\numberwithin{equation}{section} 
\newtheorem{theorem}{Theorem}[section] 
\newtheorem{lemma}[theorem]{Lemma} 
\newtheorem{proposition}[theorem]{Proposition} 
\newtheorem{corollary}[theorem]{Corollary} 
\newtheorem{conjecture}[theorem]{Conjecture}
\theoremstyle{definition}
\newtheorem{definition}[theorem]{Definition} 
\newtheorem{notation}[theorem]{Notation} 
\newtheorem{remark}[theorem]{Remark} 
\newtheorem{example}[theorem]{Example} 
\newcommand{\C}{\mathbb{C}}
\newcommand{\Z}{\mathbb{Z}} 
\newcommand{\N}{\mathbb{N}}
\newcommand{\Q}{\mathbb{Q}} 
\newcommand{\R}{\mathbb{R}}
\newcommand{\bS}{\mathbb{S}} 
\newcommand{\hbS}{\widehat{\bS}}
\newcommand{\tbS}{\widetilde{\bS}} 
\newcommand{\PP}{\mathbb{P}} 
\newcommand{\cO}{\mathcal{O}} 
\newcommand{\cS}{\mathcal{S}} 
\newcommand{\hcS}{\widehat{\cS}}
\newcommand{\cE}{\mathcal{E}} 
\newcommand{\cN}{\mathcal{N}}
\newcommand{\cL}{\mathcal{L}} 
\newcommand{\tcL}{\widetilde{\cL}}
\newcommand{\cH}{\mathcal{H}} 
\newcommand{\cI}{\mathcal{I}} 
\newcommand{\cM}{\mathcal{M}}
\newcommand{\cJ}{\mathcal{J}} 
\newcommand{\tcJ}{\widetilde{\cJ}}
\newcommand{\hcJ}{\widehat{\cJ}} 
\newcommand{\cF}{\mathcal{F}} 
\newcommand{\hD}{{\widehat{D}}}
\newcommand{\hvarphi}{\hat{\varphi}}
\newcommand{\homega}{\hat{\omega}} 
\newcommand{\hT}{{\widehat{T}}} 
\newcommand{\htau}{\hat{\tau}}
\newcommand{\hS}{{\widehat{S}}}
\newcommand{\hjmath}{{\hat{\jmath}}}
\newcommand{\hpi}{\hat{\pi}}
\newcommand{\hp}{{\hat{p}}} 
\newcommand{\halpha}{\hat{\alpha}} 
\newcommand{\hGamma}{\widehat{\Gamma}} 
\newcommand{\ovbeta}{\overline{\beta}}
\newcommand{\tX}{{\widetilde{X}}}
\newcommand{\td}{{\tilde{d}}}
\newcommand{\tQ}{{\widetilde{Q}}}
\newcommand{\te}{{\tilde{e}}} 
\newcommand{\tDelta}{{\widetilde{\Delta}}} 
\newcommand{\ttau}{{\tilde{\tau}}}
\newcommand{\tv}{{\tilde{v}}}
\newcommand{\bs}{\mathbf{s}} 
\newcommand{\bc}{\mathbf{c}} 
\newcommand{\bx}{\mathbf{x}} 
\newcommand{\bt}{\mathbf{t}}
\newcommand{\bbf}{\mathbf{f}}
\newcommand{\bg}{\mathbf{g}} 
\newcommand{\btau}{\boldsymbol{\tau}} 
\newcommand{\btheta}{\boldsymbol{\theta}}
\newcommand{\sfF}{\mathsf{F}}
\newcommand{\sfH}{\mathsf{H}} 
\newcommand{\frakm}{\mathfrak{m}} 
\newcommand{\frq}{\mathfrak{q}}  
\newcommand{\frs}{\mathfrak{s}} 
\newcommand{\frr}{\mathfrak{r}} 
\newcommand{\frM}{\mathfrak{M}}
\newcommand{\scrF}{\mathscr{F}}
\newcommand{\Bl}{\operatorname{Bl}} 
\newcommand{\NE}{\operatorname{NE}} 
\newcommand{\ovNE}{\operatorname{\overline{NE}}}
\newcommand{\NEN}{\NE_{\N}}
\newcommand{\Int}{\operatorname{Int}} 
\newcommand{\Spec}{\operatorname{Spec}} 
\newcommand{\Spf}{\operatorname{Spf}}
\newcommand{\id}{\operatorname{id}} 
\newcommand{\ev}{\operatorname{ev}} 
\newcommand{\pt}{\operatorname{pt}} 
\newcommand{\Hom}{\operatorname{Hom}} 
\newcommand{\End}{\operatorname{End}} 
\newcommand{\rank}{\operatorname{rank}}
\newcommand{\ch}{\operatorname{ch}} 
\newcommand{\Ker}{\operatorname{Ker}}
\newcommand{\QDM}{\operatorname{QDM}} 
\newcommand{\Pic}{\operatorname{Pic}} 
\newcommand{\Tot}{\operatorname{Tot}} 
\newcommand{\pr}{\operatorname{pr}} 
\newcommand{\FT}{\operatorname{FT}}
\newcommand{\Res}{\operatorname{Res}} 
\newcommand{\LT}{\operatorname{LT}}
\newcommand{\ad}{\operatorname{ad}} 
\newcommand{\QH}{\operatorname{QH}} 
\newcommand{\iu}{\mathtt{i}}
\newcommand{\ttt}{\mathtt{t}} 
\newcommand{\st}{{\rm s}}
\newcommand{\QW}{\mathcal{Q}}
\newcommand{\bPsi}{\boldsymbol{\Psi}}
\newcommand{\bigboxplus}{
  \mathop{
    \vphantom{\bigoplus} 
    \mathchoice
      {\vcenter{\hbox{\resizebox{\widthof{$\displaystyle\bigoplus$}}{!}{$\boxplus$}}}}
      {\vcenter{\hbox{\resizebox{\widthof{$\bigoplus$}}{!}{$\boxplus$}}}}
      {\vcenter{\hbox{\resizebox{\widthof{$\scriptstyle\oplus$}}{!}{$\boxplus$}}}}
      {\vcenter{\hbox{\resizebox{\widthof{$\scriptscriptstyle\oplus$}}{!}{$\boxplus$}}}}
  }\displaylimits 
}
\def\corr#1{\left\langle#1 \right\rangle} 
\def\parfrac#1#2{\frac{\partial #1}{\partial #2}} 
\begin{document} 

\title{Quantum cohomology of blowups} 
\author{Hiroshi Iritani}
\email{iritani@math.kyoto-u.ac.jp} 
\address{Department of Mathematics, Graduate School of Science, 
Kyoto University, Kitashirakawa-Oiwake-cho, Sakyo-ku, 
Kyoto, 606-8502, Japan}

\keywords{quantum cohomology, birational transformation, GIT quotient, shift operator, Fourier transformation} 
\subjclass[2010]{Primary~14N35, Secondary~53D45}

\begin{abstract} 
We prove a decomposition theorem of the quantum cohomology $D$-module of the blowup $\tX=\Bl_Z X$ of a smooth projective variety $X$ along a smooth subvariety $Z$. The main tools we use are shift operators and Fourier analysis for equivariant quantum cohomology. 
\end{abstract} 
\maketitle 


\section{Introduction} 
Let $X$ be a smooth projective variety over $\C$ and let $Z\subset X$ be a smooth subvariety of codimension $r\ge 2$. It is well-known that the cohomology of the blowup $\tX$ of $X$ along $Z$ decomposes additively as 
\[
H^*(\tX) \cong H^*(X) \oplus H^*(Z)^{\oplus (r-1)}.  
\]
The goal of the present paper is to prove an analogous (but multiplicative) decomposition for quantum cohomology rings (or $F$-manifolds); the decomposition also lifts to a formal decomposition of quantum cohomology $D$-modules. 
\subsection{Decomposition theorem} 
The \emph{quantum cohomology} of a smooth projective variety $X$ is a formal family of supercommutative product structures $\star_\tau$ on $H^*(X)\otimes \C[\![Q]\!]$ parametrized by $\tau \in H^*(X)$ (see \S\ref{subsec:qcoh_qconn}). 
The coefficient ring $\C[\![Q]\!]$ is the completed monoid ring of the set $\NEN(X)\subset H_2(X,\Z)$ of effective curves, called the \emph{Novikov ring}. 
The \emph{quantum $D$-module} $\QDM(X)$ is the module $H^*(X)\otimes \C[z][\![Q,\tau]\!]$ equipped with the following mutually supercommuting operators from $\QDM(X)$ to $z^{-1}\QDM(X)$, called the \emph{quantum connection}:  
\begin{align*} 
\nabla_{\tau^i} &= \partial_{\tau^i} + z^{-1} (\phi_i \star_\tau), \\
\nabla_{z\partial_z} & = z \partial_z - z^{-1} (E_X\star_\tau) + \mu_X, \\ 
\nabla_{\xi Q\partial_Q} & = \xi Q \partial_Q + z^{-1} (\xi \star_\tau), 
\end{align*} 
where $\{\tau^i\}$ denotes coordinates on $H^*(X)$ dual to a basis $\{\phi_i\} \subset H^*(X)$, $E_X$ is the Euler vector field, $\mu_X$ is the grading operator and $\xi \in H^2(X)$ (see \S\ref{subsec:qcoh_qconn} for the details). We view $\nabla$ as a flat connection on the trivial $H^*(X)$-bundle over the $(z,Q,\tau)$-space. 
The variable $z$ here arises as the equivariant parameter of the $\C^\times$-action rotating the domain of maps $\PP^1 \to X$. The quantum $D$-module is also equipped with the pairing $P_X(f,g) = \int_X f(-z) \cup g(z)$ which is compatible with the quantum connection $\nabla$. 

In order to compare the quantum $D$-modules of $X$, $\tX$, $Z$, we embed their Novikov rings into a common ring. 
Let $Q$, $\tQ$, $Q_Z$ denote the Novikov variables of $X$, $\tX$ and $Z$ respectively. 
We introduce another Novikov variable $\frq$ and extend the Novikov rings of $X, \tX, Z$ to $\C(\!(\frq^{-1/\frs})\!)[\![Q]\!]$ as follows: 
\begin{equation}
\label{eq:extension_of_Novikov_rings}
\begin{aligned}
\C[\![Q]\!] & \hookrightarrow \C(\!(\frq^{-1/\frs})\!)[\![Q]\!]  \qquad & & \text{in an obvious way}\\ 
\C[\![\tQ]\!] & \hookrightarrow \C(\!(\frq^{-1/\frs})\!)[\![Q]\!] & & \text{$\tQ^\td \mapsto Q^{\varphi_*\td} \frq^{-[D]\cdot \td}$} \\ 
\C[\![Q_Z]\!] & \to \C(\!(\frq^{-1/\frs})\!)[\![Q]\!] & & \text{$Q_Z^d \mapsto Q^{\imath_*d} \frq^{-\rho_Z \cdot d/(r-1)}$}
\end{aligned} 
\end{equation}
where $\frs$ equals $r-1$ or $2(r-1)$ depending on whether $r$ is even or odd (see \eqref{eq:frs}), $\rho_Z = c_1(\cN_{Z/X})$ and $\imath\colon Z\to X$ is the inclusion map. 
The variable $\frq$ represents the class of a line in the exceptional divisor $D \subset \tX$ that maps to a point in $Z$. 
Let $\QDM(X)^{\rm la}$, $\QDM(\tX)^{\rm la}$, $\QDM(Z)^{\rm la}$ denote the base changes\footnote{The superscript `la'  means formal \emph{Laurent} forms.} of the quantum $D$-modules of $X$, $\tX$, $Z$ to $\C[z](\!(\frq^{-1/\frs})\!)[\![Q]\!]$ via the above ring extension \eqref{eq:extension_of_Novikov_rings}. 

\begin{theorem}[see Theorem \ref{thm:decomposition_fd} for the details] 
\label{thm:decomposition_introd} 
There exist a formal invertible change of variables $H^*(\tX) \to H^*(X) \oplus H^*(Z)^{\oplus (r-1)}$,  $\ttau \mapsto \left(\tau(\ttau), \{\varsigma_j(\ttau)\}_{0\le j\le r-2}\right)$ defined over $\C(\!(\frq^{-\frac{1}{r-1}})\!)[\![Q]\!]$ and an isomorphism 
\[
\Psi \colon \QDM(\tX)^{\rm la} \to \tau^*\QDM(X)^{\rm la} \oplus \bigoplus_{j=0}^{r-2} \varsigma_j^* \QDM(Z)^{\rm la}  
\]
that commutes with the quantum connection. Moreover, $\Psi$ intertwines the pairing $P_\tX$ with $P_X \oplus P_Z^{\oplus (r-1)}$. 
\end{theorem}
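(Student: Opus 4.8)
The plan is to deduce the decomposition from a single $\C^\times$-equivariant geometry interpolating between $X$, $\tX$ and the exceptional divisor $D=\PP(\cN_{Z/X})\to Z$, converting equivariant localization into a decomposition of $D$-modules by means of the Fourier transform for equivariant quantum cohomology. First I would fix such a model. The blowdown $\varphi\colon\tX\to X$ should be realized from an auxiliary $\C^\times$-action (distinct from the domain-rotation action producing $z$): using the deformation to the normal cone, or — when $Z$ is a local complete intersection — a variation-of-GIT presentation, one obtains a $\C^\times$-space whose quotient on one side of a wall is $\tX$ and on the other side is (canonically related to) $X$, the exceptional behaviour being concentrated along $\PP(\cN_{Z/X})$. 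The residual $\C^\times$ acts fiberwise on $\cO_{\tX}(-D)$ near $D$; its equivariant parameter, call it $\lambda$, is Fourier-dual to a fractional power of $\frq$, and inverting $\lambda$ — under which equivariant localization holds — corresponds exactly to passing to the Laurent completion in $\frq^{-1/\frs}$. This is the conceptual reason why $\frq$ ``represents the class of a line in $D$'' and why the decomposition exists only after this completion. For general $Z$ one reduces to this case by functoriality of $\QDM$ under the deformation to the normal cone, using that $\tX$ agrees with $\Bl_Z(\Tot\,\cN_{Z/X})$ in a formal neighbourhood of $D$ and with $X$ away from it.

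The heart of the argument is the next step. From the $\C^\times$-action I would construct the equivariant quantum $D$-module together with the associated shift operators (realized geometrically via Gromov--Witten invariants of the associated $\PP^1$-bundle, a Seidel space), which equip $\QDM(\tX)^{\rm la}$ with a compatible $\frq$-difference structure. Then I would show that the Fourier transform in $\lambda$ admits two descriptions: on the one hand it reproduces $\QDM(\tX)^{\rm la}$ with the Novikov substitutions of \eqref{eq:extension_of_Novikov_rings}; on the other hand, applying the localization theorem to the $\C^\times$-fixed locus — whose components are $X$ and the $\PP^{r-1}$-bundle $D\to Z$ — and using that the Fourier transform trades equivariant localization for a formal decomposition into exponential-type blocks indexed by the fixed components, it splits as one block isomorphic to a pullback of $\QDM(X)^{\rm la}$ plus blocks isomorphic to pullbacks of $\QDM(Z)^{\rm la}$. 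Exactly $r-1$ blocks of $\QDM(Z)$-type survive: the discrepancy with the rank $r$ of $H^*(D)$ over $H^*(Z)$ reflects, just as in the classical decomposition, that one eigen-block of quantum multiplication along the exceptional locus resonates with, and is absorbed into, the $\QDM(X)$-block. The eigenvalues of that quantum multiplication operator are the fractional powers $\frq^{1/\frs}$, and $\frs\in\{r-1,\,2(r-1)\}$ is the order that renders them rational.

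It then remains to match the data. The block decomposition presents each summand as $\QDM(X)$ resp. $\QDM(Z)$ pulled back along a ``mirror-map''-type transformation $\ttau\mapsto\tau(\ttau)$, $\varsigma_j(\ttau)$, defined over $\C(\!(\frq^{-1/(r-1)})\!)[\![Q]\!]$; its invertibility follows by computing the $\frq\to\infty$ leading term and identifying it with the classical isomorphism $H^*(\tX)\cong H^*(X)\oplus H^*(Z)^{\oplus(r-1)}$. Compatibility with the full quantum connection is inherited from that of the Fourier transform and of equivariant localization with the entire $\C^\times$-equivariant structure, so that $\nabla_{z\partial_z}$ and $\nabla_{\xi Q\partial_Q}$ come along once the $z$-connection is handled; the pairing statement follows because the Fourier transform and localization are each compatible with the Poincaré pairing up to the equivariant Euler classes of the normal bundles of the fixed components, which combine to send $P_\tX$ to $P_X\oplus P_Z^{\oplus(r-1)}$.

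The main obstacle is making the ``two descriptions of the Fourier transform'' rigorous: establishing the formal-neighbourhood (or degeneration-formula) limit relating the Gromov--Witten theory of the non-compact — or degenerate — auxiliary model to that of the compact $\tX$, controlling the spectral analysis in the $\frq$-direction so that all the series involved are well-defined over $\C(\!(\frq^{-1/\frs})\!)[\![Q]\!]$ with the block count equal to $r-1$ on the nose, and pinning down the precise shift/Fourier normalization that produces the substitutions in \eqref{eq:extension_of_Novikov_rings}. The remaining points — invertibility of the change of variables and the pairing identity — are then comparatively routine leading-order computations.
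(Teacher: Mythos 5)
Your broad strategy --- realize $X$ and $\tX$ as GIT quotients of an auxiliary $\C^\times$-variety, then transport the equivariant quantum $D$-module of that variety to the quotients via shift operators and a Fourier transformation in the equivariant parameter --- is exactly the paper's. But two details that you treat as cosmetic are in fact load-bearing, and you get them wrong.

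First, the fixed locus of the master space. The paper uses $W=\Bl_{Z\times\{0\}}(X\times\PP^1)$, which is smooth projective for any smooth $Z\subset X$ (no need for your deformation-to-the-normal-cone reduction, which would introduce degenerate fibers and serious technical baggage). Its $\C^\times$-fixed locus is $W^T=X\sqcup Z\sqcup \tX$: \emph{three} components, with $Z$ itself a fixed component of codimension $r+1$ and normal bundle $\cN_{Z/X}\oplus\cO$. Neither $D$ nor any $\PP^{r-1}$-bundle over $Z$ appears as a fixed component. Since you propose to index the blocks of the decomposition by fixed components, this error propagates.

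Second, and more seriously, the mechanism producing exactly $r-1$ copies of $\QDM(Z)$. You attribute it to one of the $r$ pieces of $H^*(D)\cong H^*(Z)^{\oplus r}$ ``resonating with and being absorbed into'' the $\QDM(X)$-block; that is an analogy with the classical cohomology splitting, not an argument, and it does not reflect what happens in the paper. In the paper the $\QDM(X)$-block comes from the \emph{GIT quotient} $X$, via a \emph{discrete} Fourier transform $\sfF_X(\bbf)=\sum_k S^k\kappa_X(\cS^{-k}\bbf)$ supported on the GIT chamber $C^\vee_{X,\N}$; the $\QDM(Z)$-blocks come from the \emph{fixed component} $Z$ via a \emph{continuous} Fourier (Mellin--Barnes) transform $\int e^{\lambda\log S_Z/z}G_Z\bbf_Z\,d\lambda$ with $G_Z$ built from $\Gamma$-functions (the inverse $\hGamma$-class of $\cN_{Z/W}$). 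The number $r-1$ is precisely $|c_Z|$, where $c_Z=\sum_\alpha r_\alpha w_\alpha=-(r-1)$ is the weighted sum of $\C^\times$-weights of $\cN_{Z/W}=\cN_{Z/X}\oplus\cO$ (weights $-1,\dots,-1,+1$), and it counts the saddle points $\lambda_j=e^{2\pi\iu j/c_Z}\lambda_0$ of the Mellin--Barnes phase function. The stationary phase expansion at each $\lambda_j$, combined with quantum Riemann--Roch, lands in the Givental cone of $Z$; that is Proposition \ref{prop:Fourier_bigJtw} and Corollary \ref{cor:Fourier_transform_JW}. Without this structure the count $r-1$ is not forced, and ``absorption'' has no content.

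A further gap: you assert the pairing identity as a one-line consequence of localization commuting with Poincar\'e pairing. In the paper it requires an actual argument (Lemma \ref{lem:inversion_Phi} and the discussion after it): one checks that $\bPsi$ preserves pairings at the boundary $\QW=\btheta=0$, $\frq=\infty$, and then uses flatness of $P_1=(\bPsi^{-1})^*P_\tX$ and $P_2=P_X\oplus P_Z^{\oplus(r-1)}$, together with semisimplicity of the residue of $\nabla_{\frq\partial_\frq}$ at $\frq=\infty$, to propagate the equality. Your sketch skips the part where the Euler-class factors from localization have to cancel against the $\Gamma$-function normalizations $q_{Z,j}$ in a nontrivial way.

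In short: the strategic outline is right, but the two points you flag as ``comparatively routine'' and the one you call ``the main obstacle'' are resolved in the paper by specific structures (the fixed component $Z$ and its normal bundle $\cN_{Z/X}\oplus\cO$; the dichotomy discrete/continuous Fourier transform; the $r-1$ Mellin--Barnes saddle points) that your proposal either misidentifies or omits.
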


\begin{corollary} 
\label{cor:F-manifolds} 
The map $\ttau \mapsto (\tau(\ttau), \{\varsigma_j(\ttau)\}_{0\le j\le r-2})$ defines an isomorphism of quantum cohomology $F$-manifolds over $\C(\!(\frq^{-\frac{1}{r-1}})\!)[\![Q]\!]$, i.e.~the differential of this map defines a ring isomorphism $
(H^*(\tX), \star_\ttau) \cong (H^*(X), \star_{\tau(\ttau)}) \oplus 
\bigoplus_{j=0}^{r-2} (H^*(Z), \star_{\varsigma_j(\ttau)})$. It also preserves the Euler vector fields. 
\end{corollary}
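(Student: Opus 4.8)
The plan is to derive Corollary~\ref{cor:F-manifolds} from Theorem~\ref{thm:decomposition_introd} by passing to the semiclassical limit $z\to 0$ of the isomorphism $\Psi$. Recall that for a smooth projective variety $Y$ the quantum product and the Euler vector field are read off from $(\QDM(Y),\nabla)$ as follows: the operator $z\nabla^Y_{\tau^i}=z\partial_{\tau^i}+(\phi_i\star_\tau)$ preserves the lattice $z\QDM(Y)$, hence descends to $\QDM(Y)/z\QDM(Y)\cong H^*(Y)\otimes(\text{Novikov ring})$, and there it equals $(\phi_i\star_\tau)$, the summand $z\partial_{\tau^i}$ landing in $z\QDM(Y)$; likewise $z\nabla^Y_{z\partial_z}=z^2\partial_z-(E_Y\star_\tau)+z\mu_Y$ descends to $-(E_Y\star_\tau)$ modulo $z$. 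The identical computation applies to the pulled-back quantum $D$-modules $\tau^*\QDM(X)^{\rm la}$ and $\varsigma_j^*\QDM(Z)^{\rm la}$ appearing in Theorem~\ref{thm:decomposition_introd}. Fix a basis $\{\tilde\phi_a\}$ of $H^*(\tX)$ with dual coordinates $\{\ttau^a\}$, put $\bullet_\ttau:=\star_{\tau(\ttau)}\oplus\bigoplus_{j}\star_{\varsigma_j(\ttau)}$ for the product on the right-hand side, and let $df$ be the differential of the change of variables $\ttau\mapsto(\tau(\ttau),\{\varsigma_j(\ttau)\}_j)$ (tangent spaces identified with cohomology as usual). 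Then, by the chain rule for pulled-back connections, the reduction modulo $z$ of $z\nabla_{\ttau^a}$ on the target is $\bullet_\ttau$-multiplication by the class $df(\partial_{\ttau^a})\in H^*(X)\oplus H^*(Z)^{\oplus(r-1)}$, and the reduction modulo $z$ of $z\nabla_{z\partial_z}$ on the target is $-(\mathbf{E}_\ttau\,\bullet_\ttau)$ with $\mathbf{E}_\ttau:=(E_X|_{\tau(\ttau)},\{E_Z|_{\varsigma_j(\ttau)}\}_j)$.

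Since $\Psi$ is $\C[z]$-linear, tensoring it with $\C[z]\to\C$, $z\mapsto 0$, gives an isomorphism $\overline{\Psi}\colon H^*(\tX)\otimes\Lambda\xrightarrow{\ \sim\ }(H^*(X)\oplus H^*(Z)^{\oplus(r-1)})\otimes\Lambda$, where $\Lambda=\C(\!(\frq^{-\frac{1}{r-1}})\!)[\![Q]\!]$ — all the quantum cohomology rings involved and the change of variables are already defined over $\Lambda$, and although $\QDM(\tX)^{\rm la}$ was base-changed to the possibly larger ring $\C[z](\!(\frq^{-1/\frs})\!)[\![Q]\!]$, the powers of $\frq$ entering $\star_\ttau$ are integral, so $\overline{\Psi}$ is defined over $\Lambda$ as well (in any case the relations below may be checked over the larger ring and then descend, the coefficient-ring extension being injective). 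Multiplying $\Psi\circ\nabla^\tX=\nabla^{\rm target}\circ\Psi$ by $z$ and reducing modulo $z$, the two facts of the previous paragraph give
\[
\overline{\Psi}\circ(\tilde\phi_a\star_\ttau)=\bigl(df(\partial_{\ttau^a})\,\bullet_\ttau\bigr)\circ\overline{\Psi}\quad(\text{all }a),\qquad
\overline{\Psi}\circ(E_\tX\star_\ttau)=(\mathbf{E}_\ttau\,\bullet_\ttau)\circ\overline{\Psi}
\]
as operators on $H^*(\tX)\otimes\Lambda$.

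Finally I would identify $\overline{\Psi}$ with $df$ using the normalization that $\Psi$ sends the unit section $\mathbf{1}_\tX$ to the unit section $(\mathbf{1}_X,\{\mathbf{1}_Z\}_j)$, a property built into Theorem~\ref{thm:decomposition_fd} (there the change of variables is, informally, the $z\to 0$ limit of $\Psi$), so that $\overline{\Psi}(\mathbf{1}_\tX)=(\mathbf{1}_X,\{\mathbf{1}_Z\}_j)$. Evaluating the first displayed relation at $\mathbf{1}_\tX$ gives $\overline{\Psi}(\tilde\phi_a)=df(\partial_{\ttau^a})$ for all $a$, hence $\overline{\Psi}=df$; substituting back, $df(\tilde\phi_a\star_\ttau x)=df(\tilde\phi_a)\bullet_\ttau df(x)$ for all $a$ and $x$, so by bilinearity $df$ is an isomorphism of algebras $(H^*(\tX),\star_\ttau)\xrightarrow{\ \sim\ }(H^*(X),\star_{\tau(\ttau)})\oplus\bigoplus_{j}(H^*(Z),\star_{\varsigma_j(\ttau)})$; since each summand is a quantum cohomology ring — (super)commutative, associative, with the fundamental class as unit — this is the asserted isomorphism of quantum cohomology $F$-manifolds. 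Evaluating the second displayed relation at $\mathbf{1}_\tX$, using $E_\tX\star_\ttau\mathbf{1}_\tX=E_\tX$ and $\overline{\Psi}=df$, gives $df(E_\tX)=\mathbf{E}_\ttau=(E_X,\{E_Z\}_j)$, i.e.\ the change of variables preserves the Euler vector fields. The only step that is not formal manipulation of the $z$-adic structure is the identification $\overline{\Psi}=df$, i.e.\ the unit-preservation of $\Psi$; I expect this to cause no real difficulty, being part of the construction underlying Theorem~\ref{thm:decomposition_fd}, so that the corollary is in essence Theorem~\ref{thm:decomposition_introd} evaluated at $z=0$.
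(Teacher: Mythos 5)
Your proposal correctly reduces everything to the identification $\overline\Psi = df$, which you flag as the one step you have not verified, expecting it to be ``built into Theorem~\ref{thm:decomposition_fd}.'' That expectation is wrong: $\Psi$ does \emph{not} send the unit section to the unit section. Theorem~\ref{thm:decomposition_fd}(4) (equivalently, \eqref{eq:image_FTX_c} and \eqref{eq:image_FTZ_c}) show that at $Q=\ttau=0$,
\[
\Psi(1)=\Bigl(1+O(\frq^{-1}),\ \bigl\{q_{Z,j}\bigl(1+O(\frq^{-\frac{1}{r-1}})\bigr)\bigr\}_{0\le j\le r-2}\Bigr),
\]
where the scalars $q_{Z,j}=\frac{1}{\sqrt{r-1}}e^{\frac{\pi\iu}{r-1}(jr+\frac12)}\frq^{-\frac{r}{2(r-1)}}$ from \eqref{eq:qZj_hZj} are not $1$. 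These prefactors cannot be normalized away: they are precisely what makes $\Psi$ compatible with the pairings $P_\tX$ and $P_X\oplus P_Z^{\oplus(r-1)}$, and their fractional $\frq$-exponent is the reason $\Psi$ lives over $\C(\!(\frq^{-1/\frs})\!)$ with $\frs=2(r-1)$ for odd $r$ even though the $F$-manifold isomorphism is over $\C(\!(\frq^{-\frac{1}{r-1}})\!)[\![Q]\!]$. Consequently, evaluating your first displayed relation at $\mathbf{1}_\tX$ yields $\overline\Psi(\tilde\phi_a)=df(\partial_{\ttau^a})\bullet_\ttau\overline\Psi(1)$, not $df(\partial_{\ttau^a})$, and the argument as written stops short.

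The paper's proof (\S4.6) avoids identifying $\overline\Psi$ with $df$ entirely: the product $\star$ on the tangent sheaf is characterized intrinsically by $z\nabla_v\circ z\nabla_w=z\nabla_{v\star w}+O(z)$, and the Euler vector field $\vec{E}$ by the regularity of $\nabla_{z\partial_z}+\nabla_{\vec{E}}$ at $z=0$ (both characterizations are well-posed because $v\mapsto(\text{image of }v)\star$ is a faithful representation). Since $\Psi$ is a gauge equivalence between $\nabla^{\tX}$ and the pulled-back connection on $\tau^*\QDM(X)^{\rm la}\oplus\bigoplus_j\varsigma_j^*\QDM(Z)^{\rm la}$ over the same $\ttau$-space, the two connections define the same intrinsic $F$-manifold structure on that base: on the source it is $\star_\ttau$ with Euler field $\vec{E}_\tX$, on the target it is the $df$-pullback of the direct sum structure, and equality of the two is exactly the corollary. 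If you prefer to salvage your route, note that your intertwining relations already say that $\overline\Psi$ conjugates the (injective) regular representation $v\mapsto(v\star_\ttau)$ onto $v\mapsto\bigl(df(v)\bullet_\ttau\bigr)$; since conjugation preserves multiplicativity and $w\mapsto(w\,\bullet_\ttau)$ is an injective algebra map, $df$ is a ring homomorphism — the unit $\overline\Psi(1)$ simply cancels, so you never need $\overline\Psi=df$.
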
 

\begin{remark} 
Throughout the paper, we work with completions in the category of graded rings or modules. The power series rings such as $\C[\![Q]\!]$, $\C[z](\!(\frq^{-1/\frs})\!)[\![Q]\!]$ should be understood in the graded sense. For example,  $\C(\!(\frq^{-\frac{1}{r-1}})\!)[\![Q]\!]$ is the same as $\C[\frq^{\pm \frac{1}{r-1}}][\![Q]\!]$ since $\frq$ has positive degree. 
See \S\ref{subsec:formal_power_series}. 
\end{remark} 

\begin{remark} 
The pullbacks $\tau^*\QDM(X)^{\rm la}$, $\varsigma_j^*\QDM(Z)^{\rm la}$ are defined to be the modules $H^*(X)\otimes \C[z](\!(\frq^{-1/\frs})\!)[\![Q,\ttau]\!]$, $H^*(Z) \otimes \C[z](\!(\frq^{-1/\frs})\!)[\![Q,\ttau]\!]$, respectively, equipped with the pulled-back quantum connections. See \eqref{eq:qconn_QDMtXla}, \eqref{eq:qconn_QDMXla}, \eqref{eq:qconn_QDMZla} and the discussion there. 
\end{remark} 

\begin{remark} 
The decomposition $\Psi$ in Theorem \ref{thm:decomposition_introd} is defined over $\C[z](\!(\frq^{-1/\frs})\!)[\![Q,\ttau]\!]$. The base ring can also be written as $\C[\frq^{\pm 1/\frs}][\![Q,\ttau]\!][\![z]\!]$ because of our convention on the graded completion (where $\deg \frq = 2(r-1)$, $\deg z = 2$). Although quantum cohomology is expected to have convergent structure constants in general, we do not expect that the decomposition $\Psi$ is analytic in all the variables $\frq, Q, \ttau, z$. Instead, we expect that $\Psi$ can be analytified to an isomorphism over the ring $\cO^{\rm an}[\![z]\!]$ of formal power series in $z$ with coefficients in the sheaf $\cO^{\rm an}$ of analytic functions in $\frq, Q,\ttau$. 
In fact, we showed \cite{Iritani:discrepant} for toric blowups that the Stokes structure of $\QDM(\tX)$ arising from the irregular singularities at $z=0$ does not admit an orthogonal decomposition, but a \emph{semiorthogonal decomposition} induced by Orlov's one \cite{Orlov:proj} for derived categories of coherent sheaves 
\begin{equation} 
\label{eq:Orlov_SOD} 
D^b_{\rm coh}(\tX) \cong \Bigl\langle D^b_{\rm coh}(X), D^b_{\rm coh}(Z)_0, \cdots, D^b_{\rm coh}(Z)_{r-2} \Bigr \rangle 
\end{equation} 
via the $\hGamma$-integral structure, where $D^b_{\rm coh}(Z)_0,\dots,D^b_{\rm coh}(Z)_{r-2}$ are copies of $D^b_{\rm coh}(Z)$. We refer the reader to \cite{Sanda-Shamoto}, \cite[\S 8]{Iritani:discrepant}, \cite{Kontsevich:Miami2020}, \cite{Iritani:Nottingham}, \cite[\S 3.3]{Iritani:ICM} for related conjectures in the analytic setting. 
See also \cite{Halpern-Leistner:nc_mmp} for far-reaching conjectures in this direction. 
\end{remark} 

\begin{remark} 
There are many blowup formulae for Gromov-Witten invariants, see e.g.~\cite{Hu:blowup, Gathmann:blowup, Hu-Li-Ruan:birational_cobordism, Lai:blowup, Manolache:virtual_pullback, Fan:Chern_projective}, but 
our results do not rely on these formulas. The geometric ingredients of the proof are the virtual localization formula \cite{Kontsevich:enumeration, Graber-Pandharipande} and the quantum Riemann-Roch theorem \cite{Coates-Givental}.  
\end{remark} 

\begin{remark}[related works]  
(1) Bayer \cite[Lemma 3.4.2]{Bayer:semisimple} showed that the quantum cohomology ring of a blowup of $X$ at a point decomposes into the quantum cohomology of $X$ and $\dim X-1$ many idempotent factors at certain quantum parameters. 
Recently, Milanov-Xia \cite{Milanov-Xia} showed that the exceptional objects $\cO_D(k)$ in $D^b_{\rm coh}(\tX)$ correspond to these idempotent factors under the $\hGamma$-integral structure (in the language of the second structure connection). 

(2) It is expected \cite{Kontsevich:ICM} that quantum cohomology arises as the Hochschild cohomology of the Fukaya category. Sanda \cite{Sanda:thesis, Sanda:computation_QC} and Venugopalan-Woodward-Xu \cite{VWX:Fukaya_blowup} deduced a decomposition of quantum cohomology from that of Fukaya categories for certain blowups. 

(3) Many people have studied quantum cohomology of toric blowups or flips. Gonz\'alez-Wooward \cite{Gonzalez-Woodward:tmmp} described a decomposition of quantum cohomology rings under a running of the toric minimal model programme. Acosta-Shoemaker \cite{Acosta-Shoemaker:blowup_LG} found a relationship between the $I$-functions of toric flips via asymptotic expansion. 
Lee-Lin-Wang \cite{Lee-Lin-Wang:flips_I} and the author \cite{Iritani:discrepant} proved a formal decomposition of quantum $D$-modules under toric flips; in the latter work, it was shown that an analytic lift of the formal decomposition corresponds to the Orlov decomposition \eqref{eq:Orlov_SOD} for weighted blowups of weak-Fano toric orbifolds along toric suborbifolds. Lee-Lin-Wang \cite{Lee-Lin-Wang:StringMath15, Wang:Gakushuin2022} also announced certain blowup formulas beyond the toric case. 

(4) Coates-Lutz-Shafi \cite{Coates-Luts-Shafi:blowup} obtained the $I$-function of a blowup along a complete intersection defined by convex line bundles. 

(5) Kontsevich has given several talks \cite{Kontsevich:HSE2019, Kontsevich:Miami2020, Kontsevich:Simons2021}  on joint project with Ludmil Katzarkov, Tony Pantev and Tony Yue Yu, where they conjectured a formal decomposition of quantum connections for blowups (similar to Theorem \ref{thm:decomposition_introd}) and discovered remarkable applications to birational geometry. 
They, along with \cite{HYZZ:framing}, also highlighted the unique reconstructibility of genus-zero Gromov-Witten invariants of $\tX$ from those of $X$ and $Z$, based on this decomposition theorem. We describe the initial condition for this reconstruction in \S\ref{subsec:initial_conditions}. 
\end{remark}

\subsection{Equivariant quantum cohomology and Fourier transformation} 
\label{subsec:Fourier_introd}
We prove the decomposition theorem (Theorem \ref{thm:decomposition_introd}) using \emph{Fourier analysis} on the equivariant quantum cohomology of the master space $W = \Bl_{Z\times \{0\}} (X\times \PP^1)$, the blowup of $X\times \PP^1$ along $Z\times \{0\}$. 
Both $X$ and its blowup $\tX$ arise as the Geometric Invariant Theory (GIT) quotient of $W$ by the  $\C^\times$-action on the $\PP^1$-factor (see \S\ref{subsec:GIT}). 
Teleman \cite{Teleman:gauge_mirror} conjectured a relationship between equivariant quantum cohomology and quantum cohomology of the GIT quotients in terms of the Seidel representation. 
Following his conjecture, we connect the quantum $D$-modules of $X$ and $\tX$ with the $\C^\times$-equivariant quantum $D$-module of $W$ via Fourier transformation. 

We explain an expected Fourier duality of quantum $D$-modules derived from Teleman's conjecture. Suppose that a smooth projective variety $X$ is equipped with an algebraic $T$-action, where $T\cong (\C^\times)^{\rank T}$. 
Then the cocharacter lattice $\Hom(\C^\times,T) \cong H_2^T(\pt,\Z)$ acts on the equivariant quantum $D$-module $\QDM_T(X)$ by the so-called shift operators of equivariant parameters \cite{Okounkov-Pandharipande:Hilbert,Maulik-Okounkov:qcoh_qgroup, BMO:Springer, Iritani:shift,LJones:shift_symp, GMP:nil-Hecke}: it is a lift of the Seidel representation \cite{Seidel:pi1} on quantum cohomology to $D$-modules. 
In this paper we combine the shift operators with the Novikov variables and define the action of the group $N_1^T(X)$ of equivariant curve classes, which fits into the exact sequence (see \eqref{eq:equiv_homology_seq}) 
\[
\begin{CD} 
0 @>>> N_1(X) @>>> N_1^T(X) @>>> H_2^T(\pt,\Z) @>>> 0, 
\end{CD} 
\]
where $N_1(X) \subset H_2(X,\Z)$ is the group of classes of algebraic curves. For each $\beta \in N_1^T(X)$, we define an operator $\hbS^\beta(\tau) \colon \QDM_T(X) \to \QDM_T(X)[Q^{-1}]$ satisfying the commutation relation (see Definition \ref{def:shift}): 
\begin{equation} 
\label{eq:commutation_relation} 
[\lambda, \hbS^\beta(\tau)] = z (\lambda\cdot \ovbeta) \hbS^\beta(\tau) 
\end{equation} 
with $\lambda \in H^2_T(\pt)$, where $\ovbeta\in H_2^T(\pt)$ is the image of $\beta\in N_1^T(X)$.  The operator $\hbS^\beta(\tau)$ equals the Novikov variable $Q^\beta$ if $\beta \in N_1(X)$. 
The relation \eqref{eq:commutation_relation} can be interpreted in two different ways: we can either view $\hbS^\beta(\tau)$ as a \emph{shift operator} of equivariant parameters $\lambda \to \lambda - z (\lambda \cdot \ovbeta)$, or view $\lambda$ as a \emph{differential operator} in the $\hbS$-variables (like $z \hbS \partial_{\hbS}$). These viewpoints are related by Fourier transformation.  
Teleman's conjecture suggests that $\QDM_T(X)[Q^{-1}]$ viewed as a module over $\C[N_1^T(X)]$ via the Seidel representation corresponds to $\QDM(X/\!/T)$ in such a way that the action of $\C[N_1^T(X)]$ corresponds to the Novikov variables for $X/\!/T$ and the action of equivariant parameters $\lambda$ corresponds to the quantum connection for $X/\!/T$. We note that $\C[N_1^T(X)]$ is an extension of $\C[N_1(X/\!/T)]$ via the dual Kirwan map $\kappa^* \colon N_1(X/\!/T) \to N_1^T(X)$. 

More precisely, we need to take into consideration the cone of curves. We have a monoid $\NEN^T(X) \subset N_1^T(X)$ that preserves the lattice $\QDM_T(X)$ under the Seidel representation (Proposition \ref{prop:module_over_extended_shift}). It turns out \cite{Iritani-Sanda:reduction} that the cone generated by $\NEN^T(X)$ is dual to the so-called \emph{$T$-ample cone} $C_T(X) \subset N^1_T(X)_\R$, the cone spanned by the equivariant first Chern classes $c_1^T(L)$ of $T$-equivariant ample line bundles $L\to X$ whose stable loci are nonempty. 
The cone $C_T(X)$ is divided into chambers depending on GIT quotients \cite{Dolgachev-Hu:VGIT, Thaddeus:GIT, Ressayre:GIT}. Let $Y= X/\!/T$ be a smooth\footnote{This means that there are no strictly semistable points and that $T$ acts freely on the stable locus.} GIT quotient and write  $C_Y\subset C_T(X)$ for the open subcone (chamber) corresponding to $Y$. Roughly speaking, we expect that restricting and completing $\QDM_T(X)$ along the affine chart $\Spec (\C[C_Y^\vee \cap N_1^T(X)]) \subset \Spec  (\C[\NEN^T(X)])$ gives rise to $\QDM(Y)$, where $C_Y^\vee \cap N_1^T(X)$ denotes the set of $\beta \in N_1^T(X)$ whose image in $N_1^T(X)_\R$ lies in the dual cone $C_Y^\vee$ of $C_Y$. We can state a precise conjecture in terms of the $J$-function (a solution of the quantum $D$-module) and the Givental cone as follows: 

\begin{conjecture}
\label{conj:reduction} 
Let $Y = X/\!/T$ be a smooth GIT quotient of a smooth projective $T$-variety $X$. 
Let $J_X(\tau)$ be the $T$-equivariant $J$-function \eqref{eq:J-function} of $X$ and let $\hS$ denote a formal variable of the group ring $\C[N_1^T(X)]$. 
Then the sum 
\begin{equation} 
\label{eq:I-function} 
I = z \sum_{[\beta] \in N_1^T(X)/N_1(X)} \kappa_Y\left(\hcS^{-\beta} J_X(\tau)\right) \hS^\beta
\end{equation} 
is supported on $C_Y^\vee \cap N_1^T(X)$ as an $\hS$-power series and gives a point on the Givental cone of $Y$ defined over the extension $\C[\![C_Y^\vee \cap N_1^T(X)]\!]$ of the Novikov ring of $Y$. Here $\hcS^\beta$ is the shift operator  on the Givental space $($see Definition $\ref{def:shift_Givental}$$)$ and $\kappa_Y \colon H^*_T(X) \to H^*(Y)$ is the Kirwan map. 
\end{conjecture}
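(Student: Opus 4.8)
The plan is to identify the quantity $I$ of \eqref{eq:I-function}, built from the shift operators on $X$, with the genus-zero (descendant) theory of $Y$, by realising both as fixed-point contributions on a common \emph{Seidel-type} family via virtual localisation \cite{Kontsevich:enumeration, Graber-Pandharipande} and absorbing the residual discrepancy with the quantum Riemann--Roch theorem \cite{Coates-Givental}. First I would replace the conclusion ``$I$ lies on the Givental cone of $Y$'' by a differential criterion: a family $I$ with leading Novikov term $z\,\mathbf{1}$ lies on the cone iff the $\C[z]$-span of $I$ and its first derivatives in the deformation parameters is preserved by the quantum connection of $Y$. Thus it is enough to upgrade the commutation relation \eqref{eq:commutation_relation} to a Teleman-type isomorphism \cite{Teleman:gauge_mirror}: the module $\QDM_T(X)[Q^{-1}]$, viewed as a module over $\C[N_1^T(X)]$ through the Seidel representation $(\beta\mapsto\hbS^\beta)$, should --- after applying the Kirwan map $\kappa_Y$ and completing along $\Spec\bigl(\C[C_Y^\vee\cap N_1^T(X)]\bigr)$ --- become $\QDM(Y)$, with the $\hbS$-action turning into multiplication by the Novikov variables $\hS^\beta$ and the equivariant parameters $\lambda \in H^2_T(\pt)$ turning into the quantum connection operators of $Y$. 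Granting this, the image of $J_X(\tau)$ is automatically a point of the Givental cone of $Y$.

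For the geometric input, fix $\beta \in N_1^T(X)$ with image $\ovbeta \in \Hom(\C^\times,T) = H_2^T(\pt,\Z)$ and let $E_{\ovbeta} \to \PP^1$ be the $X$-bundle associated to the principal $T$-bundle of degree $\ovbeta$; it carries a fibrewise $T$-action and a $\C^\times$-action lifting the rotation of $\PP^1$, with the $T\times\C^\times$-linearisations over $0$ and $\infty$ differing by $\ovbeta$ --- this is what implements the shift $\lambda \mapsto \lambda - z(\lambda\cdot\ovbeta)$. Running virtual localisation with respect to this $\C^\times$-action on the spaces of genus-zero stable maps to $E_{\ovbeta}$ of section type together with fibre classes, the contributions of the fixed loci over $0$ and $\infty$ are, by the defining property of the shift operators (Definitions \ref{def:shift} and \ref{def:shift_Givental}), $\hcS^{\mp\beta} J_X(\tau)$; hence each summand $\kappa_Y(\hcS^{-\beta}J_X(\tau))\,\hS^\beta$ of \eqref{eq:I-function} is by construction $\kappa_Y$ of the localisation contribution from the fixed fibre over one endpoint of $\PP^1$. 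On the other hand, the fibrewise GIT quotient $E_{\ovbeta}/\!/T$ with the linearisation defining $Y$ is a family over $\PP^1$ all of whose fibres are copies of $Y$; summing over all $\ovbeta$ and all section classes, the genus-zero invariants of these $Y$-families reassemble --- by the divisor and string equations in the $\PP^1$-direction --- into the genus-zero theory of $Y$ itself, with $\hS^\beta$ recording the $\ovbeta$-degree. The support statement ``$I$ is supported on $C_Y^\vee \cap N_1^T(X)$'' is then exactly the assertion that only those $\ovbeta$ whose associated bundle admits a section meeting the semistable locus --- i.e.\ the classes effective on $Y$ --- contribute, which is where the duality between $\NEN^T(X)$ and the $T$-ample cone $C_T(X)$ of \cite{Iritani-Sanda:reduction} is used.

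The comparison between the genus-zero theory of the $Y$-family $E_{\ovbeta}/\!/T$ and $\kappa_Y$ applied to that of $E_{\ovbeta}$ is not an equality: the obstruction theories differ by the tangent directions along the $T$-orbits (a trivial factor $\Lie T$) together with, on a degree-$\ovbeta$ section, the higher cohomology over the domain of those pulled-back factors twisted according to $\ovbeta$. These are twists of Euler-class type, so I would pass between the two generating functions using the quantum Riemann--Roch operator of \cite{Coates-Givental}: one writes the $Y$-side invariants as $E_{\ovbeta}$-side invariants twisted in the sense of quantum Riemann--Roch, expresses the latter through $\hcS^{\mp\beta}J_X(\tau)$ by the localisation above, applies $\kappa_Y$, and checks that reassembling over $\ovbeta$ reproduces exactly \eqref{eq:I-function} as a cone point of $Y$. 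The compatibility with the pairing $P_Y$ and with the grading and Euler vector field then follows from the corresponding compatibilities of the shift operators $\hbS^\beta$ and of $\kappa_Y$.

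The hard part will be that forming GIT quotients does not commute with forming moduli of stable maps, $\overline M_{0,n}(E_{\ovbeta},-)/\!/T \neq \overline M_{0,n}(E_{\ovbeta}/\!/T,-)$, the discrepancy being precisely the stable-map/quasimap wall-crossing. Controlling it at the level of virtual classes requires either a quasimap wall-crossing theorem --- which comes with hypotheses on $Y$, such as a suitable extra torus action or a semi-positivity condition --- or a delicate localisation bootstrap. Moreover, since \eqref{eq:I-function} involves the \emph{big} $J$-function $J_X(\tau)$ with $\tau$ ranging over all of $H^*_T(X)$, one additionally needs a Birkhoff-factorisation argument in the style of \cite{Coates-Givental} to remove the dependence on the divisor equation. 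For these reasons I expect Conjecture \ref{conj:reduction} to be provable in this generality only under extra assumptions, and in the present paper only in the special cases --- $X = W$ the master space and $Y \in \{X, \tX\}$ --- needed for Theorem \ref{thm:decomposition_introd}.
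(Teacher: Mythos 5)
This statement is posed as a \emph{conjecture}, and the paper does not prove it in general; you correctly recognize this and identify the stable-map/quasimap wall-crossing as the fundamental obstruction. What the paper does establish is the special case recorded in Theorem~\ref{thm:divisor_reduction} (proved via Corollary~\ref{cor:Fourier_transform_JW_GIT} and Appendix~\ref{append:divisor_reduction}), where $Y\subset X$ is a $\C^\times$-fixed divisor whose normal bundle has weight $\pm1$. Your route for the special case, however, still differs from the paper's. You propose to localize on the Seidel spaces $E_{\ovbeta}$ and then compare the genus-zero theory of $E_{\ovbeta}$ with that of the fibrewise GIT quotient $E_{\ovbeta}/\!/T$, absorbing the obstruction-theory mismatch via quantum Riemann--Roch; this plan has to face exactly the wall-crossing difficulty you flag, even in the divisorial case, because it still compares stable maps to a quotient with $T$-invariant maps upstairs. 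The paper exploits the divisorial hypothesis to avoid that comparison altogether: since $Y$ is a $\C^\times$-fixed divisor, $\kappa_Y$ is simply restriction to $Y$ (with $\lambda$ set to $-c_1(\cN_{Y/X})$), so one can invoke the Brown--Fan--Lee restriction theorem (Proposition~\ref{prop:restriction_twisted_cone}): the restriction of a point of $\cL_X$ to a fixed component lies on the $(\cN_{Y/X},e_T^{-1})$-twisted Givental cone of $Y$. Quantum Riemann--Roch (Theorem~\ref{thm:QRR}) converts that into the untwisted cone, and the lattice sum \eqref{eq:I-function} is then read as a Mellin--Barnes integral whose stationary-phase expansion (\S\ref{subsubsec:formal_asymptotics}) is matched against the residue series by the discrete-versus-continuous comparison in Proposition~\ref{prop:conti_vs_discrete}; the support statement is Proposition~\ref{prop:support_conjecture}, proved by virtual localization on $X_{0,n,d}$ directly (not on the Seidel spaces). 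No quasimap wall-crossing enters. In short: you share with the paper the tools of virtual localization and QRR, and your differential-criterion reformulation of the cone condition is consistent with the paper's viewpoint, but the organizing geometry is different and the wall-crossing step in your outline is a genuine gap that the paper's divisorial argument deliberately circumvents rather than solves.
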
 

The formulation of this conjecture has been worked out in joint work with Fumihiko Sanda: we plan to give a more detailed account in \cite{Iritani-Sanda:reduction}. 

\begin{remark} 
Each summand $\kappa_Y(\hcS^{-\beta} J_X(\tau)) \hS^\beta$ in \eqref{eq:I-function} depends only on the class $[\beta]$ in  $N_1^T(X)/N_1(X) \cong H_T^2(\pt,\Z)$. 
The well-definedness of $\kappa_Y(\hcS^{-\beta} J_X(\tau))$ follows from the fact that $\hcS^{-\beta}$ preserves the tangent spaces of the Givental cone up to localization in Novikov variables (see the proof of Proposition \ref{prop:support_conjecture}). Note that the Novikov ring $\C[\![\NEN(Y)]\!]$ of $Y$ is embedded into $\C[\![C_Y^\vee \cap N_1^T(X)]\!]$ via the dual Kirwan map $\kappa_Y^* \colon \NEN(Y) \to C_Y^\vee \cap N_1^T(X)$. 
\end{remark} 

\begin{remark} 
The formula \eqref{eq:I-function} gives a discrete (partial) Fourier transform of $J_X(\tau)$: it transforms functions in $(\lambda,Q)$ into functions in $\hS=(S,Q)$ and intertwines the shift operator $\hcS^\beta$ with multiplication by the monomial $\hS^\beta$ and the equivariant parameter $\lambda$ with a differential operator in $\hS$, see \eqref{eq:properties_discrete_Fourier}. 
\end{remark} 

\begin{example} 
Givental's mirror theorem \cite{Givental:toric_mirrorthm} for toric varieties $\C^n/\!/T$ can be viewed as an example of this conjecture (although $\C^n$ is non-compact). 
We recall that Givental \cite{Givental:homological} heuristically obtained his $I$-function as a Fourier transform of the Floer fundamental cycle. 
\end{example} 

\begin{example} 
Let $V\to X$ be a vector bundle such that $V^\vee$ is generated by global sections and consider the fibrewise scalar $\C^\times$-action on $V$. In joint work \cite{Iritani-Koto:projective_bundle} with Koto, we showed that an analogue of the Conjecture \ref{conj:reduction} holds for the GIT quotient $V/\!/\C^\times = \PP(V)$ of the non-compact space $V$. 
\end{example} 

In this paper, we need the following special case of Conjecture \ref{conj:reduction}. 

\begin{theorem}[Corollary \ref{cor:Fourier_transform_JW_GIT}, Appendix \ref{append:divisor_reduction}] 
\label{thm:divisor_reduction} 
Let $X$ be a smooth projective variety with a $\C^\times$-action. 
Let $Y\subset X$ be a $\C^\times$-fixed divisor whose normal bundle has $\C^\times$-weight $1$ or $-1$. 
Then the GIT quotient $X/\!/\C^\times =Y$ satisfies Conjecture $\ref{conj:reduction}$. 
\end{theorem}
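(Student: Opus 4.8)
The plan is to verify Conjecture \ref{conj:reduction} directly in this special case, where $T = \C^\times$ acts on $X$ with a fixed divisor $Y$ whose normal bundle $\cN_{Y/X}$ has weight $\pm 1$; by possibly replacing the $\C^\times$-action by its inverse we may assume the weight is $+1$, so that $Y$ is the unstable-to-stable wall and the GIT quotient for the relevant chamber $C_Y$ is $Y$ itself. The first step is to make the combinatorics explicit: the cocharacter lattice $H_2^T(\pt,\Z) \cong \Z$ is generated by a class $\sigma$, and $N_1^T(X) = N_1(X) \oplus \Z\sigma$ after a choice of splitting. The $T$-ample cone $C_T(X) \subset N^1_T(X)_\R$ is one-dimensional modulo $N^1(X)$, and the chamber $C_Y$ is the half-line on which $Y$ is the stable locus; its dual cone $C_Y^\vee \cap N_1^T(X)$ is spanned by $\NEN(X)$ together with the class $\sigma$ (the curve class contracted by $\kappa_Y$, i.e.\ the fibre direction of $\cN_{Y/X}$ viewed inside the Seidel space), and possibly some combinations. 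The content of the theorem is then: (i) the sum \eqref{eq:I-function} is supported on this cone, and (ii) the resulting $\hS$-series lies on the Givental cone of $Y$.

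For the support statement (i), I would analyze the shift operator $\hcS^{-\sigma}$ acting on the equivariant $J$-function $J_X(\tau)$. The key is the localization formula at the unique relevant fixed locus $Y$ (together with its "antipodal" fixed locus, if the action has another component): since $\cN_{Y/X}$ has weight $1$, the first-order pole structure in the equivariant parameter $\lambda$ is as simple as possible, and the Seidel element for $\sigma$ is, up to Novikov twist, essentially the class of $Y$ pushed into $H^*_T(X)$. Thus $\hcS^{-\beta} J_X(\tau)$, after applying the Kirwan map $\kappa_Y$ (which restricts to $Y$ and kills $\lambda$ appropriately), produces terms whose Novikov exponents lie in $\NEN(Y)$ shifted by a non-negative multiple of $\sigma$ — exactly the positivity needed for support in $C_Y^\vee$. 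Concretely I would use the string/divisor equations and the explicit form of shift operators from Definition \ref{def:shift}–\ref{def:shift_Givental} to track, for each $\beta = d + k\sigma$ with $k < 0$, that $\kappa_Y(\hcS^{-\beta}J_X)$ vanishes, and for $k \ge 0$ that the degree bookkeeping places it in the cone.

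For part (ii), that the Fourier-transformed series $I$ is a point on the Givental cone of $Y$, I would invoke the structure already set up in the body of the paper: this is exactly the statement of Corollary \ref{cor:Fourier_transform_JW_GIT} applied to $W$ — or rather, the abstract mechanism behind it — combined with the quantum Riemann–Roch / quantum Lefschetz comparison for the divisor $Y \subset X$. The divisor case is precisely where the Seidel representation degenerates to a mirror-type hypergeometric modification, so that $I$ is obtained from $J_X$ by an explicit Birkhoff-factorization-type transformation; matching this against the $J$-function of $Y$ uses that the normal bundle has weight one (making the relevant "twisting" a single line bundle) and the comparison of Euler vector fields and grading operators between $X$, $Y$. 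The details are deferred to Appendix \ref{append:divisor_reduction}, where the localization computation on the space of stable maps to $X$ with the $\C^\times$-action is carried out using virtual localization \cite{Graber-Pandharipande} and \cite{Coates-Givental}.

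The main obstacle I anticipate is the support statement (i): showing that the potentially negative powers of the Novikov variable introduced by $\hcS^{-\beta}$ for $\beta$ with a large $N_1(X)$-component are genuinely cancelled after applying $\kappa_Y$, so that $I$ lies in the completed ring $\C[\![C_Y^\vee \cap N_1^T(X)]\!]$ rather than merely in its localization. This requires a careful estimate relating the valuation of $\hcS^{-\beta}J_X(\tau)$ in $Q$ to the pairing of $\beta$ with the wall-defining class, i.e.\ effectively an effectivity statement for the curve classes that contribute to the Seidel-shifted $J$-function, which in turn rests on the fact that $Y$ is a \emph{divisor} (codimension one) with weight-one normal bundle, so that the only "new" curve class is the single fibre class $\sigma$ and its non-negative multiples.
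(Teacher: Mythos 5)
Your high-level strategy matches the paper's: reduce to weight $+1$, split $N_1^T(X)\cong N_1(X)\oplus\Z$ by the section class $\sigma_Y(1)$, establish (i) support of the $I$-function in $C_Y^\vee$ and (ii) membership in the Givental cone of $Y$, and defer (ii) to the machinery already set up in \S\ref{subsec:cont_vs_discrete}. This is indeed what Appendix \ref{append:divisor_reduction} does.

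However, there is a genuine gap in your treatment of the support statement (i), which you yourself flag as the main obstacle. Your picture of $C_Y^\vee \cap N_1^T(X)$ is wrong: you describe it as generated by $\NEN(X)$ and a single class $\sigma$ ``and possibly some combinations,'' and you summarize the mechanism at the end as ``the only `new' curve class is the single fibre class $\sigma$ and its non-negative multiples.'' In fact the crucial extra generators are the classes $(\delta_i,-1)$, where $\delta_1,\dots,\delta_l\in N_1(X)$ are the (finitely many) classes of $T$-invariant irreducible curves meeting $Y$ but \emph{not contained in} $Y$; the paper shows $C^\vee_{Y,\N}=\NEN(X)+\langle(0,1),(\delta_i,-1):i=1,\dots,l\rangle_\N$, and these $(\delta_i,-1)$ do not lie in the cone generated by $\NEN(X)$ and $(0,1)$. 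Correspondingly, your claim that $\kappa_Y(\hcS^{-\beta}J_X)$ \emph{vanishes} for $\beta = d + k\sigma$ with $k<0$ is false: for $k<0$ one has $\cS^{-k}$ with $-k>0$, which introduces a pole at $\lambda = -[Y]$; the correct statement (obtained by virtual localization as in Proposition \ref{prop:support_conjecture}) is that $\kappa_Y(\cS^{-k}J_X)$ is non-vanishing only when divisible by $y_i^{-k}=Q^{-k\delta_i}$ for some $i$, so that $S^k\kappa_Y(\cS^{-k}J_X)$ lands in $\NEN(X)+(-k)(\delta_i,-1)$. Without identifying the $\delta_i$ and proving this divisibility via the localization analysis, the support estimate you need — ``an effectivity statement for the curve classes that contribute to the Seidel-shifted $J$-function'' — cannot be completed, because it is precisely the effectivity of the $\delta_i$ (together with the Hilbert–Mumford inequality $\omega\cdot\delta_i > a$ at a point of $C_Y$) that makes $(\delta_i,-1)$ lie in $C_Y^\vee$.

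A secondary imprecision: your $\sigma$ is described both as the generator of $H_2^T(\pt,\Z)$ and as ``the fibre direction of $\cN_{Y/X}$ viewed inside the Seidel space''; what is actually used is the section class $\sigma_Y(1)\in N_1^{\rm sec}(E_1)\subset N_1^T(X)$ through a fixed point of $Y$, which is the linear splitting of Remark \ref{rem:sigma_F}, not a fibre class.
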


\subsection{Global K\"ahler moduli space for blowups}

\begin{figure}[t]
  \centering    
  \begin{tikzpicture} 
\shadedraw [opacity =0.5, left color =green, right color =white] (2.5,-1.3) -- (-0.5,0.5) -- (3,0.5); 
\shadedraw [opacity=0.5, left color =yellow, right color =white]  (3,0.5) -- (-0.5,0.5) -- (2.5,2.3); 
\draw (3,0.5) -- (-0.5,0.5) -- (2.5,-1.3); 
\draw (-0.5,0.5) -- (2.5,2.3); 
\draw (2.3,-0.3) node {\small $C_\tX$}; 
\draw (2.3,1.3) node {\small $C_X$}; 
\draw (1.5,-1.7) node {\small GIT fan};

\draw (6,2) ..controls (6.8,1.65) and (7.5,1.4) .. (8,1.3) .. controls (7.9,0.7) and (7.9,0.3) .. (8,-0.3) .. controls (7.5,-0.4) and (6.7,-0.5) .. (6,-1);  
\draw [very thick] (8,1.3) .. controls (7.9,0.7) and (7.9,0.3) .. (8,-0.3); 
\shadedraw [opacity=0.2, right color=black, left color=white] (6,2) ..controls (6.8,1.65) and (7.5,1.4) .. (8,1.3) .. controls (7.9,0.7) and (7.9,0.3) .. (8,-0.3)  .. controls (7.5,-0.4) and (6.7,-0.5) .. (6,-1);  

\draw (7,0.5) node {$\frM$}; 
\draw [very thick, ->] (7.925,0.5)--(7.925,0.5); 
\draw (8.2,0.4) node {$\frq$}; 
  
  \filldraw (8,1.3) circle [radius=0.05]; 
  \filldraw (8,-0.3) circle [radius =0.05]; 
  
\draw (8.7,1.4) node {\footnotesize $X$-cusp}; 
\draw (8.7,-0.4) node {\footnotesize $\tX$-cusp}; 

\draw (7.7,-1.76) node {\small toric variety (global K\"ahler moduli)}; 
\end{tikzpicture} 
\caption{The GIT fan on the $T$-ample cone $C_T(W)$ and the associated ``toric variety'' $\frM = \frM_X \cup \frM_{\tX}$. See also Figure \ref{fig:Mori_cones}. The variable $\frq$, which is written also as $y S^{-1}$ in the main body of the text, comes from a shift operator.} 
\label{fig:global} 
\end{figure}  

\if0
\begin{figure}[t] 
\centering 
\begin{tikzpicture}[x=0.7cm, y=0.7cm, >=stealth]
\fill[brown, rotate=4,opacity=0.1] (1,0) ellipse [x radius =4, y radius =2];
\fill[blue, rotate =2, opacity=0.15] (7,0) ellipse [x radius=4, y radius = 2]; 

\draw[very thick] (-1,0).. controls (3,-0.2) and (6,-0.2) .. (10,0); 
\draw[very thick, ->] (0,-1) .. controls (0,0.5) .. (-0.2,3); 
\draw[very thick, ->] (1,1) .. controls (-0.5, -0.6) .. (-2,-2); 
\draw[very thick,->] (3,-0.15)--(3.1,-0.15); 

\draw[very thick,->] (9,-1) .. controls (9,0.5) .. (9.2,3); 
\draw[very thick,->] (8.7,1) .. controls (9.1,-0.5) .. (9.8,-2);
\draw[very thick,->] (6,-0.15) -- (5.9,-0.15); 

\draw (3,-0.7) node {$\frq$}; 
\draw (-2,-2.5) node {$xy^{-1}$}; 
\draw (0,3.5) node {$y \frq^{-1}$}; 

\draw (6,-0.7) node {$\frq^{-1}$}; 
\draw (9.8,-2.5) node {$xy^{-1} \frq$}; 
\draw (9.2,3.3) node {$y$}; 

\draw (-1.2,0.7) node {$\frM_\tX$}; 
\draw (8,1.1) node {$\frM_X$}; 
\end{tikzpicture} 
\caption{Global K\"ahler moduli space associated with $\QDM_T(W)$. See also Figure \ref{fig:Mori_cones}. We also write $\frq = y S^{-1}$ in the main body of the text. }
\label{fig:global}  
\end{figure} 
\fi

As the above examples suggest, \emph{the equivariant quantum $D$-modules play the role of mirrors for the GIT quotients}. Applying Theorem \ref{thm:divisor_reduction} to the master space $W = \Bl_{Z\times \{0\}} (X\times \PP^1)$, we find that Conjecture \ref{conj:reduction} holds for the two GIT quotients $X, \tX$ of $W$. Let $T=\C^\times$. The $T$-ample cone $C_T(W)$ contains two open chambers $C_X$, $C_\tX$ and we obtain a global ``K\"ahler moduli space'' by gluing $\frM_X = \Spec \C[C_{X,\N}^\vee]$ and $\frM_\tX = \Spec \C[C_{\tX,\N}^\vee]$ (see Figure \ref{fig:global}; see also Remark \ref{rem:global}). The quantum $D$-modules of $X$ and $\tX$ live on the charts $\frM_X$ and $\frM_\tX$ respectively and arise from $\QDM_T(W)$ by Fourier transformation. More precisely, we have the following description for $\QDM(\tX)$: 

\begin{theorem}[see Theorem \ref{thm:Fourier_isom} for precise statements] 
\label{thm:Fourier_duality_introd}
Let $\QDM_T(W)_\tX\sphat$ be a completion of the restriction of $\QDM_T(W)$ to the affine chart $\frM_\tX$. There exist a ``mirror map'' $\ttau \colon H^*_T(W) \to H^*(\tX)$ and an isomorphism  $\QDM_T(W)_\tX\sphat \cong \ttau^*\QDM(\tX)^{\rm ext}$ induced by Fourier transformation. 
\end{theorem}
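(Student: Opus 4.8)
The plan is to deduce Theorem~\ref{thm:Fourier_duality_introd} from Conjecture~\ref{conj:reduction} applied to the master space $W=\Bl_{Z\times\{0\}}(X\times\PP^1)$ and its GIT quotient $\tX$. First I would verify the hypotheses of Theorem~\ref{thm:divisor_reduction}: the proper transform of $X\times\{0\}$ inside $W$ is a copy of $\tX=\Bl_Z X$, it is fixed by the $T=\C^\times$-action on the $\PP^1$-factor, and its normal bundle has constant $T$-weight $\pm 1$ (inherited from the normal direction of $X\times\{0\}$ in $X\times\PP^1$). Hence $W/\!/T=\tX$ in the chamber $C_\tX$, so Theorem~\ref{thm:divisor_reduction} shows that Conjecture~\ref{conj:reduction} holds for this quotient. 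Concretely, the discrete Fourier transform $I=z\sum_{[\beta]}\kappa_\tX(\hcS^{-\beta}J_W(\tau))\,\hS^\beta$ of the $T$-equivariant $J$-function of $W$ (sum over $[\beta]\in N_1^T(W)/N_1(W)$) is supported on $C_\tX^\vee\cap N_1^T(W)$ and defines a family of points on the Givental cone of $\tX$ over the extension $\C[\![C_\tX^\vee\cap N_1^T(W)]\!]$ of the Novikov ring of $\tX$; this is essentially Corollary~\ref{cor:Fourier_transform_JW_GIT}.

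Next I would construct the mirror map and upgrade from the cone statement to a $D$-module isomorphism. The support property of $I$ makes the Fourier transform operation --- which replaces the shift operators $\hbS^\beta$ by monomials $\hS^\beta$ and the equivariant parameter $\lambda$ by a differential operator in the $\hS$-variables --- well-defined on the completion $\QDM_T(W)_\tX\sphat$, and identifies its base ring with the Novikov ring of $\tX$ enlarged by the extra variable $\frq$ (the $\frq=yS^{-1}$ of Figure~\ref{fig:global}) spanning the remaining ray of $C_\tX^\vee$; this recovers $\QDM(\tX)^{\rm ext}$. Since $I$ is a family of points on the Givental cone parametrized by $\tau\in H^*_T(W)$, Givental's characterization of the cone produces an invertible mirror map $\ttau\colon H^*_T(W)\to H^*(\tX)$ and, after a Birkhoff factorization absorbing the $z$-dependence, identifies the transform with $\ttau^*J_\tX$. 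Because $\QDM(\tX)$ is generated by the fundamental solution applied to its $J$-function and the shift-operator module structure on $\QDM_T(W)$ is compatible with the equivariant quantum connection via the commutation relation~\eqref{eq:commutation_relation}, the equality of $J$-functions bootstraps to an isomorphism $\Psi\colon\QDM_T(W)_\tX\sphat\to\ttau^*\QDM(\tX)^{\rm ext}$: one matches cyclic generators and checks that $\Psi$ intertwines the connection operators, the $H^*_T(W)$-directions going to the $\nabla_{\tau^i}$-directions through the mirror map and the $\lambda$-direction going to the divisor/$\frq$-logarithmic direction through~\eqref{eq:commutation_relation}. Finally I would check that $\Psi$ intertwines the equivariant Poincar\'e pairing on $\QDM_T(W)$ with $P_\tX$; this reduces to a residue computation relating the Fourier kernel to the Kirwan pushforward, in which the usual Euler-class correction is trivial because the normal weight is $\pm1$.

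I expect the main obstacle to be the passage from the cone-theoretic statement to the honest $D$-module isomorphism over the correct completed base ring. One must show that the Fourier-transformed module is \emph{free} of the expected rank over the extended base ring $\C[z][\![\tQ,\ttau]\!][\frq^{\pm 1}]$ (up to completion conventions) --- equivalently, that the restriction of $\QDM_T(W)$ to the chamber $\frM_\tX$, once completed, has exactly the expected size with no extra sections --- and that the differential operator attached to $\lambda$ under Fourier transform is precisely the correct combination of the $\nabla_{z\partial_z}$-, $\nabla_{\xi Q\partial_Q}$- and grading terms for $\tX$. Keeping track of these completions and of the free-module structure, rather than the formal manipulation of $J$-functions, is where the real work lies; the remainder is bookkeeping with the shift operators, the Kirwan map, and the mirror map.
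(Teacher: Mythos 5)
Your high-level plan matches the paper's: start from Corollary \ref{cor:Fourier_transform_JW_GIT} (the special case of Conjecture \ref{conj:reduction} for $W/\!/T=\tX$), extract a mirror map and fundamental-solution factorization from it, promote the discrete Fourier transformation to a map of quantum $D$-modules via Proposition \ref{prop:FT_QDM}, and then argue it is an isomorphism after completion. You also correctly flag the crux: proving that $\QDM_T(W)_\tX\sphat$ is a \emph{finite free} module over $\C[z][\![C_{\tX,\N}^\vee,\btheta]\!]$ of rank $\dim H^*(\tX)$.

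However, having named the crux you then wave past it as ``bookkeeping with the shift operators, the Kirwan map, and the mirror map,'' and this is where your proposal has a genuine gap. In the paper's proof of Theorem \ref{thm:Fourier_isom}, the freeness is established by an explicit computation of the fiber $\cM/\frakm\cM$ of the completion, which requires Lemma \ref{lem:Kirwan_kernel_shift}. That lemma is not bookkeeping: it describes precisely how every element of $\Ker(\kappa_\tX)\subset H^*_T(W)[z]$ arises as the leading term of $\bS(\theta)c_1 + x\bS(\theta)^{-1}c_2 + y\bS(\theta)^{-1}c_3$, and part (3) of the lemma --- the claim that $y\bS(\theta)^{-1}c$ actually lies in $\QDM_T(W)$ when $c=\hjmath_*\hpi^*\gamma$ --- relies on a nontrivial virtual-localization argument showing $\bbf_X = (M_W(\theta)c)|_X$ is divisible by $xy^{-1}$. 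Without this input one cannot show that the fiber is spanned by elements mapping to a basis of $H^*(\tX)$ under $\kappa_\tX$, hence one cannot control the rank of the completed module. Your proposal also invokes a Birkhoff factorization at this stage, which is not used in the proof of Theorem \ref{thm:Fourier_isom} (Birkhoff factorization enters only in the separate reconstruction discussion of \S\ref{subsec:initial_conditions}); and you close by verifying compatibility with the pairing, which is not part of the statement of Theorem \ref{thm:Fourier_duality_introd} or Theorem \ref{thm:Fourier_isom} --- the pairing comparison belongs to Theorem \ref{thm:decomposition}, where it is proved by a quite different argument (uniqueness of flat endomorphisms of the decomposed connection, not a residue computation for the Fourier kernel).
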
 

The validity of Conjecture \ref{conj:reduction} for $X = W/\!/T$ shows that we have a projection   $\QDM_T(W)_\tX\sphat \to \tau^* \QDM(X)^{\rm La}$ induced by Fourier transformation (Proposition \ref{prop:FTX_completion}). 
On the other hand, the $T$-fixed component $Z$ of $W$ produces the Mellin-Barnes-type solutions for the Fourier transform of $\QDM_T(W)$:  
\[
\int e^{-\lambda \log \frq/z} G_Z(\lambda) J_W(\theta)|_Z d\lambda \qquad \text{taking values in $H^*(Z)$}
\]
where $G_Z(\lambda)$ is the product \eqref{eq:G_F} of $\Gamma$-functions arising from the inverse $\hGamma$-class of the normal bundle $\cN_{Z/W}$; $G_Z(\lambda)$ gives the quantum Riemann-Roch operator of Coates-Givental \cite{Coates-Givental} via asymptotic expansion. 
By the stationary phase approximation of this Fourier integral, we obtain $(r-1)$ many projections  $\QDM_T(W)_\tX\sphat \to \varsigma_j^*\QDM(Z)^{\rm La}$ (Corollary \ref{cor:FT_Zj}). Similar techniques were used in \cite{Iritani-Koto:projective_bundle}. The decomposition theorem follows by combining these projections. 

\begin{remark} We can view the $(r-1)$ copies of $\QDM(Z)$ as wall-crossing terms for quantum $D$-modules. The method in this paper could be applied to more general GIT variations or symplectic birational cobordisms \cite{Guillemin-Sternberg:birational, Hu-Li-Ruan:birational_cobordism}. 
It would be also interesting to understand the relationship between our results/conjectures and the works  \cite{Woodward:qKirwan, Gonzalez-Woodward:wall-crossing,GSW:gauged_maps} on gauged Gromov-Witten theory and quantum Kirwan maps. 
\end{remark} 

\subsection{Plan of the paper} 
In \S\ref{sec:preliminaries}, we recall the definition of quantum $D$-modules, Givental cone, twisted Gromov-Witten invariants and shift operators. In particular, we introduce the \emph{extended} shift action of $N_1^T(X)$ that combines the shift operators and the Novikov variables. 
In \S\ref{sec:geom_blowup}, we collect basic geometric facts about the master space $W$ of blowups. 
The content of \S\ref{sec:Fourier} is the technical core of the paper: we show that discrete/continuous Fourier  transforms of the equivariant $J$-function lie in the Givental cone of the GIT quotients or $T$-fixed loci. In \S\ref{sec:decomposition}, we introduce a completion of the equivariant quantum $D$-module of $W$ and prove the decomposition theorem.

\subsection*{Acknowledgements} 
I thank Sergey Galkin, Yuki Koto, Fumihiko Sanda, Yuuji Tanaka and Constantin Teleman for helpful discussions at various stages of this work. 
I thank Ludmil Katzarkov, Maxim Kontsevich, Tony Pantev and Tony Yue Yu for their interest in this work and for helpful discussions on the unique reconstructibility of the decomposition. 
I thank Weiqiang He, Xiaowen Hu, Hua-Zhong Ke, Changzheng Li, Chris Woodward and Longting Wu for valuable comments on the paper. 
I warmly thank anonymous referees for their many helpful comments, which significantly improves the presentation of the paper. 
This research is supported by JSPS grant 16H06335, 20K03582, 21H04994 and 23H01073. 



\begin{longtable}{ll} 
\caption{List of Notation} \\
\toprule 
$\iu$ & imaginary unit. \\ 
$\N$ & the set $\Z_{\ge 0}$ of non-negative integers. \\ 
$T$ & algebraic torus $(\C^\times)^{\rank T}$; $\rank T =1$ from \S\ref{sec:geom_blowup} onwards. \\ 
\midrule 
$\NEN(X)$ & the monoid of effective curves, see \S\ref{subsec:formal_power_series}. \\ 
$\C[\![Q]\!]$ & the Novikov ring $\C[\![\NEN(X)]\!]$, see \S\ref{subsec:formal_power_series}. \\ 
$M_X(\tau)$ & fundamental solution of the quantum connection \eqref{eq:fundsol}. \\ 
$J_X(\tau)$ & the $J$-function $M_X(\tau) 1$ \eqref{eq:J-function}. \\
$\cJ_X(\bt)$ & the big $J$-function \eqref{eq:point_on_the_cone}. \\ 
$N_1^T(X)$ & the group of equivariant curve classes in $H_2^T(X,\Z)$, see \S\ref{subsec:equiv_curve_classes}. \\ 
$\NEN^T(X)$ & the monoid of equivariant curve classes in $N_1^T(X)$, see Proposition \ref{prop:module_over_extended_shift}. \\
$\hbS^\beta$ & the shift operator for $\beta\in N_1^T(X)$, see \S \ref{subsec:shift}. \\
$\hcS^\beta$ & the shift operator on the Givental space for $\beta \in N_1^T(X)$, see \S \ref{subsec:shift}. \\ 
$\sigma_F(k)$ & class in $N_1^T(X)$ associated with a fixed component $F\subset X$ and \\ 
& $k\in \Hom(\C^\times,T)$, see \S \ref{subsec:shift}. \\ 
$\sigma_{\rm max}(k)$ & the maximal section class in $N_1^T(X)$ for $k\in \Hom(\C^\times,T)$, see \S \ref{subsec:shift}. \\ 
$\cH_X$ & the Givental space \eqref{eq:Givental_space} of $X$. \\
$\cH_X^{\rm rat}$ & the rational Givental space \eqref{eq:rational_Givental_space} of $X$. \\ 
$\cL_X$ & the Givental cone of $X$, see \S\ref{subsec:Givental_cone}. \\ 
$\Delta_{(V,\bc)}$ & the quantum Riemann-Roch operator \eqref{eq:QRR_operator}. \\
\midrule 
$\tX$ & blowup of $X$ along $Z$. \\ 
$W$ & the master space $\Bl_{Z\times\{0\}}(X\times \PP^1)$. \\
$Q,x,y,S$ & variables for $\C[N_1^T(W)] \cong \C[N_1(X)\oplus \Z^3]$ with $\deg x = 4$, $\deg y =2r$, \\ 
& $\deg S = 2$, see \S \ref{subsec:N1_W}.  \\ 
$\QW$ & the Novikov variable of $W$, denoting $(Q,x,y)$ collectively;  \\ 
& for $\delta \in N_1(W)$, $\QW^{\delta} = Q^{\pr_{1*}\hvarphi_*\delta} x^{[X]\cdot \delta} y^{-[\hD]\cdot \delta}$, see \S \ref{subsec:N1_W}. \\ 
$\hS$ & the variable for $\C[N_1^T(W)]$, denoting $(\QW,S)$ collectively, see \S\ref{subsec:QDM(W)}. \\ 
$S$ & the Seidel variable $\hS^{(0,0,0,1)}$ of $W$, see \S\ref{subsec:N1_W}. \\ 
$\cS$ & the shift operator $\hcS^{(0,0,0,1)}$ on $\cH^W_{\rm rat}$, see \S \ref{subsec:QDM(W)}. \\ 
$\tQ$ & the Novikov variable of $\tX$; $\tQ^\td$ is identified with $Q^{\varphi_*\td} (y^{-1} S)^{[D]\cdot \td}$ in \\  & $\C[C_{\tX,\N}^\vee]$ via the dual Kirwan map $\kappa_\tX^*$, see \S\ref{subsec:Kirwan}.\\ 
$S_F$ & the variable $\hS^{\sigma_F(1)}$ associated with a fixed component $F\subset W$;  \\ 
& $S_X = S x^{-1}$, $S_Z = S y^{-1}$, $S_\tX = S$, see \S \ref{subsubsec:MB_integrals}. \\ 
$\frq$ & the variable $yS^{-1}=S_Z^{-1}$ of degree $2(r-1)$, see \S\ref{subsec:Fourier_projections}. \\ 
$C_T(W)$ & the $T$-ample cone in $N^1_T(W)_\R$, see \S\ref{subsec:T-ample_cone}. \\ 
$C_Y$ & GIT chamber in $C_T(W)$ corresponding to $Y=X$ or $\tX$, see \S\ref{subsec:T-ample_cone}. \\   
$C_{X,\N}^\vee$ & the monoid \eqref{eq:dualmonoids} dual to $C_X$ such that $\C[C_{X,\N}^\vee] = \C[\QW, xS^{-1}, y^{-1}S]$.  \\  
$C_{\tX,\N}^\vee$ & the monoid \eqref{eq:dualmonoids} dual to $C_\tX$ such that $\C[C_{\tX,\N}^\vee] = \C[\QW, yS^{-1}, S]$. \\ 
$\NEN^T(W)$ & the monoid $C_{\tX,\N}^\vee \cap C_{X,\N}^\vee$ \eqref{eq:dualmonoids} such that $\C[\NEN^T(W)] = \C[\QW, S, x S^{-1}]$. \\ 
$\kappa_Y$ & the Kirwan map from $H^*_T(W)$ to $H^*(Y)$ for $Y=X$ or $\tX$, see \S\ref{subsec:Kirwan}. \\ 
$\sfF_Y$ & discrete Fourier transformation $\cH^{\rm rat}_W[\QW^{-1}] \dasharrow \cH^{\rm ext}_Y[\QW^{-1}]$ associated \\
& with a GIT quotient $Y = X$ or $\tX$, see \S\ref{subsec:discrete_Fourier}. \\ 
$\scrF_{F,j}$ & continuous Fourier transformation $\cH^{\rm rat}_W \to H^*(F)$ associated with a \\ 
& fixed component $F\subset W$, see \S\ref{subsubsec:formal_asymptotics}. \\ 
$\theta$ & parameter taking values in $H^*_T(W)$; we write $\btheta =\{\theta^{i,k}\}$ for the \\ 
& coefficients of the expansion $\theta = \sum_{i,k} \theta^{i,k} \phi_i \lambda^k$, see \S\ref{subsec:QDM(W)}.\\ 
$\tau$, $\ttau$, $\sigma$ & parameters taking values in $H^*(X)$, $H^*(\tX)$, $H^*(Z)$ respectively. \\ 
$\QDM_T(W)$ & the equivariant quantum $D$-module $H^*_T(W)[z][\![\QW,\btheta]\!]$ of $W$, see \S\ref{subsec:QDM(W)}. \\ 
$\QDM_T(W)_\tX\sphat$ & a completion of $\QDM_T(W)_\tX = \C[yS^{-1}] \cdot \QDM_T(W)$, see \S\ref{subsec:completion}. \\ 
$\QDM(X)$ & the quantum $D$-module $H^*(X)[z][\![Q,\tau]\!]$ of $X$, see \S\ref{subsec:extended_QDM}. \\
$\QDM(\tX)$ & the quantum $D$-module $H^*(\tX)[z][\![\tQ,\ttau]\!]$ of $\tX$, see \S\ref{subsec:extended_QDM}. \\ 
$\QDM(Z)$ & the quantum $D$-module $H^*(Z)[z][\![Q_Z,\sigma]\!]$ of $Z$, see \S\ref{subsubsec:projection_Z}. \\ 
$\QDM(Y)^{\rm ext}$ & the base change of $\QDM(Y)$ via $\C[z][\![Q]\!] \hookrightarrow \C[z][\![C^\vee_{Y,\N}]\!]$ \\ 
& for $Y=X$ or $\tX$, see \S\ref{subsec:extended_QDM}. \\ 
$\QDM(F)^{\rm La}$ & the base change of $\QDM(F)$ to $\C[z](\!(\frq^{-1/\frs})\!)[\![\QW]\!]$ for $F=X$, $\tX$, $Z$; \\ 
& see \S\ref{subsubsec:projection_X}, \S\ref{subsec:restriction_slice},  \S\ref{subsubsec:projection_Z} respectively. \\ 
$\QDM(F)^{\rm la}$ & the base change of $\QDM(F)$ to $\C[z](\!(\frq^{-1/\frs})\!)[\![Q]\!]$ for $F = X$, $\tX$, $Z$; \\ 
& see \S\ref{subsec:restriction_slice}. \\
$\FT_Y$ & Fourier transformation from $\QDM_T(W)$ to $\ttt^*\QDM(Y)^{\rm ext}$ for \\ 
& $Y=X$ or $\tX$, see \S\ref{subsec:FT_QDM}. \\ 
$\FT_Y\sphat$ & Fourier transformation extended to the completion $\QDM_T(W)_\tX\sphat$ \\
& for $Y=X$ or $\tX$, see \S\ref{subsubsec:projection_X}, \S\ref{subsec:Fourier_isom} respectively. \\
$\FT_{Z,j}\sphat$ & Fourier transformation from $\QDM_T(W)_\tX\sphat$ to $\sigma_j^*\QDM(Z)^{\rm La}$, \\ 
& $j=0,\dots,r-2$, see \S\ref{subsubsec:projection_Z}. \\
\bottomrule 
\end{longtable} 

\section{Preliminaries} 
\label{sec:preliminaries} 
In this section, we introduce notation on Gromov-Witten invariants, quantum cohomology and Givental cones. We also discuss equivariant shift operators and quantum Riemann-Roch theorem. Throughout this section, $X$ denotes a smooth projective variety over $\C$. We denote by $\N=\Z_{\ge 0}$ the set of non-negative integers. 

\subsection{Gromov-Witten invariants} We refer the reader to \cite{Cox-Katz,Manin:Frobenius_book} and references therein for background material on Gromov-Witten invariants. For a smooth projective variety $X$, let $X_{0,n,d}$ denote the moduli stack of genus-zero, $n$-pointed stable maps to $X$ of degree $d \in H_2(X,\Z)$. It is a proper Deligne-Mumford stack and carries the virtual fundamental class $[X_{0,n,d}]_{\rm vir} \in H_{2D}(X_{0,n,d},\Q)$ with $D= c_1(X)\cdot d+\dim_\C X +  n-3$. For $\alpha_1,\dots,\alpha_n \in H^*(X,\Q)$ and $k_1,\dots,k_n\in \N=\Z_{\ge 0}$, we have the descendant \emph{Gromov-Witten invariant}: 
\[
\corr{\alpha_1\psi^{k_1}, \dots, \alpha_n\psi^{k_n}}_{0,n,d}^X = \int_{[X_{0,n,d}]_{\rm vir}} 
\prod_{i=1}^n \ev_i^*(\alpha_i) \psi_i^{k_i} \in \Q 
\]
where $\ev_i\colon X_{0,n,d} \to X$ is the evaluation map at the $i$th marked point and $\psi_i$ is the first Chern class of the universal cotangent line bundle (over $X_{0,n,d}$) at the $i$th marked point. 

When an algebraic torus $T\cong (\C^\times)^{\rank T}$ acts on $X$, it also acts on the moduli space $X_{0,n,d}$. We then have the \emph{equivariant Gromov-Witten invariant} for $\alpha_1,\dots,\alpha_n \in H^*_T(X,\Q)$ and $k_1,\dots,k_n\in \N$:
\[
\corr{\alpha_1\psi^{k_1}, \dots, \alpha_n\psi^{k_n}}_{0,n,d}^{X,T} \in H^*_T(\pt,\Q) \cong \Q[\lambda] 
\]
by replacing the integral with the equivariant one, where $\lambda=(\lambda_1,\dots,\lambda_{\rank T})$ is a basis of $H^2_T(\pt,\Z)$ called the equivariant parameters. 

\begin{remark} Throughout the paper, we identify an equivariant parameter $\lambda_i \in H^2_T(\pt,\Z)$ with a character of $T$. The identification $\Hom(T,\C^\times) \cong H^2_T(\pt,\Z)$ is given as follows: a character $\chi\colon T \to \C^\times$ naturally defines a $T$-equivariant line bundle $L_\chi \to \pt$ over a point, and gives rise to the class $c_1^T(L_\chi) \in H^2_T(\pt,\Z)$. 
\end{remark} 

\subsection{Formal power series rings} 
\label{subsec:formal_power_series} 
We introduce our conventions on power series ring. In this paper, we mostly work with $\Z$-graded rings or modules and consider their completions in the category of graded rings or modules. If $M = \bigoplus_{n\in \Z} M_n$ is a graded module whose topology is given by a descending chain of graded submodules $N_k = \bigoplus_{n\in \Z} N_{k,n}\subset M$, then the \emph{graded completion} of $M$ is defined to be $\widehat{M} = \bigoplus_{n\in \Z} \widehat{M}_n$ with $\widehat{M}_n = \varprojlim_k M_n/N_{k,n}$. 

Let $\NEN(X)\subset H_2(X,\Z)$ denote the monoid generated by classes of effective curves and let $\NE(X)\subset H_2(X,\R)$ be the convex cone generated by $\NEN(X)$. We write $Q^d\in \C[\NEN(X)]$ for the element corresponding to $d\in \NEN(X)$ and define its degree by 
\[
\deg Q^d = 2 c_1(X) \cdot d.
\]
Let $\omega$ be an ample class. We write $\C[\![Q]\!]=\C[\![\NEN(X)]\!]$ for the graded completion of $\C[\NEN(X)]$ with respect to the descending chain of submodules $I_k=\langle Q^d : \omega \cdot d \ge k\rangle_\C$. The completion does not depend on the choice of $\omega$. 
We call $Q$ the \emph{Novikov variable} and $\C[\![Q]\!]$ the \emph{Novikov ring}. 

This construction generalizes to a submonoid $C_\N$ of a finitely generated abelian group $N$ such that the closure of the cone $C\subset N\otimes \R$ generated by $C_\N$ is strictly convex. 
Suppose that we have a linear function $\deg \colon N \to \Z$ that defines a grading on $\C[C_\N]$. Using an interior point $\omega$ of the dual cone $C^\vee$, we can similarly define the graded completion $\C[\![C_\N]\!]$ of $\C[C_\N]$, which is independent of the choice of $\omega$. More generally, we can define $K[\![C_\N]\!]$ for any $\Z$-graded module $K$. 

For a (possibly infinite) set of parameters $\bs=(s_1,s_2,\dots)$ and a module $K$, we write $K[\![\bs]\!]$ for the completion of the space $K[\bs]= K[s_1,s_2,\dots]$ of polynomials with coefficients in $K$ with respect to the descending chain of submodules $(s_1^n,\dots,s_n^n,s_{n+1}, s_{n+2},\dots) K[\bs]$. In the graded case where $s_i$ has a degree and $K$ is $\Z$-graded, the completion $K[\![\bs]\!]$ should be understood in the graded sense. 

Let $x$ be a variable with degree $\deg x \in \Z$. For a $\Z$-graded module $K$, we denote by $K(\!(x)\!)$ the graded completion of $K[x,x^{-1}]$ with respect to the descending chain of submodules $x^n K[x]$. A homogeneous element of $K(\!(x)\!)$ is of the form $\sum_{n=m}^\infty a_n x^n$ with $\deg a_n + n \deg x$ being independent of $n$, for some $m\in \Z$. 

Most of $\Z$-graded rings or modules in this paper are also equipped with $\Z/2\Z$-grading, called \emph{parity}, so that they are $\Z\times (\Z/2\Z)$-graded. The module or ring structures are assumed to be supercommutative with respect to the parity, i.e.~$a\cdot b = (-1)^{|a| |b|} b \cdot a$ where $|a|,|b|\in \Z/2\Z$ are the parities of $a,b$ respectively. In many cases, the parity $|a|$ is congruent mod $2$ to the degree $\deg a \in \Z$. However it is sometimes convenient to allow the parity to be independent of the degree. For example, the square root $\sqrt{x}$ of a variable $x$ with $\deg x=2$, $|x|=0$ has degree 1 and (still) even parity.


\subsection{Quantum cohomology and quantum connection} 
\label{subsec:qcoh_qconn}
Unless otherwise stated, $H^*(X)$ stands for the cohomology group of $X$ with complex coefficients. Let 
\[
(\alpha_1,\alpha_2)_X = \int_X \alpha_1 \cup \alpha_2 
\]
be the Poincar\'e pairing on $H^*(X)$. We choose a homogeneous basis $\{\phi_i\}_{i=0}^s$ of $H^*(X)$ with $\phi_0=1$ and denote by $\{\tau^i\}_{i=0}^s$ the dual coordinate system on $H^*(X)$. We write $\tau = \sum_i \tau^i \phi_i$ for a general point on $H^*(X)$. The degree of $\tau^i$ is set to be $\deg \tau^i := 2 - \deg \phi_i$. The quantum cohomology is defined over the ring $\C[\![Q,\tau]\!]=\C[\![Q]\!][\![\tau^0,\dots,\tau^s]\!]$. 
We note that odd variables anti-commute with each other: $\tau^i\tau^j = (-1)^{|i||j|} \tau^j \tau^i$, $\tau^i \phi_j = (-1)^{|i||j|} \phi_j \tau^i$ with $|i|:=\deg \phi_i \pmod 2$; the quantum cohomology is a family of ring structures over the cohomology group $H^*(X,\C[\![Q]\!])$ regarded as a (formal) supermanifold \cite{Manin:Frobenius_book}. 

The \emph{quantum product} $\star_\tau$ is a $\C[\![Q,\tau]\!]$-bilinear, associative and supercommutative product structure on $H^*(X)[\![Q,\tau]\!]$ defined by 
\begin{equation} 
\label{eq:qprod} 
(\phi_i\star_\tau \phi_j,\phi_k)_X = \sum_{d\in \NEN(X), n\ge 0} 
\corr{\phi_i,\phi_j,\phi_k,\tau,\dots,\tau}_{0,n+3,d}^X \frac{Q^d}{n!}. 
\end{equation} 
We follow the Koszul sign convention when expanding the correlators $\corr{\phi_i,\phi_j,\phi_k,\tau,\dots,\tau}_{0,n+3,d}^X$ in polynomials of $\{\tau^i\}$. The algebra $(H^*(X)[\![Q,\tau]\!],\star_\tau)$ is called \emph{quantum cohomology}. The quantum product is homogeneous and has $\phi_0=1$ as the identity element.  

Let $z$ be a variable of degree $2$. The \emph{quantum connection} is given by the following set of operators $\nabla_{\tau^i}, \nabla_{z\partial_z}, \nabla_{\xi Q \partial_Q}\colon H^*(X)[z][\![Q,\tau]\!] \to z^{-1} H^*(X)[z][\![Q,\tau]\!]$ (with $\xi \in H^2(X)$) 
\begin{align}
\label{eq:qconn}
\begin{split} 
\nabla_{\tau^i} & = \partial_{\tau^i} + z^{-1} (\phi_i\star_\tau) \\ 
\nabla_{z\partial_z} & = z \partial_z - z^{-1} (E_X\star_\tau) + \mu_X \\ 
\nabla_{\xi Q\partial_Q} & = \xi Q \partial_Q + z^{-1} (\xi \star_\tau)   
\end{split} 
\end{align} 
where the \emph{Euler vector field} $E_X\in H^*(X)[\![Q,\tau]\!]$ and the \emph{grading operator} $\mu_X\in \End(H^*(X))$ are given by 
\begin{equation} 
\label{eq:Euler_grading} 
E_X = c_1(X) + \sum_{i} \left(1-\frac{\deg \phi_i}{2}\right) \tau^i \phi_i, \quad 
\mu_X(\phi_i) = \left( \frac{\deg \phi_i}{2} - \frac{\dim_\C X}{2} \right) \phi_i  
\end{equation} 
and $\xi Q \partial_Q$ is the derivation of the Novikov ring $\C[\![Q]\!]$ given by $(\xi Q \partial_Q) Q^d = (\xi \cdot d) Q^d$. The operators $\nabla_{\tau^i}, \nabla_{z\partial_z}, \nabla_{\xi Q\partial_Q}$ are homogeneous of degree $\deg \phi_i-2$, $0$, $0$ respectively and are supercommutative, e.g.~$\nabla_{\tau^i} \nabla_{\tau^j}=(-1)^{|i||j|} \nabla_{\tau^j} \nabla_{\tau^i}$. The quantum connection can be viewed as a flat connection on the trivial $H^*(X)$-bundle over the $(\tau,z,Q)$-space. 
Let $P_X$ be the $z$-sesquilinear pairing on $H^*(X)[z][\![Q,\tau]\!]$ induced by the Poincar\'e pairing: 
\begin{equation} 
\label{eq:PX}
P_X(f,g) = (f(-z),g(z))_X. 
\end{equation} 
This is flat with respect to the quantum connection $dP_X(f,g) = P_X(\nabla f, g) + P_X(f,\nabla g)$. The module 
\begin{equation*} 
\QDM(X)=H^*(X)[z][\![Q,\tau]\!]
\end{equation*} 
equipped with the flat connection $\nabla$ and the pairing $P_X$ is called the \emph{quantum $D$-module} of $X$. 

\begin{remark} 
The variable $z$ of the quantum connection can be interpreted geometrically as the equivariant parameter associated with the $\C^\times$ action on the domain curve $\PP^1$, see \S \ref{subsec:shift}. 
\end{remark} 

The quantum connection admits a fundamental solution $M_X(\tau)\in \End(H^*(X))[z^{-1}][\![Q,\tau]\!]$ given by descendant Gromov-Witten invariants:  
\begin{equation} 
\label{eq:fundsol} 
(M_X(\tau) \phi_i,\phi_j)_X = (\phi_i,\phi_j)_X + \sum_{\substack{d\in \NEN(X),n\ge 0 \\ (n,d)\neq (0,0)}} 
\corr{\phi_i,\tau,\dots,\tau,\frac{\phi_j}{z-\psi}}_{0,n+2,d}^X \frac{Q^d}{n!}.  
\end{equation} 
where $\phi_j/(z-\psi)$ should be expanded in the power series $\sum_{b=0}^\infty \phi_j \psi^b z^{-b-1}$. 
It is homogeneous of degree zero and is a solution of the quantum connection in the following sense (see \cite[\S 1]{Givental:equivariant}, \cite[Proposition 2]{Pandharipande:afterGivental}, \cite[Proposition 3.1]{CCIT:MS}):    
\begin{align} 
\label{eq:fundsol_qconn}
\begin{split}
M_X(\tau) \circ \nabla_{\tau^i} & = \partial_{\tau^i} \circ M_X(\tau) \\ 
M_X(\tau) \circ \nabla_{z\partial_z} & = (z\partial_z - z^{-1} c_1(X) + \mu_X) \circ M_X(\tau) \\
M_X(\tau) \circ \nabla_{\xi Q\partial_Q} & = (\xi Q \partial_Q + z^{-1} \xi)\circ M_X(\tau) 
\end{split} 
\end{align} 
where $c_1(X)$ and $\xi$ on the right-hand side are regarded as endomorphisms of $H^*(X)$ by the cup product. The fundamental solution preserves the Poincar\'e pairing: 
\begin{equation} 
\label{eq:fundsol_pairing} 
P_X(M_X(\tau) f, M_X(\tau) g) = P_X(f,g)
\end{equation} 
for $f,g\in H^*(X)[z][\![Q,\tau]\!]$. The \emph{$J$-function} of $X$ is defined to be $M_X(\tau)1$; using the String Equation, we have 
\begin{align} 
\label{eq:J-function} 
J_X(\tau) = 1 + \frac{\tau}{z} + \sum_{j=0}^s 
\sum_{\substack{d\in \NEN(X),n\ge 0 \\ (n,d)\neq (0,0),(1,0)}} \phi^j 
\corr{\tau,\dots,\tau,\frac{\phi_j}{z(z-\psi)}}_{0,n+2,d}^X \frac{Q^d}{n!} 
\end{align} 
where $\{\phi^i\}\subset H^*(X)$ is a basis dual to $\{\phi_i\}$ such that $(\phi_i,\phi^j)_X=\delta_i^j$. 

\begin{remark} 
\label{rem:QDM} 
We defined the quantum $D$-module over the ring $\C[z][\![Q,\tau]\!]$. Using the Divisor and String Equations, we can reduce the structure of the quantum $D$-module to a smaller ring. We write $\tau^{(2)}=\sum_{\deg \phi_i=2} \tau^i \phi_i$ for the $H^2$-part of $\tau$ and decompose $\tau = \tau^0\phi_0 + \tau^{(2)} + \tau'$, where $\tau^0$ is the coordinate dual to $\phi_0 =1$. Then the quantum connection \eqref{eq:qconn} multiplied by $z$ preserves the submodule $H^*(X) [z][\![Qe^{\tau^{(2)}},\tau']\!][\tau^0] \subset \QDM(X)$, where we interpret $(Q e^{\tau^{(2)}})^d = Q^d e^{\tau^{(2)}\cdot d}$ for $d\in \NEN(X)$. Note that the quantum product $\star_\tau$ does not depend on $\tau^0$, but $\tau^0$ appears in the Euler vector field $E_X$ (while $\tau^{(2)}$ does not appear in $E_X$). This fact is relevant when considering pullbacks of quantum $D$-modules. 
\end{remark}

\subsection{Equivariant quantum cohomology and quantum connection} 
\label{subsec:equiv_qconn} 
Let $X$ be a smooth projective variety equipped with an algebraic $T$-action. By the equivariant formality \cite{GKM} of $X$, $H^*_T(X)$ is a free module of rank $\dim H^*(X)$ over $H^*_T(\pt)=\C[\lambda_1,\dots,\lambda_{\rank T}]=\C[\lambda]$. Let $\{\phi_i\}_{i=0}^s$ be a homogeneous basis of $H^*_T(X)$ over $\C[\lambda]$ with $\phi_0=1$. A general point $\tau \in H_T^*(X)$ is expanded as $\tau = \sum_{i=0}^s \tau^i \phi_i$ as before, where $\tau^i$'s are now $\C[\lambda]$-valued coordinates. 

We define the \emph{equivariant quantum product} $\star_\tau$ on $H^*_T(X)[\![Q,\tau]\!]$ at $\tau \in H^*_T(X)$ similarly to \eqref{eq:qprod} by replacing the Poincar\'e pairing and the Gromov-Witten invariants with their equivariant counterparts. Note that the equivariant Poincar\'e pairing is perfect over $\C[\lambda]$. 

The definition of the equivariant quantum connection is similar but requires care. We shall define it as a connection on the infinite dimensional space\footnote{We consider the infinite-dimensional base in order to define the connection $\nabla_{z\partial_z}$ in the $z$-direction (which is not $\C[\lambda]$-linear) and also to introduce shift operators (see \S\ref{subsec:shift}).} $H^*_T(X)$ over $\C$. 
We introduce $\C$-valued coordinates $\{\tau^{i,k}\}_{0\le i\le s, k}$ on $H^*_T(X)$ dual to the $\C$-basis $\{\phi_i \lambda^k\}_{0\le i \le s,k}$ so that $\tau^i= \sum_{k} \tau^{i,k} \lambda^k$. The index $k$ here ranges over  $\N^{\rank T}$.  We set $\deg \tau^{i,k} := 2-\deg \phi_i-2|k|$ with $|k| = \sum_{a=1}^{\rank T} k_a$. We use the boldface letter $\btau$ to denote the infinite set $\{\tau^{i,k}\}$ of variables. The \emph{equivariant quantum connection} $\nabla_{\tau^{i,k}}, \nabla_{z\partial_z}, \nabla_{\xi Q\partial_Q} \colon H^*_T(X)[z][\![\btau,Q]\!] \to z^{-1} H^*_T(X)[z][\![\btau,Q]\!]$ is defined by essentially the same formulae as \eqref{eq:qconn}: 
\begin{align*}
\begin{split} 
\nabla_{\tau^{i,k}} & = \partial_{\tau^{i,k}} + z^{-1} (\phi_i\lambda^k \star_\tau), \\ 
\nabla_{z\partial_z} & = z \partial_z - z^{-1} (E_X\star_\tau) + \mu_X, \\ 
\nabla_{\xi Q\partial_Q} & = \xi Q \partial_Q + z^{-1} (\xi \star_\tau).     
\end{split} 
\end{align*} 
Here $\xi \in H^2_T(X)$ is an \emph{equivariant} class; the \emph{Euler vector field} $E_X\in H^*_T(X)[\![Q,\btau]\!]$ and the \emph{grading operator} $\mu_X\in \End_\C(H^*_T(X))$ in the equivariant setting are given by 
\begin{align*} 
E_X & = c_1^T(X) + \sum_{i,k} \left(1-\frac{\deg \phi_i}{2}-|k|\right) \tau^{i,k}  \phi_i\lambda^k, \\ 
\mu_X(\phi_i\lambda^k) & = \left( \frac{\deg \phi_i}{2}+|k| - \frac{\dim_\C X}{2} \right) \phi_i  \lambda^k. 
\end{align*} 
Note that $\nabla_{\xi Q\partial_Q}$ equals the multiplication by $z^{-1}\xi$ if $\xi\in H^2_T(\pt)$. Note also that the grading operator $\mu_X$ is not $\C[\lambda]$-linear --- it contains a derivation in $\lambda$. 
The \emph{equivariant quantum $D$-module} $\QDM_T(X)$ is the module $H^*_T(X)[z][\![Q,\btau]\!]$ equipped with the quantum connection $\nabla$ and the pairing $P_X$ induced by the equivariant Poincar\'e pairing.

We define the fundamental solution $M_X(\tau)$ for the equivariant theory by the same formula as \eqref{eq:fundsol} by using the equivariant Poincar\'e pairing and the equivariant Gromov-Witten invariants in place of the non-equivariant ones. Then $M_X(\tau)$ with $\tau^i= \sum_{k} \tau^{i,k} \lambda^k$ is a solution of the equivariant quantum connection and satisfies \eqref{eq:fundsol_qconn}, \eqref{eq:fundsol_pairing} with $\tau^i$ replaced with $\tau^{i,k}$, $c_1(X)$ with $c_1^T(X)$, and $\mu_X$ and the Poincar\'e pairing with their equivariant counterparts. By using the expansion $1/(z-\psi)=\sum_{b\ge 0} z^{-b-1} \psi^b$, we find that the fundamental solution in the equivariant theory lies in the following ring: 
\begin{equation} 
\label{eq:fundsol_at_infinity}
M_X(\tau) \in \End_{\C[\lambda]}(H^*_T(X))[\![z^{-1}]\!][\![Q,\tau]\!]. 
\end{equation} 
On the other hand, using the virtual localization \cite{Kontsevich:enumeration, Graber-Pandharipande}, we find that $M_X(\tau)$ is a formal power series in $Q$ and $\tau$ with coefficients in \emph{rational functions} of $\lambda$ and $z$: 
\begin{equation} 
\label{eq:fundsol_rational} 
M_X(\tau) \in \End_{\C[\lambda]}(H^*_T(X))\otimes_{\C[\lambda]} \C(\lambda,z)_{\rm hom} [\![Q,\tau]\!]
\end{equation} 
where $\C(\lambda,z)_{\rm hom} := \C(\lambda_1/z,\dots,\lambda_{\rank T}/z)[z,z^{-1}]$ is the localization of $\C[\lambda,z]$ with respect to non-zero homogeneous elements. 

The \emph{equivariant $J$-function} is defined similarly to the non-equivariant case as $J_X(\tau) = M_X(\tau)1$, where $M_X(\tau)$ is the equivariant fundamental solution. It is given by the same formula as \eqref{eq:J-function}, with the correlators replaced by their equivariant counterparts and the bases $\{\phi_i\}$, $\{\phi^i\}$ understood as mutually dual bases of $H_T^*(X)$ over $\C[\lambda]$ with respect to the equivariant Poincar\'e pairing.

\subsection{Equivariant curve classes} 
\label{subsec:equiv_curve_classes} 
Let $N_1(X)\subset H_2(X,\Z)$ denote the group generated by homology classes of algebraic curves. For a $T$-variety $X$, we introduce the equivariant version $N_1^T(X) \subset H_2^T(X,\Z)$. Note that this is unrelated to the Edidin-Graham equivariant Chow group $A_1^T(X)$ \cite{Edidin-Graham:equiv_intersection}, which admits a cycle map $A_1^T(X) \to H^T_{{\rm BM}, 2}(X)$ to the Borel-Moore equivariant homology as opposed to the ordinary one. 

The equivariant homology group $H_2^T(X,\Z)$ is defined to be the second integral homology of the Borel construction $X_T = (X\times ET)/T$, where $ET\to BT$ is a universal $T$-bundle and $T$ acts on $X\times ET$ diagonally. We can take $ET = (\C^{\oplus \infty}\setminus \{0\})^{\rank T}$ and $BT=(\PP^\infty)^{\rank T}$. We define $N_1^T(X)$ to be the group generated by images in $H_2^T(X,\Z)$ of algebraic curves contained in a fibre $X$ of $X_T\to BT$ and classes $s_{x*}(\sigma)$, $\sigma \in H_2(BT,\Z)$ associated with sections $s_x \colon BT \cong \{x\} \times BT \subset X_T$ given by any $T$-fixed points $x\in X$. 
We have the following commutative diagram: 
\begin{equation} 
\label{eq:equiv_homology_seq} 
\begin{CD}
0 @>>> N_1(X) @>>> N_1^T(X) @>>> H_2^T(\pt,\Z) @>>>0 \\
@. @VVV @VVV  @| \\ 
0@>>> H_2(X,\Z) @>>> H_2^T(X,\Z) @>>> H_2^T(\pt,\Z) @>>> 0   
\end{CD}
\end{equation} 
where the vertical arrows are inclusions. 
\begin{lemma}
\label{lem:equiv_H2_split} 
The two rows in \eqref{eq:equiv_homology_seq} are exact sequences. 
\end{lemma}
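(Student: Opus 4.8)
The plan is to prove exactness of both rows of \eqref{eq:equiv_homology_seq} by a standard argument combining the Leray--Hirsch theorem (equivariant formality) with a direct analysis of the generators of $N_1^T(X)$.

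First I would treat the bottom row. Since $ET$ is contractible, the fibration $X \hookrightarrow X_T \to BT$ has a Serre spectral sequence, and the equivariant formality hypothesis (i.e.\ $H^*_T(X)$ is free over $H^*_T(\pt)$, as recalled from \cite{GKM}) forces this spectral sequence to degenerate. Dually, the homology Serre spectral sequence degenerates as well, so for each $n$ one gets a short exact sequence $0 \to H_n(X,\Z)' \to H_n^T(X,\Z) \to H_n(BT,\Z) \to 0$ where $H_n(X,\Z)'$ is the image of the fibre inclusion. In degree $2$, using $H_1(BT,\Z)=0$ and $H_2(BT,\Z)\cong H_2^T(\pt,\Z)$, this reads $0 \to H_2(X,\Z) \to H_2^T(X,\Z) \to H_2^T(\pt,\Z) \to 0$, which is the bottom row. (One should be a little careful about torsion and about working with the infinite-dimensional model $ET=(\C^\infty\setminus\{0\})^{\rank T}$, $BT=(\PP^\infty)^{\rank T}$; here the relevant homology groups stabilize, so the argument goes through.) Moreover a choice of $T$-fixed point $x\in X$ (which exists, $X$ being projective with a torus action) gives a section $s_x\colon BT\to X_T$, hence a splitting of the bottom row; I would record this splitting since it is used below.

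Next I would deduce the top row. The inclusion $N_1(X)\hookrightarrow H_2(X,\Z)$ and the commutativity of \eqref{eq:equiv_homology_seq} make the composite $N_1(X) \to N_1^T(X) \to H_2^T(\pt,\Z)$ zero and make $N_1(X)\to N_1^T(X)$ injective, so the top row is a complex and is left-exact. For exactness in the middle and on the right: by definition $N_1^T(X)$ is generated by (a) images of algebraic curves in a fibre $X$ — these lie in the image of $N_1(X)$ by definition of $N_1(X)$ — and (b) the section classes $s_{x*}(\sigma)$ for $T$-fixed $x$ and $\sigma\in H_2(BT,\Z)$. The map $N_1^T(X)\to H_2^T(\pt,\Z)$ sends $s_{x*}(\sigma)$ to $\sigma$ (using that $H_2(BT,\Z)\cong H_2^T(\pt,\Z)$ and that the composite $BT\xrightarrow{s_x} X_T\to BT$ is the identity), so surjectivity on the right is immediate. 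For exactness in the middle, take $c\in N_1^T(X)$ mapping to $0$ in $H_2^T(\pt,\Z)$; write $c$ as a $\Z$-combination of curve classes in the fibre and section classes $s_{x_i*}(\sigma_i)$. Fix one $T$-fixed point $x_0$ and use the splitting $s_{x_0*}$ of the bottom row to subtract off $s_{x_0*}$ of the total $H_2^T(\pt,\Z)$-component; since that component is $0$, what remains is $c$ itself rewritten as $\sum_i \bigl(s_{x_i*}(\sigma_i) - s_{x_0*}(\sigma_i)\bigr)$ plus fibre curve classes. Each difference $s_{x_i*}(\sigma_i)-s_{x_0*}(\sigma_i)$ maps to $0$ in the bottom row's $H_2^T(\pt,\Z)$, hence lies in $H_2(X,\Z)$; I then need to know it actually lies in the subgroup $N_1(X)$ of classes of algebraic curves. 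This follows because $x_i$ and $x_0$ can be joined by a chain of $T$-invariant curves in $X$ (again using projectivity and, if necessary, passing to a $1$-parameter subgroup so that one reduces to the case of a smooth projective variety with a $\C^\times$-action whose fixed components are connected by invariant curves — the Bia\l ynicki-Birula picture), and the class of such an invariant curve, when its section class over $BT$ is subtracted at the two fixed points, is precisely an honest algebraic curve class; more simply, $s_{x_i*}(\sigma) - s_{x_0*}(\sigma)$ equals the class of the image of a $T$-equivariant map from (a resolution of) the corresponding invariant $\PP^1$-chain, which is algebraic. Hence $c$ lies in the image of $N_1(X)$, proving exactness.

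The main obstacle I anticipate is the last point: showing that $s_{x_i*}(\sigma)-s_{x_0*}(\sigma)$ lies in the subgroup $N_1(X)$ of \emph{algebraic} curve classes (not merely in $H_2(X,\Z)$). This is where one genuinely uses that $X$ is projective with a torus action, via the existence of enough $T$-invariant curves connecting fixed points (equivalently, connectedness of $X$ together with the structure of the $1$-skeleton of the torus action). The degeneration-of-spectral-sequence part is routine given equivariant formality; the bookkeeping with the infinite-dimensional $ET$, $BT$ and with possible torsion is a minor technical nuisance rather than a real difficulty. I would present the argument so that the algebraicity claim is isolated as the one substantive geometric input, citing the torus-action structure theory as needed.
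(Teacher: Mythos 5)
Your treatment of the top row is essentially the paper's: surjectivity is immediate from the section classes, injectivity follows from the bottom row and the commutative square, and the kernel condition reduces to the statement that $s_{x*}(\sigma)-s_{x'*}(\sigma)$ lies in $N_1(X)$, which both you and the paper obtain by joining fixed points with a chain of $T$-invariant rational curves (the paper cites \cite[Lemma 2.2]{Gonzalez-Iritani:Selecta} for this).

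For the bottom row, however, your route is genuinely different and has a gap. You invoke equivariant formality from \cite{GKM} to assert degeneration of the Serre spectral sequence. Equivariant formality is a statement over $\Q$ (or $\C$): it says that $H^*_T(X;\Q)$ is free over $H^*_T(\pt;\Q)$, equivalently that the cohomology Serre spectral sequence with $\Q$-coefficients degenerates at $E_2$. But the bottom row of \eqref{eq:equiv_homology_seq} is a statement about \emph{integral} homology, and the differential whose vanishing is actually needed is
\[
d^2 \colon E^2_{2,1} = H_2(BT,\Z)\otimes H_1(X,\Z) \longrightarrow E^2_{0,2}=H_2(X,\Z).
\]
If $H_1(X,\Z)$ has torsion (which it can for a smooth projective variety), this differential could land entirely in torsion and so be nonzero integrally while vanishing after $\otimes\,\Q$. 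The rational degeneration therefore does not yield the integral exactness you need, and the remark that the torsion bookkeeping is ``a minor technical nuisance'' has it exactly backwards: this is the substantive point of the bottom-row argument.

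The paper avoids this by not appealing to degeneration at all. It observes that only the two differentials $d^2\colon E^2_{2,0}\to E^2_{0,1}$ and $d^2\colon E^2_{2,1}\to E^2_{0,2}$ matter (since $H_{2k+1}(BT,\Z)=0$), and shows directly that every class in $E^2_{2,q}$ for $q=0,1$ is a permanent cycle because it is represented by an actual cycle in $X_T$: one represents any class of $H_q(X,\Z)$, $q\le 1$, by a cycle in the fixed locus $X^T$ (for $q=0$ take a fixed point; for $q=1$ use Bott--Morse theory of the moment map of an $S^1\subset T$ with $X^{S^1}=X^T$, noting that the non-minimal critical components have unstable manifolds of even, hence $\ge 2$, codimension, so $H_1$-classes can be pushed down to the minimum), and then takes the product with a cycle in $BT$ inside $(X^T)_T = X^T\times BT\subset X_T$. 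This is an integral geometric argument and is what closes the gap your proposal leaves open. Your observation that the section $s_{x*}$ already forces $E^\infty_{2,0}=E^2_{2,0}$ (hence $d^2|_{E^2_{2,0}}=0$) is correct and slightly streamlines the $q=0$ case, but the $q=1$ case still requires the geometric input above, not formality.
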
 
\begin{proof} 
The exactness of the bottom row follows from the Serre spectral sequence for $X_T \to BT$. The $E^2$-page is given by $E^2_{p,q} = H_p(BT,H_q(X,\Z))$ and it suffices to show that the differentials $d^2 \colon E^2_{2,0}\to E^2_{0,1}$ and $d^2\colon E^2_{2,1} \to E^2_{0,2}$ are zero. For this, it is sufficient to show that every element in $E^2_{2,0}$ and $E^2_{2,1}$ is represented by a cycle in $X_T$.  This follows from the fact that a class in $H_i(X,\Z)$, $i=0,1$ can be represented by a cycle in the fixed locus $X^T$. For $i=0$, we can take a fixed point. For $i=1$, we see this by using the Bott-Morse theory for the moment map\footnote{The $S^1$-action associated with an algebraic $\C^\times$-action on a smooth projective variety $X$ admits a moment map for an $S^1$-invariant K\"ahler form. This follows from the fact that an algebraic $\C^\times$-action on $X$ has a fixed point, and that an $S^1$-action on a compact K\"ahler manifold preserving the K\"ahler form is Hamiltonian (i.e.~admits a moment map) if and only if it has a fixed point (Frankel's theorem \cite{Frankel:fixed_points}).} $\mu\colon X\to \R$ of the $S^1$-action associated with a generic subgroup $\C^\times \subset T$, where we choose $\C^\times \subset T$ so that $X^T = X^{\C^\times}$. Since every critical component of $\mu$ has an even index, any class in $H_1(X,\Z)$ can be represented by a cycle in the locus where $\mu$ attains the local minimum. 

Next we see the exactness of the top row. The surjectivity of $N_1^T(X) \to H_2^T(\pt,\Z)$ is obvious from the definition. The injectivity of $N_1(X) \to N_1^T(X)$ follows from the commutative diagram and the exactness of the bottom row. It suffices to show that the kernel of $N_1^T(X) \to H_2^T(\pt,\Z)$ is contained in $N_1(X)$. By a discussion similar to \cite[Lemma 2.2]{Gonzalez-Iritani:Selecta}, the classes $s_{x*}(\sigma)$, $s_{x'*}(\sigma)$ associated with different $T$-fixed points $x,x'$ differ by an element in $N_1(X)$: this follows by joining the $T$-fixed components of $x,x'$ by a chain of $T$-invariant rational curves. The claim follows. 
\end{proof} 

\subsection{Extended shift operators} 
\label{subsec:shift} 
For a smooth projective variety $X$ with a $T$-action, the cocharacter lattice $\Hom(\C^\times,T)$ acts on the equivariant quantum $D$-module as shift operators of equivariant parameters. This is a lift of the Seidel representation \cite{Seidel:pi1} on quantum cohomology. The shift operators were introduced by Okounkov-Pandharipande  \cite{Okounkov-Pandharipande:Hilbert} for the study of quantum cohomology of Hilbert schemes and have been applied and extended in many directions  \cite{Maulik-Okounkov:qcoh_qgroup, BMO:Springer, Iritani:shift,LJones:shift_symp, GMP:nil-Hecke}. In this section, we reformulate it as an action of $N_1^T(X)$ from the previous section \S\ref{subsec:equiv_curve_classes}; this action combines the shift operators with the Novikov variables.

The \emph{Seidel space}\footnote{The Seidel space $E_k$ in this paper is denoted by $E_{-k}$ in \cite{Iritani:shift}.} $E_k=E_k(X)$ associated with a cocharacter $k\in \Hom(\C^\times,T)$ is defined to be the quotient of $X\times (\C^2 \setminus \{0\})$ by the $\C^\times$-action $s \cdot (x,(v_1,v_2)) = (s^k x, (sv_1,s v_2))$. This is a fibre bundle over $\PP^1$ with fibre $X$ via the map $E_k \to \PP^1$, $[x,(v_1,v_2)] \mapsto [v_1, v_2]$. Let $f_k \colon \PP^1 \subset \PP^\infty = B\C^\times \to BT$ be the map induced by the cocharacter $k\colon \C^\times \to T$; it is a continuous map determined up to homotopy. The map $f_k$ gives rise to a  fiber square:
\[
\begin{CD} 
E_k @>{\tilde{f}_k}>> X_{T}\\
@VVV @VVV \\ 
\PP^1 @>_{f_k}>> BT 
\end{CD} 
\]
where $X_T\to BT$ is the Borel construction (see \S\ref{subsec:equiv_curve_classes}). By the map 
$\tilde{f}_k$, we identify section classes of $E_k$ with equivariant homology classes of $X$: 
\begin{equation*} 
\tilde{f}_{k*} \colon H_2^{\rm sec}(E_k,\Z) \hookrightarrow H_2^T(X,\Z).  
\end{equation*} 
where $H_2^{\rm sec}(E_k,\Z)$ denotes the set of classes in $H_2(E_k,\Z)$ mapping to the fundamental class $[\PP^1]$. 
Since $\tilde{f}_k$ can be chosen to be fibrewise algebraic and every class in $N_1^{\rm sec}(E_k):=N_1(E_k) \cap H_2^{\rm sec}(E_k,\Z)$ can be written as the sum of a fibre class in $N_1(X)$ and a section class associated with a $T$-fixed point in $X$, this embedding induces the map: 
\begin{equation} 
\label{eq:N1_Seidelspace} 
N_1^{\rm sec}(E_k) \hookrightarrow N_1^T(X).  
\end{equation} 
The image of this embedding consists precisely of classes in $N_1^T(X)$ mapping to $f_{k*}[\PP^1]=k \in H^T_2(\pt,\Z) \cong \Hom(\C^\times,T)$. We may therefore view $N_1^T(X)$ as the disjoint union of $N_1^{\rm sec}(E_k)$ for all $k\in \Hom(\C^\times,T)$. 

There is a unique fixed component $F_{\rm max}(k)\subset X^{k(\C^\times)}$, called the \emph{maximal component}, such that the normal bundle of $F_{\rm max}(k)$ has only negative $k(\C^\times)$-weights. Let $\sigma_{\rm max}(k) \in \NEN(E_k)$ be the section class associated to a fixed point in $F_{\rm max}(k)$. Then we have the following: 
\begin{lemma}[{\cite[Lemma  2.2]{Gonzalez-Iritani:Selecta}, \cite[Lemma 3.6]{Iritani:shift}}] 
\label{lem:effective_section_classes}
Every effective section class of $E_k\to \PP^1$ is of the form $\sigma_{\rm max}(k) + d$ with an effective class $d\in \NEN(X) \subset N_1(X)$.  
\end{lemma}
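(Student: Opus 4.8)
The plan is to reduce to the case of an honest section, degenerate it under a residual $\C^\times$-action to a torus-invariant section, and then slide between the fixed components of $X^{k(\C^\times)}$ using the one-parameter subgroup $k(\C^\times)$ itself.

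\emph{Reduction to honest sections.} If $\sigma$ is an effective section class, write $\sigma=\sum_i a_i[C_i]$ with $C_i$ irreducible and $a_i\in\N$. Pushing forward to $\PP^1$ and using that the image class is $[\PP^1]$, exactly one $C_i$ --- call it $C$ --- has $a_i=1$ and maps to $\PP^1$ with degree $1$, while the remaining $C_i$ are contracted to points, so their classes lie in $\NEN(X)$. Since $C\to\PP^1$ is birational, its normalization realizes $C$ as the image of a genuine algebraic section $\PP^1\to E_k$. Hence it suffices to prove $[C]-\sigma_{\rm max}(k)\in\NEN(X)$ when $C$ is an honest section.

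\emph{Degeneration to an invariant section.} Let $R$ denote the $\C^\times$-action on $E_k$ descending from $t\cdot(x,(v_1,v_2))=(x,(v_1,tv_2))$; it covers the standard rotation of $\PP^1$ fixing $0=[1{:}0]$ and $\infty=[0{:}1]$, with fixed locus $X_0\sqcup X^{k(\C^\times)}_\infty$, where $X_0,X_\infty$ are the fibres over $0,\infty$. First I would take the flat limit of $R(t)\cdot C$ as $t\to0$ inside the Chow variety of $E_k$: it is an $R$-invariant effective $1$-cycle of class $[C]$, whose unique component dominating $\PP^1$ occurs with multiplicity one and is an $R$-invariant section $\Sigma$, the other components being vertical over $0$ or $\infty$ and hence of class in $\NEN(X)$. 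An $R$-invariant section is forced to be a ``constant'' section $\Sigma_{x_0}$ attached to a point $x_0\in X$: it has constant value $x_0$ over the affine chart of $0$, and over the chart of $\infty$ (coordinate $w$) it reads $w\mapsto k(w)\cdot x_0$, which extends across $w=0$ since $X$ is projective; thus $\Sigma_{x_0}$ meets $X_0$ at $x_0$ and $X_\infty$ at $q:=\lim_{s\to0}k(s)\cdot x_0\in X^{k(\C^\times)}$. Letting $s\to0$ in the family of sections $\{\Sigma_{k(s)\cdot x_0}\}$ identifies $[\Sigma_{x_0}]$ with $[\hat q]$, where $\hat q\colon\PP^1\to E_k$, $[v_1{:}v_2]\mapsto[q,(v_1,v_2)]$, is the canonical section through the fixed point $q$; no bubbling occurs here because the value of $\Sigma_{k(s)\cdot x_0}$ over $0$ tends to $q$ while its value over $\infty$ is identically $q$. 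Therefore $[C]=[\hat q]+d'$ with $d'\in\NEN(X)$, and it remains to prove $[\hat q]-\sigma_{\rm max}(k)\in\NEN(X)$ for $q$ in an arbitrary component $F$ of $X^{k(\C^\times)}$.

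\emph{The sliding identity.} Using the action of $k(\C^\times)$ on $E_k$ induced from its action on the $X$-factor, take $x\in X\setminus X^{k(\C^\times)}$ and set $q_0=\lim_{s\to0}k(s)\cdot x$, $q_\infty=\lim_{s\to\infty}k(s)\cdot x$, so that the orbit closure $\gamma=\overline{k(\C^\times)\cdot x}$ is an irreducible rational curve joining $q_0$ to $q_\infty$, with $[\gamma]\in\NEN(X)$. Every section in the family $\{\Sigma_{k(s)\cdot x}\}$ takes the value $q_0$ over $\infty$ --- because $\lim_{w\to0}k(w)\cdot(k(s)\cdot x)=q_0$ independently of $s$ --- while its value over $0$ sweeps out $\gamma$. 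Its flat limit as $s\to0$ is $\hat{q}_0$ with no bubbling, whereas its flat limit as $s\to\infty$ is $\hat{q}_\infty$ together with a bubble, attached over $\infty$, equal to the orbit closure $\gamma$ sitting inside the fibre $X_\infty\cong X$. Comparing homology classes yields the key identity
\[
[\hat{q}_0]=[\hat{q}_\infty]+[\gamma].
\]
Thus sliding a fixed point along the flow towards $s=\infty$ strictly decreases $[\hat{\,\cdot\,}]$ by an effective curve class. Now $F_{\rm max}(k)$, being the unique fixed component with only negative normal $k(\C^\times)$-weights, is the unique local --- hence global --- maximum of the moment map of the associated $S^1$-action; consequently every fixed component $F$ is joined to $F_{\rm max}(k)$ by a finite chain of $k(\C^\times)$-orbit closures, each ascending in the moment map. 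Concatenating the identity above along such a chain gives $[\hat q]=\sigma_{\rm max}(k)+\sum_i[\gamma_i]$ with all $[\gamma_i]\in\NEN(X)$, which completes the argument.

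\emph{Main obstacle.} The delicate point is the bubbling analysis in the sliding identity: one must check that the flat limit of $\Sigma_{k(s)\cdot x}$ as $s\to\infty$ is precisely the fixed-point section $\hat{q}_\infty$ plus the orbit closure $\gamma$ bubbled off inside $X_\infty$ --- rather than some other splitting of the class, or a bubble over $X_0$ --- and this is exactly where the asymmetry of the Seidel space $E_k$ and the special role of $F_{\rm max}(k)$ enter. The remaining ingredients (the reduction to honest sections, the description of $R$-invariant sections, and the combinatorics of ascending chains of fixed components) are routine. One could also sidestep the degeneration arguments by computing the intersection numbers of $[\hat{q}_0]$, $[\hat{q}_\infty]$ and $[\gamma]$ against the fibre class and the exceptional divisor of the ruled surface $\overline{\bigcup_{s\in\C^\times}\Sigma_{k(s)\cdot x}}\subset E_k$, which directly yields the identity above.
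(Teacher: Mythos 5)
The paper itself does not prove this lemma; it cites external references. Your overall strategy --- reducing to honest sections, degenerating under the base rotation to a constant section $\hat{q}$ through a $k(\C^\times)$-fixed point, and descending from an arbitrary fixed component to $F_{\rm max}(k)$ along bubbled-off orbit closures --- is the natural Bialynicki--Birula-type argument and is sound as a strategy. However, your ``key identity'' $[\hat{q}_0]=[\hat{q}_\infty]+[\gamma]$ is false in general: the bubble $\gamma$ can appear in the flat limit with multiplicity greater than one. Take $X=\PP^1$ with $T=\C^\times$ acting by $t\cdot[a:b]=[a:tb]$, $k=2$, and $x=[1:1]$, so that $E_k\cong\mathbb{F}_2$, $q_0=[1:0]$, $q_\infty=[0:1]=F_{\rm max}(k)$, and $\gamma$ is a fibre $f$; then $\hat{q}_\infty$ is the $(-2)$-curve and $\hat{q}_0$ has self-intersection $+2$, so $[\hat{q}_0]=[\hat{q}_\infty]+2f$, not $[\hat{q}_\infty]+f$. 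One sees the multiplicity directly: in the $\infty$-chart near $q_\infty$, with fibre coordinate $\zeta=a/b$ and base coordinate $w$, the section $\Sigma_{k(s)\cdot x}$ is cut out by $\zeta w^2=s^{-2}$, whose flat limit as $s\to\infty$ is the divisor $\{\zeta w^2=0\}$, in which $\{w=0\}$ occurs with multiplicity $2$. In general the multiplicity is the (absolute value of the) $k(\C^\times)$-weight along which the orbit approaches $q_\infty$, so the correct identity is $[\hat{q}_0]=[\hat{q}_\infty]+m[\gamma]$ for some integer $m\ge 1$ depending on the orbit.

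Fortunately the error is harmless for the lemma as stated: $m\ge 1$ is all the conclusion requires, and concatenating the corrected identity along a chain of orbit closures ascending to $F_{\rm max}(k)$ still expresses $[\hat{q}]-\sigma_{\rm max}(k)$ as an $\N$-linear combination of effective classes in $\NEN(X)$. I would also remark that the danger you flag in your ``main obstacle'' paragraph --- whether the bubble might land over $X_0$, or the class might split in some other way --- is already excluded by your set-theoretic analysis combined with the degree-one projection to $\PP^1$: the support of the flat limit is forced to be $\hat{q}_\infty\cup\gamma$, and $\hat{q}_\infty$ must have coefficient one because it is the unique component dominating $\PP^1$. The genuinely delicate point is the coefficient of $\gamma$, which you asserted to be $1$ without justification; the intersection-theoretic alternative you sketch on the ruled surface swept out by the $\Sigma_{k(s)\cdot x}$ would, if carried out, reproduce the correct multiplicity $m$ rather than $1$.
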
 

Set $\hT = T\times \C^\times$. We introduce a $\hT$-action on $E_k$ by 
\[
(\lambda, z) \cdot [x,(v_1,v_2)] = [\lambda x, (v_1, z v_2)] 
\]
for $(\lambda,z) \in \hT$. The coordinates $\lambda,z$ of $\hT$ will also be used to denote the corresponding equivariant parameters. Let $X_0$, $X_\infty$ denote the fibre of $E_k\to \PP^1$ at $0=[1,0], \infty=[0,1]\in \PP^1$ respectively. The group $\hT$ acts on $X_0$, $X_\infty$ as 
\begin{align*} 
(\lambda,z) \cdot x & =\lambda \cdot x & & \text{for $x\in X_0$;} \\ 
(\lambda,z) \cdot x & = \lambda z^{-k} \cdot x & & \text{for $x\in X_\infty$}. 
\end{align*} 
The identity map $X_0 \to X_\infty$ is equivariant with respect to the automorphism $\hT \to \hT$, $(\lambda,z) \mapsto (\lambda z^{k},z)$, hence it induces an isomorphism $\Phi_k \colon H^*_{\hT}(X_0) \xrightarrow{\cong} H^*_{\hT}(X_\infty)$ satisfying 
\[
\Phi_k(f(\lambda,z) \alpha) = f(\lambda-kz, z) \Phi_k(\alpha)
\]
for $f(\lambda,z) \in \C[\lambda,z]=H^*_{\hT}(\pt)$ and $\alpha \in H^*_{\hT}(X_0)$. We note that $H^*_{\hT}(X_0)=H^*_T(X)[z]$. 
For $\tau\in H^*_T(X)$, let $\htau\in H^*_{\hT}(E_k)$ denote the unique class satisfying 
\begin{equation*} 
\htau|_{X_0} = \tau \qquad \text{and} \qquad \htau|_{X_\infty} = \Phi_k(\tau). 
\end{equation*}
See \cite[Lemma 3.7, Notation 3.8]{Iritani:shift} for the existence and uniqueness of $\htau$. 

\begin{definition}[shift operators]  
\label{def:shift} 
For $\beta \in N_1^T(X)$, let $\ovbeta\in \Hom(\C^\times,T)$ be the image of $\beta$ under $N_1^T(X) \to H_2^T(\pt,\Z) = \Hom(\C^\times,T)$ and consider the Seidel space $E_k \to \PP^1$ associated with $k:=-\ovbeta$. Let $X_0,X_\infty$ be the fibres of $E_k$ at $0$ and $\infty$ as above. 
For $\tau\in H^*_T(X)$, we define the $H_\hT^*(\pt)$-linear operator $\tbS^\beta(\tau) \colon H^*_{\hT}(X_0)[\![Q,\btau]\!] \to Q^{\beta+\sigma_{\rm max}(k)} H^*_{\hT}(X_\infty)[\![Q,\btau]\!]$ in terms of the $\hT$-equivariant Gromov-Witten invariants of $E_k$ as follows: 
\begin{equation} 
\label{eq:def_tbS}
\left(\tbS^\beta(\tau) \alpha_0,\alpha_\infty\right) =  \sum_{n=0}^\infty \sum_{d\in N_1(X)} \corr{i_{0*}\alpha_0,\htau,\dots,\htau,i_{\infty*}\alpha_\infty}_{0,n+2,-\beta+d}^{E_k,\hT} 
\frac{Q^d}{n!}  
\end{equation} 
where $\alpha_0\in H^*_{\hT}(X_0)$, $\alpha_\infty \in H^*_{\hT}(X_\infty)$, $i_0\colon X_0\to E_k$ and $i_\infty \colon X_\infty \to E_k$ are the inclusions, and $(\cdot,\cdot)$ in the left-hand side is the equivariant Poincar\'e pairing on $H^*_{\hT}(X_\infty)$. 
Finally, we define the (extended) \emph{shift operator} as 
\[
\hbS^\beta(\tau) := \Phi_k^{-1} \circ \tbS^\beta(\tau)  \colon 
H^*_T(X)[z][\![Q,\btau]\!] \to Q^{\beta+\sigma_{\rm max}(k)} H^*_T(X)[z][\![Q,\btau]\!]. 
\]
It satisfies $\hbS^\beta(\tau) (f(\lambda,z) \alpha) = f(\lambda-\ovbeta z, z) \hbS^\beta(\tau) \alpha$ for $f(\lambda,z)\in H^*_\hT(\pt)$. 
\end{definition} 

\begin{remark}
\label{rem:shift}
 (1) The degree $-\beta + d\in N_1^T(X)$ appearing in the right-hand side of \eqref{eq:def_tbS} should be regarded as a section class in $N_1^{\rm sec}(E_k)$ under  \eqref{eq:N1_Seidelspace}. It is effective if and only if $d \in \beta+\sigma_{\rm max}(k) +\NEN(X)$ by Lemma \ref{lem:effective_section_classes}. Therefore the lowest exponent of $Q$ appearing in $\tbS^\beta(\tau), \hbS^\beta(\tau)$ is $\beta+\sigma_{\rm max}(k)$ or higher. 

(2) Since the map $\tau \mapsto \htau$ is not $\C[\lambda$]-linear, we regard $\hbS^\beta(\tau)$ as a power series in the infinite set $\btau=\{\tau^{i,k}\}$ of coordinates for $\tau\in H^*_T(X)$ (see \S\ref{subsec:equiv_qconn}). By dimension counting, we see that $\hbS^\beta(\tau)$ is homogeneous of degree $2 c_1^T(X) \cdot \beta$. 

(3) The operator $\hbS^\beta(\tau)$ is related to the shift operator $\bS_{\ovbeta}(\tau)$ in \cite{Iritani:shift} by the formula 
\[
\hbS^\beta(\tau) = Q^{\beta+\sigma_{\rm max}(-\ovbeta)} \bS_{\ovbeta}(\tau).  
\] 
In particular, $\hbS^\beta(\tau) = Q^\beta$ if $\beta \in N_1(X) \subset N_1^T(X)$. The hat in the notation $\hbS^\beta$ indicates that we extend the shift operator action to include the Novikov variables. The operators $\bS_k (\tau)=\hbS^{-\sigma_{\rm max}(-k)}(\tau)$ in \cite{Iritani:shift} arise from a non-linear splitting $k\mapsto -\sigma_{\rm max}(-k)$ of \eqref{eq:equiv_homology_seq}.  
\end{remark} 

The rational \emph{Givental space} of $X$ is defined to be 
\begin{equation} 
\label{eq:rational_Givental_space} 
\cH^{\rm rat}_X := H^*_\hT(X)_{\rm loc}[\![Q]\!]
\end{equation} 
where $\hT=T\times \C^\times$ acts on $X$ via the projection $\hT \to T$ and $H^*_{\hT}(X)_{\rm loc} = H^*_T(X)\otimes_{\C[\lambda]} \C(\lambda,z)_{\rm hom}$ is the localization of $H^*_\hT(X) = H^*_T(X)[z]$ with respect to non-zero homogeneous elements of $\C[\lambda,z]$. We introduce the shift action of $N_1^T(X)$ on $\cH^{\rm rat}_X$. 

\begin{definition}[shift operator on the Givental space]  
\label{def:shift_Givental} 
For a connected component $F$ of $X^T$, let $\cN_F = \cN_{F/X}$ be the normal bundle of $F$ in $X$ and let $\cN_F = \bigoplus_{\alpha} \cN_{F,\alpha}$ be the $T$-weight decomposition, where $T$ acts on $\cN_{F,\alpha}$ by the character $\alpha\in \Hom(T,\C^\times)$. Let $\rho_{F,\alpha,j}$, $j=1,\dots,\rank \cN_{F,\alpha}$ be the Chern roots of $\cN_{F,\alpha}$. For $\beta \in N_1^T(X)$, we define the operator $\hcS^\beta \colon \cH^{\rm rat}_X \to Q^{\beta+\sigma_{\rm max}(-\ovbeta)}\cH^{\rm rat}_X$ using the localization isomorphism $H_{\hT}^*(X)_{\rm loc} \cong \bigoplus_F H_{\hT}^*(F)_{\rm loc}$ as follows: 
\[
\left. \hcS^\beta \bbf \right|_{F} = Q^{\beta + \sigma_{F}(-\ovbeta)} 
\left( \prod_\alpha \prod_{j=1}^{\rank \cN_{F,\alpha}} 
\frac{\prod_{c=-\infty}^0  \rho_{F,\alpha,j}+\alpha+ cz}
{\prod_{c=-\infty}^{-\alpha \cdot \ovbeta} 
\rho_{F,\alpha,j}+ \alpha + cz} 
\right)  
e^{-z \ovbeta \partial_{\lambda}}(\bbf|_{F})
\]
where  $\sigma_{F}(-\ovbeta) \in N_1^{\rm sec}(E_{-\ovbeta})\subset N_1^T(X)$ is the section class of $E_{-\ovbeta} \to \PP^1$ associated with a fixed point in $F$, $\alpha$ is identified with an element of $H^2_T(\pt)$ and $e^{-z \ovbeta \partial_{\lambda}}$ is the shift $f(\lambda,z) \mapsto f(\lambda- \ovbeta z, z)$ of equivariant parameters acting on $\C(\lambda,z)_{\rm hom}$. Note that $\beta + \sigma_{F}(-\ovbeta)$ lies in $N_1(X)$. 
\end{definition} 

\begin{remark} 
\label{rem:sigma_F} 
The map $\sigma_F\colon H_2^T(\pt,\Z) \cong \Hom(\C^\times,T) \to N_1^T(X)\subset H_2^T(X,\Z)$ associated with a fixed component $F$ is the push-forward along the inclusion $\pt \subset F \hookrightarrow X$. It defines a \emph{linear} splitting of \eqref{eq:equiv_homology_seq} and is dual to the restriction map $H^2_T(X,\Z) \to H^2_T(\pt,\Z)$, i.e.~$\xi\cdot \sigma_F(k) = (\xi|_{\pt}) \cdot k$ for $\xi\in H^2_T(X,\Z)$, $\pt \in F$ and $k \in H_2^T(\pt,\Z) \cong \Hom(\C^\times,T)$. We also note that $\sigma_{\rm max}(k) = \sigma_{F_{\rm max}(k)}(k)$. 
\end{remark} 

\begin{proposition}[{\cite[Theorem 3.14]{Iritani:shift}}] 
\label{prop:shift_fundsol} 
The fundamental solution $M_X(\tau)$ in the equivariant theory satisfies $\hcS^\beta \circ M_X(\tau) = M_X(\tau) \circ \hbS^\beta(\tau)$. 
\end{proposition}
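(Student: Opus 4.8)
The plan is to establish the equivalent identity $\hcS^\beta = M_X(\tau)\circ\hbS^\beta(\tau)\circ M_X(\tau)^{-1}$ by virtual localization on the Seidel space $E_k$ with $k=-\ovbeta$, with respect to the $\C^\times$ that rotates the base $\PP^1$; its equivariant parameter is exactly the variable $z$ of the quantum connection. The underlying mechanism is that the fundamental solutions $M_X(\tau)^{\pm1}$ cancel the \emph{vertex} contributions coming from stable maps lying entirely over $0$ or $\infty\in\PP^1$, so that only the \emph{edge} contributions of multiple covers of the sections $\PP^1\times\{p\}$, $p\in X^T$, survive — and their localization weights are precisely the hypergeometric products defining $\hcS^\beta$.

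First I would give $M_X(\tau)^{-1}$ a Gromov--Witten description: by the pairing-flatness \eqref{eq:fundsol_pairing}, $M_X(\tau)^{-1}$ is the $P_X$-adjoint of $M_X(\tau)$, hence is again built from one-descendant genus-zero invariants of $X$ (with $z\to -z$). Next, using the splitting axiom for nodal degenerations of the domain curve, I would glue the three generating series — $M_X(\tau)$, the defining series \eqref{eq:def_tbS} of $\tbS^\beta(\tau)$, and $M_X(\tau)^{-1}$ — along the two marked points that the insertions $i_{0*}$, $i_{\infty*}$ force into the fibres $X_0$, $X_\infty$ of $E_k$. Since $X_0$ and $X_\infty$ are genuine fibres of $E_k$, this gluing produces genus-zero descendant Gromov--Witten invariants of $E_k$ itself, carrying insertions $1/(z-\psi)$ at the two glued points. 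Thus, after applying $\Phi_k$, the operator $M_X(\tau)\circ\hbS^\beta(\tau)\circ M_X(\tau)^{-1}$ becomes a single generating function of descendant invariants on the Seidel space.

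Then I would apply virtual localization to this generating function. The $\C^\times$-fixed stable maps break into a part over $X_0$, a tube which is a multiple cover of some $\PP^1\times\{p\}$ with $p\in X^T$, and a part over $X_\infty$, with the $\htau$-insertions distributed among the three. The sum of the $X_0$-contributions, paired against the descendant from $M_X(\tau)^{-1}$, collapses to the identity by the defining formula \eqref{eq:fundsol} and the invertibility of $M_X(\tau)$ (a standard telescoping argument); likewise for the $X_\infty$-contributions against $M_X(\tau)$, where the identification $\Phi_k$ of the $\hT$-weights on $X_0$ and $X_\infty$ accounts for the operator $e^{-z\ovbeta\partial_{\lambda}}$. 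What is left is the edge factor: for each component $F\subset X^T$ the virtual normal bundle of the space of multiple covers of $\PP^1\times\{p\}$ ($p\in F$) is $\cN_{F}$ twisted by the $\C^\times$-weights along the cover, and its equivariant Euler class — organized via $\cN_F=\bigoplus_\alpha\cN_{F,\alpha}$ and Chern roots $\rho_{F,\alpha,j}$ — is exactly $\prod_\alpha\prod_j\bigl(\prod_{c\le 0}(\rho_{F,\alpha,j}+\alpha+cz)\bigr)\big/\bigl(\prod_{c\le -\alpha\cdot\ovbeta}(\rho_{F,\alpha,j}+\alpha+cz)\bigr)$, with the cover degree recorded by the Novikov factor $Q^{\beta+\sigma_F(-\ovbeta)}$. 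Comparing with Definition \ref{def:shift_Givental} yields $M_X(\tau)\circ\hbS^\beta(\tau)\circ M_X(\tau)^{-1}=\hcS^\beta$, which is the claim.

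The main obstacle is the localization bookkeeping: enumerating all fixed loci (including unstable rational tails and all ways of splitting the $\htau$-insertions between the $0$-side, the tube, and the $\infty$-side), identifying their virtual normal bundles, allowing $F$ to be non-isolated so that genuine Chern roots rather than weights appear, and above all making the vertex-cancellation rigorous — this is delicate because $\tau\mapsto\htau$ is not $\C[\lambda]$-linear, which forces careful control of the $Q$-adic and $z$-adic completions and of where the equivariant parameters are shifted. A more formal-looking route is to note that both $M_X(\tau)\circ\hbS^\beta(\tau)\circ M_X(\tau)^{-1}$ and $\hcS^\beta$ solve the same first-order system in $(\btau,z,Q)$ — using \eqref{eq:fundsol_qconn} together with the commutation of $\hbS^\beta(\tau)$ with the quantum connection — and agree to leading order in $Q$ and $\btau$; but proving that commutation relation again requires the Seidel-space geometry, so this only relocates the work.
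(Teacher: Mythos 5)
The paper does not prove this proposition --- it is quoted from \cite[Theorem~3.14]{Iritani:shift} --- so there is no in-text argument to compare against. Your sketch nevertheless follows the localization strategy used (modulo the Novikov-variable enhancement to $N_1^T(X)$) in that reference: conjugate by $M_X(\tau)$, localize the descendant Seidel-space correlator \eqref{eq:def_tbS} with respect to the auxiliary $\C^\times$ rotating the base $\PP^1$, have the vertex contributions over $0$ and $\infty$ cancel against $M_X(\tau)^{-1}$ and $M_X(\tau)$, and read the residual edge weight off against Definition~\ref{def:shift_Givental}. Your observation that the ``solve the same flat system'' shortcut is circular is also right, since \eqref{eq:shift_qconn} is deduced \emph{from} the present proposition in the Corollary immediately after it.

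One point should be corrected. You describe the edge as a \emph{multiple cover} of a $T$-invariant section. Since $-\beta+d$ in \eqref{eq:def_tbS} is a section class of $E_k\to\PP^1$ (Remark~\ref{rem:shift}(1)), a stable map of that class has degree one over the base, and the unique non-contracted component over $\PP^1$ \emph{is} the section $\sigma_F$ itself, not a degree-$m$ cover of it. The hypergeometric ratio in $\hcS^\beta$ arises from the $\hT$-equivariant $H^0$ and $H^1$ of the normal bundle $\cN_{\sigma_F/E_k}\cong\bigl(\bigoplus_\alpha\cO_{\PP^1}(-\alpha\cdot\ovbeta)\otimes\cN_{F,\alpha}|_p\bigr)\oplus\bigl(\cO_{\PP^1}\otimes T_pF\bigr)$: the integer $\alpha\cdot\ovbeta$ is an $\cO_{\PP^1}$-degree, not a cover degree. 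Your stated product happens to be the right one --- had it genuinely been a degree-$m$ cover, the $z$'s would appear divided by $m$ --- so this reads as a slip rather than a computational error. Otherwise the mechanism is captured accurately, and you correctly single out the non-$\C[\lambda]$-linearity of $\tau\mapsto\htau$ and the associated completion bookkeeping as the real technical difficulties the cited proof must control.
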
 

\begin{corollary} 
The extended shift operators define a representation of $N_1^T(X)$, i.e.~$\hbS^0(\tau) = \id$, $\hbS^{\beta_1+\beta_2}(\tau) = \hbS^{\beta_1}(\tau) \circ \hbS^{\beta_2}(\tau)$. Moreover, they satisfy 
\begin{align} 
\label{eq:shift_qconn} 
& [\nabla_{\tau^{i,k}},\hbS^\beta(\tau)] = [\nabla_{z\partial_z}, \hbS^\beta(\tau)] = 0, \quad 
[\nabla_{\xi Q\partial_Q}, \hbS^\beta(\tau)] = (\xi\cdot \beta) \hbS^\beta(\tau), \\ 
\label{eq:shift_pairing} 
& e^{-z \ovbeta \partial_\lambda} P_X(f, g) = P_X(\hbS^{-\beta}(\tau) f, \hbS^\beta(\tau)g) 
\end{align} 
for $\xi\in H^2_T(X)$, $f,g\in H^*_T(X)[z][\![Q,\btau]\!]$. 
\end{corollary}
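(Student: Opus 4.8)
The plan is to derive all the assertions from Proposition~\ref{prop:shift_fundsol}, which states $\hcS^\beta\circ M_X(\tau)=M_X(\tau)\circ\hbS^\beta(\tau)$. By \eqref{eq:fundsol} the fundamental solution has the form $M_X(\tau)=\id+N$ with $N$ of positive order in $z^{-1}$ and in $(Q,\btau)$, so $M_X(\tau)$ is invertible and $\hbS^\beta(\tau)=M_X(\tau)^{-1}\circ\hcS^\beta\circ M_X(\tau)$. It therefore suffices to establish the corresponding statements for the operators $\hcS^\beta$ on the rational Givental space and to transport them through $M_X(\tau)$ via the intertwining relations \eqref{eq:fundsol_qconn} and \eqref{eq:fundsol_pairing}. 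Recall from Definition~\ref{def:shift_Givental} that $\hcS^\beta$ is diagonal for the localization splitting $H^*_{\hT}(X)_{\rm loc}\cong\bigoplus_F H^*_{\hT}(F)_{\rm loc}$, and on the summand $F$ it is the composition of three elementary operations: multiplication by $Q^{\beta+\sigma_F(-\ovbeta)}$, multiplication by the Chern-root factor $R_{\ovbeta,F}:=\prod_\alpha\prod_j\frac{\prod_{c\le 0}(\rho_{F,\alpha,j}+\alpha+cz)}{\prod_{c\le -\alpha\cdot\ovbeta}(\rho_{F,\alpha,j}+\alpha+cz)}$, and the shift $e^{-z\ovbeta\partial_\lambda}$ of equivariant parameters.

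From this description the group law for $\hcS^\beta$ is immediate: $\sigma_F$ is linear (Remark~\ref{rem:sigma_F}) so the exponents add, the shifts $e^{-z\ovbeta\partial_\lambda}$ compose additively, and commuting $e^{-z\ovbeta_1\partial_\lambda}$ past $R_{\ovbeta_2,F}$ replaces $\alpha$ by $\alpha-(\alpha\cdot\ovbeta_1)z$ and thereby re-indexes the products so that $R_{\ovbeta_1,F}\cdot e^{-z\ovbeta_1\partial_\lambda}(R_{\ovbeta_2,F})$ telescopes to $R_{\ovbeta_1+\ovbeta_2,F}$; hence $\hcS^0=\id$ and $\hcS^{\beta_1}\circ\hcS^{\beta_2}=\hcS^{\beta_1+\beta_2}$, and conjugation by the (left-invertible) $M_X(\tau)$ gives the representation law for $\hbS^\beta(\tau)$. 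For the commutation relations \eqref{eq:shift_qconn} we use \eqref{eq:fundsol_qconn}: $M_X(\tau)$ conjugates $\nabla_{\tau^{i,k}}$, $\nabla_{z\partial_z}$, $\nabla_{\xi Q\partial_Q}$ (with $\xi\in H^2_T(X)$) into $\partial_{\tau^{i,k}}$, $z\partial_z-z^{-1}c_1^T(X)+\mu_X$, $\xi Q\partial_Q+z^{-1}\xi$, so that $[\nabla_\bullet,\hbS^\beta(\tau)]=M_X(\tau)^{-1}[\mathcal D_\bullet,\hcS^\beta]M_X(\tau)$ for the corresponding operator $\mathcal D_\bullet$ on the Givental space. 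Since $\hcS^\beta$ contains no $\btau$, it commutes with $\partial_{\tau^{i,k}}$, giving $[\nabla_{\tau^{i,k}},\hbS^\beta(\tau)]=0$; and a direct check from the explicit formula shows $[z\partial_z-z^{-1}c_1^T(X)+\mu_X,\hcS^\beta]=0$ (on the summand $F$ the ``non-$Q$'' degree shift $\deg R_{\ovbeta,F}=2\,c_1^T(\cN_{F/X})\cdot\ovbeta$ produced by $z\partial_z+\mu_X$ is cancelled by the commutator of $-z^{-1}c_1^T(X)$ with $e^{-z\ovbeta\partial_\lambda}$), which is consistent with $\hbS^\beta$ being homogeneous of degree $2\,c_1^T(X)\cdot\beta$ (Remark~\ref{rem:shift}(2)); this gives $[\nabla_{z\partial_z},\hbS^\beta(\tau)]=0$.

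The Novikov-direction relation is the delicate one, because its right-hand side is nonzero and the constant comes out only after balancing two contributions. On the summand $F$, $\xi Q\partial_Q$ commutes with $R_{\ovbeta,F}$ and with the $\lambda$-shift and acts on $Q^{\beta+\sigma_F(-\ovbeta)}$ by the scalar $\xi\cdot(\beta+\sigma_F(-\ovbeta))=(\xi\cdot\beta)-(\xi|_{\pt})\cdot\ovbeta$ (using $\xi\cdot\sigma_F(k)=(\xi|_{\pt})\cdot k$ from Remark~\ref{rem:sigma_F}), while $z^{-1}\xi$ commutes with the $Q$- and $R_{\ovbeta,F}$-factors but satisfies $[z^{-1}\xi,\,e^{-z\ovbeta\partial_\lambda}]=\bigl((\xi|_{\pt})\cdot\ovbeta\bigr)e^{-z\ovbeta\partial_\lambda}$ because the shift moves the $T$-weight $\xi|_{\pt}$ of $\xi$ along $F$ by $-z\,(\xi|_{\pt})\cdot\ovbeta$ and leaves the $H^*(F)$-part fixed; the two corrections cancel, leaving $[\xi Q\partial_Q+z^{-1}\xi,\hcS^\beta]=(\xi\cdot\beta)\hcS^\beta$, hence $[\nabla_{\xi Q\partial_Q},\hbS^\beta(\tau)]=(\xi\cdot\beta)\hbS^\beta(\tau)$. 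Finally, for the pairing identity \eqref{eq:shift_pairing} one uses \eqref{eq:fundsol_pairing}, which makes $M_X(\tau)$ an isometry for $P_X$, to reduce to $e^{-z\ovbeta\partial_\lambda}P_X(a,b)=P_X(\hcS^{-\beta}a,\hcS^\beta b)$ on the Givental space; by the localization formula for $P_X$ this reduces, since $\sigma_F(-\ovbeta)+\sigma_F(\ovbeta)=0$, to the functional equation $e^T(\cN_{F/X})(\lambda)=R_{-\ovbeta,F}(-z,\lambda)\,R_{\ovbeta,F}(z,\lambda)\,e^T(\cN_{F/X})(\lambda-\ovbeta z)$ for the equivariant Euler class, which one verifies factor by factor. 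I expect the main obstacle to be exactly the Novikov relation: the bookkeeping that matches the section-class term $\sigma_F(-\ovbeta)$ in $\hcS^\beta$ against the $\lambda$-shift acting on $c_1^T(X)$ — and the parallel cancellation in the $\nabla_{z\partial_z}$ case — is where care is needed; the group law, the pairing identity, and the $\btau$-relation are formal or routine.
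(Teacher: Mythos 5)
Your proof is correct and takes essentially the same route as the paper: reduce everything to the operators $\hcS^\beta$ on the Givental space via Proposition~\ref{prop:shift_fundsol} together with \eqref{eq:fundsol_qconn}, \eqref{eq:fundsol_pairing}, then verify the group law, the three commutation identities, and the pairing identity directly from the localization formula for $\hcS^\beta$. The paper leaves these last verifications as ``easy to check''; your write-up just spells out the telescoping for the group law, the cancellation between $\xi\cdot\sigma_F(-\ovbeta)$ and the shift acting on $z^{-1}\xi|_F$ in the Novikov direction, and the Euler-class functional equation for the pairing, all of which are correct.
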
 
\begin{proof} 
By Proposition \ref{prop:shift_fundsol}, it suffices to prove the corresponding properties for $\hcS^\beta$.  We have $\hcS^0=\id$, $\hcS^{\beta_1+\beta_2} = \hcS^{\beta_1} \circ \hcS^{\beta_2}$. The equation \eqref{eq:shift_qconn} follows from \eqref{eq:fundsol_qconn} and the following properties: 
\[
[\partial_{\tau^{i,k}}, \hcS^\beta] = [z\partial_z - z^{-1} c_1^T(X) + \mu_X, \hcS^\beta] =0, \quad 
[\xi Q\partial_Q + z^{-1}\xi, \hcS^\beta] = (\xi \cdot \beta) \hcS^\beta. 
\]
We can verify these properties for $\hcS^\beta$ by using Remark \ref{rem:sigma_F} and $c_1^T(X)|_F = c_1(F) + \sum_{\alpha,j} (\rho_{F,\alpha,j} + \alpha)$, etc. 
The equation \eqref{eq:shift_pairing} follows from \eqref{eq:fundsol_pairing} and the equality 
\[
e^{-z \ovbeta\partial_\lambda} P_X(f,g) = P_X(\hcS^{-\beta} f, \hcS^\beta g) 
\]
that easily follows from the localization formula \cite{Atiyah-Bott,Berline-Vergne} in equivariant cohomology. 
\end{proof} 

\begin{proposition} 
\label{prop:module_over_extended_shift} 
Let $\NEN^T(X)\subset N_1^T(X)$ be the monoid generated by $\NEN(X)$ and $-\sigma_{\rm max}(-k)\in N_1^{\rm sec}(E_k) \subset N_1^T(X)$ for all $k\in \Hom(\C^\times,T)$. Then $\hbS^\beta(\tau)$ with $\beta \in  \NEN^T(X)$ preserves $\QDM_T(X) = H^*_T(X)[z][\![Q,\btau]\!]$. If $T$ acts on $X$ with a finite generic stabilizer, $\QDM_T(X)$ has the structure of a module over $\C[z][\![\NEN^T(X)]\!]$ via the extended shift operators.  
\end{proposition}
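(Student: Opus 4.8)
The plan is to deduce both claims from the multiplicativity $\hbS^{\beta_1+\beta_2}(\tau)=\hbS^{\beta_1}(\tau)\circ\hbS^{\beta_2}(\tau)$ of the shift operators, established in the Corollary above, together with the estimate on the leading Novikov exponent in Remark~\ref{rem:shift}(1). For the first claim, take $\beta\in\NEN^T(X)$ and write $\beta=\beta_1+\dots+\beta_m$ with each $\beta_a$ one of the generators, so $\beta_a\in\NEN(X)$ or $\beta_a=-\sigma_{\rm max}(-k_a)$ for some $k_a\in\Hom(\C^\times,T)$. By multiplicativity it is enough to show that each $\hbS^{\beta_a}(\tau)$ preserves $\QDM_T(X)$. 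If $\beta_a\in\NEN(X)$ then $\hbS^{\beta_a}(\tau)=Q^{\beta_a}$ by Remark~\ref{rem:shift}(3), i.e.\ multiplication by a Novikov monomial. If $\beta_a=-\sigma_{\rm max}(-k_a)$, then $\overline{\beta_a}=k_a$ and $\beta_a+\sigma_{\rm max}(-\overline{\beta_a})=0$, so Remark~\ref{rem:shift}(1) shows that every Novikov exponent occurring in $\hbS^{\beta_a}(\tau)$ lies in $0+\NEN(X)=\NEN(X)$; since $\hbS^{\beta_a}(\tau)$ is $\C[z][\![Q,\btau]\!]$-linear and, by construction in Definition~\ref{def:shift}, takes values in $H^*_T(X)[z][\![Q,\btau]\!]$ after removing a Novikov shift, it preserves $\QDM_T(X)=H^*_T(X)[z][\![Q,\btau]\!]$. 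Hence so does the composition $\hbS^\beta(\tau)=\hbS^{\beta_1}(\tau)\circ\dots\circ\hbS^{\beta_m}(\tau)$.

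For the second claim I would first make sense of $\C[z][\![\NEN^T(X)]\!]$ as a graded ring. For the graded completion of \S\ref{subsec:formal_power_series} to apply, the convex cone generated by $\NEN^T(X)$ in $N_1^T(X)_\R$ must have strictly convex closure, equivalently its dual --- which is the $T$-ample cone $C_T(X)$, see \cite{Iritani-Sanda:reduction} --- must be full-dimensional in $N^1_T(X)_\R$. This is exactly the point at which the hypothesis that $T$ acts with finite generic stabilizer enters: it forces the existence of a $T$-linearized ample line bundle whose $T$-weights span $H^2_T(\pt)_\R$, so that $C_T(X)$ is full-dimensional. Choosing an interior point $\omega=c_1^T(L)$ of $C_T(X)$ with $L$ ample on $X$ (so that $\omega$ restricts to an ample class on $X$), one obtains a well-defined graded ring $\C[z][\![\NEN^T(X)]\!]$ containing $\C[z][\![Q]\!]$.

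By the first claim, together with $\C[z][\![Q,\btau]\!]$-linearity of the operators $\hbS^\beta(\tau)$ and their multiplicativity, the assignment $\beta\mapsto\hbS^\beta(\tau)$ is a monoid homomorphism from $\NEN^T(X)$ into the $\C[z][\![Q,\btau]\!]$-linear endomorphisms of $\QDM_T(X)$ which restricts on $\NEN(X)$ to multiplication by Novikov monomials; this already gives $\QDM_T(X)$ the structure of a graded $\C[\NEN^T(X)]$-module compatible with the $\C[z][\![Q]\!]$-action. It then remains to extend this action continuously to the graded completion $\C[z][\![\NEN^T(X)]\!]$, i.e.\ to verify that for each fixed $v\in\QDM_T(X)$ the operators $\hbS^\beta(\tau)$ map $v$ into arbitrarily deep terms of the $(Q,\btau)$-adic filtration as $\omega\cdot\beta\to\infty$ within $\NEN^T(X)$. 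I expect this continuity estimate to be the main obstacle: the relevant inputs are the lower bound on the Novikov exponents of $\hbS^\beta(\tau)v$ from Remark~\ref{rem:shift}(1), the positivity $c_1^T(X)\cdot\bigl(-\sigma_{\rm max}(-k)\bigr)=(c_1^T(X)|_{\pt})\cdot k>0$ of the shift generators (using Remark~\ref{rem:sigma_F} and the fact that the normal bundle of $F_{\rm max}(-k)$ has only positive $k$-weights), and $Q$-adic completeness of $\QDM_T(X)$ in the purely Novikov directions; the delicate part is combining these so as to control the $(Q,\btau)$-order of $\hbS^\beta(\tau)v$ uniformly in each fixed cohomological degree, which then yields the asserted $\C[z][\![\NEN^T(X)]\!]$-module structure.
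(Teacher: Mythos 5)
Your first paragraph is correct and is essentially the paper's intended argument. The paper compresses it to ``immediately from Definition \ref{def:shift},'' and the clean way to make that precise is exactly what you do: reduce to generators via the multiplicativity $\hbS^{\beta_1+\beta_2}(\tau)=\hbS^{\beta_1}(\tau)\circ\hbS^{\beta_2}(\tau)$ from the preceding corollary, and observe that for a generator $\beta_a=-\sigma_{\rm max}(-k_a)$ the Novikov shift $\beta_a+\sigma_{\rm max}(-\ovbeta_a)$ vanishes, so that $\hbS^{\beta_a}(\tau)$ maps $\QDM_T(X)$ into $Q^0\QDM_T(X)=\QDM_T(X)$.

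For the second claim you identify the relevant ingredients but explicitly leave the argument unfinished, flagging the continuity/completion step as ``the main obstacle.'' That is a genuine gap in the writeup, and it closes more easily than you anticipate. By Remark \ref{rem:shift}(2), each generator $\bS_k(\tau)=\hbS^{-\sigma_{\rm max}(-k)}(\tau)$ is homogeneous of degree $2\,c_1^T(X)\cdot\bigl(-\sigma_{\rm max}(-k)\bigr)$, which equals the sum of the $k(\C^\times)$-weights of $\cN_{F_{\rm max}(-k)/X}$ and is therefore strictly positive unless $k(\C^\times)$ acts trivially on $X$; finiteness of the generic stabilizer rules this out for $k\ne 0$. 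Once every non-Novikov generator of $\NEN^T(X)$ has strictly positive degree, the graded completion collapses to $\C[z][\![\NEN^T(X)]\!]=\C[z,\{\bS_k(\tau)\}_{k\ne0}][\![Q]\!]$: one completes only in the $Q$-direction, the $\bS_k$'s contribute polynomially in each fixed degree, and the module structure then follows directly from your first claim with no further uniform $(Q,\btau)$-order estimate. Your detour through full-dimensionality of the $T$-ample cone $C_T(X)$ is not how the paper uses the finite-stabilizer hypothesis --- it uses it only to guarantee positivity of the degrees $\deg\bS_k(\tau)$ --- and it is not needed to conclude.
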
 
\begin{proof} 
It follows immediately from Definition \ref{def:shift} that $\hbS^\beta(\tau)$ preserves $\QDM_T(X)$ if $\beta \in \NEN^T(X)$. 
The operator $\bS_k(\tau) = \hbS^{-\sigma_{\rm max}(-k)}(\tau)$ has a strictly positive degree unless $k(\C^\times)$ acts trivially on $X$, since $c_1^T(X)\cdot \sigma_{\rm max}(-k)$ is the sum of weights of the normal bundle of $F_{\rm max}(-k)$ with respect to the $(-k)(\C^\times)$-action. In particular, if the generic stabilizer is finite, $\bS_k(\tau)$ has positive degree unless $k=0$. By the convention of graded completion, $\QDM_T(X)$ has the structure of a module over $\C[z][\![\NEN^T(X)]\!]=\C[z, \{\bS_k(\tau)\}_{k\neq 0}][\![Q]\!]$.  
\end{proof}

\subsection{Givental cone} 
\label{subsec:Givental_cone} 
We briefly review the Givental cone encoding all genus-zero descendant Gromov-Witten invariants \cite{Givental:symplectic, Coates-Givental}. 
Note that we adopt the sign convention for $z$ opposite to the original definition \cite{Givental:symplectic,Coates-Givental}. 

Let $X$ be a smooth projective variety with $T$-action; in the non-equivariant case, we choose $T=\{1\}$. The \emph{Givental space} of $X$ is the infinite-dimensional $\C[\![Q]\!]$-module  
\begin{equation} 
\label{eq:Givental_space} 
\cH_X = H^*_T(X)(\!(z^{-1})\!)[\![Q]\!] 
\end{equation} 
equipped with the symplectic form $\Omega(\bbf, \bg) = - \Res_{z=\infty} (\bbf(-z),\bg(z))_Xdz$. Following the convention in  \S\ref{subsec:formal_power_series}, we mean by $H^*_T(X)(\!(z^{-1})\!)$ the graded completion of $H^*_T(X)[z,z^{-1}]$ with respect to the grading $\deg z =2$. In the non-equivariant case, we have $H^*(X)(\!(z^{-1})\!) = H^*(X)[z,z^{-1}]$. 
The Givental space is equipped with the decomposition $\cH_X = \cH_+ \oplus \cH_-$ into maximally isotropic subspaces: 
\[
\cH_+:= H^*_T(X)[z][\![Q]\!] \quad \text{and} \quad \cH_- :=z^{-1}H^*_T(X)[\![z^{-1}]\!][\![Q]\!]. 
\] 
The symplectic form $\Omega$ identifies $\cH_-$ with the dual of $\cH_+$ (via $v \mapsto \Omega(\cdot,v)$) and $\cH_X$ with the cotangent bundle $T^*\cH_+$ of $\cH_+$. The \emph{genus-zero descendant Gromov-Witten potential}  $\cF_X$ is a function on the formal neighbourhood of $z\in \cH_+$ given by 
\begin{equation} 
\label{eq:descendant_GW_potential}
\cF_X(z+ \bt(z)) = \sum_{\substack{n\ge 0, d\in \NEN(X)\\ (n,d) \neq (0,0), (1,0), (2,0)}} \corr{\bt(-\psi),\dots,\bt(-\psi)}_{0,n,d}^{X,T} \frac{Q^d}{n!} 
\end{equation} 
where $\bt(z) = \sum_{n=0}^\infty t_n z^n$ with $t_n= \sum_{i=0}^s t_n^i \phi_i \in H^*_T(X)$ is a coordinate of $\cH_+$ centred at $z$. The \emph{Givental cone} $\cL_X\subset T^*\cH_+ \cong \cH_X$ is defined to be the graph of the differential of $\cF_X$. It consists of points 
\begin{align} 
\label{eq:point_on_the_cone} 
\begin{split}
\cJ_X(\bt) &= z + \bt(z) + \sum_{i=0}^s \sum_{k=0}^\infty (-1)^k\frac{\phi^i}{z^{k+1}} \parfrac{\cF_X}{t_i^k} \\
&= z + \bt(z) + \sum_{i=0}^s\sum_{\substack{n\ge 0, d\in \NEN(X) \\ (n,d)\neq (0,0), (1,0)}}  \phi^i 
\corr{\bt(-\psi),\dots,\bt(-\psi),\frac{\phi_i}{z-\psi}}_{0,n+1,d}^{X,T} \frac{Q^d}{n!} 
\end{split} 
\end{align} 
with $\bt(z) \in \cH_+$. Here $1/(z-\psi)$ should be expanded in the geometric series $\sum_{n=0}^\infty z^{-n-1}\psi^n$; in the equivariant theory, we need infinitely many negative powers of $z$ since $\psi$ may not be nilpotent. On the other hand, the virtual localization formula shows that points of the form \eqref{eq:point_on_the_cone} lie in the rational form (already appearing in  \eqref{eq:rational_Givental_space}): 
\[
\cH^{\rm rat}_X = H^*_{\hT}(X)_{\rm loc}(X)[\![Q]\!] = H^*_T(X)\otimes_{\C[\lambda]}\C(\lambda,z)_{\rm hom}[\![Q]\!].  
\]
We embed $\cH^{\rm rat}_X$ into the localized form $\cH^{\rm loc}_X =H^*_T(X)_{\rm loc}(\!(z^{-1})\!)[\![Q]\!]$ by the Laurent expansion at $z=\infty$, where $H^*_T(X)_{\rm loc}$ is the localization of $H^*_T(X)$ by non-zero homogeneous elements of $H^*_T(\pt)=\C[\lambda]$. The point \eqref{eq:point_on_the_cone} lies in the intersection $\cH^{\rm rat}_X \cap \cH_X$ inside the localized Givental space $\cH^{\rm loc}$. 
The $J$-function \eqref{eq:J-function} multiplied by $z$ is a finite-dimensional family $\tau \mapsto z J_X(\tau)$ of elements on $\cL_X$, which equals $\cJ_X(\bt)$ with $\bt(z) = \tau$. In view of this,  $\cJ_X(\bt)$ is called the \emph{big $J$-function}.  

The Givental cone satisfies very special geometric properties \cite{Givental:symplectic}, often referred to as being \emph{overruled}. Let $T_\tau$ be the tangent space of $\cL_X$ at $z J_X(\tau)$. It is closed under multiplication by $z$ and has the structure of an $H^*_T(\pt)[z][\![Q]\!]$-module. In fact, $T_\tau$ is freely generated by the derivatives $z \partial_{\tau^i} J_X(\tau) =M_X(\tau) \phi_i$ over $H^*_T(\pt)[z][\![Q]\!]$, i.e.~
\[
T_\tau = M_X(\tau) \cH_+. 
\]
The Givental cone $\cL_X$ can be written as the union of semi-infinite subspaces: 
\begin{equation} 
\label{eq:Givental_cone}
\cL_X = \bigcup_{\tau\in H_T^*(X)[\![Q]\!]} z T_\tau 
\end{equation} 
in the formal neighbourhood of $z J_X(0)$, 
and $T_\tau$ is tangent to $\cL_X$ exactly along $z T_\tau \subset \cL_X$.  
We note that the tangent space $T_\tau$ is preserved by the differential operators $z\partial_{\tau^i}, z\xi Q\partial_Q + \xi$, $z^2 \partial_z - c_1^T(X) + z \mu$ by  \eqref{eq:fundsol_qconn}, where $\xi \in H^2_T(X)$. We can think of $\cL_X$ as a linear-algebraic incarnation of the quantum $D$-module. 

A remarkable property in the equivariant case is that \emph{$T_\tau$ is invariant also under shift operators}: 
\begin{equation*} 
\hcS^\beta T_\tau \subset Q^{\beta+\sigma_{\rm max}(-\ovbeta)} T_\tau, \qquad \text{for $\beta \in N_1^T(X)$.}
\end{equation*} 
This follows from $M_X(\tau) \circ \hbS^\beta(\tau) = \hcS^\beta\circ M_X(\tau)$ (see Proposition \ref{prop:shift_fundsol}).

\begin{remark} 
\label{rem:functor_of_points} 
We can understand the Givental cone $\cL_X$ in terms of a functor of points. For any set $\bx = (x_0,x_1,x_2,\dots)$ of formal variables, an \emph{$H_T^*(\pt)[\![Q,\bx]\!]$-valued point} of $\cL_X$ is a point of the form \eqref{eq:point_on_the_cone} for $\bt(z) \in \cH_+[\![\bx]\!]$ satisfying $\bt(z)|_{Q=\bx=0} =0$. 
Such a point lies in $\cH_X[\![\bx]\!]$. 
The equality \eqref{eq:Givental_cone} means that any $H^*_T(\pt)[\![Q,\bx]\!]$-valued point on $\cL_X$ can be written in the form $z M_X(\tau) v$ for unique $\tau\in H^*_T(X)[\![Q,\bx]\!]$ and $v\in \cH_+[\![\bx]\!]$ such that $\tau|_{Q=\bx=0} = 0$ and $v|_{Q=\bx=0} = 1$. We refer the reader to \cite[Appendix B]{CCIT:computing} (see also \cite[Appendix A]{Lutz-Shafi-Webb:Grassmann_flop}) for a treatment of $\cL_X$ as an infinite-dimensional formal scheme over the Novikov ring. 
\end{remark}

\subsection{Twisted Gromov-Witten invariants} 
\label{subsec:twisted_GW} 
In this section we review Gromov-Witten invariants twisted by a vector bundle $V \to X$ and an invertible multiplicative characteristic class $\bc$ as studied by Coates-Givental \cite{Coates-Givental}. We will mostly consider the case where $\bc$ is the inverse of the equivariant Euler class $e_\lambda$ or its variant $\te_\lambda$: 
\begin{equation} 
\label{eq:equiv_Euler} 
e_\lambda(V) = \sum_{i=0}^{\rank (V)} c_i(V) \lambda^{\rank V - i}, 
\qquad 
\te_\lambda(V) = \sum_{i=0}^{\rank V} c_i(V) \lambda^{-i} 
\end{equation} 
where $\lambda$ is the equivariant parameter of the $\C^\times$-action scaling fibres of $V$. 
Consider the universal stable map 
\[
\xymatrix{ 
C_{0,n,d} \ar[r]^{f} \ar[d]_\pi & X \\ 
X_{0,n,d} & 
}
\]
and define a virtual bundle $V_{0,n,d} := \R\pi_* (f^* V) \in K(X_{0,n,d})$ on the moduli space $X_{0,n,d}$.  The \emph{$(V,\bc)$-twisted Gromov-Witten invariants} are defined as 
\[
\corr{\alpha_1 \psi^{k_1},\dots,\alpha_n \psi^{k_n}}_{0,n,d}^{X,(V,\bc)} 
:= \int_{[X_{0,n,d}]_{\rm vir}} \left(\prod_{i=1}^n \ev_i^*(\alpha_i) \psi_i^{k_i} \right) \cup \bc(V_{0,n,d}) 
\]
where $\alpha_1,\dots,\alpha_n \in H^*(X)$, $k_1,\dots,k_n \in \N$. Note that the right-hand side is essentially a \emph{non-equivariant} integral; the torus acts on $X_{0,n,d}$ trivially and can act only on fibres of $V_{0,n,d}$.  Using the twisted Poincar\'e pairing 
\[
(\alpha_1,\alpha_2)_X^{(V,\bc)} := \int_X \alpha_1 \cup \alpha_2 \cup \bc(V)  
\]
and the twisted Gromov-Witten invariants in place of the ordinary ones, we can similarly define the quantum product \eqref{eq:qprod}, the fundamental solution \eqref{eq:fundsol} and the descendant Gromov-Witten potential \eqref{eq:descendant_GW_potential} in the $(V,\bc)$-twisted theory. 

Let $R$ be a coefficient ring of the classes $\bc^\pm$: we take $R=\C[\lambda,\lambda^{-1}]$ for $\bc= e_\lambda^{-1}$ and $R=\C[\lambda^{-1}]$ for $\bc = \te_\lambda^{-1}$ with grading given by $\deg \lambda=2$. 
The \emph{Givental space} of the twisted theory is the module 
\begin{equation} 
\label{eq:twisted_Givental_space} 
\cH^{\rm tw}_X := H^*(X)\otimes R(\!(z)\!)[\![Q]\!] 
\end{equation} 
equipped with the symplectic form $\Omega_{\rm tw}(\bbf,\bg) = \Res_{z=0}(\bbf(-z),\bg(z))_X^{(V,\bc)} dz$. 
Here we allow formal power series in $z$ \emph{infinite in the positive direction}: note that this is opposite to the previous version \eqref{eq:Givental_space}. Using the isotropic decomposition $\cH^{\rm tw}_X = \cH_+^{\rm tw} \oplus \cH_-^{\rm tw}$ with 
\[
\cH_+^{\rm tw} = H^*(X)\otimes R[\![z]\!][\![Q]\!] \quad \text{and} \quad 
\cH_-^{\rm tw} = z^{-1} H^*(X)\otimes R[z^{-1}][\![Q]\!], 
\]
we identify $\cH^{\rm tw}_X$ with the cotangent bundle of $\cH_+^{\rm tw}$ as before. 
The twisted \emph{Givental cone} $\cL_X^{\rm tw}\subset \cH^{\rm tw}_X$ is defined to be the graph of the differential of the twisted descendant Gromov-Witten potential. It consists of points of the form 
\begin{equation} 
\label{eq:point_on_the_twisted_cone} 
\cJ_X^{\rm tw}(\bt)= z + \bt(z) + \sum_{i=0}^s\sum_{\substack{n\ge 0, d\in \NEN(X) \\ (n,d)\neq (1,0)}}  \frac{\phi^i}{\bc(V)} 
\corr{\bt(-\psi),\dots,\bt(-\psi),\frac{\phi_i}{z-\psi}}_{0,n+1,d}^{X,(V,\bc)} \frac{Q^d}{n!} 
\end{equation} 
with $\bt(z) \in \cH_+^{\rm tw}$. Since the $\psi$-class is nilpotent here, this is contained in $\cH^{\rm tw}_X$. The nilpotence of $\psi$ also ensures that the functor of points as in Remark \ref{rem:functor_of_points} also makes sense. An \emph{$R[\![Q,\bx]\!]$-valued point} of $\cL_X^{\rm tw}$ is a point of the form \eqref{eq:point_on_the_twisted_cone} for $\bt(z) \in \cH_+^{\rm tw}[\![\bx]\!]$ satisfying $\bt(z)|_{Q=\bx=0} = 0$ (where we work with a discrete topology on $R$, cf.~\S\ref{subsec:QRR}); such a point lies in $\cH^{\rm tw}_X[\![\bx]\!]$ by the nilpotence of $\psi$. 



\subsection{Quantum Riemann-Roch theorem} 
\label{subsec:QRR} 
We review the quantum Riemann-Roch theorem of Coates-Givental \cite{Coates-Givental} that describes twisted Gromov-Witten invariants in terms of ordinary ones. We only need the statement at genus zero. 

Let $\bc(\cdot) = \exp(\sum_{k=0}^\infty s_k \ch_k(\cdot))$ be the universal invertible multiplicative class with parameters $\bs =(s_0,s_1,s_2,\dots)$. We set $\deg s_k = - 2k$ and take the coefficient ring $R$ for $\bc$ to be  $R=\C[\![\bs]\!]$. We can write the Givental space \eqref{eq:twisted_Givental_space} in this case as 
\[
\cH^{\rm tw}_X =  H^*(X)[z,z^{-1}] [\![Q,\bs]\!] 
\]
because we are working with the graded completion (see \S \ref{subsec:formal_power_series}). 
The quantum Riemann-Roch theorem requires us to work with a topology where the parameters $s_k$ are considered small (as described in  \cite[Appendix B]{CCIT:computing}); for any set $\bx=(x_0,x_1,x_2,\dots)$ of formal parameters, a $\C[\![Q,\bs,\bx]\!]$-valued point on $\cL_X^{\rm tw}$ is a point of the form \eqref{eq:point_on_the_twisted_cone} for  $\bt(z) \in \cH_+^{\rm tw}[\![\bx]\!]$ satisfying $\bt(z)|_{Q=\bx=\bs=0}=0$. 
The \emph{quantum Riemann-Roch operator} $\Delta_{(V,\bc)} \colon \cH^{\rm tw}_X \to \cH^{\rm tw}_X$ is defined to be: 
\begin{equation} 
\label{eq:QRR_operator} 
\Delta_{(V,\bc)} = \exp\left(\sum_{l,m\ge 0, l+m\ge 1} s_{l+m-1} \frac{B_m}{m!} \ch_l(V) (-z)^{m-1} \right) 
\end{equation} 
where $B_m$ are the Bernoulli numbers given by $\sum_{m=0}^\infty \frac{B_m}{m!} x^m = \frac{x}{e^x-1}$;  we have $B_0=1$, $B_1 = -\frac{1}{2}$, $B_2 = \frac{1}{6}, \dots$. 

\begin{theorem}[{\cite[Corollary 4]{Coates-Givental}}]  
\label{thm:QRR} 
The $(V,\bc)$-twisted Givental cone is given by $\cL_X^{\rm tw}=\Delta_{(V,\bc)} \cL_X$, where $\cL_X$ is the untwisted cone defined within $\cH_X^{\rm tw}$. 
\end{theorem}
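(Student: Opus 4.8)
The plan is to prove the identity in its infinitesimal form and then integrate over the parameters $\bs=(s_0,s_1,\dots)$. When $\bs=0$ we have $\bc\equiv 1$, so $\Delta_{(V,\bc)}=\id$ and the $(V,\bc)$-twisted theory coincides with the untwisted one; thus $\cL_X^{\rm tw}=\cL_X=\Delta_{(V,\bc)}\cL_X$ at $\bs=0$ (where $\cL_X$ is the untwisted cone base-changed to $\C[\![Q,\bs]\!]$ inside $\cH_X^{\rm tw}$). It therefore suffices to show that the two families $\bs\mapsto \cL_X^{\rm tw}$ and $\bs\mapsto \Delta_{(V,\bc)}\cL_X$ of subsets of $\cH_X^{\rm tw}$ satisfy the same first-order differential equation in each $s_k$. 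Using the functor-of-points description of the cone adapted to the twisted setting --- the nilpotence of $\psi$ makes a $\C[\![Q,\bs,\bx]\!]$-valued point of $\cL_X^{\rm tw}$ a well-defined notion, cf.~\S\ref{subsec:QRR} --- this amounts to differentiating a general point $\cJ_X^{\rm tw}(\bt)$ of the form \eqref{eq:point_on_the_twisted_cone} with respect to $s_k$ and checking that the result, after subtracting the infinitesimal loop-group action generated by $(\partial_{s_k}\Delta_{(V,\bc)})\circ\Delta_{(V,\bc)}^{-1}=\sum_{l+m=k+1}\frac{B_m}{m!}\ch_l(V)(-z)^{m-1}$, remains tangent to $\cL_X^{\rm tw}$.

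The geometric heart of the argument is Grothendieck--Riemann--Roch for the universal stable map $\pi\colon C_{0,n,d}\to X_{0,n,d}$, $f\colon C_{0,n,d}\to X$. Differentiating a twisted correlator in $s_k$ inserts the class $\ch_k(V_{0,n,d})=\ch_k(\R\pi_*f^*V)$, and GRR (in the form due to Mumford and Faber--Pandharipande) gives
\[
\ch\bigl(\R\pi_*f^*V\bigr)=\pi_*\Bigl(\ch(f^*V)\cdot\mathrm{Td}^{\vee}(\omega_\pi)\Bigr),
\]
where $\omega_\pi$ is the relative dualizing sheaf of the family of prestable curves. Expanding the Todd class and using the standard comparison formulas relating $\omega_\pi$ (equivalently, the $\psi$-classes pulled back from the boundary) to the cotangent lines $\psi_i$ at the marked points and to the classes of the node divisors, one finds that $\ch_k(V_{0,n,d})$ decomposes into: (i) a bulk term $\pi_*(f^*\ch_{k+1}(V))$; (ii) corrections supported on the sections $\sigma_i$ at the marked points, carrying powers $\psi_i^{m-1}$ with coefficients $\frac{B_m}{m!}$ originating from the Bernoulli expansion $\frac{x}{1-e^{-x}}=\sum_{m\ge 0}\frac{B_m}{m!}(-x)^m$; and (iii) corrections supported on the boundary divisors along which the universal curve acquires a node. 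This is precisely where the coefficients $s_{l+m-1}\frac{B_m}{m!}\ch_l(V)(-z)^{m-1}$ of $\Delta_{(V,\bc)}$ in \eqref{eq:QRR_operator} come from.

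It remains to match these three contributions with the infinitesimal symplectic conjugation by $\log\Delta_{(V,\bc)}$. The marked-point corrections (ii) with $m\ge 1$, being polynomial in $\psi$, translate into the ``upper-triangular'' part of the loop-group element acting on the $z$-expansion of $\cJ_X^{\rm tw}(\bt)$; the bulk term (i) together with the $m=0$ piece $\ch_{k+1}(V)(-z)^{-1}$ accounts for the fact that the twisted Poincar\'e pairing $(\,\cdot\,,\,\cdot\,)_X^{(V,\bc)}$ differs from the untwisted one, supplying the $z^{-1}$ part of $\log\Delta_{(V,\bc)}$; and the node contributions (iii), resummed over all boundary strata via the splitting/degeneration axiom for the virtual class, produce a quadratic recursion among the correlators which is exactly the one solved by conjugating an overruled cone by the $z$-series $\Delta_{(V,\bc)}$ (the standard ``dilaton shift plus quadratic recursion $\Longleftrightarrow$ $z$-series calibration'' dictionary for the Givental cone, cf.~\S\ref{subsec:Givental_cone}). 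Comparing the resulting differential equation for $\cL_X^{\rm tw}$ with $\partial_{s_k}(\Delta_{(V,\bc)}\cL_X)=\bigl((\partial_{s_k}\Delta_{(V,\bc)})\Delta_{(V,\bc)}^{-1}\bigr)(\Delta_{(V,\bc)}\cL_X)$ and using uniqueness of solutions with the common initial value at $\bs=0$ (the flows in the various $s_k$ commute, since $\Delta_{(V,\bc)}$ depends on $\bs$ only through a single exponential), we conclude $\cL_X^{\rm tw}=\Delta_{(V,\bc)}\cL_X$.

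The main obstacle is step (iii): keeping track of all the boundary/node terms in the GRR computation and proving that, once summed over boundary strata using the splitting axiom, they assemble into precisely the quadratic recursion equivalent to the $z$-series conjugation. This is where all of the genuine combinatorics of the theorem resides; steps (i) and (ii) are essentially formal once the GRR formula and the $\psi$-comparison formulas are in hand.
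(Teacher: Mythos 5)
The paper does not prove this theorem --- it is imported as a black box, with the citation to [Coates--Givental, Corollary~4] standing in for a proof --- so there is no in-paper argument to compare against. Your sketch follows the strategy of the original Coates--Givental proof: fix the initial value at $\bs=0$, differentiate in $s_k$ to insert $\ch_k(V_{0,n,d})$ into the twisted correlators, apply Mumford/Grothendieck--Riemann--Roch for the universal curve to split this insertion into bulk, section, and boundary/node contributions, and then identify the resulting variation of $\cL_X^{\rm tw}$ with the infinitesimal conjugation by $\log\Delta_{(V,\bc)}$.

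Two cautions. First, the assignment of roles to the three GRR pieces is stated a bit too confidently: in the actual Coates--Givental bookkeeping the $z^{-1}$ term of $\log\Delta_{(V,\bc)}$ does not ``account for the twisted Poincar\'e pairing'' --- that change of pairing is already absorbed into the twisted dual basis $\phi^i/\bc(V)$ appearing in \eqref{eq:point_on_the_twisted_cone} --- and the $m=0$ piece arises, like the $m\ge 1$ pieces, from the bulk/section pushforwards after the projection formula and the generalized string/dilaton equations are used to trade $\psi$-classes for powers of $z$. Second, and more seriously, you correctly identify step (iii) --- resumming the node contributions via the splitting axiom and matching them against the quadratic recursion encoded by conjugating an overruled cone by a $z$-series --- as the crux, but you leave it entirely unexecuted. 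That step carries essentially all of the content of the theorem, so what you have is an honest roadmap of the known proof rather than a self-contained argument. For the purposes of this paper, where the theorem is simply invoked, that level of detail is appropriate; but the proposal should not be mistaken for a proof.
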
 

\begin{remark} 
\label{rem:e_te_twist}
Technically speaking, Theorem \ref{thm:QRR} does not apply directly to $\bc = e_\lambda^{-1}$ where the corresponding parameter $s_0=-\log \lambda$ is not ``small": 
\[
s_k = \begin{cases} 
- \log \lambda &\text{for  $k=0$}; \\ 
(-1)^k (k-1)! \lambda^{-k} & \text{for $k>0$}. 
\end{cases} 
\]
There is no such problem with $\bc=\te_\lambda^{-1}$ where $s_0 =0$ and $s_k$ with $k>0$ is the same as above. To determine the $e_\lambda^{-1}$-twisted cone, we can use the following relationship between the big $J$-functions $\cJ_X^{\rm tw}(\bt)$, $\tcJ_X^{\rm tw}(\bt)$, respectively, of the $(V,e_\lambda^{-1})$- and $(V,\te_\lambda^{-1})$-twisted theories 
\[
\cJ_X^{\rm tw}(\bt) = \tcJ_X^{\rm tw}(\bt)\bigr|_{Q \to Q \lambda^{-c_1(V)}} 
\]
where $Q\to Q\lambda^{-c_1(V)}$ means to replace $Q^d$ with $Q^d \lambda^{-c_1(V) \cdot d}$. 
Note that the Novikov variables have different degrees between these theories. The degree of $Q^d$ equals $2 c_1(X)\cdot d$ in the $(V,\te_\lambda^{-1})$-twisted theory and equals $2 (c_1(X) +c_1(V))\cdot d$ in the $(V,e_\lambda^{-1})$-twisted theory. 
\end{remark} 

\begin{remark} 
\label{rem:more_than_one_twists}
Later we consider a more general fibrewise $T$-action on a vector bundle $V\to X$ and the Gromov-Witten invariants twisted by $V$ and the inverse equivariant Euler class $e_T^{-1}$. Decomposing $V$ into $T$-eigenbundles $V_\alpha$, we can calculate $e_T^{-1}$-twisted invariants by the following (straightforward) generalization of Theorem \ref{thm:QRR}: the Givental cone twisted by several bundles $V_\alpha$ and classes $\bc_\alpha$ is given by $\cL^{\rm tw}_X= (\prod_\alpha \Delta_{(V_\alpha,\bc_\alpha)})\cL_X$, see \cite[Theorem 1.1]{Tonita:twisted_orbifoldGW}. 
\end{remark}

\section{Geometry of blowups} 
\label{sec:geom_blowup}

\subsection{Blowup as a variation of GIT quotient} 
\label{subsec:GIT} 
Let $X$ be a smooth projective variety and $Z \subset X$ be a smooth subvariety of codimension $r$. We describe the blowup $\varphi\colon \tX\to X$ along $Z$ as a variation of GIT quotient, following \cite[Example 3.5.5]{BFK:VGIT} (see also \cite{Guillemin-Sternberg:birational,Lerman:cut}). 

Let $\hvarphi \colon W\to X\times \PP^1$ be the blowup of $X\times \PP^1$ along $Z\times \{0\}$. We identify $\tX$ with the strict transform of $X\times \{0\}$ in $W$ and $X$ with the subspace $X\times \{\infty\} \subset W$. 
We denote by $D$ the exceptional divisor of $\tX \to X$ and by $\hD$ the exceptional divisor of $W \to X\times \PP^1$. Let $\cN_{Z/X} \to Z$ be the normal bundle of $Z$ in $X$. Then $\hD\cong \PP(\cN_{Z/X}\oplus 1)$ contains $D \cong \PP(\cN_{Z/X})$ as the infinity divisor and $\hD$ intersects $\tX$ transversely along $D$. The strict transform of $Z\times \PP^1$ in $W$ is $Z\times \PP^1$ itself and we regard $Z\times \PP^1$ as the subspace of $W$. We also identify $Z$ with the subspace $Z\times \{0\} = (Z\times \PP^1) \cap \hD$ of $W$. See Figure \ref{fig:W}. 

\begin{figure}[t] 
\centering 
\begin{tikzpicture}[x=0.7cm, y=0.7cm]
\draw (0,5) -- (8,5) -- (10,6) -- (2,6) -- (0,5);
\draw (0,0) -- (8,0) -- (10,1) -- (5.6,1); 
\draw (4.4,1) -- (2,1) -- (0,0);
\filldraw (5,5.5) circle [radius=0.05]; 
\draw[thick, dotted] (5,5.5) -- (5,4.9); 
\draw (5,4.9) -- (5,3); 
\filldraw (5,3) circle [radius =0.05]; 
\draw[thick,dotted] (5.5,0.5) arc [x radius = 0.5, y radius =0.2, start angle=0, end angle =180];
\draw[thick] (4.5,0.5) arc [x radius =0.5, y radius =0.2, start angle =180, end angle = 360]; 
\draw (5,3) .. controls (4.6,2.9) .. (4.5,0.5) ;
\draw (5,3) .. controls (5.4,2.9) .. (5.5,0.5) ;
\draw (-0.5,0.5) node {$\tX$}; 
\draw (-1.5,5.5) node {$X= X\times \{\infty\}$}; 
\draw (4,0.5) node {$D$}; 
\draw (5,1.8) node {$\hD$}; 
\draw (4.3,3) node {$Z$}; 
\draw (6,4.3) node {$Z\times \PP^1$}; 
\draw[->] (14,0)--(14,6.5); 
\filldraw (14,5.5) circle [radius=0.05]; 
\filldraw (14,3) circle [radius =0.05]; 
\filldraw (14,0.5) circle [radius =0.05]; 
\draw (14.3,5.5) node {$t$}; 
\draw (14.3,3) node {$\epsilon$}; 
\draw (14.3,0.5) node {$0$}; 
\draw[dotted] (10,5.5) -- (14,5.5); 
\draw[dotted] (10,3) -- (14,3); 
\draw[dotted] (10,0.5) -- (14,0.5); 
\draw[->] (11,4) -- (12,4); 
\draw (11.5,4.3) node {$\mu$}; 
\end{tikzpicture} 
\caption{$W = \Bl_{Z\times \{0\}} (X\times \PP^1)$ and a moment map $\mu \colon W\to \R$}
\label{fig:W}
\end{figure} 

The subspace $X = X\times \{\infty\} \subset W$ has the Zariski open neighbourhood $X\times (\PP^1\setminus \{0\})$, $\tX \subset W$ has the Zariski open neighbourhood $W\setminus (X\cup (Z\times \PP^1))$ isomorphic to the total space of the line bundle $\cO_{\tX}(-D)$ and $Z\subset W$ has the normal bundle $\cN_{Z/X} \oplus 1$. 

The spaces $X$, $\tX$ arise as the GIT quotients of $W$ for different stability conditions. We henceforth denote by $T$ the algebraic torus $\C^\times$ of rank 1. Consider the $T$-action on $X\times \PP^1$ given by the standard $\C^\times$-action $v \mapsto \lambda v$ on $\PP^1=\C\cup \{\infty\}$ and the trivial one on $X$. This $T$-action naturally lifts to $W$ and the $T$-fixed locus is given by $W^{T} = X \sqcup Z \sqcup \tX$. There are two possibilities of nonempty GIT quotients (without strictly semistable loci): the stable loci 
\begin{align} 
\label{eq:stable_loci} 
\begin{split} 
W^\st_X &= W \setminus (X \cup \hD \cup \tX) = X\times \C^\times\\ 
W^\st_\tX &= W \setminus (X \cup (Z\times \PP^1) \cup \tX) = \Tot(\cO_\tX(-D))\setminus \tX 
\end{split} 
\end{align} 
yield the quotients $X=W^\st_X/T$, $\tX = W^\st_\tX/T$ respectively. 

\subsection{Symplectic picture} 
\label{subsec:symp} The above GIT quotients can be described also as symplectic quotients (see \cite{Kirwan:coh_quotient}). Let $N^1(X) \subset H^2(X,\Z)$ and $N^1_{T}(W) \subset H^2_{T}(W,\Z)$ denote the (equivariant) N\'eron-Severi groups; they are the images of $c_1\colon \Pic(X)\to H^2(X,\Z)$ and $c_1^T\colon \Pic^{T}(W) \to H^2_{T}(W,\Z)$ respectively. Let $\omega \in N^1(X)_\R=N^1(X)\otimes\R$ be an ample class. Let $[X], [\hD] \in N^1_{T}(W)$ be the classes of the divisors $X=X\times \{\infty\}$, $\hD$ respectively. Consider the equivariant class 
\[
\homega = \hvarphi^*\pr_1^*\omega + t[X] - \epsilon [\hD] \in N^1_{T}(W)_\R = N^1_T(W) \otimes \R
\]
where $\pr_1 \colon X\times \PP^1\to X$ is the first projection and $t,\epsilon>0$. When $\epsilon>0$ is sufficiently small, the non-equivariant limit of $\homega$ is ample and hence represented by an $S^1$-invariant K\"ahler form $\alpha$. 
A de Rham representative of $\homega$ in the Cartan model\footnote{The Cartan model for $S^1$-equivariant cohomology is the complex $\Omega^*(W)^{S^1} [\lambda]$ equipped with the differential $d - \lambda \iota$, where $\iota$ is the contraction with respect to the fundamental vector field of the $S^1$-action.} is given by an equivariantly closed 2-form $\alpha - \lambda \mu$, where $\mu \colon W\to \R$ is the moment map with respect to $\alpha$ and $\lambda$ is the equivariant parameter. The values of $\mu$ on the $S^1$-fixed loci are determined by the cohomology class $\homega=[\alpha -\lambda \mu]$; we have $\mu|_{\tX}= 0$, $\mu|_Z = \epsilon$ and $\mu|_X = t$. Then we can describe $X$, $\tX$ as symplectic quotients: 
\[
\mu^{-1}(a)/S^1 \cong 
\begin{cases}
X & \text{when $\epsilon<a<t$;} \\
\tX & \text{when $0<a<\epsilon$.}  
\end{cases}
\]
\subsection{Equivariant divisor (curve) classes of $W$} 
\label{subsec:N1_W}
We fix a presentation of the equivariant N\'eron-Severi group $N^1_{T}(W)$ and its dual $N_1^T(W)$. Using the class $[X]\in N^1_{T}(X\times \PP^1)$ of the divisor $X=X\times \{\infty\}$, we have the splitting:  
\[
N^1_{T}(X\times \PP^1) = \pr_1^* N^1(X) \oplus \Z [X]  \oplus \Z \lambda.  
\]
We also have  
\[
N^1_{T}(W) = \hvarphi^* N^1_{T}(X\times \PP^1) \oplus \Z (-[\hD]).  
\]
Combining these isomorphisms, we get a decomposition 
\begin{equation} 
\label{eq:2nd_cohomology} 
N^1_T(W) = \hvarphi^*\pr_1^* N^1(X) \oplus \Z [X] \oplus \Z (-[\hD])\oplus \Z \lambda \cong 
 N^1(X) \oplus \Z^3
\end{equation} 
where an element $(\omega, t,\epsilon,a)$ on the right-hand side corresponds to the class $\hvarphi^* \pr_1^*\omega + t[X] - \epsilon [\hD] + a \lambda \in N^1_{T}(W)$. The divisor class $[\tX]$ equals $[X]-[\hD]+\lambda=(0,1,1,1)$.  
We also consider the dual decomposition of the equivariant curve class group (see \S\ref{subsec:equiv_curve_classes}): 
\begin{equation} 
\label{eq:2nd_homology} 
N_1^T(W) \cong N_1(X) \oplus \Z^3.  
\end{equation} 
An element $(d,k,l,m)$ on the right-hand side corresponds to the class $\beta\in N_1^{T}(W)$ with $\pr_{1*}\hvarphi_* \beta = (d,m)\in N_1^T(X)=N_1(X)\oplus \Z$, $[X] \cdot \beta = k$ and $-[\hD] \cdot \beta = l$. The subgroup $N_1(W)\subset N_1^T(W)$ consists of elements of the form $(d,k,l,0)$. See Table \ref{tab:homology_classes} for various curve classes in this presentation. 

\begin{table}[t]
\caption{Curve classes in the presentation \eqref{eq:2nd_homology} and Novikov variables.} 
\label{tab:homology_classes} 
\begin{tabular}{lll} 
\toprule
Class & Variable & Description \\ \midrule 
$(d,0,0,0)$ & $Q^d$ & the push-forward $i_{X*}d$ along $i_X\colon X\times \{\infty\}\hookrightarrow W$ \\
$(0,1,0,0)$ & $x$ & the class of a line $\pt\times \PP^1\subset W$ with $\pt \in X\setminus Z$\\
$(0,0,1,0)$ & $y$ & the class of a line in the fibre of $\hD=\PP(\cN_{Z/X}\oplus 1) \to Z$ \\ 
$(0,1,-1,0)$ &  $xy^{-1}$ & the class of a line $\pt\times \PP^1$ contained in $Z\times \PP^1\subset W$ \\ 
$(0,0,0,1)$ & $S$ & the generator of $H_2^{T}(\pt)\hookrightarrow N_1^{T}(W)$ for $\pt\in\tX\subset W$ \\ 
\bottomrule
\end{tabular} 
\end{table} 

We write $Q^d, x, y, S$ for the elements of the group ring $\C[N_1^{T}(W)]$ corresponding respectively to $(d,0,0,0)$, $(0,1,0,0)$, $(0,0,1,0)$, $(0,0,0,1)$ under the isomorphism \eqref{eq:2nd_homology}. The variables $Q,x,y$ play the role of the Novikov variables of $W$ and we denote them collectively by $\QW=(Q,x,y)$. 
We also use a letter $\hS=(\QW,S)$ to denote the variable of the group ring $\C[N_1^T(W)]$.  
The degrees of $Q^d,x,y,S$ are given by $2 c_1^T(W)$; by Lemma \ref{lem:c_1(W)} below, we have $\deg Q^d = 2 c_1(X)\cdot d$, $\deg x = 4$, $\deg y = 2r$ and $\deg S = 2$. We will also use the variable $\frq := y S^{-1}$ later (see \S\ref{subsec:Fourier_projections}). 

\begin{lemma} 
\label{lem:NEN_W} 
The monoid $\NEN(W)$ of effective curves in $W$ is generated by $(d,0,0,0)$ with $d\in \NEN(X)$,  $(\varphi_*\td,0,-[D]\cdot \td,0)$ with $\td \in \NEN(\tX)$ and $(0,1,-1,0)$. 
\end{lemma}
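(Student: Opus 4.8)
## Proof proposal for Lemma \ref{lem:NEN_W}

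The plan is to describe $\NEN(W)$ by relating effective curves in $W$ to effective curves in $X\times\PP^1$ and in the exceptional divisor $\hD$, using the structure of $W$ as a blowup.

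First I would recall the general principle for blowups: if $\hvarphi\colon W\to X\times\PP^1$ is the blowup along the smooth center $Z\times\{0\}$, then every irreducible curve in $W$ either is contained in the exceptional divisor $\hD\cong\PP(\cN_{Z/X}\oplus 1)$ or is the strict transform of an irreducible curve in $X\times\PP^1$. Correspondingly, $\NEN(W)$ is generated by the classes of curves in $\hD$ together with strict transforms of generators of $\NEN(X\times\PP^1)$. So the first step is to identify $\NEN(X\times\PP^1)$: by the K\"unneth-type description of the Mori cone of a product, $\NEN(X\times\PP^1)$ is generated by $\NEN(X)$ (fibre classes of the projection to $\PP^1$, i.e.\ curves in $X\times\{pt\}$) together with the line class $[\pt\times\PP^1]$. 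Under $\hvarphi$ these pull back/strict-transform to curve classes in $W$. A curve $d\in\NEN(X)$ sitting in $X\times\{\infty\}$ (away from the center) has class $(d,0,0,0)$, the variable $Q^d$; this handles the first family of generators. For the line $\pt\times\PP^1$, there are two cases depending on whether $\pt\in Z$ or $\pt\notin Z$: if $\pt\notin Z$ the line misses the center and its class is $(0,1,0,0)=x$; if $\pt\in Z$ then $\pt\times\PP^1$ passes through $Z\times\{0\}$, and since $Z\times\PP^1$ is its own strict transform (the center has codimension $r\ge 2$ inside it is codimension... actually $Z\times\{0\}$ has codimension $1$ in $Z\times\PP^1$, so strict transform equals total transform minus $\hD$), its class is $(0,1,-1,0)=xy^{-1}$ — this is one of the listed generators. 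Note $x=(xy^{-1})+y$, so once we have $xy^{-1}$ and the fibre classes of $\hD$, the class $x$ is automatically in the monoid generated by the others, consistent with the statement listing only three families.

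Second, I would describe the curves contained in the exceptional divisor $\hD\cong\PP(\cN_{Z/X}\oplus 1)\to Z$. The Mori cone of a projectivized bundle over $Z$ is generated by the fibre line class of $\PP(\cN_{Z/X}\oplus 1)\to Z$ together with (lifts of) the classes in $\NEN(Z)$. The fibre line class is exactly $(0,0,1,0)=y$ by the description in Table \ref{tab:homology_classes}. For the horizontal part, a curve class $e\in\NEN(Z)$ lifts to $\hD$; but it is cleaner to push everything back to $\tX$: recall $\tX=\Bl_Z X$ with exceptional divisor $D\cong\PP(\cN_{Z/X})\subset\hD$, and every effective curve class in $\tX$ restricted/mapped into $W$ gives an effective class. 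Using the well-known description of $\NEN(\tX)$ for a blowup along a smooth center — namely $\NEN(\tX)$ is generated by strict transforms of $\NEN(X)$-curves and by curves in the exceptional divisor $D$ — and the fact that the relevant curve classes in $\hD$ over $Z$ differ from curve classes in $\tX$ only by multiples of the fibre class $y$, I would show that every effective curve in $\hD$ has class of the form $(\varphi_*\td,0,-[D]\cdot\td,0)$ (for $\td\in\NEN(\tX)$) plus a non-negative multiple of $y$. Since $y$ itself arises as $(\varphi_*\td,0,-[D]\cdot\td,0)$ for $\td$ the class of a line in $D\cong\PP(\cN_{Z/X})$ — indeed such a line $\ell$ has $\varphi_*\ell=0$ and $[D]\cdot\ell=-1$, giving $(0,0,1,0)=y$ — these are all accounted for by the second listed family. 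Conversely each listed generator is manifestly effective (exhibit an explicit curve), so the monoid they generate is contained in $\NEN(W)$.

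The main obstacle I anticipate is the bookkeeping in the second step: correctly matching the intrinsic curve classes of $\hD=\PP(\cN_{Z/X}\oplus 1)$ and of $\tX=\Bl_Z X$ with their images in $N_1^T(W)$ under the presentation \eqref{eq:2nd_homology}, and in particular verifying the intersection numbers $[X]\cdot\beta$, $-[\hD]\cdot\beta$ on the nose (e.g.\ checking $[\hD]\cdot\ell=-1$ for a line $\ell$ in a fibre of $\hD$, and that $[X]$ restricts trivially to $\hD$ and to curves inside fibres). The geometric input — that curves in a blowup are strict transforms of downstairs curves or live in the exceptional divisor, and the Mori cone of a projective bundle — is standard; the delicate part is threading it through the two nested blowups ($W$ over $X\times\PP^1$, and the induced $\tX$ over $X$) consistently. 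A clean way to organize this is to first prove the analogous (and presumably already-known or easy) statement for $\NEN(\tX)$, then leverage the fibration $\hD\cong\PP(\cN_{Z/X}\oplus 1)$ together with $\hvarphi^{-1}(X\times\{\infty\})\cong X$ to assemble $\NEN(W)$ from $\NEN(\tX)$, the fibre class $y$, and the single extra generator $xy^{-1}$ coming from curves in $Z\times\PP^1$.
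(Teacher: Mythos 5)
Your approach is genuinely different from the paper's, and it has a gap that you half-anticipate but do not close.

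The paper's proof uses the $T$-action directly: since every effective curve in $W$ can be deformed to a $T$-invariant one, it suffices to classify $T$-invariant irreducible curves. These are either (a) contained in the fixed locus $X\sqcup Z\sqcup\tX$, in which case their classes are read off immediately from the three embeddings (giving $(d,0,0,0)$ and $(\varphi_*\td,0,-[D]\cdot\td,0)$), or (b) closures of one-dimensional orbits, of which there are only a few explicit families connecting pairs of fixed components, with classes $(0,1,-1,0)$, $(0,0,1,0)$, $(0,1,0,0)$ (and the last two are already in the monoid generated by the others). This sidesteps both of the delicate points in your proposal.

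The gap in your approach: the phrase ``$\NEN(W)$ is generated by the classes of curves in $\hD$ together with strict transforms of generators of $\NEN(X\times\PP^1)$'' is not a valid step. Strict transform is defined on irreducible curves, not on classes, and it is not additive: if an irreducible curve $C_0$ has class $(d,k)$, the class of its strict transform is $(d,k,-m,0)$ where $m = [\hD]\cdot\tilde C_0$ is the multiplicity of $C_0$ along $Z\times\{0\}$, and $m$ is not determined by $(d,k)$ nor computable from the strict transforms of any chosen generators. To push your plan through you would need to (i) prove the bound $m\le k = [X]\cdot\tilde C_0$ (which does hold here, since the multiplicity along $Z\times\{0\}\subset X\times\{0\}$ cannot exceed the intersection with $X\times\{0\}$), and then (ii) explicitly exhibit $(d,k,-m,0) = k\cdot(0,1,-1,0) + (d,0,k-m,0)$ and show $(d,0,k-m,0)$ is a sum of the first two kinds of generators, e.g.\ by taking $\td$ to be $(k-m)$ fibre lines in $D$ plus the strict transform of a general curve of class $d$ disjoint from $Z$. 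None of this is in your sketch. A second, smaller issue is the claim that $\NEN(\hD)$ is generated by the fibre line class plus ``lifts of $\NEN(Z)$'' --- for a general projectivized bundle this is false (think of $\PP(\cO\oplus\cO(-1))$ over $\PP^1$, whose Mori cone needs the $(-1)$-section, not an arbitrary lift); you mitigate this by ``pushing back to $\tX$'', but then invoke a similarly imprecise description of $\NEN(\tX)$. The $T$-invariance argument avoids all of this bookkeeping, which is presumably why the paper chose it.
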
 
\begin{proof} 
Since every curve can be deformed to a $T$-invariant curve, it suffices to examine the classes of $T$-invariant curves in $W$. A $T$-invariant irreducible curve $C$ is either contained in the fixed locus $X\sqcup Z \sqcup \tX$ or the closure of a $T$-orbit. 
If $C$ is contained in $X\sqcup Z$, the class is of the form $(d,0,0,0)$ with $d\in \NEN(X)$; if $C$ is contained in $\tX$, the class is of the form $(\varphi_*\td, 0, - [D]\cdot \td,0)$ with $\td \in \NEN(\tX)$; and if $C$ is the closure of a $T$-orbit, the class is one of $(0,1,-1,0)$, $(0,0,1,0)$, $(0,1,0,0)$ (see Table \ref{tab:homology_classes}). Note that the third is the sum of the first two and $(0,0,1,0)$ equals $(\varphi_*\td,0,-[D]\cdot \td,0)$ with $\td$ being the class of a line in a fibre of $D\to Z$. 
\end{proof} 

\begin{lemma} 
\label{lem:c_1(W)} 
We have $c_1^{T}(W)=(c_1(X), 2, r, 1)$ under \eqref{eq:2nd_cohomology}, where recall that $r$ is the codimension of $Z$ in $X$. 
\end{lemma}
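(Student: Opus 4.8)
The plan is to obtain $c_1^T(W)$ from the equivariant refinement of the classical formula $K_{\Bl_C M}=\hvarphi^*K_M+(\operatorname{codim}_M C-1)[E]$ for the canonical class of a blowup, combined with an explicit computation of $c_1^T(\PP^1)$ for the given $\C^\times$-action.

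First I would assemble the two ingredients. (i) Because $W=\Bl_{Z\times\{0\}}(X\times\PP^1)$ is the blowup along the \emph{$T$-invariant} smooth center $Z\times\{0\}$, which has codimension $r+1$ in $X\times\PP^1$, the canonical-bundle formula holds $T$-equivariantly (it is a computation local over $X\times\PP^1$), giving
\[
c_1^T(W)=\hvarphi^*c_1^T(X\times\PP^1)-r[\hD].
\]
(ii) Since $T$ acts trivially on $X$ and by $v\mapsto\lambda v$ on $\PP^1=\C\cup\{\infty\}$, we have $c_1^T(X\times\PP^1)=\pr_1^*c_1(X)+\pr_2^*c_1^T(\PP^1)$, and $c_1^T(T\PP^1)=[0]+[\infty]$, where $[0],[\infty]\in H^2_T(\PP^1)$ denote the equivariant classes of the two $T$-fixed points (the tangent weights at $0$ and $\infty$ being $+1$ and $-1$). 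Restriction to the fixed points gives $[0]-[\infty]=\lambda$; as $X=X\times\{\infty\}$ misses the blowup center, $\hvarphi^*\pr_2^*[\infty]=[X]$ and $\hvarphi^*\pr_2^*[0]=[X]+\lambda$, so that $\hvarphi^*c_1^T(X\times\PP^1)=\hvarphi^*\pr_1^*c_1(X)+2[X]+\lambda$.

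Combining the two identities yields
\[
c_1^T(W)=\hvarphi^*\pr_1^*c_1(X)+2[X]-r[\hD]+\lambda,
\]
which under the identification $(\omega,t,\epsilon,a)\leftrightarrow\hvarphi^*\pr_1^*\omega+t[X]-\epsilon[\hD]+a\lambda$ of \eqref{eq:2nd_cohomology} reads exactly $(c_1(X),2,r,1)$. As a consistency check, pairing this class with the curve classes in Table \ref{tab:homology_classes} recovers the degrees $\deg Q^d=2c_1(X)\cdot d$, $\deg x=4$, $\deg y=2r$, $\deg S=2$ stated in \S\ref{subsec:N1_W}. The one point demanding care is pinning down the equivariant conventions: the sign in $[0]-[\infty]=\lambda$ hinges on the chosen normalization of the $\C^\times$-action and on the identification of $\lambda$ with the standard character, and one must apply the blowup formula in its $T$-equivariant form; neither is difficult. (Alternatively, one could restrict to the three fixed components $X$, $Z$, $\tX\subset W$ and use adjunction on each, but the blowup formula is the most direct route.)
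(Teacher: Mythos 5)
Your proof is correct and is essentially the same as the paper's: both apply the equivariant blowup formula for the canonical class (giving the $-r[\hD]$ term from the codimension-$(r+1)$ center), compute $c_1^T(X\times\PP^1)=\pr_1^*c_1(X)+\pr_2^*([0]+[\infty])$, and then use the identities $[0]-[\infty]=\lambda$ and $\hvarphi^*\pr_2^*[\infty]=[X]$ to rewrite $\pr_2^*([0]+[\infty])$ as $2[X]+\lambda$. The consistency check against the degrees of $Q^d,x,y,S$ and the remark about the alternative fixed-locus approach are extras not in the paper but do not change the method.
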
 
\begin{proof} 
Using $K_W = \hvarphi^* K_{X\times \PP^1} +r [\hD]$, we have 
\begin{align*} 
c_1^{T}(W) & = \hvarphi^*c_1^{T}(X\times \PP^1)  - r [\hD] \\
& = \hvarphi^*(\pr_1^*c_1(X) + \pr_2^*([0]+[\infty]))  - r[\hD] \\ 
& = \hvarphi^*\pr_1^*c_1(X) + 2[X] - r[\hD] + \lambda 
\end{align*} 
where $\pr_1\colon X\times \PP^1 \to X$, $\pr_2 \colon X\times \PP^1 \to \PP^1$ are the projections and $[0],[\infty]\in N^1_{T}(\PP^1)$ denote the class of the divisors $0,\infty \in\PP^1$ respectively. We used the identities $[0]-[\infty] = \lambda$, $\hvarphi^*\pr_2^*[\infty] = [X]$ in the last step. 
\end{proof}

\subsection{Wall and chamber structure of the $T$-ample cone} 
\label{subsec:T-ample_cone}
We describe the wall and chamber structure of the $T$-ample cone of $W$, studied by Dolgachev-Hu \cite{Dolgachev-Hu:VGIT}, Thaddeus \cite{Thaddeus:GIT} and Ressayre \cite{Ressayre:GIT} for general GIT quotients.  

By the Hilbert-Mumford numerical criterion, the stable locus $W^\st(\homega)$ for an equivariant ample class $\homega \in N^1_T(W)_\R$ is given as follows:  
\[
W^\st(\homega) = \left\{x \in W : - \homega|_{x_0} \in \R_{<0} \lambda,\, -\homega|_{x_\infty} \in \R_{>0} \lambda \right \} 
\]
where we set $x_0=\lim_{\lambda \to 0} \lambda \cdot x$, $x_\infty = \lim_{\lambda \to \infty} \lambda \cdot x$ for $x\in W$. Let $Y$ denote $X$ or $\tX$. We define 
\begin{equation*} 
C_Y =\{ \homega \in N^1_T(W)_\R : \text{$\homega$ is ample and $W^\st(\homega) = W^\st_Y$}\},   
\end{equation*} 
where $W^\st_Y$ is given in \eqref{eq:stable_loci}. 
In terms of symplectic quotients, if $\mu_{\homega} \colon W \to \R$ denotes the moment map associated with a de Rham representative of an ample class $\homega\in N^1_T(W)_\R$ as in \S\ref{subsec:symp}, then $\homega\in C_Y$ if and only if $0$ is the regular value of $\mu_{\homega}$ and $Y \cong \mu_{\homega}^{-1}(0)/S^1$.  
The cone 
\[
C_T(W) := \Int(\overline{C_X}\cup \overline{C_\tX})=C_X+C_\tX
\] 
consists of equivariant ample classes whose stable loci are nonempty: we call it the \emph{$T$-ample cone} of $W$. 

\begin{remark} 
Dolgachev-Hu \cite[Definition 0.2.1]{Dolgachev-Hu:VGIT} defined the $T$-ample cone to be the set of equivariant ample classes whose \emph{semistable} loci are nonempty. Our $T$-ample cone is the interior of theirs (see  \cite[Proposition 11]{Ressayre:GIT}).  
\end{remark} 

\begin{remark} 
For a $T$-variety $W$ equipped with a linearization $L\to W$, Shoemaker \cite[Theorem 1.3]{Shoemaker:Kleiman} introduced a cone $\NE(L)^\vee\subset N^1_T(W)_\R$. The cone $\NE(L)^\vee$ is dual to the ``effective cone'' in the sense of quasimap theory, and is potentially bigger than the GIT chamber $C_Y\subset C_T(W)$ associated with $Y=W/\!/_LT$. This suggests that the quasimap $J$-function can have a bigger domain of definition than the $I$-function in Conjecture \ref{conj:reduction} (under the support condition conjectured there). 
\end{remark} 

\begin{lemma} 
\label{lem:GIT_chambers} 
For an ample class $\omega \in N^1(X)_\R$, let $\varepsilon(\omega) = \max\{\epsilon\in \R : \text{$\varphi^*\omega -\epsilon [D]$ is nef\,}\}\in (0,\infty)$ be the Seshadri constant at the subvariety $Z\subset X$. Then we have under \eqref{eq:2nd_cohomology} 
\begin{align*}
C_X & = \{(\omega,t,\epsilon, a) : \text{$\omega$ is ample}, 0<\epsilon<a<t, \epsilon<\varepsilon(\omega)\},  \\
C_\tX & = \{(\omega,t,\epsilon,a) : \text{$\omega$ is ample}, 0<a<\epsilon<t, \epsilon<\varepsilon(\omega)\}.
\end{align*} 
\end{lemma}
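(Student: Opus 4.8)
The plan is to combine a computation of the ample cone of $W$, obtained by dualizing the cone of effective curves of Lemma~\ref{lem:NEN_W}, with the symplectic-reduction description of the GIT chambers recalled in \S\ref{subsec:symp} and \S\ref{subsec:T-ample_cone}. Throughout I use the presentation \eqref{eq:2nd_cohomology}, writing $\homega$ for the equivariant class attached to $(\omega,t,\epsilon,a)$.

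The first step is to decide for which $(\omega,t,\epsilon,a)$ the class $\homega$ is ample. Ampleness depends only on the image of $\homega$ in $N^1(W)_\R$, that is, on $\hvarphi^*\pr_1^*\omega+t[X]-\epsilon[\hD]$, so $a$ plays no role here; by Kleiman's criterion it suffices to pair this class against the generators of $\NEN(W)$ listed in Lemma~\ref{lem:NEN_W}. Using the pairing conventions of \S\ref{subsec:N1_W}, pairing against $(d,0,0,0)$ returns $\omega\cdot d$, pairing against $(\varphi_*\td,0,-[D]\cdot\td,0)$ returns $(\varphi^*\omega-\epsilon[D])\cdot\td$ (by the projection formula), and pairing against $(0,1,-1,0)$ returns $t-\epsilon$. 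Hence $\homega$ is ample exactly when $\omega$ is ample on $X$, $\varphi^*\omega-\epsilon[D]$ is ample on $\tX$, and $\epsilon<t$. By the definition of the Seshadri constant $\varepsilon(\omega)$ (the nef threshold of $\varphi^*\omega-\epsilon[D]$ on $\tX$) together with the standard description of the ample cone of a blowup --- $\varphi^*\omega-\epsilon[D]$ is ample for every $\epsilon$ in the open interval $(0,\varepsilon(\omega))$ --- this condition is equivalent to ``$\omega$ ample and $0<\epsilon<\min(t,\varepsilon(\omega))$'', with $a$ unconstrained.

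The second step is to compute the moment map $\mu_{\homega}$ on the fixed locus $W^T=X\sqcup Z\sqcup\tX$. A direct computation of the restrictions of $\hvarphi^*\pr_1^*\omega$, $[X]$, $[\hD]$ and $\lambda$ to the three fixed components gives $\homega|_X=\omega+(a-t)\lambda$, $\homega|_Z=\omega|_Z+(a-\epsilon)\lambda$ and $\homega|_{\tX}=\varphi^*\omega-\epsilon[D]+a\lambda$, so that $\mu_{\homega}$ takes the values $t-a$, $\epsilon-a$, $-a$ on $X$, $Z$, $\tX$ respectively (recovering the values of \S\ref{subsec:symp} when $a=0$). Equivalently, $\homega=\homega_0+a\lambda$ with $\homega_0=(\omega,t,\epsilon,0)$, so $\mu_{\homega}=\mu_{\homega_0}-a$ and $\mu_{\homega}^{-1}(0)/S^1\cong\mu_{\homega_0}^{-1}(a)/S^1$. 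The symplectic-reduction analysis of \S\ref{subsec:symp} --- which uses only that $\homega_0$ is ample, not that $\epsilon$ is small --- then identifies this quotient with $\tX$ when $0<a<\epsilon$ and with $X$ when $\epsilon<a<t$, whereas $\mu_{\homega}^{-1}(0)$ is empty for $a<0$ or $a>t$, and $0$ is a critical value of $\mu_{\homega}$ precisely when $a\in\{0,\epsilon,t\}$.

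Finally I assemble the pieces using the criterion of \S\ref{subsec:T-ample_cone}: $\homega$ lies in $C_X$ (resp.\ $C_\tX$) if and only if $\homega$ is ample, $0$ is a regular value of $\mu_{\homega}$, and the corresponding reduction is $X$ (resp.\ $\tX$). Intersecting the ampleness condition with $\epsilon<a<t$ gives $\{\,\omega\text{ ample},\ 0<\epsilon<a<t,\ \epsilon<\varepsilon(\omega)\,\}$, and intersecting it with $0<a<\epsilon$ gives $\{\,\omega\text{ ample},\ 0<a<\epsilon<t,\ \epsilon<\varepsilon(\omega)\,\}$, which are the asserted descriptions of $C_X$ and $C_\tX$. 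The step I expect to demand the most care is the ampleness computation --- specifically, identifying the nef threshold of $\varphi^*\omega-\epsilon[D]$ on $\tX$ with $\varepsilon(\omega)$ and the passage from nef to ample on $(0,\varepsilon(\omega))$. An alternative that bypasses the symplectic input entirely is to read $W^\st(\homega)$ directly off the Hilbert--Mumford criterion by following the $T$-flow (the $\lambda\to0$ and $\lambda\to\infty$ limits) on the three non-fixed strata $(X\setminus Z)\times\C^\times$, $Z\times\C^\times$ and $\hD\setminus(D\cup Z)$ of $W$, whose source and sink fixed components are $(\tX,X)$, $(Z,X)$ and $(\tX,Z)$ respectively, and comparing with the descriptions of $W^\st_X$, $W^\st_\tX$ in \eqref{eq:stable_loci}.
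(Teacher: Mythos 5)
Your proof is correct and follows essentially the same two-step strategy as the paper: first characterize when $\homega$ is ample (via Lemma~\ref{lem:NEN_W} and the Seshadri constant), then locate $a$ using the moment-map values $t-a$, $\epsilon-a$, $-a$ on $X$, $Z$, $\tX$ --- which are precisely the Hilbert--Mumford quantities $-\homega|_F/\lambda$ that the paper's proof works with directly. The only small presentational difference is that you pair against the generators of $\NEN(W)$ and invoke the Seshadri constant of $Z\subset X$ on $\tX$, while the paper passes through the Seshadri constant $\varepsilon(\omega,t)$ of $Z\times\{0\}\subset X\times\PP^1$ and shows $\varepsilon(\omega,t)=\min(\varepsilon(\omega),t)$; both routes use the same ingredients.
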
 
\begin{proof} 
Set $\homega =\hvarphi^*\pr_1^*\omega + t [X] - \epsilon[\hD] + a \lambda$. We have by definition 
\begin{align*} 
C_X &= \{ \homega : \text{$\homega$ is ample}, -\homega|_X \in \R_{>0} \lambda, -\homega|_Z \in \R_{<0}\lambda, 
-\homega|_\tX \in \R_{<0} \lambda \}, \\ 
C_\tX & = \{  \homega : \text{$\homega$ is ample}, -\homega|_X \in \R_{>0} \lambda, -\homega|_Z \in \R_{>0}\lambda, 
-\homega|_\tX \in \R_{<0} \lambda \}. 
\end{align*} 
We note the fact that $-\homega|_X =(t-a)\lambda$, $-\homega|_Z=(\epsilon-a) \lambda$, $-\homega|_\tX = -a \lambda$. The ampleness of $\homega$ implies (and is in fact equivalent to) that $\omega = \homega|_{X\times \infty}$ is ample and that $0<\epsilon<\varepsilon(\omega,t)$, where $\varepsilon(\omega,t)$ is the Seshadri constant at the subvariety $Z\times \{0\}\subset X\times \PP^1$: 
\[
\varepsilon(\omega,t) = \max\{\epsilon\in \R : \text{$\hvarphi^*\pr_1^*\omega + t[X] -\epsilon [\hD]$ is nef}\}. 
\]
Using Lemma \ref{lem:NEN_W}, we can easily find that $\varepsilon(\omega,t) = \min(\varepsilon(\omega), t)$. The presentations for $C_X$, $C_\tX$ follow from these facts. 
\end{proof}

Let $\ovNE(-)$ denote the closure of the cone $\NE(-)\subset N_1(-)_\R$ of effective curves. The dual cones $C_Y^\vee, C_T(W)^\vee \subset N_1^{T}(W)_\R$ of $C_Y$, $C_T(W)$ are given as follows (see Figure \ref{fig:Mori_cones}): 
\begin{align*}
\begin{split} 
C_X^\vee &= \ovNE(W)+\langle (0,1,0,-1), (0,0,-1,1)\rangle_{\R_{\ge 0}}, \\ 
C_\tX^\vee & = \ovNE(W) + \langle (0,0,1,-1), (0,0,0,1) \rangle_{\R_{\ge 0}}, \\ 
C_T(W)^\vee 
& =C_X^\vee \cap C_\tX^\vee  = \ovNE(W) + \langle (0,1,0,-1), (0,0,0,1)\rangle_{\R_{\ge 0}}. 
\end{split} 
\end{align*} 
We also introduce the corresponding monoids in $N_1^T(W)$ as follows: 
\begin{align}
\label{eq:dualmonoids}  
\begin{split} 
C_{X,\N}^\vee & := \NEN(W)+\langle (0,1,0,-1), (0,0,-1,1)\rangle_{\N}, \\ 
C_{\tX,\N}^\vee & := \NEN(W) + \langle (0,0,1,-1), (0,0,0,1) \rangle_{\N}, \\ 
\NEN^T(W) & :=C_{X,\N}^\vee \cap C_{\tX,\N}^\vee = \NEN(W) + \langle (0,1,0,-1), (0,0,0,1)\rangle_{\N}.  
\end{split} 
\end{align} 
where $\NEN(W)=\langle (d,0,0,0),   (\varphi_*\td, 0, -[D]\cdot \td,0),(0,1,-1,0) : d\in \NEN(X), \td\in \NEN(\tX) \rangle_{\N}$ by Lemma \ref{lem:NEN_W}. 

\begin{figure}[t]
\centering 
\begin{tikzpicture}[>=stealth, x=0.7cm, y=0.7cm]
\draw (-5,0) -- (5,0); 
\draw[->] (0,0) -- (-3,0); 
\draw[->] (0,0) -- (3,0);
\draw[->] (0,0) -- (0,3); 
\draw[->] (0,0) -- (3,1.732);
\draw[->] (0,0) --(-3,3);  
\draw (0,3.5) node {\scriptsize $\ovNE(W)$}; 
\draw (-3,-0.4) node {\scriptsize $y S^{-1}$}; 
\draw (3,-0.4) node {\scriptsize $y^{-1} S$}; 
\draw (-3.3,3.2) node {\scriptsize $x S^{-1}$}; 
\draw (3.3,1.8) node {\scriptsize $S$};

\draw [<->] (-2.2,0) arc[start angle =  180, end angle = 30, radius = 2.2]; 
\draw [<->] (1.7,0) arc[start angle =0, end angle = 135, radius = 1.7]; 
\draw (-1,2.5) node {$\tX$}; 
\draw (0.55,1.2) node {$X$}; 
\end{tikzpicture} 

\caption{A schematic picture of the cones $C_X^\vee$ and $C_\tX^\vee$ in $N_1^T(W)$.} 
\label{fig:Mori_cones} 
\end{figure} 


\subsection{Cohomology}  
\label{subsec:cohomology} 
Let $\jmath \colon D\to \tX$, $\hjmath \colon \hD \to W$ be the inclusions of the exceptional divisors and let $\pi=\varphi|_D \colon D\to Z$, $\hpi= \hvarphi|_{\hD} \colon \hD \to Z$ be the projections. Let $p= c_1(\cO_D(1))\in H^2(D)$, $\hp= c_1^T(\cO_\hD(1))\in H^2_T(\hD)$ be the relative hyperplane classes of the projective bundles $D= \PP(\cN_{Z/X}) \to Z$, $\hD = \PP(\cN_{Z/X}\oplus 1) \to Z$ respectively; we have $p= -\jmath^*[D]$ and $\hp= -\hjmath^*[\hD]$. 
Since $\varphi\colon \tX \to X$ is birational, $\varphi_* \varphi^*=\id$ on $H^*(X)$ and hence $H^*(\tX)=\varphi^*H^*(X) \oplus \Ker \varphi_*$. More precisely, we have the following additive decomposition of the (equivariant) cohomology groups of $\tX$ and $W$ (see e.g.~\cite[Theorem 7.31]{Voisin:Hodge_I} in the non-equivariant case; the extension to equivariant cohomology is straightforward). 
\begin{align}
\label{eq:cohomology_bu}
\begin{split}   
H^*(\tX) & = \varphi^*H^*(X) \oplus \bigoplus_{k=0}^{r-2} \jmath_* (p^k \pi^* H^*(Z)) \cong H^*(X)\oplus H^*(Z)^{\oplus (r-1)}, \\
H^*_T(W) &= \hvarphi^*H^*_T(X\times \PP^1) \oplus \bigoplus_{k=0}^{r-1} \hjmath_*(\hp^k \hpi^*H^{*}_T(Z)) \cong H^*_T(X\times \PP^1) \oplus H^*_T(Z)^{\oplus r},
\end{split} 
\end{align} 
where $H^*_T(X\times \PP^1) \cong H^*(X)[\,[0],[\infty]\,]/([0]\cdot [\infty])$ with $[0]-[\infty]=\lambda$ and $H^*_T(Z) = H^*(Z)[\lambda]$. Note that the second components in the decompositions \eqref{eq:cohomology_bu} are $\Ker \varphi_*$, $\Ker \hvarphi_*$ respectively.  

Equivariant cohomology classes of $W$ can be also described in terms of their localizations to the fixed loci. The restriction of a cohomology class 
\[
f=\hvarphi^*\alpha + \sum_{k=0}^{r-1} \hjmath_*(\hp^k \hpi^* \gamma_k)\in H^*_T(W)
\] 
with $\alpha \in H^*_T(X\times \PP^1)$, $\gamma_i \in H^*_T(Z)$ to the $T$-fixed loci $X=X\times \{\infty\}$, $Z$, $\tX$ are given as follows: 
\begin{align}
\label{eq:restrictions_of_f}
\begin{split}  
f_X &:= f|_X  = \alpha|_{X\times \{\infty\}},\\
f_Z &:= f|_Z  = \alpha|_{Z\times \{0\}} - \sum_{k=0}^{r-1} (-\lambda)^{k+1} \gamma_k,\\
f_\tX &:= f|_\tX  = 
\varphi^*(\alpha|_{X\times \{0\}}) + \sum_{k=0}^{r-1} \jmath_*(p^k \pi^*\gamma_k).  
\end{split} 
\end{align} 
Here we use the fact that $[\hD]|_Z = \lambda$, $\hp|_Z = -\lambda$, $\hp|_D = p$ and that the intersection $\hD \cap \tX = D$ is transversal. It follows that the restriction maps $H^*_T(W) \to H^*_T(X)$, $H^*_T(W) \to H^*_T(\tX)$ are surjective (but $H^*_T(W)\to H^*_T(Z)$ is not necessarily so). 
\begin{remark} 
The presentation of $f_\tX$ compatible with the first line of \eqref{eq:cohomology_bu} is given as follows: 
\[
f_\tX=  
\varphi^*(\alpha|_{X\times \{0\}} + \imath_*\gamma_{r-1}) + \sum_{k=0}^{r-2} \jmath_*(p^k \pi^*(\gamma_k - c_{r-1-k}(\cN_{Z/X}) \gamma_{r-1}))
\]
where $\imath\colon Z\to X$ is the inclusion. 
This follows from the computation of $\varphi_*g$, $\jmath^*g$ for $g=\jmath_*(p^{r-1} \pi^*\gamma_{r-1})$ using $\pi_*(p^{r-1}) = 1$ and the relation $p^r + p^{r-1} \pi^* c_1(\cN_{Z/X}) + \cdots + \pi^*c_{r}(\cN_{Z/X})=0$ in $H^*(D)$.  
\end{remark} 

\begin{lemma} 
\label{lem:GKZ_type} 
For a subset $A$ of $W$, we write $i_A \colon A \to W$ for the inclusion map. For $f\in H^*_T(W)$, we have the following:
\begin{itemize} 
\item[(1)] $f_X|_Z - f_Z$ is divisible by $\lambda$ in $H^*(Z)[\lambda]$.   
\item[(2)] $f_\tX|_D- \pi^* f_Z$ is divisible by $\lambda+p$ in $H^*_T(D)$, or equivalently, when we regard $g(p,\lambda)= f_\tX|_D - \pi^*f_Z$ as a polynomial in $p$ and $\lambda$ with coefficients in $H^*(Z)$, $g(-\lambda,\lambda)$ is divisible by $e_{-\lambda}(\cN_{Z/X})$ in $H^*(Z)[\lambda]$, where $e_\lambda$ is the equivariant Euler class \eqref{eq:equiv_Euler}. 
\item[(3)] Suppose $f_X=0$. Then $f_Z$ is divisible by $\lambda$ by part (1) and we have 
\[
f=\hjmath_* (\hpi^*f_Z/\lambda) + i_{\tX*}g
\] 
for some $g \in H^*_T(\tX)$. In particular, if $f_X=f_Z=0$, then $f_\tX$ is divisible by $\lambda - [D]$. 

\item[(4)] Suppose $f_\tX = 0$. Then $f_Z$ is divisible by $e_{-\lambda}(\cN_{Z/X})$ by part $(2)$ and we have 
\[
f= i_{Z\times \PP^1*} (\pr_1^*f_Z/e_{-\lambda}(\cN_{Z/X})) + i_{X*} g
\] 
for some $g \in H^*_T(X)$. In particular, if $f_\tX = f_Z= 0$, then $f_X$ is divisible by $\lambda$. 
\end{itemize} 
\end{lemma}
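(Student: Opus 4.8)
The plan is to prove (1)--(2) by direct substitution into the restriction formulas \eqref{eq:restrictions_of_f}, and (3)--(4) by subtracting an explicit correction term and identifying the remainder with the image of a Gysin push-forward, using the additive decomposition \eqref{eq:cohomology_bu}. Throughout write $f=\hvarphi^*\alpha+\sum_{k=0}^{r-1}\hjmath_*(\hp^k\hpi^*\gamma_k)$ with $\alpha\in H^*_T(X\times\PP^1)$ and $\gamma_k\in H^*_T(Z)$, and expand $\alpha=\pr_1^*\tilde a+(\pr_1^*\tilde b)\,\pr_2^*[\infty]$ with $\tilde a,\tilde b\in H^*_T(X)$ (so $\{1,\pr_2^*[\infty]\}$ is an $H^*_T(X)$-basis of $H^*_T(X\times\PP^1)$). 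For (1), since $\pr_2^*[\infty]|_{X\times\{\infty\}}=-\lambda$ and $\pr_2^*[\infty]|_{X\times\{0\}}=0$, restricting to $Z$ gives $f_X|_Z-\alpha|_{Z\times\{0\}}=-\lambda\,\tilde b|_Z$, so $f_X|_Z-f_Z=-\lambda\,\tilde b|_Z+\sum_k(-\lambda)^{k+1}\gamma_k$ is divisible by $\lambda$. For (2), use $\varphi\circ\jmath=\imath\circ\pi$ and $\jmath^*\jmath_*\eta=-p\,\eta$ to get $f_\tX|_D=\pi^*(\alpha|_{Z\times\{0\}})-\sum_k p^{k+1}\pi^*\gamma_k$; subtracting $\pi^*f_Z$ leaves $-\sum_k(p^{k+1}-(-\lambda)^{k+1})\pi^*\gamma_k$, which is divisible by $p+\lambda$. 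The equivalent reformulation is the elementary fact that, in $H^*_T(D)=H^*(Z)[\lambda][p]/(\prod_j(p+\rho_j))$, a polynomial representative of a class is divisible by $p+\lambda$ iff its value at $p=-\lambda$ is divisible by $\prod_j(\rho_j-\lambda)=e_{-\lambda}(\cN_{Z/X})$; this follows from the division algorithm together with the divisibility of $\prod_j(p+\rho_j)-\prod_j(-\lambda+\rho_j)$ by $p+\lambda$.

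For (3), assume $f_X=0$, so $f_Z/\lambda\in H^*_T(Z)$ by (1), and set $f'=f-\hjmath_*(\hpi^*(f_Z/\lambda))$. Using $\hD\cap X=\emptyset$, $\hjmath^*[\hD]=-\hp$, $\hp|_Z=-\lambda$ and the transversality $\hD\cap\tX=D$, one checks $f'|_X=0$, $f'|_Z=0$ and $f'|_\tX=f_\tX-\jmath_*(\pi^*(f_Z/\lambda))$. It then suffices to prove that \emph{every} $h\in H^*_T(W)$ with $h|_X=h|_Z=0$ lies in $i_{\tX*}H^*_T(\tX)$: indeed $h|_X=0$ forces the $H^*_T(X\times\PP^1)$-component of $h$ to be a multiple $(\pr_1^*\tilde b)\,\pr_2^*[0]$, and the blowup identity $\hvarphi^*\pr_2^*[0]=[\tX]+[\hD]$ then gives $h=i_{\tX*}(\varphi^*\tilde b)+\hjmath_*\bigl(\hpi^*(\tilde b|_Z)+\sum_k\hp^k\hpi^*\gamma_k\bigr)$; now $h|_Z=0$ yields $\tilde b|_Z=-\sum_k(-\lambda)^k\gamma_k$, so the second summand equals $\hjmath_*\bigl(\sum_k(\hp^k-(-\lambda)^k)\hpi^*\gamma_k\bigr)=\hjmath_*\bigl((\hp+\lambda)\eta'\bigr)=[\tX]\,\hjmath_*\eta'$ for some $\eta'\in H^*_T(\hD)$ (divisibility of $\hp^k-(-\lambda)^k$ by $\hp+\lambda=\hjmath^*[\tX]$, then the projection formula), which lies in $[\tX]\,H^*_T(W)=i_{\tX*}H^*_T(\tX)$ since $i_\tX^*$ is surjective. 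Applying this to $h=f'$ gives the decomposition of (3); when moreover $f_Z=0$ we get $f=i_{\tX*}g$, hence $f_\tX=[\tX]|_\tX\,g=(\lambda-[D])g$ since $[\tX]|_\tX=([X]-[\hD]+\lambda)|_\tX=\lambda-[D]$.

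Part (4) is dual. Assume $f_\tX=0$, so $f_Z/e_{-\lambda}(\cN_{Z/X})\in H^*_T(Z)$ by (2), and set $f''=f-i_{Z\times\PP^1*}\bigl(\pr_1^*(f_Z/e_{-\lambda}(\cN_{Z/X}))\bigr)$. Using the disjointness $\tX\cap(Z\times\PP^1)=\emptyset$ (the two strict transforms meet $\hD$ in the disjoint sections $D$ and $Z$ of $\hD=\PP(\cN_{Z/X}\oplus 1)$), the fact that $\cN_{(Z\times\PP^1)/W}|_Z$ has equivariant Euler class $e_{-\lambda}(\cN_{Z/X})$, and the transversality $(Z\times\PP^1)\cap X=Z\times\{\infty\}$, one checks $f''|_\tX=0$, $f''|_Z=0$ and $f''|_X=f_X-\imath_*(f_Z/e_{-\lambda}(\cN_{Z/X}))$. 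It suffices to prove that every $h$ with $h|_\tX=h|_Z=0$ lies in $i_{X*}H^*_T(X)$: writing $h$ via \eqref{eq:cohomology_bu} and using the presentation of $f_\tX$ compatible with the first line of \eqref{eq:cohomology_bu} (the Remark following \eqref{eq:restrictions_of_f}), $h|_\tX=0$ forces $\gamma_k=c_{r-1-k}(\cN_{Z/X})\gamma_{r-1}$ for $k\le r-2$ and $\alpha|_{X\times\{0\}}=-\imath_*\gamma_{r-1}$; then $h|_Z=0$, together with $\imath^*\imath_*\gamma=c_r(\cN_{Z/X})\gamma$ and the identity $c_r(\cN_{Z/X})+\sum_k(-\lambda)^{k+1}c_{r-1-k}(\cN_{Z/X})=e_{-\lambda}(\cN_{Z/X})$, forces $\gamma_{r-1}\,e_{-\lambda}(\cN_{Z/X})=0$; since $e_{-\lambda}(\cN_{Z/X})$ is monic in $\lambda$ up to sign, hence a non-zero-divisor in $H^*(Z)[\lambda]$, we get $\gamma_{r-1}=0$, all $\gamma_k=0$ and $\alpha|_{X\times\{0\}}=0$, whence $\alpha=(\pr_1^*\tilde b)\,\pr_2^*[\infty]$ and $h=\hvarphi^*\alpha=[X]\,\hvarphi^*\pr_1^*\tilde b=i_{X*}\tilde b$. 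Applying this to $h=f''$ gives the decomposition of (4); when moreover $f_Z=0$ we get $f=i_{X*}g$, so $f_X=[X]|_X\,g=-\lambda\,g$.

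The delicate points are all bookkeeping: getting the equivariant line-bundle identities on $W$ right --- notably $\hvarphi^*\pr_2^*[0]=[\tX]+[\hD]$, $\hjmath^*[\tX]=\hp+\lambda$, $[\tX]|_\tX=\lambda-[D]$, $[X]|_X=-\lambda$ and the Euler class of $\cN_{(Z\times\PP^1)/W}|_Z$ --- and keeping straight the disjointness and transversality pattern of $X$, $Z$, $\tX$, $D$, $\hD$, $Z\times\PP^1$ in $W$, which dictates every Gysin restriction formula used. The one genuinely structural step is recognising, in (3) and (4), that after subtracting the obvious correction term the remainder is forced --- by the \emph{second} vanishing condition --- to be divisible by $\hjmath^*[\tX]=\hp+\lambda$ (respectively, that the obstruction collapses to $\gamma_{r-1}\,e_{-\lambda}(\cN_{Z/X})$), which is precisely what turns it into a push-forward from $\tX$ (respectively from $X$).
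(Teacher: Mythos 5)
Your proof is correct. Parts (1)--(2) use the same restriction-formula computation as the paper. For parts (3)--(4) you take a genuinely different route: the paper subtracts the same correction term but then establishes the divisibility of the remaining non-vanishing restriction ($f'_\tX$ by $\lambda-[D]$, respectively $f''_X$ by $\lambda$) via a localization-and-integrality argument --- namely $\int_W^T \halpha\cup f' = \int_\tX \alpha\cup f'_\tX/(\lambda-[D])$ must lie in $\C[\lambda]$ for every $\alpha\in H^*(\tX)$ --- and then concludes $f' = i_{\tX*}g$ from injectivity of restriction to $W^T$, handling (4) by "a similar argument". You instead work explicitly through the additive decomposition \eqref{eq:cohomology_bu} and the equivariant divisor identities on $W$ ($\hvarphi^*\pr_2^*[0]=[\tX]+[\hD]$, $\hjmath^*[\tX]=\hp+\lambda$, the Euler class of $\cN_{(Z\times\PP^1)/W}|_Z$ being $e_{-\lambda}(\cN_{Z/X})$), exhibiting any class whose restrictions to two of the three fixed components vanish as a Gysin pushforward from the third, with the "in particular" divisibility claims falling out as by-products rather than being the engine of the argument. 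Both proofs are complete; the paper's is shorter and handles (3) and (4) uniformly by a single integrality trick, while yours is more elementary (no localization formula for integrals) at the cost of heavier explicit bookkeeping --- which, as you yourself flag, is the bulk of the work, and it is all carried out correctly.
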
 
\begin{proof} 
Parts (1) and (2) follow from the formulae \eqref{eq:restrictions_of_f} together with the fact that $\alpha|_{Z\times \{0\}} - \alpha|_{Z\times\{\infty\}}$ is divisible by $\lambda$ for $\alpha \in H^*_T(X\times \PP^1)$ and that $\jmath^*\jmath_*\gamma = -p \gamma$ for $\gamma \in H^*_T(D)$. 


Suppose $f_X=0$ and set $f' = f - \hjmath_*(\hpi^*f_Z/\lambda) \in H^*_T(W)$. 
Then $f'_X = f'_Z = 0$. 
For any $\alpha \in H^*(\tX)$, there exists a lift $\halpha\in H^*_T(W)$ such that $\halpha|_\tX = \alpha$. 
Then the $T$-equivariant integral of $\halpha \cup f'$ is given by 
\[
\int_W^T \halpha \cup f' = \int_\tX \alpha \cup \frac{f'_\tX}{\lambda -[D]} 
\] 
by the localization formula. Since this lies in $\C[\lambda]$ for every $\alpha \in H^*(\tX)$, it follows that $f'_\tX$ is divisible by $\lambda -[D]$ in $H^*(Z)[\lambda]$. 
Part (3) follows by setting $g = f'_\tX/(\lambda -[D]) \in H^*(Z)[\lambda]$. 
Part (4) follows by a similar argument. 
\end{proof}

\subsection{Kirwan map}
\label{subsec:Kirwan} 
The Kirwan map $\kappa_Y \colon H^*_{T}(W) \to H^*(Y)$ for $Y = X$ or $\tX$ is defined to be the composition: 
\[
\kappa_Y \colon H^*_T(W) \to H^*_T(W^\st_Y) \cong H^*(Y), 
\]
where the first map is the restriction to the stable locus $W^\st_Y\subset W$ (see \eqref{eq:stable_loci}). For $\alpha \in H^*_{T}(W)$, we obtain the class $\kappa_X(\alpha)$ by restricting $\alpha$ to $X=X\times \{\infty\}$ and setting $\lambda=0$; we obtain the class $\kappa_{\tX}(\alpha)$ by restricting $\alpha$ to $\tX$ and setting $\lambda = [D]$ (recall that the normal bundle of $\tX$ is $\cO_\tX(-D)$). The Kirwan map is known to be surjective in general \cite{Kirwan:coh_quotient}; in the case at hand this fact follows from the surjectivity of $i_{X}^*$, $i_{\tX}^*$. 
The Kirwan map on $H^2_T(W)$ is given by 
\begin{align*} 
\kappa_X\colon \hvarphi^*\pr_1^*\alpha &\longmapsto \alpha 
& \kappa_\tX \colon \hvarphi^*\pr_1^*\alpha & \longmapsto \varphi^*\alpha \\ 
[X] &\longmapsto 0 
& [X] & \longmapsto  0 \\ 
-[\hD] & \longmapsto 0 
& -[\hD] & \longmapsto -[D] 
\\ 
\lambda & \longmapsto 0 
& \lambda & \longmapsto [D]
\end{align*} 
for $\alpha \in H^2(X)$. The Kirwan map $\kappa_Y$ maps the cone $C_Y\subset N_T^1(W)_\R$ to the ample cone $C(Y)\subset N^1(Y)_\R$ of $Y$. The map $\kappa_X \colon C_X \to C(X)$ is surjective but $\kappa_\tX \colon C_\tX \to C(\tX)$ is not necessarily so (since $\varphi^*\omega- \epsilon [D]\in N^1(\tX)_\R$ being ample does not imply $\omega\in N^1(X)_\R$ being ample in general). 

The dual Kirwan maps $\kappa_Y^* \colon N_1(Y) \to N_1^T(W)$ of $\kappa_Y \colon N^1_T(W) \to N^1(Y)$ for curve classes are given as follows: 
\begin{align*} 
\kappa_X^*\colon d &\longmapsto (d,0,0,0) & \kappa_\tX^* \colon \td & \longmapsto (\varphi_* \td, 0,-[D]\cdot \td, [D] \cdot \td). 
\end{align*} 
We have that $\kappa_Y^*(\NEN(Y)) \subset C_{Y,\N}^\vee$, see \eqref{eq:dualmonoids}. We identify the Novikov ring $\C[\![\NEN(Y)]\!]$ of $Y$ with a subring of $\C[\![C_{Y,\N}^\vee]\!]$ via $\kappa_Y^*$. Note that this identification is compatible with the grading since $\kappa_Y(c_1^T(W)) = c_1(Y)$. If we write $\tQ$ for the Novikov variable of $\tX$, then $\tQ^\td\in \C[\![\NEN(\tX)]\!]$ is identified with $Q^{\varphi_*\td} (y^{-1} S)^{[D]\cdot \td}=\QW^{i_* \td} S^{[D]\cdot \td}\in \C[\![C_{\tX,\N}^\vee]\!]$ where $i=i_\tX\colon \tX \to W$ is the inclusion. 

\begin{lemma} 
\label{lem:Kirwan_kernel} 
The kernel of the Kirwan map $\kappa_\tX \colon H^*_T(W) \to H^*(\tX)$ equals $i_{\tX*} H^*_T(\tX) + i_{X*}H^*_T(X) + i_{Z\times \PP^1*} (\pr_1^*H^*_T(Z))$. 
\end{lemma}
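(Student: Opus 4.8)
The plan is to prove the inclusion $i_{\tX*}H^*_T(\tX) + i_{X*}H^*_T(X) + i_{Z\times \PP^1*}(\pr_1^*H^*_T(Z)) \subseteq \Ker\kappa_\tX$ first, and then the reverse inclusion via a dimension count over $\C[\lambda]$. The easy direction follows from the localization description of $\kappa_\tX$ in \S\ref{subsec:Kirwan}: a class in the image of $i_{X*}$ restricts to zero on $\tX$ since $X\cap\tX=\emptyset$ in $W$, a class in the image of $i_{Z\times\PP^1*}$ restricts on $\tX$ to something proportional to $\jmath_*$ of a class on $D$ times the restriction of $[Z\times\PP^1]$, which one checks vanishes after setting $\lambda=[D]$ (this is where Lemma \ref{lem:GKZ_type}(4) and the normal bundle computation enter), and a class in the image of $i_{\tX*}$ restricts to $e_T(\cN_{\tX/W})\cup(\text{something})=(\lambda-[D])\cup(\cdots)$, which dies under $\lambda\mapsto[D]$. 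So $i_{\tX*}H^*_T(\tX)+i_{X*}H^*_T(X)+i_{Z\times\PP^1*}(\pr_1^*H^*_T(Z)) \subseteq \Ker\kappa_\tX$.

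For the reverse inclusion, the strategy is to use part (4) of Lemma \ref{lem:GKZ_type}. Given $f\in\Ker\kappa_\tX$, we have $f_\tX|_{\text{stable locus}}=0$ after setting $\lambda=[D]$; I want to upgrade this to a statement over $\C[\lambda]$ before localizing. Precisely: the condition $\kappa_\tX(f)=0$ says $f_\tX \equiv 0$ modulo the ideal $(\lambda-[D])$ in $H^*_T(\tX)=H^*(\tX)[\lambda]$ — here I use that restriction to the stable locus of $\tX$ followed by $\lambda=[D]$ is exactly reduction mod $(\lambda-[D])$ in $H^*(\tX)[\lambda]$, and that $H^*_T(\tX)$ is free over $\C[\lambda]$. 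So $f_\tX = (\lambda-[D])\cdot h$ for some $h\in H^*_T(\tX)$. Now I adjust $f$ by an element of $i_{\tX*}H^*_T(\tX)$: since $i_{\tX}^*i_{\tX*}(h) = e_T(\cN_{\tX/W})\cup h = (\lambda+\hjmath^*[\hD])|_\tX \cup h$, and $\hjmath^*[\hD]|_\tX = -p = \jmath^*[D]$, this equals $(\lambda-[D])\cup h = f_\tX$. Hence $f' := f - i_{\tX*}(h)$ satisfies $f'_\tX=0$, i.e. $f'\in\Ker i_\tX^*$. At this point Lemma \ref{lem:GKZ_type}(4) applies directly: $f' = i_{Z\times\PP^1*}(\pr_1^*f'_Z/e_{-\lambda}(\cN_{Z/X})) + i_{X*}g$ for some $g\in H^*_T(X)$, and $f'_Z/e_{-\lambda}(\cN_{Z/X})\in H^*(Z)[\lambda]$ is a genuine class by part (4). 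This exhibits $f = i_{\tX*}(h) + i_{Z\times\PP^1*}(\cdots) + i_{X*}(g)$ in the claimed sum.

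The main obstacle is the bookkeeping in the easy direction — specifically, checking that $i_{Z\times\PP^1*}(\pr_1^*\gamma)$ lies in $\Ker\kappa_\tX$. One must compute $i_\tX^*i_{Z\times\PP^1*}(\pr_1^*\gamma)$. Since $(Z\times\PP^1)\cap\tX = D$ transversally inside $W$ (as recorded in \S\ref{subsec:cohomology}), excess-intersection gives $i_\tX^*i_{Z\times\PP^1*}(\pr_1^*\gamma) = \jmath_*\big(e_T(\text{excess bundle})\cup \jmath^*\pr_1^*\gamma\big)$; the excess bundle is the quotient of $\cN_{Z\times\PP^1/W}|_D$ by $\cN_{D/\tX}$, and its $T$-equivariant Euler class, restricted along $D\to Z$, should be divisible by $(\lambda-[D])$ after the substitution — indeed the normal directions of $Z\times\PP^1$ transverse to $\tX$ carry the $\PP^1$-weight, contributing a factor with $\lambda$. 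Rather than unwinding the excess bundle explicitly, a cleaner route is: $i_{Z\times\PP^1*}(\pr_1^*\gamma)$ has zero restriction to $X$ (disjointness) and its restriction to $Z$ is divisible by $e_{-\lambda}(\cN_{Z/X})$ (since $f_Z$ for such a class is, by the second line of \eqref{eq:restrictions_of_f} and $\hp|_Z=-\lambda$, a polynomial in $\lambda$ with the right divisibility built in), so by the "in particular" clause of Lemma \ref{lem:GKZ_type}(4) — or rather a symmetric version — one sees that any class with $f_X=0$ and $f_Z$ appropriately divisible, modulo $i_{\tX*}H^*_T(\tX)$, lies in $i_{Z\times\PP^1*}(\pr_1^*H^*_T(Z))$; combined with the already-established easy inclusions this pins down $\kappa_\tX$ of it. I expect this verification, while elementary, to be the part requiring the most care.
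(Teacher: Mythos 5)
Your hard direction is correct and follows the paper's proof exactly: given $f\in\Ker\kappa_\tX$, write $i_\tX^*f=(\lambda-[D])h$ (using freeness of $H^*_T(\tX)$ over $\C[\lambda]$), subtract $i_{\tX*}h$ using $i_\tX^*i_{\tX*}h=e_T(\cN_{\tX/W})\cup h=(\lambda-[D])h$, and apply Lemma~\ref{lem:GKZ_type}(4) to $f'=f-i_{\tX*}h$.

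Your easy direction, however, contains a geometric error and a circularity. You assert $(Z\times\PP^1)\cap\tX=D$ transversally; this is false — these two subvarieties are \emph{disjoint} in $W$. Indeed $\hvarphi(\tX)\cap\hvarphi(Z\times\PP^1)=Z\times\{0\}$, so any intersection lies over the center and hence inside $\hD$; there $\tX\cap\hD=D$ is the infinity section and $(Z\times\PP^1)\cap\hD=Z$ is the zero section of $\hD=\PP(\cN_{Z/X}\oplus 1)\to Z$, and these do not meet. (You may have conflated $Z\times\PP^1$ with $\hD$, for which $\hD\cap\tX=D$ does hold, as stated in \S\ref{subsec:cohomology}.) So the entire excess-intersection discussion is moot: $i_\tX^*i_{Z\times\PP^1*}(\pr_1^*\gamma)=0$ outright. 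Your fallback ``cleaner route'' has a second error — $Z\times\PP^1$ and $X=X\times\{\infty\}$ are \emph{not} disjoint (they meet along $Z\times\{\infty\}$) — and is also circular: you are using the easy inclusions you are in the middle of proving to ``pin down'' the value of $\kappa_\tX$ on a class you already know lies in $i_{Z\times\PP^1*}(\pr_1^*H^*_T(Z))$.

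The paper disposes of the easy inclusion in one line, and it is worth internalizing the mechanism: $\tX\cup X\cup(Z\times\PP^1)$ is precisely the unstable locus $W\setminus W^\st_\tX$, and for any closed $T$-invariant submanifold $F\subset W$, the composite $H^*_T(F)\xrightarrow{i_{F*}}H^*_T(W)\to H^*_T(W\setminus F)$ vanishes by the equivariant Gysin exact sequence. Since $W^\st_\tX\subset W\setminus F$ for each of $F=\tX,\ X,\ Z\times\PP^1$, the Kirwan map (restriction to $W^\st_\tX$) kills each $i_{F*}$. No computation with $\lambda=[D]$, normal bundles, or excess classes is needed.
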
 
\begin{proof} 
It is clear that $\Ker(\kappa_\tX)$ contains  $i_{\tX*} H^*_T(\tX) + i_{X*}H^*_T(X) + i_{Z\times \PP^1*} (\pr_1^*H^*_T(Z))$ since $\tX \cup (Z\times \PP^1) \cup X$ is the unstable locus for the GIT quotient $\tX$. We show the converse. Take $f \in \Ker(\kappa_\tX)$. Then $i_\tX^*f$ lies in the ideal in $H^*_T(\tX)$ generated by $\lambda - [D]$ and can be written in the form $i_\tX^*f = (\lambda -[D]) g$ for some $g\in H^*_T(\tX)$. 
Then $i_\tX^*(f - i_{\tX*}g) = 0$. By Lemma \ref{lem:GKZ_type}(4), $f -i_{\tX*} g$ lies in  $i_{X*}H^*_T(X) + i_{Z\times \PP^1*} (\pr_1^*H^*_T(Z))$ and the conclusion follows. 
\end{proof}

\subsection{Equivariant quantum $D$-module of $W$ with shift operators} 
\label{subsec:QDM(W)} 
We fix notation for the equivariant quantum $D$-module and shift operators for $W$. 

Recall the generators $Q^d,x,y,S$ of the group ring $\C[N_1^T(W)]$ from \S\ref{subsec:N1_W}. The parameters $\QW=(Q,x,y)$ give the Novikov variable of $W$ and $S$ corresponds to the shift operator. 
We also write $\hS =(\QW,S)$. 
As before, for a (graded) module $K$, we write $K[\![\QW]\!]$ for the (graded) completion of $K[\QW]:=K[\NEN(W)]$.  
We write $\theta$ for a general point of $H^*_T(W)$ and $\btheta=\{\theta^{i,k}\}$ for the infinite set of variables for $\theta$ dual to a $\C$-basis $\{\phi_i\lambda^k\}$ of $H^*_T(W)$: they correspond respectively to $\tau$ and $\btau$ in the notation of \S\ref{subsec:equiv_qconn}. The equivariant quantum $D$-module of $W$ is the module 
\[
\QDM_T(W) := H^*_T(W) [z][\![\QW,\btheta]\!] 
\]
equipped with the equivariant quantum connection $\nabla$ and the pairing $P_W$ (given by the equivariant Poincar\'e pairing), see \S\ref{subsec:equiv_qconn}. It is also equipped with the action of shift operators $\hbS^\beta(\theta)$ for $\beta\in N_1^T(W)$, see Definitions \ref{def:shift}. Using the presentation \eqref{eq:2nd_homology}, we set 
\[
\bS(\theta) := \hbS^{(0,0,0,1)}(\theta). 
\]
The operator for a general $\beta = (d,k,l,m) \in N^T_1(W)$ is given by $\hbS^\beta(\theta) = Q^d x^k y^l \bS(\theta)^m$ by Remark \ref{rem:shift}(3). We also write $\cS = \hcS^{(0,0,0,1)}$ for the corresponding operator on the rational Givental space $\cH^{\rm rat}_W=H^*_{\hT}(W)_{\rm loc}[\![\QW]\!]$ (see Definition \ref{def:shift_Givental}). 

The section class $\sigma_F(k)\in N_1^{\rm sec}(E_k(W)) \subset N_1^T(W)$ (see Definition \ref{def:shift_Givental}) associated with a fixed component $F\subset W^T$ and $k\in \Hom(\C^\times,T) = \Z$ is given as follows: 
\begin{equation} 
\label{eq:sigma_F} 
\sigma_X(k) = (0,-k,0,k), \quad  \sigma_Z(k) = (0,0,-k,k), \quad \sigma_{\tX}(k) = (0,0,0,k). 
\end{equation} 
The maximal fixed component $F_{\rm max}(k)$ equals $X$ for $k>0$, $W$ for $k=0$ and $\tX$ for $k<0$. Thus the maximal section class $\sigma_{\rm max}(k)=\sigma_{F_{\rm max}(k)}(k)$ equals $(0,\min(-k,0),0,k)$ and the shift operator defines a map  
$\bS(\theta)^k \colon \QDM_T(W) \to x^{\min(0,k)} \QDM_T(W)$ by Definition \ref{def:shift}. 
We can also see that $\NEN^T(W)$ given in \eqref{eq:dualmonoids} coincides with the one in Proposition \ref{prop:module_over_extended_shift}. 
Proposition \ref{prop:module_over_extended_shift} immediately implies:

\begin{proposition}
\label{prop:equiv_QDM_module_over} 
The equivariant quantum $D$-module $\QDM_T(W)$ has the structure of a $\C[z][\![\NEN^T(W)]\!]$-module, where the $\C[z]$-algebra $\C[z][\![\NEN^T(W)]\!]$ is topologically generated by the Novikov variables $\QW^\delta$ with $\delta \in \NEN(W)$, $\bS(\theta)$ and $x \bS(\theta)^{-1}$ by \eqref{eq:dualmonoids}. 
\end{proposition}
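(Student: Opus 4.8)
The plan is to read this off Proposition~\ref{prop:module_over_extended_shift} applied with the $T$-variety $X$ there taken to be $W$ (and $T=\C^\times$). The only hypothesis of the second half of that proposition that needs checking is that $\C^\times$ acts on $W$ with a finite generic stabilizer. This is clear: the $\C^\times$-action on $W$ lifts the standard faithful action on the $\PP^1$-factor of $X\times\PP^1$, and it is in fact free on the open stable locus $W^\st_X=X\times\C^\times$ of \eqref{eq:stable_loci}. Hence Proposition~\ref{prop:module_over_extended_shift} gives that $\QDM_T(W)$ is a $\C[z][\![\NEN_0]\!]$-module via the extended shift operators, where $\NEN_0\subset N_1^T(W)$ denotes the monoid generated by $\NEN(W)$ together with the classes $-\sigma_{\rm max}(-k)$ for all $k\in\Hom(\C^\times,T)=\Z$.

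It then remains to identify $\NEN_0$ with the monoid $\NEN^T(W)$ of \eqref{eq:dualmonoids} and to unwind the generators. Using the formula $\sigma_{\rm max}(k)=(0,\min(-k,0),0,k)$ recorded after \eqref{eq:sigma_F}, one computes $-\sigma_{\rm max}(-k)=(0,\max(-k,0),0,k)$, which equals $k\cdot(0,0,0,1)$ when $k\ge 0$ and $(-k)\cdot(0,1,0,-1)$ when $k\le 0$. Consequently
\[
\NEN_0=\NEN(W)+\langle(0,1,0,-1),(0,0,0,1)\rangle_{\N},
\]
which is precisely the third line of \eqref{eq:dualmonoids}; this is the coincidence already mentioned before the proposition, and it also shows that $\NEN_0$ generates a strictly convex cone, so the graded completion $\C[z][\![\NEN_0]\!]$ makes sense.

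Finally, for the list of topological generators I would invoke the identification $\C[z][\![\NEN^T(W)]\!]=\C[z,\{\bS_k(\theta)\}_{k\ne 0}][\![\QW]\!]$ from the proof of Proposition~\ref{prop:module_over_extended_shift}, where $\bS_k(\theta)=\hbS^{-\sigma_{\rm max}(-k)}(\theta)$. By Remark~\ref{rem:shift}(3) and the computation above, $\bS_k(\theta)=\bS(\theta)^k$ for $k>0$ and $\bS_k(\theta)=(x\bS(\theta)^{-1})^{-k}$ for $k<0$, while Remark~\ref{rem:shift}(2) together with Lemma~\ref{lem:c_1(W)} shows that $\bS(\theta)$ and $x\bS(\theta)^{-1}$ are homogeneous of degree $2c_1^T(W)\cdot(0,0,0,1)=2$ and $2c_1^T(W)\cdot(0,1,0,-1)=2$ respectively, both strictly positive; thus $\C[z][\![\NEN^T(W)]\!]$ is topologically generated over $\C[z]$ by the Novikov variables $\QW^\delta$ ($\delta\in\NEN(W)$) together with $\bS(\theta)$ and $x\bS(\theta)^{-1}$, and $\QDM_T(W)$ carries the asserted module structure. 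I do not expect a genuine difficulty here beyond this bookkeeping: the content of the statement is entirely packaged in Proposition~\ref{prop:module_over_extended_shift}, and the only thing to do is to translate the abstract shift monoid into the explicit section classes $\sigma_F(k)$ of \eqref{eq:sigma_F} and to check positivity of the relevant degrees.
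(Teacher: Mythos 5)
Your proof is correct and takes essentially the same approach as the paper, which also deduces the statement from Proposition~\ref{prop:module_over_extended_shift} after matching $\NEN^T(W)$ of \eqref{eq:dualmonoids} with the monoid there via the computation $\sigma_{\rm max}(k)=(0,\min(-k,0),0,k)$. Your additional checks (finite generic stabilizer, positivity of the degrees of $\bS(\theta)$ and $x\bS(\theta)^{-1}$) are the same bookkeeping the paper relies on implicitly.
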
 
%

Unpacking Definition \ref{def:shift_Givental}, we find that the shift operators $\cS^k \colon \cH_W^{\rm rat} \to x^{\min(0,k)} \cH_W^{\rm rat}$ on the Givental space are given as follows (see also \eqref{eq:sigma_F}): 
\begin{align}
\label{eq:shiftop_W} 
\begin{split}  
(\cS^k \bbf)_X & = x^k \frac{\prod_{c=-\infty}^0 -\lambda + cz }{\prod_{c=-\infty}^k -\lambda + cz}
e^{-k z\partial_\lambda} \bbf_X \\ 
(\cS^k \bbf)_Z & = y^k \frac{\prod_{c=-\infty}^0 e_{-\lambda+cz}(\cN_{Z/X})}{\prod_{c=-\infty}^k e_{-\lambda+cz}(\cN_{Z/X})}\frac{\prod_{c=-\infty}^0 \lambda + cz}{\prod_{c=-\infty}^{-k} \lambda+ cz} 
e^{-kz \partial_\lambda} \bbf_Z \\ 
(\cS^k \bbf)_\tX & = \frac{\prod_{c=-\infty}^0 -[D]+ \lambda + cz}{\prod_{c=-\infty}^{-k} -[D]+\lambda+cz} e^{-kz\partial_\lambda} \bbf_{\tX} 
\end{split} 
\end{align} 
where $\bbf_F$ denotes the restriction of $\bbf\in H^*_\hT(W)_{\rm loc}$ to a fixed component $F$ and $e_\lambda(\cN_{Z/X})$ is the equivariant Euler class \eqref{eq:equiv_Euler} of the normal bundle $\cN_{Z/X}$ of $Z$ in $X$. 

\section{Fourier transformation for the equivariant $J$-function}
\label{sec:Fourier} 
In this section, we discuss the Fourier transformation for the equivariant $J$-function of $W=\Bl_{Z\times\{0\}}(X\times \PP^1)$, or equivalently, the equivariant Givental cone of $W$. We introduce \emph{discrete} and \emph{continuous} Fourier transformations and compare them. 
The main results (Corollaries \ref{cor:Fourier_transform_JW} and \ref{cor:Fourier_transform_JW_GIT}) of this section say that  certain (discrete or continuous) Fourier transforms of the equivariant $J$-function of $W$ lie in the Givental cone of the fixed loci or the GIT quotients. 
As in the previous section, $T$ denotes the rank 1 torus $\C^\times$ acting on $W$. 

\subsection{Discrete Fourier transformation associated with a GIT chamber} 
\label{subsec:discrete_Fourier} 
Let $Y$ denote $X$ or $\tX$. We define the extended Givental space of $Y$ as 
\[
\cH_Y^{\rm ext} := H^*(Y)[z,z^{-1}][\![C_{Y,\N}^\vee]\!]. 
\] 
Recall from \S\ref{subsec:Kirwan} that $\C[\![C_{Y,\N}^\vee]\!]$ contains the Novikov ring $\C[\![\NEN(Y)]\!]$ via the dual Kirwan map $\kappa_Y^* \colon \NEN(Y) \to C_{Y,\N}^\vee$; hence this is a base change of the original Givental space $\cH_Y$ in \eqref{eq:Givental_space}. The discrete Fourier transformation $\sfF_Y$ is defined to be the following map: 
\[
\sfF_Y \colon 
\cH^{\rm rat}_W[\QW^{-1}] \dasharrow \cH_Y^{\rm ext}[\QW^{-1}], \qquad 
\sfF_Y(\bbf) := \sum_{k\in \Z} S^k \kappa_Y(\cS^{-k}\bbf)  
\]
where $K[\QW^{-1}]$ denotes the localization\footnote{We note that $C_{Y,\N}^\vee + (-\NEN(W))=N_1^T(W)$ by \eqref{eq:dualmonoids}.} of a $\C[\![\QW]\!]$-module $K$ by $\{\QW^\delta: \delta\in \NEN(W)\}$ and $\cS$ is given in \eqref{eq:shiftop_W}. 
The dashed arrow indicates that the map $\sfF_Y$ is not defined everywhere: $\sfF_Y(\bbf)$ is well-defined if $\cS^{-k} \bbf|_Y$ is regular at $\lambda=0$ for all $k\in \Z$ (so that $\kappa_Y(\cS^{-k}\bbf)$ is well-defined), and if the power series $\sum_{k\in \Z} S^k \kappa_Y(\cS^{-k} \bbf)$ in $\hS =(\QW,S)$ is supported on $\delta+ C_{Y,\N}^\vee$ for some $\delta \in N_1(W)$. 
Here we say that a rational function $f(\lambda,z)$ of $\lambda$ and $z$ is \emph{regular at} $\lambda=0$ if it is written in the form $p(\lambda,z)/q(\lambda,z)$ with $p,q\in \C[\lambda,z]$ such that $q(0,z)$ is a non-zero polynomial. For example, $1/(\lambda+z)$ is regular at $\lambda=0$ but $1/\lambda$ is not. 

Using the commutation relation $[\lambda,\cS] = z \cS$, we can easily check that $\sfF_Y$ satisfies
\begin{align}
\label{eq:properties_discrete_Fourier}
\begin{split} 
\sfF_Y( \cS^l \bbf) & = S^l \sfF_Y(\bbf),  \qquad  l \in \Z, \\
\sfF_Y( \lambda \bbf) & = (z S \partial_S + \kappa_Y(\lambda)) \sfF_Y(\bbf)  
\end{split} 
\end{align} 
whenever $\sfF_Y(\bbf)$ is well-defined. The second equation can be generalized for $\xi \in H^2_T(W)$ as follows: 
\begin{equation} 
\label{eq:properties_discrete_Fourier_Qdiff} 
\sfF_Y( (z\xi \QW \partial_{\QW} + \xi) \bbf ) = (z \xi \hS \partial_{\hS} + \kappa_Y(\xi)) \sfF_Y(\bbf) 
\end{equation} 
where $\xi \hS \partial_{\hS}$ denotes the derivation of $\C[\![C_{Y,\N}^\vee]\!]$ given by $(\xi \hS \partial_{\hS}) \hS^\beta = (\xi \cdot \beta) \hS^\beta$ for $\beta \in C_{Y,\N}^\vee \subset N_1^T(W)$. The fact that $\sfF_Y$ preserves the degree implies that $\sfF_Y \circ (z\partial_z + c_1^T(W) \QW\partial_{\QW} + \mu_W+ \frac{1}{2}) = (z\partial_z + c_1^T(W) \hS \partial_\hS + \mu_Y)\circ \sfF_Y$. This combined with \eqref{eq:properties_discrete_Fourier_Qdiff} implies that 
\begin{equation} 
\label{eq:properties_discrete_Fourier_zdiff} 
\sfF_Y( (z\partial_z - z^{-1} c_1^T(W) + \mu_W + \tfrac{1}{2}) \bbf ) = 
(z \partial_z - z^{-1} c_1(Y) + \mu_Y) \sfF_Y(\bbf) 
\end{equation} 
whenever $\sfF_Y(\bbf)$ is well-defined. 

\begin{proposition} 
\label{prop:support_conjecture} 
If $\bbf\in\cH_W^{\rm rat}$ lies in a tangent space of the equivariant Givental cone $\cL_W$ of $W$, then $\sfF_Y(\bbf)$ is a well-defined element of $\cH_Y^{\rm ext}$. 
\end{proposition}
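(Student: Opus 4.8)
The plan is to reduce the statement to a property of the $J$-function $J_W(\theta)$ itself by the overruled structure of the Givental cone, and then verify the two requirements in the definition of $\sfF_Y$ — regularity at $\lambda=0$ and support in a shifted cone — one at a time.

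\emph{Step 1: reduce to the $J$-function.} By the overruled property \eqref{eq:Givental_cone}, any $\bbf$ in a tangent space of $\cL_W$ equals $M_W(\theta)v$ for some $\theta$ and $v\in \cH_+^W[\![\QW]\!]$ (after possibly adjoining formal parameters as in Remark \ref{rem:functor_of_points}); equivalently $\bbf$ lies in $T_\theta = M_W(\theta)\cH_+$. Since the tangent space $T_\theta$ is preserved by $z\partial_{\theta^{i,k}}$, by $z\xi\QW\partial_{\QW}+\xi$ for $\xi\in H^2_T(W)$, and by $z^2\partial_z-c_1^T(W)+z\mu_W$, and since by Proposition \ref{prop:shift_fundsol} it is also preserved (up to a Novikov monomial) by the shift operators $\hcS^\beta$, it suffices to prove the claim for $\bbf = zJ_W(\theta)$: the general $\bbf\in T_\theta$ is obtained from $zJ_W(\theta)=M_W(\theta)1$ by applying these operators, all of which are intertwined by $\sfF_Y$ with operators preserving $\cH_Y^{\rm ext}$ via \eqref{eq:properties_discrete_Fourier}, \eqref{eq:properties_discrete_Fourier_Qdiff}, \eqref{eq:properties_discrete_Fourier_zdiff} and the fact that $\hcS^{-k}$ is intertwined with multiplication by $S^k$. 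So I must show: $\cS^{-k}J_W(\theta)|_Y$ is regular at $\lambda=0$ for all $k\in\Z$, and $\sum_k S^k\kappa_Y(\cS^{-k}J_W(\theta))$ is supported on a translate of $C_{Y,\N}^\vee$.

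\emph{Step 2: regularity at $\lambda=0$.} Here I use the localization expression \eqref{eq:shiftop_W} for $\cS^{-k}$ restricted to $Y$. For $Y=X$ (recall $\kappa_X$ is restriction to $X\times\{\infty\}$ followed by $\lambda=0$), the prefactor in $(\cS^{-k}\bbf)_X$ is $x^{-k}\prod_{c=-\infty}^{0}(-\lambda+cz)\big/\prod_{c=-\infty}^{-k}(-\lambda+cz)$, a finite product of linear factors; the only way setting $\lambda=0$ could fail is a factor $(-\lambda+0\cdot z)=-\lambda$ in the denominator, which occurs exactly when $-k\ge 1$, i.e. $k\le -1$. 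But in that case the shifted argument $e^{kz\partial_\lambda}\bbf_X = \bbf_X|_{\lambda\mapsto\lambda+kz}$ — applied to $J_W(\theta)|_X$, which by \eqref{eq:fundsol_at_infinity}/\eqref{eq:fundsol_rational} has a specific pole structure in $\lambda$ along the hyperplanes $\lambda\in\Z z$ coming from virtual localization on $E_k$ — contributes a compensating zero. The clean way to see this is to note $\cS^{-k}J_W(\theta)$ is, by construction of the shift operator (Definition \ref{def:shift}, Proposition \ref{prop:shift_fundsol}), itself a $J$-type object for $W$ pulled back through the Seidel space, hence lies in $\cH_W^{\rm rat}$ and its restriction to the maximal fixed component for the relevant cocharacter is regular there; combined with Lemma \ref{lem:effective_section_classes} this pins down which $\lambda$-poles can appear. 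For $Y=\tX$ ($\kappa_\tX$: restrict to $\tX$, set $\lambda=[D]$) the prefactor is $\prod_{c=-\infty}^{0}(-[D]+\lambda+cz)\big/\prod_{c=-\infty}^{k}(-[D]+\lambda+cz)$; setting $\lambda=[D]$ turns the potentially bad factor into $cz$, which is a nonzero scalar for $c\ne 0$, and the $c=0$ factor is $-[D]+\lambda\mapsto 0$ only in the numerator — so $\kappa_\tX$ is actually harmless on the prefactor, and regularity again reduces to the pole structure of $J_W(\theta)|_\tX$ under the $\lambda$-shift, handled as above. I should present this uniformly: $\cS^{-k}\bbf$ for $\bbf$ in a tangent space of $\cL_W$ again lies in a tangent space (up to Novikov monomial), restriction of any element of $\cH_W^{\rm rat}\cap\cH_W$ to a fixed component $F$ has poles only along $\lambda\in\Z z$ with order controlled by the $T$-weights of $\cN_{F/W}$, and $\kappa_Y$ is precisely the specialization that avoids these.

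\emph{Step 3: support in $C_{Y,\N}^\vee$.} This is where the GIT geometry enters and is the main obstacle. I want to show that $\kappa_Y(\cS^{-k}J_W(\theta))$ vanishes unless $k$ is such that the total exponent $S^k$ times the Novikov monomials appearing in $\cS^{-k}J_W(\theta)$ lands in $C_{Y,\N}^\vee$. The key input is Lemma \ref{lem:effective_section_classes}: every effective section class of the Seidel space $E_{-k}(W)$ has the form $\sigma_{\max}(-k)+d$ with $d\in\NEN(W)$. Translating via \eqref{eq:sigma_F} and the presentation \eqref{eq:2nd_homology}, $\sigma_{\max}(-k) = (0,\min(k,0),0,-k)$, so the Novikov/shift monomials occurring in $S^k\cS^{-k}J_W(\theta)$ lie in $S^k\cdot\{S^{-k}x^{\min(k,0)}\}\cdot\NEN(W) = x^{\min(k,0)}\NEN(W)$, i.e. in $\NEN(W)+\N\langle(0,-1,0,0)\rangle$ when $k<0$... but this is not quite $C_{Y,\N}^\vee$, so the refinement I actually need is that $\kappa_Y$ kills the terms that fall outside $C_{Y,\N}^\vee$. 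Concretely: $\kappa_X$ sends a class to zero when it restricts to zero on the $X$-chart, i.e. when $[X]\cdot(\text{class})$ or the $\lambda$-part forces a factor of $[X]$ or $\lambda$ that vanishes on $X\times\{\infty\}$ after $\lambda=0$; this eliminates exactly the monomials with the "wrong sign" in the $[X]$–$\lambda$ directions, leaving support in $\ovNE(W)+\langle(0,1,0,-1),(0,0,-1,1)\rangle_{\N} = C_{X,\N}^\vee$ by \eqref{eq:dualmonoids}. Similarly for $\tX$ using $\kappa_\tX$ with $\lambda=[D]$ and $[\hD]\mapsto[D]$, the surviving monomials lie in $C_{\tX,\N}^\vee$. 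The honest proof of this combines: (i) the fact that $\hcS^{-k}$ preserves tangent spaces so $\cS^{-k}J_W(\theta)$ is built from $T$-equivariant GW invariants of $W$ of effective degree; (ii) Lemma \ref{lem:effective_section_classes} to bound which section classes enter; (iii) an explicit check using \eqref{eq:restrictions_of_f}–\eqref{eq:shiftop_W} that $\kappa_Y$ annihilates monomials $\hS^\beta$ with $\beta\notin C_{Y,\N}^\vee$, which amounts to observing the prefactors in \eqref{eq:shiftop_W}, after applying $\kappa_Y$, are polynomial (no negative powers of $z$) exactly in the favorable range of $k$ and that in the unfavorable range $\kappa_Y$ of the $Y$-restriction vanishes because $J_W(\theta)|_Y$ is divisible by the appropriate equivariant class (Lemma \ref{lem:GKZ_type} is the relevant divisibility statement). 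I expect Step 3 to require the most care, and I would organize it as a separate lemma identifying, for each $k$, the lowest $\hS$-exponent of $\kappa_Y(\cS^{-k}J_W(\theta))$ and showing it lies in $C_{Y,\N}^\vee$, with the finiteness (local finiteness in the graded completion) following from the positive degrees of $S_X=Sx^{-1}$, $S_\tX = S$ etc. as in Proposition \ref{prop:module_over_extended_shift}. Finally, once well-definedness is established, membership in $\cH_Y^{\rm ext}$ (as opposed to its localization) follows from the support statement together with the degree-preservation of $\sfF_Y$.
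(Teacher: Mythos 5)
Your architecture matches the paper's: restrict to a tangent space $T_\theta = M_W(\theta)\cH_+$, then verify regularity at $\lambda=0$ and the support bound separately. Step~1 (reduction to $zJ_W(\theta)$) is harmless but unnecessary — the paper works directly with an arbitrary $\bbf\in T_\theta$, since every part of the argument applies to such a $\bbf$. Step~2 takes a long detour through the explicit prefactors of \eqref{eq:shiftop_W}, but the paper's argument for regularity at $\lambda=0$ is a one-liner: by \eqref{eq:fundsol_at_infinity}, the $z=\infty$ expansion of $M_W(\theta)$ has coefficients in $\End_{\C[\lambda]}(H^*_T(W))$ (no denominators in $\lambda$), so $\bbf_Y$ is automatically regular at $\lambda=0$ for any $\bbf\in T_\theta$, and the same holds for $\cS^{-k}\bbf$ since $\cS^{-k}$ preserves $T_\theta$ up to a Novikov monomial. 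You do not need the prefactor analysis for this part.

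The real gap is in Step~3, and you half-notice it but fill it with the wrong mechanism. You correctly observe that Lemma~\ref{lem:effective_section_classes} (via Remark~\ref{rem:shift}(1)) only pins the $\hS$-exponents of $S^k\kappa_Y(\cS^{-k}\bbf)$ to a cone that is strictly larger than $C_{Y,\N}^\vee$, and that a vanishing statement for $\kappa_Y$ is needed to cut the support down. But Lemma~\ref{lem:GKZ_type}, which you cite as "the relevant divisibility statement," is about equivariant classes $f\in H^*_T(W)$ whose fixed-point restrictions satisfy prescribed conditions; it does not apply to the rational-function coefficients $\bbf_{\delta,Y}\in H^*(Y)\otimes\C(\lambda,z)_{\rm hom}$ appearing in the expansion of $\bbf|_Y$. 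What the paper actually proves is a \emph{pole-location} statement via the virtual localization formula: for $Y=\tX$, the prefactor of $\cS^k$ in \eqref{eq:shiftop_W} leaves a surviving factor $\lambda-[D]$ for $k>0$, which is killed by $\kappa_\tX$ \emph{unless} $e^{-kz\partial_\lambda}\bbf_{\delta,\tX}$ has a pole at $\lambda=0$, equivalently $\bbf_{\delta,\tX}$ has a pole along $\lambda+kz=0$. By virtual localization, such a pole can only come from a fixed stable map whose last marked point lies on a $k$-fold covering of a $T$-invariant rational curve meeting $\tX$ (where $\psi_{n+2}$ restricts to $-\lambda/k$), so the curve class $\delta$ must lie in $k\delta_0+\NEN(W)$ for $\delta_0$ the class of a $T$-invariant irreducible curve meeting $\tX$. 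Since such $\delta_0$ is $(0,0,1,0)$ or $(0,1,0,0)\in(0,0,1,0)+\NEN(W)$, this forces $\delta\in(0,0,k,0)+\NEN(W)$, i.e.\ $\QW^\delta$ divisible by $y^k$, which is precisely the condition for $S^{-k}\kappa_\tX(\cS^k\bbf_\delta)$ to land in $C_{\tX,\N}^\vee$. Without this pole analysis — which is the technical heart of the proposition — there is no way to conclude the support bound; the "annihilation outside $C_{Y,\N}^\vee$" you invoke has no mechanism behind it in your write-up.
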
 
\begin{proof} 
Let $T$ be a tangent space of $\cL_W$. Recall from \S\ref{subsec:Givental_cone} that $T$ is of the form $M_W(\theta) \cH_+$ and is invariant under shift operators up to localizing $\QW$. We also note that $\bbf_Y=\bbf|_Y$ is regular at $\lambda=0$ for $\bbf \in T$; this follows from the fact that the expansion at $z=\infty$ of $M_W(\theta)$ is regular at $\lambda=0$ (see \eqref{eq:fundsol_at_infinity}). Hence $\kappa_Y(\cS^{-k} \bbf)$ is well-defined for $\bbf \in T$ for all $k$. 

It remains to show that $\sfF_Y(\bbf)=\sum_{k\in \Z} S^k \kappa_Y(\cS^{-k} \bbf)$ with $\bbf \in T$ is supported on $C_{Y,\N}^\vee$ as power series in $\hS$. 
We first consider the case where $Y= \tX$. Since $\C[\![C_{\tX,\N}^\vee]\!] = \C[\![\QW,S, y S^{-1}]\!]$ by \eqref{eq:dualmonoids}, it suffices to show that $(\cS^k \bbf)_\tX$ lies in $y^k H^*_\hT(\tX)_{\rm loc}[\![\QW]\!]$ for $k\ge 0$ and in $H^*_\hT(\tX)_{\rm loc}[\![\QW]\!]$ for $k\le 0$. In view of \eqref{eq:shiftop_W}, this is trivial for $k\le 0$ but requires proof for $k>0$. We expand $\bbf$ in power series of $\QW$ as $\bbf = \sum_{\delta\in \NEN(W)} \bbf_\delta \QW^\delta$. 
We need to show that the denominator of $e^{-kz\partial_\lambda} \bbf_{\delta,\tX}$ (as a rational function of $\lambda$ and $z$) with $k>0$ does not contain the factor $\lambda-[D]$ appearing in the numerator of 
\[
\frac{\prod_{c=-\infty}^0 -[D]+ \lambda + cz}{\prod_{c=-\infty}^{-k} -[D]+\lambda+cz}
\]
unless $\QW^\delta \in y^k \C[\![\QW]\!]$; if it does not, we have $\kappa_\tX(\cS^k \bbf_\delta) =0$ by the third line of \eqref{eq:shiftop_W} and $\kappa_\tX(\lambda -[D]) = 0$. 
Suppose that $\delta \notin (0,0,k,0)+\NEN(W)$ with $k>0$. It suffices to show that $e^{-kz \partial_\lambda} \bbf_{\delta,\tX}$ does not have poles along $\lambda=0$, or equivalently, $\bbf_{\delta,\tX}$ does not have poles along $\lambda + kz=0$. By the virtual localization formula \cite{Kontsevich:enumeration, Graber-Pandharipande}, poles of $\bbf_{\delta,\tX}$ along $\lambda + kz =0$ can arise only from the localization contribution to a correlator of the form 
\[
\corr{\phi_i, \theta,\dots,\theta, \frac{\phi_j}{z-\psi}}_{0,n+2,\delta'} \quad \text{for some $\delta'$ with $\delta \in \delta' + \NEN(W)$} 
\]
of a $T$-fixed locus in the moduli space where the last marked point maps to $\tX$ and lies in a component that is a $k$-fold covering of a $T$-invariant rational curve in $W$ (as the restriction of $\psi_{n+2}$ to such a fixed stable map equals $-\lambda/k$). 
If the $T$-invariant curve is of class $\delta_0$, we have $\delta' \in k\delta_0 + \NEN(W)$.  Thus if $\bbf_{\delta,\tX}$ has poles along $\lambda+ kz=0$, we have $\delta \in k \delta_0+\NEN(W)$ for some class $\delta_0$ of an irreducible $T$-invariant curve meeting $\tX$. Since $\delta_0$ can be either $(0,0,1,0)$ or $(0,1,0,0)\in (0,0,1,0) +\NEN(W)$ (see the proof of Lemma \ref{lem:NEN_W}), we have $\delta\in (0,0,k,0) + \NEN(W)$, contradicting the assumption.  

The discussion in the case where $Y=X$ is similar. We need to show that $(\cS^{-k}\bbf)_X$ lies in $y^{-k}H^*_\hT(X)_{\rm loc}[\![\QW]\!]$ for $k>0$. By the first line of \eqref{eq:shiftop_W}, it suffices to show that the denominator of $e^{k z\partial_\lambda} \bbf_{\delta,X}$ (with $k>0$) does not contain the factor $\lambda$ if $\delta \notin (0,k,-k,0) + \NEN(W)$. This again follows from the virtual localization formula together with the fact that the class of a $T$-invariant irreducible curve meeting $X$ belongs to $(0,1,-1,0)+\NEN(W)$.  
\end{proof}

\subsection{Continuous Fourier transformation associated with a fixed component} 
\label{subsec:continuous_Fourier} 
In this section, we introduce continuous Fourier transformations $\scrF_{F,j}$ associated with a $T$-fixed component $F$ of $W$. The results in this section make sense and hold for a general smooth projective variety $W$ equipped with a rank-one torus action. (Some of them also hold for higher-rank torus actions.) 

\subsubsection{Restriction to a fixed component and the inverse-Euler twisted Givental cone} 

\begin{proposition}[\cite{Brown:toric_fibration, Fan-Lee:projective_bundle}] 
\label{prop:restriction_twisted_cone}
Let $F$ be a $T$-fixed component of $W$ and let $\bbf$ be a point on the equivariant Givental cone $\cL_W$ of $W$. Consider the restriction $\bbf_F$ of $\bbf$ to $F$. The Laurent expansion at $z=0$ of $\bbf_F$ lies in the Givental cone of $F$ twisted by the normal bundle $\cN_{F/W}$ of $F$ and the inverse equivariant Euler class $e_T^{-1}$. 
\end{proposition}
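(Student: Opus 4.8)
The plan is to realize the restriction map $\bbf \mapsto \bbf_F$ at the level of the Givental cone by exploiting the localization description of the virtual class on the moduli of stable maps to $W$ and then reorganizing the localization contributions into a $\cN_{F/W}$-twisted Gromov--Witten theory of $F$. The starting point is that a point $\bbf$ on $\cL_W$ is (up to the functor-of-points formalism of Remark \ref{rem:functor_of_points}) a value of the big $J$-function $\cJ_W(\bt)$, or more flexibly lies in a tangent space $T_\theta = M_W(\theta)\cH_+$; by the overruled structure \eqref{eq:Givental_cone} it suffices to prove the statement for the family $z J_W(\theta)$, or even just to show that the restriction to $F$ of each generator $z\partial_{\theta^{i}} J_W(\theta) = M_W(\theta)\phi_i$ lies in the relevant twisted tangent space, and that the restriction of $z J_W(\theta)$ itself lies on the twisted cone. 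In fact the cleanest route is to first prove it for $z J_W$ and then use that the twisted cone is also overruled, with tangent spaces closed under $z$, to propagate to arbitrary $\bbf \in \cL_W$.

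First I would apply the virtual localization formula of Kontsevich and Graber--Pandharipande \cite{Kontsevich:enumeration, Graber-Pandharipande} to the descendant integrals defining $J_W(\theta)$, restricted to $F$. The $T$-fixed loci in $W_{0,n,d}$ that contribute to $\bbf_F$ (i.e.\ whose evaluation at the relevant marked point lands in $F$) are, by the standard analysis, built from stable maps to $F$ decorated with ``legs'' consisting of multiple covers of the normal directions and rational tails mapping into $W$; but for the leading fixed-point contribution the map is entirely contained in $F$ and the normal bundle contributes precisely the equivariant Euler class $e_T(\cN_{F/W, \text{virtual}})$ of the virtual normal bundle of the fixed locus. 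The key computation, carried out in \cite{Brown:toric_fibration, Fan-Lee:projective_bundle}, is that summing over all these fixed-locus contributions and extracting the part of $\bbf_F$ assembles exactly into the $(\cN_{F/W}, e_T^{-1})$-twisted $J$-function (big $J$-function) of $F$: the Euler class of $\R\pi_* f^*\cN_{F/W}$ over the moduli space of stable maps to $F$ appears in the denominator, which is precisely the insertion $1/\bc(V_{0,n,d})$ with $V = \cN_{F/W}$ and $\bc = e_T^{-1}$ in the twisted-invariant formula \eqref{eq:point_on_the_twisted_cone}. One has to be careful that the equivariant parameter plays a dual role here — in $\cL_W$ it is a genuine parameter, whereas after restriction it becomes the fibre-scaling parameter $\lambda$ of the twisted theory on $F$ (on which $T$ acts trivially) — but this is exactly the setup of \S\ref{subsec:twisted_GW} and causes no difficulty.

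The second step is to recognize the resulting series as a point on the twisted cone $\cL_F^{\rm tw}$ rather than merely as a single $J$-value. Since the twisted $\psi$-classes are nilpotent, the twisted big $J$-function $\cJ_F^{\rm tw}(\bt)$ of \eqref{eq:point_on_the_twisted_cone} makes sense as a genuine family sweeping out $\cL_F^{\rm tw}$, and the localization computation shows that $\bbf_F$, for $\bbf = z J_W(\theta)$, equals $\cJ_F^{\rm tw}(\bt)$ for an explicit mirror-type input $\bt(z) \in \cH_+^{\rm tw}$ depending on $\theta$ (the input records the restrictions to $F$ of the insertions and the equivariant corrections). To reach a general point $\bbf$ on $\cL_W$, I would invoke the overruled property of $\cL_W$: write $\bbf = z M_W(\theta) v$ with $v \in \cH_+$, observe that $M_W(\theta)$ restricted to $F$ becomes (by the same localization argument applied to the fundamental solution \eqref{eq:fundsol}) the twisted fundamental solution of $F$ composed with the Euler-class factor, and conclude that $\bbf_F = z M_F^{\rm tw}(\bt) v'$ for appropriate $v' \in \cH_+^{\rm tw}$, which lies in the twisted tangent space $z T_{\bt}^{\rm tw} \subset \cL_F^{\rm tw}$ by the overruled description of $\cL_F^{\rm tw}$.

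\textbf{Main obstacle.} The hard part is the bookkeeping in the localization computation: correctly matching the graph sum over $T$-fixed loci in $W_{0,n,d}$ with the definition of the twisted invariants of $F$, in particular showing that the contributions of ``broken'' fixed loci (where part of the stable map leaves $F$ along normal directions) either vanish after restriction or get absorbed into the $e_T^{-1}(\cN_{F/W})$-twist and the change of input $\bt(z)$. This is precisely the content of \cite{Brown:toric_fibration, Fan-Lee:projective_bundle}, so I would structure the proof as a citation of their result together with a verification that our hypotheses (smooth projective $W$, rank-one $T$, $F$ an arbitrary connected fixed component) are covered, and with the degree/homogeneity conventions of \S\ref{subsec:formal_power_series} and the sign convention for $z$ reconciled. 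A secondary subtlety, worth a remark, is the non-equivariant nature of the twisted integrals versus the equivariant origin: the parameter $\lambda$ scaling $\cN_{F/W}$ is the restriction of the original $T$-equivariant parameter, so the identification is literal and no substitution is needed, but this should be stated explicitly to avoid confusion with the $e_\lambda$ versus $\te_\lambda$ distinction of Remark \ref{rem:e_te_twist}.
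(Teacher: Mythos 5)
Your overall strategy is the correct one (virtual localization on $W_{0,n,d}$, then reorganize the graph sum into $(\cN_{F/W},e_T^{-1})$-twisted invariants of $F$) and you cite the right sources, but two things deserve comment.

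First, the detour through $zJ_W(\theta)$ followed by overruled propagation is unnecessary. The paper works directly with an arbitrary point $\bbf = \cJ_W(\bt)$ of the form \eqref{eq:point_on_the_cone} with general input $\bt(z)\in\cH_+$; the localization argument goes through verbatim for this family, and since the twisted $\psi$-classes are nilpotent the output is manifestly a point of the form \eqref{eq:point_on_the_twisted_cone}. Your plan to ``restrict the fundamental solution and get the twisted fundamental solution times an Euler factor'' would work, but it is more structure than you need and would require a separate verification.

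Second, and more substantively, the piece you flag as ``the main obstacle'' and defer to citation is exactly the organizing device that makes the reorganization transparent, and it is worth having in hand: after restricting the last evaluation to $F$, classify the $T$-fixed loci in $W_{0,n+1,d}$ by whether the $\psi$-class $\psi_{n+1}$ at the last marked point carries a nontrivial $T$-weight (the last marking sits on a component that multiply covers a $T$-invariant curve out of $F$) or the trivial weight (the last marking sits on a component mapped into $F$). The first kind, call them type-A, have $\psi_{n+1}$ invertible and hence contribute a series regular at $z=0$; bundling them defines the input $\tau_F(z)\in\cH^{\rm tw}_+$. The second kind, type-B, have nilpotent $\psi_{n+1}$ and, by the recursive structure of the fixed loci (substituting $z\mapsto -\psi_i$ in the type-A pieces to smooth nodes, producing $1/(-\psi_i-\psi)$), assemble into the twisted invariants of $F$ with input $\tau_F(-\psi)$. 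Your phrase ``broken fixed loci either vanish after restriction or get absorbed into the $e_T^{-1}$-twist and the change of input'' is not quite right: nothing vanishes, and the twist does not come from the broken loci — it comes from the type-B loci (virtual normal bundle of $F_{0,n,d}$ inside $W_{0,n,d}$), while the broken/type-A loci supply only the change of input. Getting this split right is what makes the proposition's statement about ``the Laurent expansion at $z=0$'' meaningful, since the type-A/type-B dichotomy is literally the dichotomy between the regular-at-$z=0$ part and the polynomial-in-$z^{-1}$ part of $\bbf_F$.
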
 

\begin{proof} 
This follows from the argument in Brown \cite[Theorem 2(i)]{Brown:toric_fibration} for toric bundles, where it was attributed to Givental.  The general case has been proved by Fan-Lee \cite[Theorem 3.5(1)]{Fan-Lee:projective_bundle}. 
We outline the argument for the convenience of the reader. Let $\bbf \in \cL_W$ be a point of the form \eqref{eq:point_on_the_cone} (with $X$ there replaced with $W$). The restriction of $\bbf$ to $F$ is 
\[
\bbf_F = z + \bt_F(z) + \sum_i \sum_{n,\delta} \phi_F^i \corr{\bt(-\psi),\dots,\bt(-\psi),\frac{i_{F*} \phi_{F,i}}{z-\psi}}_{0,n+1,\delta}^{W,T} \frac{\QW^\delta}{n!} 
\]
where $\bt_F(z) = \bt(z)|_F$, $\{\phi_{F,i}\}$ is a basis of $H^*(F)$, $\{\phi_F^i\}$ is the dual basis such that $(\phi_{F,i},\phi_F^j)_F = \delta_i^j$ and $i_F\colon F\to W$ is the inclusion map. By the virtual localization \cite{Kontsevich:enumeration,Graber-Pandharipande}, we can express $\bbf_F$ as the sum of contributions from $T$-fixed components of $W_{0,n+1,d}$. We only need to look at $T$-fixed components where the last marking maps to $F$. 
We classify such $T$-fixed components into the following two types: 
\begin{description} 
\item[type-A] the $\psi$-class $\psi_{n+1}$ at the last marking carries a nontrivial $T$-weight; this happens when the last marking lies in a component that covers a $T$-invariant curve in $W$;  

\item[type-B] the $\psi$-class $\psi_{n+1}$ at the last marking carries the trivial $T$-weight; this happens when the last marking lies in a component whose image is contained in $F$. 
\end{description}  
We single out the contributions from type-A fixed loci and set 
\begin{equation} 
\label{eq:restriction_input} 
\tau_F(z) := \bt_F(z) + \sum_i \sum_{n,\delta} \phi_F^i \corr{\bt(-\psi),\dots,\bt(-\psi),\frac{i_{F*} \phi_{F,i}}{z-\psi}}_{0,n+1,\delta}^{\textrm{(A)}} \frac{\QW^\delta}{n!} 
\end{equation} 
where the superscript (A) means the sum of contributions from type-A fixed loci to the correlator. Since $\psi_{n+1}$ is invertible on type-A fixed loci, $\tau_F(z)$ is regular at $z=0$ and its Taylor expansion at $z=0$ yields an element of $\cH_+^{\rm tw} \subset \cH^{\rm tw}_F$. On the other hand, the type-B contributions yield polynomials in $z^{-1}$ because $\psi_{n+1}$ is nilpotent there. 
Using a recursive structure of the fixed loci, 
we find that the type-B contributions can be expressed as the $(\cN_{F/W},e_T^{-1})$-twisted Gromov-Witten invariants of $F$ with insertions $\tau_F(-\psi)$:
\begin{multline*} 
\bbf_F = z + \tau_F(z) \\
+ \sum_i \sum_{n\ge 0, d\in \NEN(F)} e_T(\cN_{F/W}) \phi_F^i 
\corr{\tau_F(-\psi),\dots,\tau_F(-\psi), \frac{\phi_{F,i}}{z-\psi}}_{0,n+1,d}^{F,(\cN_{F/W},e_T^{-1})} \frac{\QW^{i_{F*}d}}{n!},
\end{multline*} 
arriving at the conclusion. Note that, by replacing $z$ with $-\psi_i$ in the type-A contributions of $\tau_F(z)$, we get the factor $1/(-\psi_i-\psi)$ that corresponds to smoothing nodes. 
\end{proof} 

\begin{remark} 
The point $\bbf$ and its restriction $\bbf_F$ are defined over the Novikov ring $\C[\![\QW]\!]$ of $W$. We regard $\bbf_F$ as a point over the Novikov ring $\C[\![Q_F]\!]$ of $F$ via the map $Q_F^d \mapsto \QW^{i_{F*}d}$ where $i_F\colon F\hookrightarrow W$ is the inclusion. Recall from Remark \ref{rem:e_te_twist} that the degree of $Q_F^d$ in the $(\cN_{F/W},e_T^{-1})$-twisted theory is $2 (c_1(F) + c_1(N_{F/W}))\cdot d$, which equals $\deg(\QW^{i_{F*}d})$. 
\end{remark} 

\begin{remark} 
The localization of Gromov-Witten invariants with respect to a torus action whose fixed points and 1-dimensional orbits are isolated is well-known. Here we work with a more general torus action (with not necessarily isolated fixed points or 1-dimensional orbits), for which we refer the reader to \cite{Mustata-Mustata:GW_Cstar, Fan-Lee:projective_bundle}. 
\end{remark} 

\subsubsection{$\Gamma$-function solutions and Fourier (Mellin-Barnes) integrals}
\label{subsubsec:MB_integrals}
We introduce an operator $G_F$ for a fixed component $F$ as follows: 
\begin{equation} 
\label{eq:G_F} 
G_F = \prod_{\varrho} \frac{1}{\sqrt{-2\pi z}} (-z)^{-\varrho/z}\Gamma\left(-\frac{\varrho}{z}\right) 
\end{equation} 
where the product is taken over the \emph{$T$-equivariant} Chern roots $\varrho$ of the normal bundle $\cN_{F/W}$ such that $c_T(\cN_{F/W}) = \prod_\varrho (1+\varrho)$. We may regard $G_F$ as an $\End(H^*(F))$-valued analytic function of $\lambda$ and $z$ (where cohomology classes act by the cup product). Concretely, we have 
\begin{align*}
G_X & = \frac{1}{\sqrt{-2\pi z}} (-z)^{\lambda/z} \Gamma\left(\frac{\lambda}{z}\right),\\ 
G_Z & = \frac{1}{(\sqrt{-2\pi z})^{r+1}} (-z)^{-\lambda/z} \Gamma\left(-\frac{\lambda}{z}\right) 
\prod_{\epsilon}(-z)^{(\lambda-\epsilon)/z} \Gamma\left(\frac{\lambda-\epsilon}{z} \right),\\ 
G_\tX & = \frac{1}{\sqrt{-2\pi z}} (-z)^{([D]-\lambda)/z} 
\Gamma\left(\frac{[D]-\lambda}{z} \right),  
\end{align*} 
where the product in the second line is taken over the Chern roots $\epsilon$ of $\cN_{Z/X}$. 
These operators give ``solutions'' of the shift operator: we can easily see from Definition \ref{def:shift_Givental} that 
\begin{equation} 
\label{eq:Gamma_solution}
G_F (\hcS^\beta \bbf)_F = \QW^{\beta+ \sigma_F(-\ovbeta)} e^{-z\ovbeta \partial_\lambda} 
(G_F \bbf_F)
\end{equation} 
for $\bbf \in \cH_W^{\rm rat}$ and $\beta \in N_1^T(W)$. In other words, the map $\bbf \mapsto G_F \bbf_F$ intertwines the shift operator $\hcS^\beta$ with the trivial one $\QW^{\beta+ \sigma_F(-\ovbeta)} e^{-z\ovbeta \partial_\lambda}$.  
We are thus led to consider the continuous Fourier transformation 
\begin{equation} 
\label{eq:Fourier_integral} 
\cH^{\rm rat}_W \ni \bbf \longmapsto \int e^{\lambda \log S_F/z} G_F \bbf_F d\lambda 
\end{equation} 
with $S_F := \hS^{\sigma_F(1)}$; we have $S_X = Sx^{-1}$, $S_Z =Sy^{-1}$, $S_\tX =S$ by \eqref{eq:sigma_F}. This should intertwine $\hcS^\beta$ with $\hS^\beta$ and $\lambda$ with $z S_F\parfrac{}{S_F} = z S\parfrac{}{S}$. The integral \eqref{eq:Fourier_integral} is also known as a \emph{Mellin-Barnes integral} and appears in toric mirror symmetry (see e.g.~\cite[\S 8]{Iritani:toric}). Below, we compute the asymptotics as $z\to 0$ using the stationary phase (or saddle-point) method. 

\begin{notation} 
\label{nota:normal_bundle} 
Let $\cN_{F/W} = \bigoplus_\alpha \cN_{\alpha}$ be the $T$-weight decomposition where $T$ acts on $\cN_{\alpha}$ by the character $\alpha \in \Hom(T,\C^\times)$. We identify the weight $\alpha$ with an element of $H^2_T(\pt,\Z) \cong \Z \lambda$ and set $\alpha = w_\alpha \lambda$ for $w_\alpha \in \Z$. We also set $\rho_\alpha := c_1(\cN_\alpha)$, $r_\alpha := \rank(\cN_\alpha)$, $c_F := \sum_\alpha r_\alpha w_\alpha$, $\rho_F := c_1(\cN_{F/W}) = \sum_\alpha \rho_\alpha$, $r_F := \rank(\cN_{F/W}) = \sum_\alpha r_\alpha$. 
We also write $\Delta_\alpha := \Delta_{(\cN_\alpha, e_\alpha^{-1})}$, $\tDelta_\alpha := \Delta_{(\cN_\alpha, \te_\alpha^{-1})}$ for the quantum Riemann-Roch operators \eqref{eq:QRR_operator} associated with the $e_\alpha^{-1}$- and $\te_\alpha^{-1}$-twists. Note that $\tDelta_\alpha$ lies in $\End(H^*(F))[\lambda^{-1}](\!(z)\!)$ and $\Delta_\alpha = \alpha^{\rho_\alpha/z+r_\alpha/2} \tDelta_\alpha$.  
\end{notation}

\begin{remark} 
\label{rem:Stirling} 
The asymptotic expansion of $G_F$ gives rise to a product of the quantum Riemann-Roch operators $\Delta_\alpha$. 
The Stirling approximation gives 
\begin{align*} 
\log G_F \sim & 
\sum_{\varrho} \left( -
\frac{\varrho \log \varrho - \varrho}{z} - \frac{1}{2} \log \varrho - \sum_{n=2}^\infty \frac{B_n}{n(n-1)} \left(\frac{z}{\varrho}\right)^{n-1} \right) \\ 
& = 
\sum_\alpha \left(-r_\alpha \frac{\alpha \log \alpha -\alpha}{z} - \log \Delta_\alpha \right) 
\end{align*} 
as $z\to 0$ along an appropriate angular region, where the first sum is over $T$-equivariant Chern roots $\varrho$ of $\cN_{F/W}$ and the second is over $T$-weights $\alpha$ of $\cN_{F/W}$. If $z$ approaches zero from the \emph{negative} real axis, the above asymptotics hold when $\arg(\alpha) \in (-\pi, \pi)$ for all the $T$-weights $\alpha$ appearing in $\cN_{F/W}$. 

Suppose that $\bbf$ lies in the equivariant Givental cone $\cL_W$. Then $\bbf_F$ lies in the $e_T^{-1}$-twisted Givental cone of $F$ by Proposition \ref{prop:restriction_twisted_cone} and hence  $(\prod_\alpha \Delta_\alpha^{-1}) \bbf_F$ lies in the untwisted cone by Theorem \ref{thm:QRR} (see also Remark \ref{rem:more_than_one_twists}). Thus we expect that the formal asymptotics of \eqref{eq:Fourier_integral} should lie in the Givental cone of $F$: we will elaborate on this in \S\ref{subsubsec:Fourier_integral_on_the_cone} below. 
\end{remark} 

\subsubsection{Formal asymptotic expansion of the Fourier integral}
\label{subsubsec:formal_asymptotics} 
We now define the formal asymptotic expansion of  the integral \eqref{eq:Fourier_integral} and discuss its properties. The discussion here parallels that in \cite[\S 5.3]{Iritani-Koto:projective_bundle}. It suffices to consider the case where $\bbf$ is independent of $\QW$, i.e.~lies in $H^*_\hT(W)_{\rm loc}$; we then extend the definition linearly over $\C[\![\QW]\!]$.   Replacing $G_F$ with the asymptotic expansion in Remark \ref{rem:Stirling}, we rewrite the integral \eqref{eq:Fourier_integral} as 
\begin{equation} 
\label{eq:formal_Fourier_integral} 
\int e^{\eta(\lambda)/z} \left( \prod_\alpha 
\alpha^{-\rho_\alpha/z-r_\alpha/2} \tDelta_\alpha^{-1}\right)  \bbf_F d\lambda 
\end{equation} 
where $\eta(\lambda) := \lambda \log S_F - \sum_\alpha r_\alpha (\alpha \log \alpha -\alpha)$ is the phase function. 
We expand $\bbf_F \in H^*_\hT(F)_{\rm loc} = H^*(F) \otimes \C(\lambda,z)_{\rm hom}$ in Laurent series at $z=0$ and regard it as an element of $H^*(F)[\lambda,\lambda^{-1}](\!(z)\!)$. Note that $H^*(F)[\lambda,\lambda^{-1}](\!(z)\!) = H^*(F)[z,z^{-1}](\!(\lambda^{-1})\!)$ because we are working with graded completions (see \S\ref{subsec:formal_power_series}). 
We take a critical point  $\lambda_0$ of $\eta(\lambda)$ given by  
\[
\lambda_0 := (S_F)^{1/c_F} \prod_\alpha w_\alpha^{-r_\alpha w_\alpha/c_F} 
\]
and consider the stationary phase approximation of the integral \eqref{eq:formal_Fourier_integral} associated with $\lambda_0$. 
Here we are assuming $c_F\neq 0$. (In the case at hand, we have $c_X=-1$, $c_Z=-(r-1)$, $c_\tX=1$.) 
The stationary phase method approximates the integral by contribution near the critical point $\lambda_0$; the resulting asymptotics as $z\to 0$ depends only on the formal germ of the integrand at $\lambda = \lambda_0$. 
We set $\lambda = \lambda_0 \exp(u/\sqrt{c_F\lambda_0})$ and expand\footnote{The choice of the coordinate $u$ is not important; we can use any other local coordinates near $\lambda_0$ normalizing the Hessian of $\eta(\lambda)$ at $\lambda_0$.} the integrand in power series of $u$. We write  
\begin{align*} 
&\eta(\lambda) = c_F \lambda_0  -  
\frac{u^2}{2}  - g(u,\lambda_0)  \quad \text{with} \quad 
g(u,\lambda_0) := \sum_{n=3}^\infty \frac{n-1}{n!} \frac{u^n}{(c_F \lambda_0)^{n/2-1}} \\
\text{and} \ &\left( \prod_\alpha \alpha^{-\rho_\alpha/z-r_\alpha/2}\tDelta_\alpha^{-1}\right)  \bbf_F 
= \left( \prod_\alpha (w_\alpha \lambda_0)^{-\rho_\alpha/z-r_\alpha/2} \right) \Phi(u) 
\end{align*} 
with 
\[
\Phi(u) := e^{-\frac{u}{\sqrt{c_F\lambda_0}}(\rho_F/z + r_F/2)} \left(\prod_\alpha \tDelta_\alpha^{-1}\right) \bbf_F
\]
in $H^*(F)\otimes \C[u,z,z^{-1}](\!(\lambda_0^{-1/2})\!)$ (where we set $\deg z = \deg \lambda_0 =2$, $\deg u =1$). 
Then the above integral \eqref{eq:formal_Fourier_integral} can be written as a  Gaussian integral 
\begin{equation*} 
e^{c_F\lambda_0/z} \sqrt{\frac{\lambda_0}{c_F}}\left( \prod_\alpha (w_\alpha \lambda_0)^{-\rho_\alpha/z-r_\alpha/2} \right) 
\int e^{-u^2/(2z)} e^{-g(u,\lambda_0)/z}e^{u/\sqrt{c_F\lambda_0}} \Phi(u) du. 
\end{equation*}  
We perform term-by-term integration of the $u$-power series $e^{-g(u,\lambda_0)/z}e^{u/\sqrt{c_F\lambda_0}}  \Phi(u)$ over the contour $\sqrt{z} \R$ and arrive at the formal asymptotic expansion as $z\to 0$: 
\begin{equation} 
\label{eq:stationary_phase_approximation} 
\int e^{\lambda \log S_F/z} G_F \bbf_F d\lambda 
\sim \sqrt{2\pi z} \cdot  e^{c_F \lambda_0/z} \scrF_{F,0}(\bbf) 
\end{equation} 
with 
\[
\scrF_{F,0}(\bbf) := \sqrt{\frac{\lambda_0}{c_F}} \left( \prod_\alpha (w_\alpha \lambda_0)^{-\rho_\alpha/z-r_\alpha/2} \right) 
\left[ e^{z(\partial_u)^2/2} e^{-g(u,\lambda_0)/z}e^{u/\sqrt{c_F \lambda_0}} \Phi(u) \right]_{u=0}. 
\]
The half-integer powers of $\lambda_0$ in $e^{-g(u,\lambda_0)/z}e^{u/\sqrt{c_F \lambda_0}}\Phi(u)$ must be accompanied by odd powers of $u$ and therefore do not contribute to the Gaussian integral. Thus we have 
\[
\scrF_{F,0}(\bbf) \in S_F^{-\rho_F/(c_Fz)-(r_F-1)/(2c_F)}  H^*(F)[z,z^{-1}](\!(S_F^{-1/c_F})\!) 
\]
for $\bbf \in H^*_\hT(W)_{\rm loc}$. Note that $\deg S_F=2 c_F$. By applying the monodromy transformation with respect to $S_F$, we get $\scrF_{F,j}(\bbf) = \scrF_{F,0}(\bbf)|_{S_F \to e^{2\pi\iu j} S_F}$ with $j=0,\dots,|c_F|-1$. This is the formal asymptotics associated with the critical point $\lambda_j := e^{2\pi \iu j/c_F} \lambda_0$ of $\eta(\lambda)$. Extending linearly over $\C[\![\QW]\!]$, we obtain the continuous Fourier transformations 
\[
\scrF_{F,j} \colon \cH_W^{\rm rat} \to S_F^{-\rho_F/(c_Fz)-(r_F-1)/(2c_F)}  H^*(F)[z,z^{-1}](\!(S_F^{-1/c_F})\!)[\![\QW]\!] 
\]
for $j=0,\dots,|c_F|-1$. 

When $\bbf_F= \lambda^n \phi + O(\lambda^{n-1})$ with $\phi\in H^*(F)$, the leading order term of $\scrF_{F,j}(\bbf)$ as a Laurent series of $S_F^{-1/c_F}$ is given by   
\begin{equation} 
\label{eq:leadingterm_scrF} 
\scrF_{F,j}(\bbf) = q_{F,j} e^{h_{F,j}/z} S_F^{-\rho_F/(c_Fz)} \lambda_j^n \left(\phi + O(S_F^{-1/c_F})\right)
\end{equation} 
where $\lambda_j = e^{2\pi \iu j/c_F} (\prod_\alpha w_\alpha^{-r_\alpha w_\alpha/c_F}) S_F^{1/c_F}$ as above and we set 
\begin{align}
\label{eq:qFj-hFj} 
\begin{split} 
q_{F,j}&:= \sqrt{\frac{\lambda_j}{c_F}} \prod_\alpha (w_\alpha \lambda_j)^{-r_\alpha/2}\in \C S_F^{-(r_F-1)/(2c_F)}, \\
h_{F,j} &:= - \frac{2\pi \iu j}{c_F} \rho_F + \sum_\alpha \left(\frac{r_\alpha w_\alpha}{c_F} \rho_F - \rho_\alpha\right)\log w_\alpha\in H^2(F,\C). 
\end{split} 
\end{align} 
\begin{proposition} 
\label{prop:properties_conti_Fourier}
The map $\bbf \mapsto \scrF_{F,j}(\bbf)$ satisfies the following: 
\begin{itemize} 
\item[(1)] $\scrF_{F,j}(\hcS^\beta \bbf) = \hS^\beta \scrF_{F,j}(\bbf)$; 
\item[(2)] $\scrF_{F,j}(\lambda \bbf) = (z S \parfrac{}{S} + \lambda_j) \scrF_{F,j}(\bbf)$; 
\item[(3)] $\scrF_{F,j}((z\partial_z - z^{-1} c_1^T(W) + \mu_W+\frac{1}{2})\bbf) = (z\partial_z-z^{-1} ( c_1(F)+c_F \lambda_j) + \mu_F) \scrF_{F,j}(\bbf)$. 
\end{itemize} 
\end{proposition}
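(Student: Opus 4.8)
The plan is to prove each of the three intertwining relations by reducing them to the corresponding ``exact'' relations for the Mellin--Barnes integral \eqref{eq:Fourier_integral}, which hold before passing to the formal asymptotic expansion, and then noting that the stationary phase procedure (term-by-term Gaussian integration near the critical point $\lambda_j$) is compatible with these manipulations. Concretely, since $\scrF_{F,j}$ was \emph{defined} as the formal asymptotic expansion of $\bbf\mapsto \int e^{\lambda\log S_F/z}G_F\bbf_F\,d\lambda$ associated with the critical point $\lambda_j$, and since all three statements are $\C[\![\QW]\!]$-linear, it suffices to verify them for $\bbf\in H^*_\hT(W)_{\rm loc}$ where the integrand is an honest (formal) expression; the identities then propagate through the asymptotic expansion because the expansion is determined by the formal germ of the integrand at $\lambda_j$.

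For part (1): by \eqref{eq:Gamma_solution} we have $G_F(\hcS^\beta\bbf)_F = \QW^{\beta+\sigma_F(-\ovbeta)} e^{-z\ovbeta\partial_\lambda}(G_F\bbf_F)$. Substituting into the Fourier integral and changing variables $\lambda\mapsto\lambda+z\ovbeta$ (equivalently, integrating by the shift-invariance of $d\lambda$), the factor $e^{\lambda\log S_F/z}$ picks up $e^{\ovbeta\log S_F} = S_F^{\ovbeta}$, so that $\int e^{\lambda\log S_F/z}G_F(\hcS^\beta\bbf)_F\,d\lambda = \QW^{\beta+\sigma_F(-\ovbeta)}S_F^{\ovbeta}\int e^{\lambda\log S_F/z}G_F\bbf_F\,d\lambda$. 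Recalling $S_F=\hS^{\sigma_F(1)}$, one checks $\QW^{\beta+\sigma_F(-\ovbeta)}S_F^{\ovbeta} = \hS^{\beta+\sigma_F(-\ovbeta)}\hS^{\ovbeta\sigma_F(1)} = \hS^\beta$, using that $\sigma_F$ is a linear splitting of \eqref{eq:equiv_homology_seq} (Remark \ref{rem:sigma_F}) and that $\QW^\delta=\hS^\delta$ for $\delta\in N_1(W)$. The critical point $\lambda_j$ is unchanged by the shift (it satisfies $\eta'(\lambda_j)=0$ and the shift only rescales $S_F$, which is compatible since the monodromy index $j$ is preserved), so this identity descends to $\scrF_{F,j}$. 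For part (2): multiplication by $\lambda$ inside the integral is converted, via integration by parts against $e^{\lambda\log S_F/z}$, into the operator $z\,\partial/\partial(\log S_F) = zS\partial_S$ acting on the result (note $S_F$ and $S$ differ by a monomial in $\QW$, which is annihilated), plus the contribution of differentiating the asymptotic prefactor $e^{c_F\lambda_j/z}$ in \eqref{eq:stationary_phase_approximation}, which produces exactly the constant $\lambda_j$ (since $c_F\lambda_j = c_F\lambda_0 e^{2\pi\iu j/c_F}$ and $S_F\partial_{S_F}(c_F\lambda_j/z) = \lambda_j/z$). For part (3): this is the statement that $\scrF_{F,j}$ preserves degrees appropriately, combined with \eqref{eq:QRR}-type bookkeeping; one observes that $\bbf\mapsto\int e^{\lambda\log S_F/z}G_F\bbf_F\,d\lambda$ is homogeneous of degree $0$ once one assigns $\deg S_F = 2c_F$, using $\deg\lambda=\deg z=2$ and the fact that $G_F$ (equivalently its asymptotics $\prod_\alpha\alpha^{-\rho_\alpha/z-r_\alpha/2}\tDelta_\alpha^{-1}$) has degree $0$. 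Translating homogeneity of the $z\to 0$ asymptotics \eqref{eq:stationary_phase_approximation} into a relation among $z\partial_z$, the Novikov grading $S\partial_S$, and the grading operators, and using $c_1^T(W)|_F = c_1(F)+\rho_F$ together with the contribution $c_F\lambda_j$ coming from differentiating $e^{c_F\lambda_j/z}$, gives (3). Alternatively (3) follows from the untwisted analogue $M_W\circ\nabla_{z\partial_z} = (z\partial_z - z^{-1}c_1^T(W)+\mu_W)\circ M_W$ in \eqref{eq:fundsol_qconn} together with Proposition \ref{prop:restriction_twisted_cone} and the explicit form of $\Delta_\alpha = \alpha^{\rho_\alpha/z+r_\alpha/2}\tDelta_\alpha$.

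The main obstacle is making rigorous the claim that these formal manipulations commute with the stationary phase expansion — in particular, that the change of variables $\lambda\mapsto\lambda+z\ovbeta$ in part (1) can be carried out at the level of formal asymptotic series without disturbing the saddle $\lambda_j$ or the Gaussian-integral normalization. The cleanest way to handle this is to observe that $\scrF_{F,j}$ is characterized, up to the explicit prefactors $q_{F,j}e^{h_{F,j}/z}$ recorded in \eqref{eq:leadingterm_scrF}--\eqref{eq:qFj-hFj}, by the operator $e^{z(\partial_u)^2/2}[\,\cdot\,]_{u=0}$ applied to the Taylor expansion of the integrand in the normalizing coordinate $u$; since all the operations above act on the integrand in a way that is manifestly compatible with Taylor expansion in $u$ (the shift in part (1) simply relabels the critical point and conjugates by a scalar, the integration-by-parts in (2) acts by a derivation), the identities pass through. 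I would present part (1) in full detail as the template, then treat (2) and (3) more briefly by the same method, and conclude by remarking that these are the continuous-Fourier analogues of \eqref{eq:properties_discrete_Fourier}, \eqref{eq:properties_discrete_Fourier_Qdiff} and \eqref{eq:properties_discrete_Fourier_zdiff} established for $\sfF_Y$.
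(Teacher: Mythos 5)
Your overall approach for parts (1) and (2) matches the paper's (which is quite terse, referring to \cite[Proposition 5.5]{Iritani-Koto:projective_bundle} and calling the identities ``translations of standard properties of the Fourier transformation''); your fleshed-out version of the shift computation in (1), including the check $\QW^{\beta+\sigma_F(-\ovbeta)} S_F^{\ovbeta} = \hS^\beta$, and the integration-by-parts/prefactor computation in (2), are both correct.

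For part (3) there is a genuine numerical error. You claim that $G_F$ (equivalently its asymptotic expansion $\prod_\alpha \alpha^{-\rho_\alpha/z - r_\alpha/2}\tDelta_\alpha^{-1}$) has degree $0$, and that $\bbf \mapsto \int e^{\lambda\log S_F/z} G_F\bbf_F\,d\lambda$ is homogeneous of degree $0$. Both are wrong: each factor $\frac{1}{\sqrt{-2\pi z}}(-z)^{-\varrho/z}\Gamma(-\varrho/z)$ of $G_F$ contributes degree $-1$ from $\frac{1}{\sqrt{-2\pi z}}$, so $G_F$ has degree $-r_F$ (equivalently, $\alpha^{-r_\alpha/2}$ contributes degree $-r_\alpha$, while $\alpha^{-\rho_\alpha/z}$ and $\tDelta_\alpha^{-1}$ are degree-neutral). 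Moreover $\scrF_{F,j}(\bbf)$ is not itself homogeneous, because of the transcendental factor $S_F^{-\rho_F/(c_F z)}$; the homogeneity statement one actually needs — and the one the paper uses — is that $\bbf\mapsto\lambda_j^{c_1(\cN_{F/W})/z}\scrF_{F,j}(\bbf)$ is homogeneous of degree $-(r_F-1)$. You also write $c_1^T(W)|_F = c_1(F)+\rho_F$, omitting the $c_F\lambda$ term; the correct identity (used in the paper) is $c_1^T(W)|_F = c_1(F)+c_1(\cN_{F/W})+c_F\lambda$, and this $c_F\lambda$ term is precisely what, after applying part (2), produces both the $c_F S\partial_S$ in the intermediate relation and the $c_F\lambda_j$ in the final formula. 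With the degree $-(r_F-1)$ and the full $c_1^T(W)|_F$ in hand, the rest of your derivation of (3) via part (2) goes through as in the paper; as written, however, the degree-$0$ claim and the dropped $c_F\lambda$ would lead to an incorrect constant in the intermediate identity.
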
 
\begin{proof} 
Similar results appear in \cite[Proposition 5.5]{Iritani-Koto:projective_bundle}. 
Parts (1), (2) are translations of standard properties of the Fourier transformation in terms of formal asymptotics (together with \eqref{eq:Gamma_solution}). To see part (3), observe that $\bbf \mapsto \lambda_j^{c_1(\cN_{F/W})/z} \scrF_{F,j}(\bbf)$ is homogeneous of degree $-(r_F-1)$. This implies 
\[
\scrF_{F,j}((z\partial_z + \mu_W + \tfrac{1}{2}) \bbf ) = (c_F S\partial_S + z\partial_z + z^{-1} c_1(\cN_{F/W})+ \mu_F )
 \scrF_{F,j}(\bbf). 
\]
Part (3) follows from this, part (2) and $c_1^T(W)|_F= c_1(F) + c_1(\cN_{F/W}) + c_F \lambda$. 
\end{proof} 

\subsubsection{The Fourier transforms of points on the Givental cone} 
\label{subsubsec:Fourier_integral_on_the_cone} 
We start with the Fourier transform of the big $J$-function $\cJ_F^{\rm tw}(\bt)$ \eqref{eq:point_on_the_twisted_cone} on the $(\cN_{F/W},e_T^{-1})$-twisted theory.  
Using a basis $\{\phi_{F,i}\}$ of $H^*(F)$, we expand the parameter as $\bt(z) = \sum_{n=0}^\infty \sum_i t_n^i \phi_{F,i} z^n$, $t_n^i = \sum_{k\in \Z} t_n^{i,k} \lambda^k$ and identify $\bt$ with the infinite set $\{t_n^{i,k}\}_{n\in \N, k\in\Z, i}$ of variables. We set $\deg t_n^{i,k} = 2-\deg \phi_i - 2n-2k$. 
We regard $\cJ_F^{\rm tw}(\bt)$ as an $R[\![Q_F, \bt]\!]$-valued point, where $R=\C[\lambda,\lambda^{-1}]$ and $Q_F$ is the Novikov variable of $F$. Let 
\[
\int e^{\lambda \log S_F/z} G_F \cJ_F^{\rm tw}(\bt) d\lambda \sim \sqrt{2\pi z}\cdot  e^{c_F\lambda_j/z} \hcJ_{F,j}^{\rm tw}(\bt) 
\]
be the formal asymptotics associated with the critical point $\lambda_j =e^{2\pi \iu j/c_F} S_F^{1/c_F} \prod_\alpha w_\alpha^{-r_\alpha w_\alpha/c_F}$ defined in the previous section \S\ref{subsubsec:formal_asymptotics}. 

\begin{proposition} 
\label{prop:Fourier_bigJtw}
With notation as above $($see also Notation $\ref{nota:normal_bundle}$$)$, there exist 
\begin{align*} 
\tau &=\tau(\bt) \in h_{F,j} + \frakm' H^*(F)[\![ S_F^{-1/c_F},Q_F S_F^{-\rho_F/c_F}, \bt S_F^{\bullet/c_F}]\!] 
\quad \text{ and } \\
v & = v(\bt) \in q_{F,j} \left(1+ \frakm'' H^*(F)[z][\![S_F^{-1/c_F},Q_FS_F^{-\rho_F/c_F}, \bt S_F^{\bullet/c_F}]\!] \right) 
\end{align*} 
such that $\hcJ^{\rm tw}_{F,j}(\bt) = S_F^{-\rho_F/(c_F z)} z M_F(\tau; Q_F S_F^{-\rho_F/c_F}) v$ where $q_{F,j}, h_{F,j}$ are as in \eqref{eq:qFj-hFj} and 
\begin{itemize} 
\item $M_F(\tau) = M_F(\tau;Q_F)$ is the fundamental solution \eqref{eq:fundsol} for $F$; 
\item $\bt S_F^{\bullet/c_F}$ denotes the infinite set $\{t^{i,k}_n S_F^{k/c_F}\}_{n\in \N,k\in \Z, i}$ of variables; 
\item $\frakm'\subset \C[\![S_F^{-1/c_F}, Q_F S_F^{-\rho_F/c_F}, \bt S_F^{\bullet/c_F}]\!]$ and $\frakm''\subset \C[z][\![S_F^{-1/c_F},Q_FS_F^{-\rho_F/c_F},\bt S_F^{\bullet/c_F}]\!]$ are the closed ideals generated by $S_F^{-1/c_F}, Q_F^d S_F^{-\rho_F\cdot d/c_F}, t^{i,k}_n S_F^{k/c_F}$ with $d \in\NEN(F)\setminus \{0\}$.  
\end{itemize} 
Moreover, $\tau$ is homogeneous of degree two and $v/q_{F,j}$ is homogeneous of degree zero, where the degree of $Q_F$ is given by $\deg Q_F^d = 2(c_1(F) + \rho_F) \cdot d$ for $d\in \NEN(F)$ $($as in the $(\cN_{F/W},e_T^{-1})$-twisted theory, see Remark $\ref{rem:e_te_twist}$$)$.  
\end{proposition}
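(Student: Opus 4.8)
The plan is to reduce, via the quantum Riemann--Roch theorem and the Stirling asymptotics of $G_F$, the Fourier integral of the twisted big $J$-function to a Gaussian integral of a family of points on the \emph{untwisted} Givental cone $\cL_F$, and then to exploit the overruled structure of $\cL_F$ together with the divisor equation to extract the decomposition $z M_F(\tau)v$.

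First I would undo the twist. By Remark \ref{rem:Stirling} the product $e^{\lambda\log S_F/z}G_F$ is asymptotic to $e^{\eta(\lambda)/z}\prod_\alpha\Delta_\alpha^{-1}$, so $\hcJ_{F,j}^{\rm tw}(\bt)$ is governed by the germ at $\lambda=\lambda_j$ of the integrand $e^{\eta(\lambda)/z}\bigl(\prod_\alpha\Delta_\alpha^{-1}\bigr)\cJ_F^{\rm tw}(\bt)$. Now $\cJ_F^{\rm tw}(\bt)$ is a point of the $(\cN_{F/W},e_T^{-1})$-twisted cone, which equals $\bigl(\prod_\alpha\Delta_\alpha\bigr)\cL_F$ by Theorem \ref{thm:QRR} and Remark \ref{rem:more_than_one_twists} (the $e_\lambda^{-1}$-versus-$\te_\lambda^{-1}$ discrepancy being absorbed as in Remark \ref{rem:e_te_twist} by the substitution $Q_F^d\mapsto Q_F^d\lambda^{-\rho_F\cdot d}$). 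Hence $\bigl(\prod_\alpha\Delta_\alpha^{-1}\bigr)\cJ_F^{\rm tw}(\bt)$ equals, up to the explicit scalar/cohomology-valued prefactor $\prod_\alpha(w_\alpha\lambda)^{-\rho_\alpha/z-r_\alpha/2}$, a family $\bg(\lambda;\bt)$ of points on $\cL_F$ with coefficients in a Laurent extension of the Novikov ring in $\lambda$. By the functor-of-points description (Remark \ref{rem:functor_of_points}) and \eqref{eq:Givental_cone} I write $\bg(\lambda;\bt)=z M_F(\ttau(\lambda;\bt);Q_F)\tv(\lambda;\bt)$ with $\ttau$ homogeneous of degree $2$ and $\tv$ of degree $0$, uniquely determined. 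The key observation is that all $\lambda$-derivatives $\partial_\lambda^n\bg(\lambda_j;\bt)$ again lie in the tangent space $T_{\ttau(\lambda_j;\bt)}=M_F(\ttau(\lambda_j;\bt))\cH_+$; this follows by induction from $\partial_\lambda\bigl[M_F(\ttau)w\bigr]=M_F(\ttau)\bigl[\sum_i(\partial_\lambda\ttau^i)\,\phi_i\star_\ttau w+\partial_\lambda w\bigr]$ and the stability of $\cH_+$ under $\phi_i\star_\ttau$ and under $\partial_\lambda$.

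Next I would run the stationary-phase approximation. Substituting $\lambda=\lambda_j e^{u/\sqrt{c_F\lambda_j}}$ and plugging $\bg$ into the explicit formula \eqref{eq:stationary_phase_approximation}, the only operator acting on the $u$-power series is $e^{z(\partial_u)^2/2}$, which involves only non-negative powers of $z$. The remaining factors $e^{-g(u,\lambda_j)/z}$, $\prod_\alpha(w_\alpha\lambda(u))^{-\rho_\alpha/z}$, the Novikov reparametrisation $Q_F^d\mapsto Q_F^d\lambda(u)^{-\rho_F\cdot d}$, and $e^{-u(\rho_F/z+r_F/2)/\sqrt{c_F\lambda_j}}$ carry the $\log\lambda$- and $u$-dependence paired with $z^{-1}$; I would move these into shifts of the base point of $M_F$ and into the Novikov variable using the divisor equation for $M_F$. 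After this reorganisation the whole integrand becomes $z\,M_F(\text{base point depending on }u)\cdot(\cH_+\text{-valued }u\text{-series})$, so by the observation above the value of $e^{z(\partial_u)^2/2}$ at $u=0$ is again of this form. Evaluating $\lambda(u)$ at the critical value $\lambda_j$, a monomial in $S_F^{1/c_F}$, converts $Q_F^d$ into $Q_F^dS_F^{-\rho_F\cdot d/c_F}$ and produces the overall factor $S_F^{-\rho_F/(c_Fz)}$, with a constant cohomology shift absorbed into the mirror map (which is $h_{F,j}$). This gives $\hcJ_{F,j}^{\rm tw}(\bt)=S_F^{-\rho_F/(c_Fz)}\,z\,M_F(\tau;Q_FS_F^{-\rho_F/c_F})v$ over $\C[\![S_F^{-1/c_F},Q_FS_F^{-\rho_F/c_F},\bt S_F^{\bullet/c_F}]\!]$. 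Finally, the values $\tau|_0=h_{F,j}$ and $v|_0=q_{F,j}$ are read off from \eqref{eq:leadingterm_scrF}--\eqref{eq:qFj-hFj}, homogeneity follows from the degree statement in Proposition \ref{prop:properties_conti_Fourier}(3), and membership of $\tau,v$ in the completed ideals $\frakm',\frakm''$ is the observation that every correction in the stationary-phase series carries a positive power of $S_F^{-1/c_F}$, of $Q_F^dS_F^{-\rho_F\cdot d/c_F}$ with $d\neq0$, or of some $t^{i,k}_nS_F^{k/c_F}$.

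The hard part will be the reorganisation in the previous paragraph: ensuring that the many $z^{-1}$-carrying factors combine with $e^{z(\partial_u)^2/2}$ to yield a genuine power series in $z$ lying on $\cL_F$, rather than something with uncontrolled negative $z$-powers. The divisor equation for $M_F$ --- which moves $e^{\rho_F\log\lambda/z}$-type factors into base-point shifts and Novikov rescalings --- together with the closedness of the tangent spaces $T_\ttau$ under multiplication by $z$ and under the operators $z\xi Q\partial_Q+\xi$ and $z^2\partial_z-c_1(F)+z\mu_F$, is precisely what makes this work; a parallel argument in the projective-bundle setting is carried out in \cite[\S5.3]{Iritani-Koto:projective_bundle}.
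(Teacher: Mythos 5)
Your overall route is the same as the paper's: undo the twist with quantum Riemann--Roch, reparametrize with Divisor and String equations, and reduce the stationary-phase expansion to the operator $e^{z(\partial_u)^2/2}$ acting on a family of points on the untwisted cone $\cL_F$. The structure of the argument, including the role of $h_{F,j}$, $q_{F,j}$ and the degree bookkeeping, matches the paper.

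However, the ``key observation'' you offer to justify that $e^{z(\partial_u)^2/2}$ lands on the cone is wrong. You write
\[
\partial_\lambda\bigl[M_F(\ttau)w\bigr]=M_F(\ttau)\Bigl[\textstyle\sum_i(\partial_\lambda\ttau^i)\,\phi_i\star_\ttau w+\partial_\lambda w\Bigr],
\]
but differentiating through \eqref{eq:fundsol_qconn} the correct identity has $z^{-1}\sum_i(\partial_\lambda\ttau^i)\,\phi_i\star_\ttau w$, not $\sum_i(\partial_\lambda\ttau^i)\,\phi_i\star_\ttau w$. With the factor $z$ in $\bg = zM_F\tv$ this $z^{-1}$ cancels once, so the \emph{first} derivative $\partial_\lambda\bg$ does land in $T_\ttau$ (that is exactly the overruled property), but from the second derivative on you pick up net negative powers of $z$ and $\partial_\lambda^{n}\bg$ leaves $T_\ttau$. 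Your claim ``all $\lambda$-derivatives lie in the tangent space $T_{\ttau(\lambda_j;\bt)}$'' is therefore false for $n\geq 2$, and the inductive argument you sketch does not establish that $\bigl[e^{z(\partial_u)^2/2}\bbf(u)\bigr]_{u=0}$ is on the cone. The cancellation between the $z^n$ from $e^{z(\partial_u)^2/2}$ and the accumulating $z^{-1}$'s from the derivatives is genuinely delicate, and the correct statement is a separate lemma (cone preservation under $e^{z(\partial_u)^2/2}$) which the paper invokes by citation to \cite[\S 8]{Coates-Givental}, \cite[\S 4.2]{CCIT:computing}, \cite[Lemma 2.7]{Iritani-Koto:projective_bundle} rather than proving via this sort of induction. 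Replace your ``key observation'' by a citation to one of these (or prove it properly, by approximating $e^{z(\partial_u)^2/2}$ by elements of the twisted loop group preserving $\cL_F$), and the rest of your outline goes through as in the paper's proof.
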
 
\begin{proof} 
Recall the characteristic classes $e_\lambda, \te_\lambda$ in \eqref{eq:equiv_Euler}. For a vector bundle $V$ with a fibrewise $T$-action, we have $e_T(V) = \prod_\alpha e_\alpha(V_\alpha)$ where $V=\bigoplus_\alpha V_\alpha$ is the $T$-eigenbundle decomposition where $T$ acts on $V_\alpha$ by the weight $\alpha \in \Hom(T,\C^\times) = H^2_T(\pt,\Z) \cong \Z \lambda$. Let $\te_T$ be a modified equivariant Euler class given by $\te_T(V) = \prod_\alpha \te_\alpha(V_\alpha)$. 
Let $\tcL_F^{\rm tw}$ be the Givental cone of the $(\cN_{F/W},\te_T^{-1})$-twisted theory and let $\tcJ_F^{\rm tw}(\bt)$ be the big $J$-function on it.  As discussed in Remark \ref{rem:e_te_twist}, we have 
\[
\tcJ_F^{\rm tw}(\bt) =\cJ_F^{\rm tw}(\bt)\bigr|_{Q_F \to Q_F \prod_\alpha \alpha^{\rho_\alpha}}.
\] 
This is a $\C[\lambda^{-1}][\![Q_F, \bt \lambda^\bullet]\!]$-valued point of $\tcL_F^{\rm tw}$, where $\bt\lambda^\bullet$ denotes the set $\{t_n^{i,k} \lambda^k\}$ of variables. Hence by Theorem \ref{thm:QRR} (see also Remark \ref{rem:more_than_one_twists}), we have 
$(\prod_\alpha \tDelta_\alpha^{-1}) \tcJ_F^{\rm tw}(\bt)$ lies in the cone $\cL_F$. 
The Divisor Equation says that the cone $\cL_F$ is invariant under $\bbf(Q_F) \mapsto e^{s h/z} \bbf(Q_F e^{sh})$ for $h\in H^2(F)$; the String Equation says that $\cL_F$ is invariant under $\bbf \mapsto e^{s/z} \bbf$, where $s$ is a formal parameter. Substituting $\lambda_j e^{u/\sqrt{c_F \lambda_j}}$ for $\lambda$ and using the Divisor and String Equations (and the fact that $\cL_F$ is a cone), we get a point 
\begin{equation} 
\label{eq:after_Divisor_String} 
e^{-g(u,\lambda_j)/z}e^{s} \left( \prod_{\alpha} e^{-s(\rho_\alpha/z+r_\alpha/2)} \tDelta_\alpha^{-1}\right) \tcJ_F^{\rm tw}(\bt)\Bigr|_{\substack{Q_F \to Q_F e^{-s \cdot \rho_F} \\ \lambda=\lambda_j e^s \phantom{aaaaaaa}}} 
\quad \text{with $s=u/\sqrt{c_F \lambda_j}$} 
\end{equation} 
lying in $\cL_F$. This is a $\C[\![\lambda_j^{-1/2},u,Q_F,\bt \lambda_j^{\bullet}]\!]$-valued point. 
By the special geometric properties of $\cL_F$ explained in \S\ref{subsec:Givental_cone}, the operator  $e^{z (\partial_u)^2/2} = e^{(z\partial_u)^2/(2z)}$ preserves points on the cone (see \cite[\S 8]{Coates-Givental}, \cite[\S 4.2]{CCIT:computing}, \cite[Lemma 2.7]{Iritani-Koto:projective_bundle}). 
Hence we have 
\[
e^{z(\partial_u)^2/2} (\text{the quantity in \eqref{eq:after_Divisor_String}})\bigr|_{u=0} 
=z M_F(\ttau) \tv
\]
for some $\ttau=\ttau(Q_F), \tv = \tv(Q_F) \in H^*(F)[z][\![\lambda_j^{-1}, Q_F,\bt \lambda_j^\bullet]\!]$ such that $\ttau$ is independent of $z$ and that $\ttau$ and $\tv-1$ vanish at the origin $\lambda_j^{-1} = Q_F = \bt\lambda_j^\bullet=0$. 
By the definition of the formal asymptotics $\hcJ^{\rm tw}_{F,j}(\bt)$ in \S\ref{subsubsec:formal_asymptotics}, we have 
\[
\hcJ^{\rm tw}_{F,j}(\bt) = q_{F,j} \left( \prod_\alpha (w_\alpha \lambda_j)^{-\rho_\alpha/z} \right) 
z M_F(\ttau) \tv\Bigr|_{Q_F \to Q_F \prod_\alpha (w_\alpha\lambda_j)^{-\rho_\alpha}}. 
\] 
Using again the Divisor Equation $e^{h/z}M_F(\tau; Q_Fe^h) = M_F(\tau+h;Q_F)$ for the fundamental solution $M_F(\tau) = M_F(\tau;Q_F)$, we arrive at the conclusion by setting $\tau = h_{F,j} + \ttau(Q_F \textstyle\prod_\alpha (w_\alpha \lambda_j)^{-\rho_\alpha})$ and $v = q_{F,j} \cdot \tv(Q_F \textstyle\prod_\alpha (w_\alpha \lambda_j)^{-\rho_\alpha})$.  

The big $J$-function $\cJ^{\rm tw}_X(\bt)$ is homogeneous of degree two for the grading $\deg Q_F^d = 2 (c_1(F)+ \rho_F) \cdot d$. Keeping track of the degrees, we find that $M_F(\ttau) \tv$ is homogeneous of degree zero with respect to the grading $\deg Q_F^d = 2 c_1(F) \cdot d$ and thus $\ttau$ and $\tv$ are homogeneous of degree two and zero respectively. Hence $\tau$ and $v/q_{F,j}$ are homogeneous of degree two and zero respectively for the grading $\deg Q_F^d = 2 (c_1(F) + \rho_F)\cdot d$. 
\end{proof} 

Let $J_W(\theta)$ with $\theta \in H^*_T(W)$ denote the equivariant $J$-function of $W$ (see \S\ref{subsec:equiv_qconn} and   \eqref{eq:J-function}). Recall from \S\ref{subsec:Givental_cone} that $z J_W(\theta)$ is the restriction of the big $J$-function $\cJ_W$ to the ``small phase space'' $H^*_T(W)$ inside $\cH^W_+$. 

\begin{corollary} 
\label{cor:Fourier_transform_JW}
The Fourier transform of the equivariant $J$-function $J_W(\theta)$ of $W$ is of the form 
\[
\scrF_{F,j}(J_W(\theta)) = S_F^{-\rho_F/(c_Fz)} M_F(\tau; Q_F S_F^{-\rho_F/c_F}) v \bigr|_{Q_F\to \QW} 
\]
for some $\tau \in H^*(F)[S_F^{1/c_F},S_F^{-1/c_F}][\![\QW,\btheta]\!]$ and $v\in q_{F,j} H^*(F)[z](\!(S_F^{-1/c_F})\!)[\![\QW,\btheta]\!]$ satisfying 
\begin{align}
\label{eq:condition_at_Q=0}
\begin{split}  
\tau|_{\QW=0} &\in h_{F,j} + \frakm' H^*(F)[\![S_F^{-1/c_F},\btheta_F S_F^{\bullet/c_F}]\!] \\ 
v|_{\QW=0} & \in q_{F,j} \left(1+\frakm'' H^*(F)[z][\![S_F^{-1/c_F},\btheta_F S_F^{\bullet/c_F} ]\!]\right)
\end{split} 
\end{align} 
where the subscript $Q_F\to \QW$ means to replace $Q_F^d$ with $\QW^{i_{F*}d}$ for $d\in \NEN(F)$ and 
\begin{itemize} 
\item $\btheta = \{\theta^{i,k}\}_{k\in\N, i}$ is the parameter for $\theta =\sum_{i,k} \theta^{i,k} \phi_{i} \lambda^k \in H^*_T(W)$ associated with a basis $\{\phi_i\}$ of $H^*_T(W)$ over $H_T^*(\pt)$ $($as in {\rm \S\ref{subsec:QDM(W)}}$)$; 
\item $\btheta_F=\{\theta_F^{i,k}\}_{k\in\N,i}$ is the parameter for the restriction $\theta|_F = \sum_{i,k} \theta_F^{i,k} \phi_{F,i}\lambda^k$; each $\theta_F^{i,k}$ can be written as a $\C$-linear combination of $\{\theta^{j,l}\}_{0\le l\le k, j}$; 
\item $\btheta_FS_F^{\bullet/c_F} $ is the infinite set $\{\theta^{i,k}_F S_F^{k/c_F} \}_{k\in \N, i}$ of variables; 
\item $\frakm'\subset \C[\![S_F^{-1/c_F},\btheta_F S_F^{\bullet/c_F} ]\!]$ and $\frakm'' \subset \C[z][\![S_F^{-1/c_F},\btheta_F S_F^{\bullet/c_F}]\!]$ are the closed ideals generated by $S_F^{-1/c_F}$ and $\theta_F^{i,k}S_F^{k/c_F}$. 
\end{itemize} 
Note that the condition \eqref{eq:condition_at_Q=0} ensures that $M_F(\tau; Q_F S_F^{-\rho_F/c_F})v$ is well-defined. 
Moreover, $\tau$ and $v/q_{F,j}$ are homogeneous of degree two and zero respectively. 
\end{corollary}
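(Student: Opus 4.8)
The plan is to deduce this directly from Proposition~\ref{prop:restriction_twisted_cone} and Proposition~\ref{prop:Fourier_bigJtw}, together with one elementary observation about degree-zero contributions.

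First I would view $z J_W(\theta)$ as a point on the equivariant Givental cone $\cL_W$ of $W$ depending on $\QW$ and on the coordinates $\btheta=\{\theta^{i,k}\}$ of $\theta=\sum_{i,k}\theta^{i,k}\phi_i\lambda^k$. By Proposition~\ref{prop:restriction_twisted_cone}, its restriction $(z J_W(\theta))_F$, Laurent-expanded at $z=0$, lies on the $(\cN_{F/W},e_T^{-1})$-twisted Givental cone $\cL^{\rm tw}_F$ of $F$. Since $\psi$ is nilpotent in the twisted theory, the functor-of-points description of $\cL^{\rm tw}_F$ (see the discussion after \eqref{eq:point_on_the_twisted_cone}) produces a unique $\bt=\bt(\theta)\in\cH_+^{\rm tw}[\![\QW,\btheta]\!]$, regarded over the Novikov ring of $F$ via $Q_F^d\mapsto\QW^{i_{F*}d}$, with $\bt|_{\QW=\btheta=0}=0$ and $(z J_W(\theta))_F=\cJ^{\rm tw}_F(\bt(\theta))$; concretely $\bt(\theta)$ is the class $\tau_F(z)$ of \eqref{eq:restriction_input} specialized to the input $\bt(z)=\theta$.

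Next I would feed this $\bt(\theta)$ into Proposition~\ref{prop:Fourier_bigJtw}. Since $(z J_W(\theta))_F=\cJ^{\rm tw}_F(\bt(\theta))$ and $\scrF_{F,j}$ (extended $\C[\![\btheta]\!]$-linearly) depends on a point only through its restriction to $F$, the Mellin--Barnes integral $\int e^{\lambda\log S_F/z}G_F\,(z J_W(\theta))_F\,d\lambda$ has, at the critical point $\lambda_j$, the formal asymptotics $\sqrt{2\pi z}\,e^{c_F\lambda_j/z}\,\hcJ^{\rm tw}_{F,j}(\bt(\theta))$, so $\scrF_{F,j}(z J_W(\theta))=\hcJ^{\rm tw}_{F,j}(\bt(\theta))$. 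Proposition~\ref{prop:Fourier_bigJtw} rewrites the latter as $S_F^{-\rho_F/(c_Fz)}\,z\,M_F(\tau;Q_F S_F^{-\rho_F/c_F})\,v$ with $\tau=\tau(\bt(\theta))$, $v=v(\bt(\theta))$ homogeneous of degrees two and zero (the latter after dividing by $q_{F,j}$) and lying in the modules stated there. Dividing by $z$ — legitimate because $\scrF_{F,j}$, built from an integral in which $z$ enters only as a central parameter, satisfies $\scrF_{F,j}(z^m\bbf)=z^m\scrF_{F,j}(\bbf)$ — and substituting $Q_F^d\mapsto\QW^{i_{F*}d}$ yields the displayed formula for $\scrF_{F,j}(J_W(\theta))$. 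For the ring membership of $\tau$ and $v$ I would substitute $\bt=\bt(\theta)$ into the conclusions of Proposition~\ref{prop:Fourier_bigJtw}, check that $\bt(\theta)S_F^{\bullet/c_F}$ is topologically nilpotent (so the substitution into those completed power series is defined, which follows from $\bt(\theta)|_{\QW=\btheta=0}=0$), and note that, order by order in $\QW$ and $\btheta$, homogeneity pins down the power of $S_F^{1/c_F}$, forcing $\tau\in H^*(F)[S_F^{\pm1/c_F}][\![\QW,\btheta]\!]$ and the corresponding statement for $v$.

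It remains to verify the normalization \eqref{eq:condition_at_Q=0} at $\QW=0$, which rests on the claim $\bt(\theta)|_{\QW=0}=\theta_F:=\theta|_F$. In \eqref{eq:restriction_input} the correction added to $\bt_F(z)=\theta_F$ is a sum of contributions from \emph{type-A} $T$-fixed loci in $W_{0,n+1,\delta}$, on which the last marked point lies on a component covering a nonconstant $T$-invariant curve of $W$; such a component has positive degree, so at $\QW=0$ (where only $\delta=0$ survives, and a degree-zero stable map to $W$ is constant on every component) these contributions vanish. Hence $\tau|_{\QW=0}=\tau(\theta_F)$ and $v|_{\QW=0}=v(\theta_F)$, and substituting the input $\bt=\theta_F$ — whose only nonzero coordinates are $t_0^{i,k}=\theta_F^{i,k}$ — into the $\QW=0$ (hence $Q_F=0$) specialization of Proposition~\ref{prop:Fourier_bigJtw} gives exactly the two memberships in \eqref{eq:condition_at_Q=0}, with $\frakm',\frakm''$ now generated by $S_F^{-1/c_F}$ and the $\theta_F^{i,k}S_F^{k/c_F}$. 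The statement is essentially a repackaging of the two propositions, so the work — and the main obstacle I anticipate — is the bookkeeping: making the substitution $\bt\mapsto\bt(\theta)$ rigorous across the several graded completions, identifying the Novikov ring of $F$ with a subring of $\C[\![\QW]\!]$ via $i_{F*}$ with matching degrees ($\deg Q_F^d=2(c_1(F)+\rho_F)\cdot d=\deg\QW^{i_{F*}d}$ in the twisted theory), and the degree-zero ``type-B'' reduction $\bt(\theta)|_{\QW=0}=\theta_F$ on which \eqref{eq:condition_at_Q=0} depends.
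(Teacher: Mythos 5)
Your proposal takes essentially the same route as the paper: restrict $zJ_W(\theta)$ to $F$ via Proposition~\ref{prop:restriction_twisted_cone}, feed the resulting $\bt(\theta)=[\bbf_F-z]_+$ into Proposition~\ref{prop:Fourier_bigJtw}, substitute $Q_F^d\mapsto\QW^{i_{F*}d}$, and then verify the normalization at $\QW=0$ and the well-definedness of the substitution into the graded completions. Two small remarks. First, for $\bt(\theta)|_{\QW=0}=\theta_F$ you argue geometrically that type-A contributions vanish at $\QW=0$; the paper gets this faster by the direct one-line observation $zJ_W(\theta)|_{\QW=0}=ze^{\theta/z}=z+\theta+O(z^{-1})$, so that $a_n^{i,k}|_{\QW=0}=\delta_{n,0}\theta_F^{i,k}$ — your route is correct but is a longer re-derivation of the same fact. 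Second, you correctly flag the completion bookkeeping as the obstacle, but your sketch (``topologically nilpotent'' plus ``homogeneity pins down the power of $S_F^{1/c_F}$'') compresses what the paper has to do carefully: one must separate the infinitely many variables $t_n^{i,k}S_F^{k/c_F}$ into those with $k<0$ (whose $S_F^{1/c_F}$-exponents diverge to $-\infty$, which is what makes the substitution land in $\C[z](\!(S_F^{-1/c_F})\!)[\![\QW,\btheta]\!]$) and the finitely many with $k\ge 0$ relevant modulo a given filtration step $I_N$, and then use that all but finitely many of the $a_n^{i,k}$ fall into $I_N$ by degree reasons. Your homogeneity observation correctly accounts for why each $(\QW,\btheta)$-coefficient of $\tau$ is a Laurent \emph{polynomial} in $S_F^{1/c_F}$ rather than a series, but it does not by itself justify the convergence of the substitution, which is the point the paper labors over.
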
 
\begin{proof} 
Set $\bbf = z J_W(\theta)$ and consider the restriction $\bbf_F = \bbf|_F$ to $F$.  
By Proposition \ref{prop:restriction_twisted_cone}, the Laurent expansion of $\bbf_F$ at $z=0$ lies in the $(\cN_{F/W},e_T^{-1})$-twisted cone of $F$ and is therefore of the form $\cJ^{\rm tw}_F(\bt)$ with $\bt(z)$ being the non-negative part $[\bbf_F-z]_+$ of the Laurent expansion of $\bbf_F-z$ at $z=0$ (also given in \eqref{eq:restriction_input}). Hence we have 
\[
\scrF_{F,j}(z J_W(\theta)) = \hcJ^{\rm tw}_{F,j}(\bt)\bigr|_{Q_F\to \QW, \bt(z)= [\bbf_F-z]_+}.  
\]
The right-hand side equals $S_F^{-\rho_F/(c_Fz)} z M_F(\tau; Q_F S_F^{-\rho_F/c_F})v|_{Q_F\to \QW,\bt(z)=[\bbf_F-z]_+}$ for  $\tau=\tau(\bt)$, $v= v(\bt)$ in Proposition \ref{prop:Fourier_bigJtw}. 
It remains to show that the substitution $Q_F\to \QW$, $\bt(z) = [\bbf_F-z]_+$ in $\tau(\bt)$, $v(\bt)$ gives rise to well-defined elements $\tau, v$ in $H^*(F)[z](\!(S_F^{-1/c_F})\!)[\![\QW,\btheta]\!]$ satisfying \eqref{eq:condition_at_Q=0}. 
Let $a^{i,k}_n \in \C[\![\QW,\btheta]\!]$ be the coefficient of $[\bbf_F-z]_+$ in front of $\lambda^k \phi_{F,i} z^n$ so that the substitution $\bt(z) = [\bbf_F-z]_+$ is equivalent to $t^{i,k}_n=a^{i,k}_n$ for all $i,k,n$. 
Note that we have $a_n^{i,k}|_{\QW = 0} = \delta_{n,0} \theta_F^{i,k}$ as $z J_W(\theta)|_{\QW=0} =z e^{\theta/z} = z + \theta+O(z^{-1})$.  This implies \eqref{eq:condition_at_Q=0}. 

Consider the closed ideal $I_N\subset \C[\![\QW,\btheta]\!]$ generated by $\QW^\delta$ with $\omega_W\cdot \delta\ge N$, $\theta^{i,k}$ with $k\ge N$ and $(\theta^{i,l})^N$ for all $l\ge 0$, where $\omega_W$ is a fixed ample class. The ring $\C[\![\QW,\btheta]\!]$ is complete with respect to the topology given by $\{I_N\}$ (in the graded sense). Fix $N\in \N$. When $|\deg a_n^{i,k}| = |\deg t_n^{i,k}| =|2 - \deg \phi_i - 2 n- 2k|$ is sufficiently large, $a_n^{i,k}$ belongs to $I_N$. Therefore it suffices to show that, for a fixed $M\in \N$, the substitution $t_n^{i,k} = a_n^{i,k}$, $Q_F\to \QW$ in an element of 
\[
\C[z][\![S_F^{-1/c_F},  \bx, Q_FS_F^{-\rho_F/c_F}]\!] \quad \text{where $\bx = \left\{t_n^{i,k}S_F^{k/c_F}: n\in \N, k\in \Z, |n+k|\le M\right\}$} 
\]
gives an element of $\C[z](\!(S_F^{-1/c_F})\!)[\![\QW,\btheta]\!]$. We write the set $\bx$ of variables as the disjoint union of $\bx_- =\{t_n^{i,k} S_F^{k/c_F}\}_{k<0}$ and $\bx_+ = \{t_n^{i,k}S_F^{k/c_F}\}_{k\ge 0}$ and rewrite the above ring as 
\[
\C[z][\![S_F^{-1/c_F},\bx_-]\!][\![\bx_+,Q_FS_F^{-\rho_F/c_F}]\!] 
\]
Noting that $\bx_+$ is a finite set of variables, modulo the ideal $I_N$, only finitely many monomials in the variables $(\bx_+, Q_F S_F^{-\rho_F/c_F})$ contribute to the substitution. Hence it suffices to see that the substitution is well-defined for elements in $\C[z][\![S_F^{-1/c_F},\bx_-]\!]$. But this is obvious as the exponents of $S_F^{1/c_F}$ of the variables $\bx_-$ are negative and diverge to $-\infty$. 
\end{proof} 


\subsection{Continuous versus discrete Fourier transformations} 
\label{subsec:cont_vs_discrete} 
Let $Y$ be $X$ or $\tX$. Since $c_X = -1$ and $c_\tX=1$, we have exactly $|c_Y|=1$ Fourier transformation $\scrF_{Y,0}$ associated with $Y$ (see \S\ref{subsubsec:formal_asymptotics}). We show that the discrete and continuous Fourier transformations $\sfF_Y(\bbf)$, $\scrF_{Y,0}(\bbf)$ for $Y$ (defined respectively in \S\ref{subsec:discrete_Fourier} and \S\ref{subsec:continuous_Fourier}) agree up to a simple factor. It follows that the discrete Fourier transformation $\sfF_Y$ maps the Givental cone of $W$ to the Givental cone of $Y$. 
\begin{proposition} 
\label{prop:conti_vs_discrete}
Let $Y$ be $X$ or $\tX$. 
For any set $\bx=(x_0,x_1,x_2,\dots)$ of formal parameter and a $\C[\![\QW,\bx]\!]$-valued tangent vector $\bbf \in \cH_W^{\rm rat}[\![\bx]\!]$ of the equivariant Givental cone $\cL_W$ of $W$,  we have 
\begin{itemize} 
\item[(1)] $\scrF_{Y,0}(\bbf) \in S_Y^{-\rho_Y/(c_Yz)}H^*(Y)[z,z^{-1},S_Y,S_Y^{-1}][\![\QW,\bx]\!]$ and
\item[(2)] $\exp(S_Y^{1/c_Y}/z)\scrF_{Y,0}(\bbf)=c_Y^{-1} S_Y^{-\rho_Y/(c_Yz)} \sfF_Y(\bbf)$. 
\end{itemize}  
Here the product $\exp(S_Y^{1/c_Y}/z) \scrF_{Y,0}(\bbf)$ in part $(2)$ is well-defined by part $(1)$. 
\end{proposition}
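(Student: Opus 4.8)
The plan is to realise \emph{both} $\sfF_Y(\bbf)$ and $\scrF_{Y,0}(\bbf)$ as two evaluations — by residues and by saddle point — of a single formal Mellin--Barnes integral and then to match them. Since $\cS$ commutes with multiplication by the monomials $\QW^\delta$ and $\sfF_Y(\QW^\delta\bbf)=\QW^\delta\sfF_Y(\bbf)$ (and likewise for $\scrF_{Y,0}$), it suffices to treat $\bbf$ whose restriction $\bbf_Y$ is regular at the base point of $\kappa_Y$, which is automatic for tangent vectors of $\cL_W$ by \eqref{eq:fundsol_at_infinity}. Consider $I_Y(\bbf):=\int e^{\lambda\log S_Y/z}\,G_Y\,\bbf_Y\,d\lambda$ with $S_X=Sx^{-1}$, $S_\tX=S$. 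The decisive structural fact is that $\cN_{Y/W}$ is a \emph{line} bundle: for $Y=X$ it is trivial of $T$-weight $-1$ (so $\rho_X=0$, $r_X=1$, $c_X=-1$), while for $Y=\tX$ it is $\cO_\tX(-D)$ of $T$-weight $+1$ (so $\rho_\tX=-[D]$, $r_\tX=1$, $c_\tX=1$); hence $G_Y$ is a \emph{single} $\Gamma$-function, and the Kirwan specialisation $\kappa_Y$ is precisely the specialisation of the equivariant Chern root $\varrho$ of $\cN_{Y/W}$ to $0$ ($\lambda\mapsto 0$ for $X$, $\lambda\mapsto[D]$ for $\tX$; cf.~\S\ref{subsec:Kirwan}). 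Using \eqref{eq:shiftop_W}, the intertwining relation \eqref{eq:Gamma_solution}, and these descriptions of $\kappa_Y$, I would close the contour on the side dictated by the sign of $c_Y$ and sum the residues of this $\Gamma$-factor; poles of $\bbf_Y$ colliding with it are cancelled by the zeros of the products in \eqref{eq:shiftop_W}, exactly as in Proposition~\ref{prop:support_conjecture}, so the sum is well defined. The residue at the $m$-th pole reproduces $S^m\kappa_Y(\cS^{-m}\bbf)$ up to a universal factor from $\Res\Gamma$ and from $1/\sqrt{-2\pi z}$ in \eqref{eq:G_F}, while $(-z)^{-\varrho/z}$ combined with $e^{\lambda\log S_Y/z}$ at the residue point produces the prefactor $S_Y^{-\rho_Y/(c_Yz)}$. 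The outcome is the \emph{exact} identity
\[
I_Y(\bbf)=\frac{-2\pi\iu\,c_Y^{-1}z}{\sqrt{-2\pi z}}\;S_Y^{-\rho_Y/(c_Yz)}\;\sfF_Y(\bbf).
\]

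Next I would run the stationary-phase evaluation of the \emph{same} integral, which is by construction the content of \S\ref{subsubsec:formal_asymptotics}: $I_Y(\bbf)\sim\sqrt{2\pi z}\,e^{c_Y\lambda_0/z}\scrF_{Y,0}(\bbf)$ with $\lambda_0=S_Y^{1/c_Y}\prod_\alpha w_\alpha^{-r_\alpha w_\alpha/c_Y}$. A short check gives $\prod_\alpha w_\alpha^{-r_\alpha w_\alpha/c_Y}=-1$ for $Y=X$ and $=1$ for $Y=\tX$, hence $c_Y\lambda_0=S_Y^{1/c_Y}$ in both cases, so $I_Y(\bbf)\sim\sqrt{2\pi z}\,e^{S_Y^{1/c_Y}/z}\scrF_{Y,0}(\bbf)$. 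Comparing the two evaluations of $I_Y(\bbf)$ and using the (consistent) branch convention $\sqrt{-2\pi z}\cdot\sqrt{2\pi z}=-2\pi\iu z$ forces
\[
e^{S_Y^{1/c_Y}/z}\,\scrF_{Y,0}(\bbf)=c_Y^{-1}\,S_Y^{-\rho_Y/(c_Yz)}\,\sfF_Y(\bbf),
\]
which is part~(2).

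For part~(1) I would inspect the residue sum more closely. In each $\QW$-order the part of $G_Y\bbf_Y$ surviving after removing the $\Gamma$-factor is polynomial in $\lambda$ (its would-be poles cancel as above), so the $m$-th residue contributes $\tfrac{(S_Y^{1/c_Y}/z)^m}{m!}$ times a polynomial in $m$ with coefficients in $H^*(Y)[z,z^{-1}][\![\QW,\bx]\!]$ (the derivative corrections from collisions only raising the polynomial degree). The Touchard/Bell identity $\sum_{m\ge0}\tfrac{m^{\ell}x^m}{m!}=(x\partial_x)^{\ell}e^x=P_{\ell}(x)e^x$ then rewrites $\sfF_Y(\bbf)$ as $e^{S_Y^{1/c_Y}/z}$ times a Laurent polynomial in $S_Y$. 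Consequently $e^{-S_Y^{1/c_Y}/z}\sfF_Y(\bbf)$ is a Laurent polynomial, and by part~(2) so is $S_Y^{\rho_Y/(c_Yz)}\scrF_{Y,0}(\bbf)$; this gives part~(1) and makes the product in the statement of part~(2) legitimate a posteriori.

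The main obstacle I anticipate is the step identifying the \emph{exact} residue value of $I_Y(\bbf)$ with its \emph{formal asymptotic} stationary-phase expansion: a priori these are two distinct formal constructions, not two expansions of a genuinely convergent integral. The way around it is exactly the rewriting above — once $\sfF_Y(\bbf)$ is presented as $e^{S_Y^{1/c_Y}/z}$ times a Laurent polynomial, the residue side is already in the asymptotic normal form $e^{c_Y\lambda_0/z}\cdot(\text{series in }z)$, and matching it against the Gaussian-integration output $[e^{z(\partial_u)^2/2}(\cdots)]_{u=0}$ of \S\ref{subsubsec:formal_asymptotics} becomes the same unambiguous combinatorial bookkeeping already used in the proof of Proposition~\ref{prop:properties_conti_Fourier} and in \cite[\S5.3]{Iritani-Koto:projective_bundle}. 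Keeping careful track of the half-integer powers of $\lambda_0$ and of the branch cuts of $(-z)^{\bullet}$, $\sqrt{-2\pi z}$ and $\sqrt{2\pi z}$ throughout is the fiddly part, but presents no conceptual difficulty.
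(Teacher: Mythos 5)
Your overall strategy — realise both $\sfF_Y(\bbf)$ and $\scrF_{Y,0}(\bbf)$ as the residue and stationary-phase evaluations of a single Mellin--Barnes integral, then argue that the residue side is $e^{S_Y^{1/c_Y}/z}$ times a Laurent polynomial so that the asymptotics is exact — is the same route the paper takes, down to the observation that exactness of the asymptotic expansion is the crux. However, there is a genuine gap in your argument for part (1).

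You assert that ``the part of $G_Y\bbf_Y$ surviving after removing the $\Gamma$-factor is polynomial in $\lambda$ (its would-be poles cancel as above).'' This is false. The cancellation in Proposition~\ref{prop:support_conjecture} occurs between the poles of $\bbf_\tX$ and the \emph{zeros of the finite product in} \eqref{eq:shiftop_W} that appears inside the evaluation $\kappa_Y(\cS^{-k}\bbf)$; it does not make the coefficient function $f(\lambda,z)$ of a $\QW$-monomial in $\bbf_Y$ itself polynomial. Virtual localization produces poles of $\bbf_\tX$ at $\lambda=-kz$, $k>0$, and the shift-invariance of tangent spaces only bounds their order by showing $(-[D]+\lambda+kz)\bbf_\tX$ is regular there — it does not remove them. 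The paper records this precisely in the partial-fraction decomposition \eqref{eq:pfd}: $f(\lambda,z)=b(\lambda,z)+\sum_i a_i(z)/(-[D]+\lambda+k_iz)$. When you close the contour, you pick up residues from \emph{both} the $\Gamma$-poles at $\lambda=[D]+nz$ ($n\ge 0$) and the poles of $f$ at $\lambda=[D]-k_iz$; the two families are disjoint, so there are no ``collisions'' and no derivative corrections. Your Touchard/Bell argument handles the residue sum coming from the polynomial part $b(\lambda,z)$ — that is indeed how the paper disposes of $f=\lambda^n$, where $I_{\lambda^n}=(zS\partial_S)^nI_1$ — but it says nothing about the contribution of $a_i(z)/(-[D]+\lambda+k_iz)$, whose $\Gamma$-residues give the decidedly non-polynomial coefficients $1/((m+k_i)z)$, plus an extra residue at $\lambda=[D]-k_iz$.

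To close the gap you need the additional argument the paper supplies for $f=(-[D]+\lambda+kz)^{-1}$: $S^{-[D]/z}I_f$ satisfies the first-order ODE $(zS\partial_S+kz)S^{-[D]/z}I_f=e^{S/z}$, and the residue series fixes the integration constant to zero, whence $e^{-S/z}S^{-[D]/z}I_f$ is an explicit polynomial in $S^{-1}$ and $z$. Without this step the Laurent-polynomial claim in your part (1), and hence the exactness that your part (2) leans on, are unjustified. Everything else you write (the MB normalisation, the critical-point bookkeeping giving $c_Y\lambda_0=S_Y^{1/c_Y}$, the branch conventions for the square roots) is consistent with the paper's computation.
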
 
\begin{proof} 
We prove the proposition for $Y=\tX$. In this case $c_Y=1$, $S_Y=S$ and $\rho_Y= -[D]$. The discussion for $Y= X$ is similar and is left to the reader. Let $\bbf=\bbf(\lambda,z)$ be a tangent vector of $\cL_W$. Recall that $\sfF_\tX(\bbf)$ is well-defined by Proposition \ref{prop:support_conjecture}. As discussed there, by the virtual localization formula, $\bbf_\tX(\lambda,z)=\bbf|_{\tX}$ can have poles (as a function of $\lambda$) only at $\lambda = -kz$ with $k>0$. Moreover, since the shift operator preserves the tangent spaces of $\cL_W$, 
\[
(\cS^{k} \bbf)_\tX = \frac{\prod_{j\le 0} -[D]+\lambda + jz }{\prod_{j\le -k} -[D] + \lambda +jz} \bbf_\tX(\lambda-kz, z) 
\] 
is regular at $\lambda=0$ for all $k\in \Z$. This implies that, for $k>0$, $(-[D]+\lambda)\bbf_\tX(\lambda-kz,z)$ is regular at $\lambda=0$, or equivalently, $(-[D]+ \lambda +kz) \bbf_\tX(\lambda,z)$ is regular at $\lambda=-kz$. 
Therefore, the coefficient $f(\lambda,z)\in H^*(\tX)\otimes \C(\lambda,z)_{\rm hom}$ of a monomial  $\QW^\delta\bx^m$ in $\bbf_\tX$ has a partial fraction decomposition of the form: 
\begin{equation} 
\label{eq:pfd}
f(\lambda,z) = b(\lambda, z) + \sum_{i=1}^l \frac{a_i(z)}{-[D]+ \lambda+ k_i z} 
\end{equation} 
with $a_i(z) \in H^*(\tX)[z,z^{-1}]$, $b(\lambda,z) \in H^*(\tX)[z,z^{-1},\lambda]$ and $k_i\in \Z_{>0}$. 
Let $S>0$ and $z<0$ and consider the Mellin-Barnes integral 
\[
I_{f}(S,z) := 
\frac{1}{2\pi \iu (-z)}
\int_{S-\iu \infty}^{S+\iu \infty} (-z)^{([D]-\lambda)/z}
\Gamma\left(\frac{\lambda-[D]}{-z}\right)
S^{\lambda/z} f(\lambda,z) d\lambda. 
\]
We assume that $|z|$ is sufficiently small so that the poles $\lambda=-k_i z$, $i=1,\dots,l$ of $f(\lambda,z)$ are on the left side of the integration contour $\{\Re \lambda = S\}$  (see Figure \ref{fig:integration_contour}). The Stirling approximation shows that the integrand has exponential decay as $|\Im \lambda|\to \infty$ and this integral converges. 
The stationary phase method yields the asymptotic expansion\footnote{We choose $\sqrt{z} \in \R_{>0} \iu$ when defining $\scrF_{\tX,0}(\bbf)$ in \eqref{eq:stationary_phase_approximation}.} 
(cf.~\eqref{eq:stationary_phase_approximation}) 
\begin{equation} 
\label{eq:expansion_ISz} 
e^{-S/z} I_f(S,z) \sim \text{the coefficient of $\QW^\delta\bx^m$ in $\scrF_{\tX,0} (\bbf)$} \quad \text{as $z\nearrow 0$}.  
\end{equation} 
Recall here that the phase function $\eta(\lambda)=\lambda - \lambda \log \lambda + \lambda \log S$ arising from the Stirling approximation (see \S\ref{subsubsec:formal_asymptotics}) has the critical value $S$ at $\lambda = S$. A detailed proof of \eqref{eq:expansion_ISz} will be given in Appendix \ref{append:asymptotics}. We shall see below that this asymptotics is exact.

\begin{figure}[t]
\centering 
\begin{tikzpicture}[>=stealth, x=0.7cm, y=0.7cm]
\draw[->] (-5.5,0) -- (6,0); 
\draw[->] (0,-3) -- (0,3); 

\draw (5.8,0.4) node {\scriptsize $\Re \lambda$}; 
\draw (0.4,2.8) node {\scriptsize $\Im \lambda$}; 

\filldraw (-5,0) circle [radius =0.05]; 
\filldraw (-4,0) circle [radius =0.05]; 
\filldraw (-3,0) circle [radius =0.05]; 
\filldraw (-2,0) circle [radius =0.05]; 
\filldraw (-1,0) circle [radius =0.05]; 
\filldraw (0,0) circle [radius =0.05]; 

\draw (-0.2,-0.34) node {\scriptsize $0$}; 
\draw (-1,-0.34) node {\scriptsize $z$}; 
\draw (-2,-0.34) node {\scriptsize $2z$}; 
\draw (-3,-0.34) node {\scriptsize $3z$}; 
\draw (-4,-0.34) node {\scriptsize $4z$}; 
\draw (-5,-0.34) node {\scriptsize $5z$}; 

\draw (1,0) node {$\times$}; 
\draw (3,0) node {$\times$};
\draw (1,-0.4) node {\scriptsize $-k_1 z$}; 
\draw (3,-0.4) node {\scriptsize $-k_2 z$}; 

\draw[very thick] (4,-3) -- (4,3); 
\draw[->, very thick] (4,-3) -- (4,1); 
\draw (5,-2.5) node {\small $\Re \lambda = S$}; 
\end{tikzpicture} 
\caption{Integration contour on the $\lambda$-plane} 
\label{fig:integration_contour} 
\end{figure} 

The integral $I_f(S,z)$ can be also evaluated by residue computation. In the course of the computation, we regard $[D]$ as a small complex parameter; at the end we take the Taylor expansion in $[D]$ and substitute $[D]$ for the cohomology class. By closing the contour to the left and summing the contributions over poles at $\lambda = [D]+nz$ with $n\ge 0$ and $\lambda = [D] -k_i z$ with $i=1,\dots,l$, we arrive at the series: 
\begin{align} 
\label{eq:series_ISz} 
I_f(S,z) & = \sum_{n=0}^\infty \frac{S^{n+[D]/z}}{n!} f([D]+nz,z)  + 
\sum_{i=1}^l (k_i-1)! (-z)^{-(k_i-1)}  S^{-k_i + [D]/z} a_i(z) \\ 
\nonumber 
& = S^{[D]/z} \sum_{n\in \Z} \kappa_\tX\left( \frac{\prod_{j\le 0} -[D]+\lambda + jz }{\prod_{j\le n} -[D] + \lambda +jz} f(\lambda+nz, z) \right) S^n. 
\end{align} 
This is the coefficient of $\QW^\delta\bx^m$ in $S^{[D]/z}\sfF_\tX(\bbf)$. 

Finally we claim that $e^{-S/z} S^{-[D]/z} I_f(S,z)$ is a Laurent polynomial in $z$ and $S$. If this holds, the asymptotics \eqref{eq:expansion_ISz} is necessarily exact and we have the equality $S^{-[D]/z}\scrF_{\tX,0}(\bbf) =e^{-S/z}  \sfF_\tX(\bbf)$ in $H^*(\tX)[z,z^{-1},S,S^{-1}][\![\QW,\bx]\!]$. 
In view of the partial fraction decomposition \eqref{eq:pfd}, it suffices to show the claim for $f= \lambda^n$ with $n\ge 0$ and $f= (-[D]+\lambda + kz)^{-1}$ with $k>0$. The integral $I_1(S,z)$ corresponding to $f=1$ equals $S^{[D]/z} e^{S/z}$ by the series expansion \eqref{eq:series_ISz}. The integral $I_{\lambda^n}(S,z)$ equals $(z S\partial_S)^n I_1(S,z)$; it is $S^{[D]/z} e^{S/z}$ multiplied by a polynomial in $S$ and $z$. The integral $I_f(S,z)$ with $f=(-[D]+\lambda+kz)^{-1}$ satisfies the differential equation: 
\[
(z S \partial_S + kz) S^{-[D]/z} I_f(S,z) = S^{-[D]/z} I_1(S,z) = e^{S/z}.  
\]
A general solution to this differential equation is given by 
\[
S^{-[D]/z}I_f(S,z) =  e^{S/z} 
\left(\sum_{i=0}^{k-1} (-1)^i \frac{(k-1)!}{(k-1-i)!} S^{-i-1} z^i \right)+ g(z) S^{-k} 
\]
with $g(z)$ independent of $S$. Comparing the coefficient of $S^{-k}$ to the series \eqref{eq:series_ISz} reveals that $g(z) = 0$ and that $e^{-S/z} S^{-[D]/z} I _f(S,z)$ is a polynomial in $S^{-1}$ and $z$. 
\end{proof} 

\begin{corollary} 
\label{cor:Fourier_transform_JW_GIT} 
Let $Y$ be $X$ or $\tX$. 
The discrete Fourier transform $\sfF_Y(z J_W(\theta))$ of the equivariant $J$-function of $W$ lies in the Givental cone of $Y$. More precisely, there exist $\tau\in H^*(Y)[\![C^\vee_{Y,\N}]\!][\![\btheta]\!]$ and $v \in H^*(Y)[z][\![C^\vee_{Y,\N}]\!][\![\btheta]\!]$ such that 
\[
\sfF_Y(z J_W(\theta)) = z M_Y(\tau) v
\]
and that $\tau|_{\btheta=0}\equiv S_Y^{1/c_Y}$, $v|_{\btheta=0}\equiv 1$ modulo the closed ideal of $\C[z][\![C_{Y,\N}^\vee]\!]$ generated by $\QW^\delta$, $\delta \in \NEN(W)\setminus \{0\}$ and $(y S^{-1})^{c_Y}$. Here, as in $\S\ref{subsec:Kirwan}$ and $\S\ref{subsec:discrete_Fourier}$, we regard the Novikov ring $\C[\![\NEN(Y)]\!]$ of $Y$ as a subring of $\C[\![C_{Y,\N}^\vee]\!]$ via the dual Kirwan map $\kappa_Y^* \colon \NEN(Y) \to C_{Y,\N}^\vee$.  
\end{corollary}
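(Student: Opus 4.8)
The plan is to read the statement off directly from Proposition~\ref{prop:conti_vs_discrete} and Corollary~\ref{cor:Fourier_transform_JW}, using the String Equation to reabsorb an exponential prefactor and Proposition~\ref{prop:support_conjecture} to pin down the coefficient ring. First I would observe that $zJ_W(\theta) = M_W(\theta)(z\cdot 1)$ lies in the tangent space $T_\theta = M_W(\theta)\cH_+$ of $\cL_W$ (as $z\cdot 1 \in \cH_+$ for $W$), so Proposition~\ref{prop:support_conjecture} already guarantees that $\sfF_Y(zJ_W(\theta))$ is a well-defined element of $\cH^{\rm ext}_Y = H^*(Y)[z,z^{-1}][\![C^\vee_{Y,\N}]\!]$; extending linearly over $\btheta$ it is an $\cH^{\rm ext}_Y[\![\btheta]\!]$-valued point of the Fourier image of $\cL_W$.

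Next, applying Proposition~\ref{prop:conti_vs_discrete} with $\bx = \btheta$ and $\bbf = zJ_W(\theta)$ gives
\[
\sfF_Y(zJ_W(\theta)) = c_Y\, S_Y^{\rho_Y/(c_Yz)}\,\exp\!\bigl(S_Y^{1/c_Y}/z\bigr)\,\scrF_{Y,0}\bigl(zJ_W(\theta)\bigr),
\]
with $\rho_Y = c_1(\cN_{Y/W})$ (so $\rho_X = 0$, $\rho_\tX = -[D]$), $c_X = -1$, $c_\tX = 1$, and a single critical point $j=0$ since $|c_Y| = 1$. Feeding in Corollary~\ref{cor:Fourier_transform_JW} for $F = Y$ and cancelling the two finite operator factors $S_Y^{\pm\rho_Y/(c_Yz)}$ (finite since $\rho_Y$ is nilpotent) yields
\[
\sfF_Y(zJ_W(\theta)) = c_Y\,\exp\!\bigl(S_Y^{1/c_Y}/z\bigr)\, z\, M_Y(\tau)\, v,
\]
where the Novikov variable $Q_Y^d$ of $M_Y$ is replaced by $\kappa_Y^*(d)\in C^\vee_{Y,\N}$ as in \S\ref{subsec:Kirwan}, and $\tau, v$ are the data of Corollary~\ref{cor:Fourier_transform_JW}, homogeneous of degrees $2$ and $0$ and normalized by \eqref{eq:condition_at_Q=0}. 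Because the fundamental solution obeys $\partial_{\tau^0}M_Y(\tau) = z^{-1}M_Y(\tau)$ — equivalently, $\cL_Y$ is invariant under $\bbf \mapsto e^{s/z}\bbf$, the String Equation — one has $\exp(s/z)M_Y(\tau) = M_Y(\tau + s\,\phi_0)$ for any homogeneous degree-$2$ element $s$ of the coefficient ring, so with $s = S_Y^{1/c_Y}\in\C[C^\vee_{Y,\N}]$ we obtain $\sfF_Y(zJ_W(\theta)) = z\,M_Y(\tau')\,v'$ with $\tau' := \tau + S_Y^{1/c_Y}\phi_0$ and $v' := c_Y v$.

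It then remains to check that $\tau'$ and $v'$ genuinely lie in $H^*(Y)[\![C^\vee_{Y,\N}]\!][\![\btheta]\!]$ and $H^*(Y)[z][\![C^\vee_{Y,\N}]\!][\![\btheta]\!]$ and to verify the values at $\btheta = 0$. The one real subtlety — and the main obstacle in this argument — is that $\tau, v$ coming from Corollary~\ref{cor:Fourier_transform_JW} are a priori only known to involve $S_Y^{\pm 1/c_Y}$, and for $Y = \tX$ the monomial $S^{-1}$ is not an element of $\C[\![C^\vee_{\tX,\N}]\!]$, so the presentation $zM_Y(\tau')v'$ is at first only over a larger ring. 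I would resolve this by uniqueness in the overruled structure of $\cL_Y$ (Remark~\ref{rem:functor_of_points}): by the first step $\sfF_Y(zJ_W(\theta))$ is a valued point of $\cL_Y$ over $\C[\![C^\vee_{Y,\N}]\!][\![\btheta]\!]$, hence admits a \emph{unique} presentation $z M_Y(\tau'')v''$ with $\tau''$ of the normalized shape over $\C[\![C^\vee_{Y,\N}]\!][\![\btheta]\!]$ and $v''$ likewise, forcing $\tau' = \tau''$, $v' = v''$ to live over the smaller ring. For the normalization I would push \eqref{eq:condition_at_Q=0} through the shift: since $\cN_{Y/W}$ has rank $1$, the constants in \eqref{eq:qFj-hFj} degenerate to $h_{Y,0} = 0$ and $q_{Y,0}$ a degree-$0$ unit (a short check of the branch conventions of \S\ref{subsubsec:formal_asymptotics} gives $c_Y q_{Y,0} = 1$), so reducing modulo the closed ideal generated by $\QW^\delta$ ($\delta\in\NEN(W)\setminus\{0\}$) and $(yS^{-1})^{c_Y}$ — under which $S_Y^{1/c_Y}$ is the only surviving generator of $\C[C^\vee_{Y,\N}]$ — one reads off $\tau'|_{\btheta=0}\equiv S_Y^{1/c_Y}$ and $v'|_{\btheta=0}\equiv 1$. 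Finally $\kappa_Y^*$ embeds $\C[\![\NEN(Y)]\!]$ into $\C[\![C^\vee_{Y,\N}]\!]$ compatibly with degrees (as $\kappa_Y(c_1^T(W)) = c_1(Y)$), so $z M_Y(\tau')v'$ is a bona fide point of the Givental cone of $Y$, which is the assertion. In short, all the substantive input is already in Propositions~\ref{prop:support_conjecture}, \ref{prop:conti_vs_discrete} and Corollary~\ref{cor:Fourier_transform_JW}; what remains is bookkeeping of the Novikov ring and of the leading constants.
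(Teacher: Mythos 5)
Your strategy — compose Proposition~\ref{prop:conti_vs_discrete} with Corollary~\ref{cor:Fourier_transform_JW}, absorb $\exp(S_Y^{1/c_Y}/z)$ into the mirror map via the String Equation, and then pin down the coefficient ring — is the same route the paper takes up to the very last step. Where you diverge is precisely at the step you yourself flag as ``the one real subtlety'': showing $\tau', v'$ land in $\C[\![C_{Y,\N}^\vee,\btheta]\!]$ rather than in the a priori Laurent ring involving $S_Y^{-1/c_Y}$. There you argue by ``uniqueness in the overruled structure'' (Remark~\ref{rem:functor_of_points}), while the paper runs a concrete Newton/Birkhoff iteration $\tau_{n+1} = [g]_{-1} - \tau_n[g]_0 + [A(\tau_n)g]_0$ starting from $\tau_0=0$ and observes that each step preserves support on $C_{Y,\N}^\vee$ because $g=\exp(-S_Y^{1/c_Y}/z)\sfF_Y(J_W(\theta))-1$ and $A(\tau)$ do.

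The gap in your argument is the sentence ``by the first step $\sfF_Y(zJ_W(\theta))$ is a valued point of $\cL_Y$ over $\C[\![C^\vee_{Y,\N}]\!][\![\btheta]\!]$.'' Proposition~\ref{prop:support_conjecture} — which is what your first step actually establishes — tells you only that $\sfF_Y(zJ_W(\theta))$ is a well-defined element of the Givental \emph{space} $\cH^{\rm ext}_Y[\![\btheta]\!]$; it says nothing about membership in the \emph{cone} $\cL_Y$ over that ring. What you do know at that point is cone membership over a \emph{larger} Laurent-type ring (from Proposition~\ref{prop:conti_vs_discrete} and Corollary~\ref{cor:Fourier_transform_JW}), so appealing to uniqueness of the presentation $zM_Y(\tau)v$ \emph{over the smaller ring} presupposes exactly the cone membership you are trying to establish. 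This can be repaired, but it requires an argument you have not supplied: set $\bt := [\sfF_Y(zJ_W(\theta)) - z]_+ \in \cH_+^{\rm ext}[\![\btheta]\!]$, check $\bt|_{\hS=\btheta=0}=0$ (which uses $\sfF_Y(J_W(\theta))|_{\QW=\btheta=0}=\exp(S_Y^{1/c_Y}/z)$, a nontrivial computation the paper performs), note that $\cJ_Y(\bt)$ is defined universally over any extension of the Novikov ring and therefore lands in $\cH^{\rm ext}_Y[\![\btheta]\!]$ once $\bt$ does, and finally observe that $\cJ_Y(\bt) = \sfF_Y(zJ_W(\theta))$ because this equality holds after base change to the larger ring where you already have the cone presentation. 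Only then can you invoke the uniqueness in Remark~\ref{rem:functor_of_points}. The paper's Newton iteration sidesteps all of this by never needing to first assert cone membership over the smaller ring — it simply tracks the $C_{Y,\N}^\vee$-support through the recursion.
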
 
\begin{proof} 
By Corollary \ref{cor:Fourier_transform_JW}, there exist 
\[
\tau \in H^*(Y)[S_Y^{1/c_Y},S_Y^{-1/c_Y}][\![\QW,\btheta]\!] \quad \text{and} \quad 
v\in H^*(Y)[z](\!(S_Y^{-1/c_Y})\!)[\![\QW,\btheta]\!]
\]
satisfying the condition \eqref{eq:condition_at_Q=0} for $(F,j) = (Y,0)$ (note that $q_{Y,0} = c_Y^{-1}$ and $h_{Y,0} = 0$ here) such that 
\[
S^{\rho_Y/(c_Yz)} \scrF_{Y,0}(J_W(\theta)) = M_Y(\tau;Q_Y S_Y^{-\rho_Y/c_Y}) v|_{Q_Y \to \QW}. 
\]  
Note that $M_Y(\tau; Q_Y S_Y^{-\rho_Y/c_Y})|_{Q_Y \to \QW}$ is precisely the image of $M_Y(\tau)$ under the base change $\C[\![\NEN(Y)]\!] \to \C[\![C_{Y,\N}^\vee]\!]$ given by the dual Kirwan map, so we henceforth write it as $M_Y(\tau)$. Proposition \ref{prop:conti_vs_discrete} implies that we have 
\begin{equation} 
\label{eq:Fourier_JW} 
\exp(-S_Y^{1/c_Y}/z) \sfF_Y(J_W(\theta)) = M_Y(\tau) c_Y^{-1}v 
\end{equation} 
and this belongs to $H^*(Y)[z,z^{-1},S_Y,S_Y^{-1}][\![\QW,\btheta]\!]$. 
We claim that $\tau$ and $v$ are supported on the monoid $C_{Y,\N}^\vee$ as power series in $\hS=(\QW,S)$. 
Recall that $\C[\![C_{Y,\N}^\vee]\!] = \C[\![\QW, S_Y^{1/c_Y}, u_Y S_Y^{-1/c_Y}=(yS^{-1})^{c_Y}]\!]$ by \eqref{eq:dualmonoids}, where we set $u_\tX= y$ and $u_X=xy^{-1}$. 

First we study the equality \eqref{eq:Fourier_JW} at the limit\footnote{\label{footnote:meaning_of_QW=0}Here we consider the limit $\QW=\btheta=0$ as power series of $S_Y^\pm$, $\QW$ and $\btheta$; the meaning of the limit differs between $Y=X$ and $Y=\tX$.} $\QW=\btheta=0$. The left-hand side equals $\exp(-S_Y^{1/c_Y}/z) \sfF_Y(1) = 1$ at $\QW=\btheta=0$ by direct calculation. Since $M_Y(\tau)|_{\QW=0} = e^{\tau/z}$, this implies that $e^{-\tau/z}|_{\QW=\btheta=0} = v|_{\QW=\btheta=0}$. Thus we find  $\tau|_{\QW=\btheta=0}=0$, $v|_{\QW=\btheta=0}=1$ by comparing power series of $z$. 

We solve for $\tau$ from \eqref{eq:Fourier_JW} by the ``Newton method'' starting from $\tau|_{\QW=\btheta=0}=0$. Set $\exp(-S_Y^{1/c_Y}/z) \sfF_Y(J_W(\theta)) = 1 + g$ and $M_Y(\tau)^{-1} = \id - (\tau\cup)/z + z^{-1} A(\tau)$. We know that $g\in H^*(Y)[z,z^{-1},S_Y,S_Y^{-1}][\![\QW,\btheta]\!]$ vanishes at $\QW=\btheta=0$ and, by Proposition \ref{prop:support_conjecture}, $g$ is supported on $C_{Y,\N}^\vee$ as $\hS$-power series.  We also know that $A(\tau)\in \End(H^*(Y))[z^{-1}][\![\QW,\tau]\!]$ vanishes at $\QW=\tau=0$ and $A(\tau) 1 = O(1/z)$. Equation \eqref{eq:Fourier_JW} implies that the coefficient of $z^{-1}$ in $M_Y(\tau)^{-1}(1+g)=c_Y^{-1}v$ vanishes and hence 
\[
\tau = [g]_{-1} - \tau [g]_{0} + [A(\tau)g]_0
\]
where $[\cdots]_n$ denotes the coefficient of $z^n$. This can be solved recursively in powers of $(\QW,\btheta)$ and $\tau$ is the limit of the sequence $\{\tau_n\}_{n\ge 0}$ given by $\tau_0= 0$, $\tau_{n+1} = [g]_{-1} - \tau_n [g]_0 + [A(\tau_n) g]_0$. Since $A(\tau)$ and $g$ are supported on $C_{Y,\N}^\vee$ as $\hS$-series, it follows that $\tau$ is also supported on $C_{Y,\N}^\vee$ and so is $v = M_Y(\tau)^{-1} g$. The conclusion follows by $\exp(S_Y^{1/c_Y}/z) M_Y(\tau) = M_Y(\tau+S_Y^{1/c_Y})$. 
\end{proof} 


\section{Decomposition of quantum $D$-modules}
\label{sec:decomposition} 
In this section we prove the decomposition theorem for the quantum $D$-module of blowups. 
First we show that a certain completion of $\QDM_T(W)$ gives rise to $\QDM(\tX)$. Then we show that the Fourier transformations $\sfF_X$ and $\scrF_{Z,j}$, $j=0,\dots,r-2$ induce a decomposition of $\QDM(\tX)$ into $\QDM(X)$ and $r-1$ copies of $\QDM(Z)$. 

\subsection{Completion of the equivariant quantum $D$-module of $W$} 
\label{subsec:completion} 

Recall the equivariant quantum $D$-module $\QDM_T(W)$ from \S\ref{subsec:QDM(W)}. 
Let $\QDM_T(W)[\QW^{-1}]$ be the localization of $\QDM_T(W)$ with respect to the multiplicative set $\{\QW^\delta\}_{\delta \in \NEN(W)}$.  
Via the action of the extended shift operators (\S\ref{subsec:shift}), $\QDM_T(W)[\QW^{-1}]$ has the structure of a $\C[N_1^T(W)]$-module, where $\hS^\beta \in \C[N_1^T(W)]$ (with $\beta \in N_1^T(W)$) acts by $\hbS^\beta(\theta)$. 
We consider the subring $\C[C_{\tX,\N}^\vee]\subset \C[N_1^T(W)]$ generated by the monoid $C_{\tX,\N}^\vee$ in \eqref{eq:dualmonoids} and the  $\C[C_{\tX,\N}^\vee]$-submodule $\QDM_T(W)_\tX$ of $\QDM_T(W)[\QW^{-1}]$ generated by $\QDM_T(W)$: 
\begin{equation} 
\label{eq:QDMW_tX}
\QDM_T(W)_\tX := \C[C_{\tX,\N}^\vee] \cdot \QDM_T(W) \subset \QDM_T(W)[\QW^{-1}].  
\end{equation} 
Since $\C[C^\vee_{\tX,\N}] = \C[\QW][S, yS^{-1}]$ and $\bS(\theta)$ preserves $\QDM_T(W)$ (see Proposition \ref{prop:equiv_QDM_module_over}), we can also write this as 
\begin{equation} 
\label{eq:QDMW_tX_2nd} 
\QDM_T(W)_\tX = \C[y S^{-1}] \cdot \QDM_T(W). 
\end{equation} 
Recall here that $x S^{-1}$ preserves $\QDM_T(W)$ but $y S^{-1}$ may not. 
Then $\QDM_T(W)_\tX$ is a module over $\C[z][\![\QW,\btheta]\!][S,y S^{-1}]$. Let $\frakm\subset \C[z][\![\QW,\btheta]\!][S,yS^{-1}]$ be the ideal generated by $S$ and $y S^{-1}$. We consider the graded $\frakm$-adic completion of $\QDM_T(W)_\tX$: 
\begin{equation} 
\label{eq:QDMW_completed} 
\QDM_T(W)_\tX\sphat := \varprojlim_{k} \QDM_T(W)_\tX/\frakm^k \QDM_T(W)_\tX 
\end{equation} 
where the inverse limit is taken in the category of graded modules. The module $\QDM_T(W)_\tX\sphat$ naturally has the structure of a $\C[z][\![C_{\tX,\N}^\vee,\btheta]\!]$-module. 
Because of the commutation relation $\lambda \cdot \bS(\theta) = \bS(\theta) \cdot (\lambda +  z)$,  the action of $\lambda$ on $\QDM_T(W)$ induces an operator $\lambda$ on $\QDM_T(W)_\tX\sphat$. 
Also, because of the commutation relation \eqref{eq:shift_qconn}, the quantum connection on $\QDM_T(W)$ induces operators $z\nabla_{\theta^{i,k}}$, $z\nabla_{z\partial_z}$, $z\nabla_{\xi \QW\partial_\QW}$ (with $\xi \in H^2_T(W)$) on $\QDM_T(W)_\tX\sphat$. We shall see in Theorem \ref{thm:Fourier_isom} that $\QDM_T(W)_\tX\sphat$ is a finite free $\C[z][\![C_{\tX,\N}^\vee,\btheta]\!]$-module; hence these operators $\lambda$, $\nabla$ can be viewed as a flat connection on it. 

\subsection{Extension of the quantum $D$-modules of $X$ and $\tX$}
\label{subsec:extended_QDM} 
We introduce extensions of the Novikov rings for the quantum $D$-modules of $X$ and $\tX$. 

Let $\tau$ and $\ttau$ denote the parameters in $H^*(X)$ and $H^*(\tX)$ respectively. We choose a homogeneous basis $\{\phi_{X,i}\} \subset H^*(X)$, $\{\phi_{\tX,i}\} \subset H^*(\tX)$ and write  $\{\tau^i\}$, $\{\ttau^i\}$ for the dual coordinates on $H^*(X)$, $H^*(\tX)$ respectively. 
The quantum $D$-modules of $X$ and $\tX$ are the modules 
\[
\QDM(X) = H^*(X)[z][\![Q,\tau]\!], \qquad \QDM(\tX) = H^*(\tX)[z][\![\tQ,\ttau]\!] 
\]
equipped with the quantum connection $\nabla$ and the pairings $P_X$, $P_\tX$, see \S\ref{subsec:qcoh_qconn}.  
Recall the extensions $\C[\![Q]\!] \hookrightarrow  \C[\![C_{X,\N}^\vee]\!]$, $\C[\![\tQ]\!]\hookrightarrow \C[\![C_{\tX,\N}^\vee]\!]$ of the Novikov rings given by the dual Kirwan maps $\kappa_\tX^*, \kappa_X^*$ (see \S\ref{subsec:Kirwan}). 
We define 
\begin{align*} 
\QDM(X)^{\rm ext} & := \QDM(X) \otimes_{\C[z][\![Q,\tau]\!]} \C[z][\![C_{X,\N}^\vee, \tau]\!] 
= H^*(X)[z][\![C_{X,\N}^\vee,\tau]\!], 
\\  
\QDM(\tX)^{\rm ext} & := \QDM(\tX) \otimes_{\C[z][\![\tQ,\ttau]\!]}\C[z][\![C_{\tX,\N}^\vee,\ttau]\!]
= H^*(\tX)[z][\![C_{\tX,\N}^\vee,\ttau]\!].  
\end{align*} 
The quantum connection \eqref{eq:qconn} can be naturally extended to $\QDM(X)^{\rm ext}$, $\QDM(\tX)^{\rm ext}$. We define the operators $\nabla_{\tau^i}, \nabla_{z\partial_z}, \nabla_{\xi \hS\partial_{\hS}}\colon \QDM(X)^{\rm ext}\to z^{-1} \QDM(X)^{\rm ext}$ by 
\begin{align*} 
\nabla_{\tau^i} &= \nabla_{\tau^i} \otimes \id + \id \otimes \partial_{\tau^i} 
= \partial_{\tau^i} + z^{-1} (\phi_{X,i}\star_{\tau}),\\ 
\nabla_{z\partial_z} & = \nabla_{z\partial_z} \otimes \id + \id \otimes z\partial_z 
= z\partial_z - z^{-1} (E_{X}\star_{\tau}) + \mu_X, \\ 
\nabla_{\xi \hS \partial_{\hS}} & = \nabla_{\kappa_X(\xi) Q\partial_Q} \otimes \id + \id \otimes \xi\hS \partial_{\hS} 
= \xi \hS \partial_{\hS} + z^{-1} (\kappa_X(\xi)\star_\tau). 
\end{align*} 
We define the operators $\nabla_{\ttau^i}, \nabla_{z\partial_z}, \nabla_{\xi \hS \partial_{\hS}}\colon \QDM(\tX)^{\rm ext}\to z^{-1}\QDM(\tX)^{\rm ext}$ similarly. 
Here $\xi \in H^2_T(W)$ and $\xi \hS\partial_{\hS}$ denotes the derivation of $\C[\![C_{Y,\N}^\vee]\!]$ (with $Y$ being $X$ or $\tX$) given by $(\xi \hS \partial_{\hS}) \hS^\beta = (\xi \cdot \beta) \hS^\beta$ for $\beta \in C_{Y,\N}^\vee\subset N_1^T(W)$. 

\subsection{Fourier transformation for quantum $D$-modules} 
\label{subsec:FT_QDM} 
In this section we restate the fact that the discrete Fourier transform of the $J$-function $J_W(\theta)$ lies in the Givental cone of $X$ or $\tX$ (Corollary \ref{cor:Fourier_transform_JW_GIT}) in terms of the quantum $D$-modules. 

By Corollary \ref{cor:Fourier_transform_JW_GIT}, there exist $\tau=\tau(\theta) \in H^*(X)[\![C_{X,\N}^\vee,\btheta]\!]$, $\ttau=\ttau(\theta) \in H^*(\tX)[\![C_{\tX,\N}^\vee,\btheta]\!]$, $v = v(\theta)\in H^*(X)[z][\![C_{X,\N}^\vee,\btheta]\!]$ and $\tv=\tv(\theta)\in H^*(\tX)[z][\![C_{\tX,\N}^\vee, \btheta]\!]$ such that 
\begin{equation} 
\label{eq:initialcond_tau_v}
\begin{alignedat}{4}
\tau(0) &\equiv x S^{-1} \ & &\text{and} \quad  & v(0) & \equiv 1 \quad &&\bmod \overline{(\QW,y^{-1}S)} \\ 
\ttau(0) & \equiv S \ & & \text{and}\quad   & \tv(0) & \equiv 1 \quad&& \bmod \overline{(\QW, yS^{-1})}
\end{alignedat} 
\end{equation} 
and that 
\begin{equation} 
\label{eq:Fourier_JW_for_X_tX} 
\sfF_X(J_W(\theta)) = M_X(\tau(\theta)) v(\theta), \qquad  
\sfF_\tX(J_W(\theta)) = M_\tX(\ttau(\theta)) \tv(\theta) 
\end{equation}
where $\overline{(\QW,y^{-1}S)} \subset \C[z][\![C_{X,\N}^\vee]\!]$ is the closed ideal generated by $\{\QW^\delta\}_{\delta \in \NEN(W)\setminus \{0\}}$ and $y^{-1}S$; the ideal  $\overline{(\QW,yS^{-1})}\subset \C[z][\![C_{\tX,\N}^\vee]\!]$ is defined similarly.  
We consider the pullbacks $\tau^*\QDM(X)^{\rm ext}$, $\ttau^*\QDM(\tX)^{\rm ext}$ by $\tau=\tau(\theta), \ttau=\ttau(\theta)$; the pullback $\tau^*\QDM(X)^{\rm ext}$ is the module $H^*(X)[z][\![C_{X,\N}^\vee,\btheta]\!]$ equipped with the pulled-back quantum connection 
\begin{align}
\label{eq:pull-back_qconn}
\begin{split}  
\nabla_{\theta^{i,k}} &= \partial_{\theta^{i,k}} + z^{-1} (\partial_{\theta^{i,k}} \tau(\theta)) \star_{\tau(\theta)},  \\ 
\nabla_{z\partial_z} & = z \partial_z - z^{-1} (E_X \star_{\tau(\theta)}) + \mu_X, \\ 
\nabla_{\xi \hS\partial_\hS} & = \xi \hS \partial_\hS + z^{-1} (\kappa_X(\xi)\star_{\tau(\theta)}) + z^{-1} (\xi\hS \partial_\hS \tau(\theta))\star_{\tau(\theta)}  
\end{split} 
\end{align} 
and $\ttau^*\QDM(\tX)^{\rm ext}$ is defined similarly. 
We observe that the Fourier transformation $\sfF_Y$ in \S\ref{subsec:discrete_Fourier} induces a map from $\QDM_T(W)[\QW^{-1}]$ to $\tau^*\QDM(X)^{\rm ext}[\QW^{-1}]$ or to $\ttau^*\QDM(\tX)^{\rm ext}[\QW^{-1}]$. Here, as before, $K[\QW^{-1}]$ means the localization of a $\C[\![\QW]\!]$-module $K$ with respect to $\{\QW^\delta\}_{\delta \in \NEN(W)}$.  
\begin{proposition} 
\label{prop:FT_QDM} 
Let $Y$ be $X$ or $\tX$. Let $\ttt=\ttt(\theta)$ denote $\tau(\theta)$ for $Y=X$ and $\ttau(\theta)$ for $Y=\tX$ in \eqref{eq:Fourier_JW_for_X_tX}. 
We have a map
\[
\FT_Y \colon \QDM_T(W)[\QW^{-1}] \to \ttt^*\QDM(Y)^{\rm ext}[\QW^{-1}] 
\] 
of $\C[z][\![\QW,\btheta]\!][\QW^{-1}]$-modules such that $M_Y(\ttt(\theta))\circ \FT_Y = \sfF_Y \circ M_W(\theta)$, i.e.~the following diagram commutes:  
\begin{equation} 
\label{eq:FT_sfF} 
\begin{aligned} 
\xymatrix{
\QDM_T(W)[\QW^{-1}] \ar[r]^{\FT_Y} \ar[d]^{M_W(\theta)} 
&  \ttt^*\QDM(Y)^{\rm ext}[\QW^{-1}] \ar[d]^{M_Y(\ttt(\theta))} \\ 
\cH_W^{\rm rat}[\QW^{-1}] \ar@{.>}[r]^{\sfF_Y} & \cH_Y^{\rm ext}[\QW^{-1}]  
}
\end{aligned}
\end{equation} 
where note that the composition $\sfF_Y \circ M_W(\theta)$ is well-defined by Proposition $\ref{prop:support_conjecture}$.  Moreover, we have the following: 
\begin{itemize} 
\item[(1)] $\FT_Y(\QDM_T(W)) \subset \ttt^*\QDM(Y)^{\rm ext}$; 
\item[(2)] $\FT_Y$ intertwines $\bS(\theta)$ with $S=\hS^{(0,0,0,1)}$; and more generally, $\hbS^\beta(\theta)$ with $\hS^\beta$ for $\beta \in N_1^T(W)$;  
\item[(3)] $\FT_Y$ intertwines $\lambda$ with $z \nabla_{S\partial_S}=z\nabla_{\lambda \hS\partial_\hS}$; and more generally, $z\nabla_{\xi \QW \partial_{\QW}}$ with $z\nabla_{\xi \hS\partial_{\hS}}$ for $\xi \in H^2_T(W)$; 
\item[(4)] $\FT_Y$ commutes with $\nabla_{\theta^{i,k}}$; 
\item[(5)] $\FT_Y$ is homogeneous of degree zero and intertwines $\nabla_{z\partial_z}+\frac{1}{2}$ with $\nabla_{z\partial_z}$. 
\end{itemize}  
\end{proposition}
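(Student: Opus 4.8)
The plan is to construct $\FT_Y$ by transporting the discrete Fourier transformation $\sfF_Y$ through the fundamental solutions, exploiting the fact that tangent spaces of the Givental cone are exactly the images of $\cH_+$ under $M_W(\theta)$, resp.\ $M_Y(\ttt(\theta))$. Concretely, for $f \in \QDM_T(W)[\QW^{-1}] = H^*_T(W)[z][\![\QW,\btheta]\!][\QW^{-1}]$, the element $M_W(\theta) f$ lies in the tangent space $T_\theta = M_W(\theta)\cH_+$ of $\cL_W$ (after localizing in $\QW$), so by Proposition \ref{prop:support_conjecture} the discrete Fourier transform $\sfF_Y(M_W(\theta) f)$ is a well-defined element of $\cH_Y^{\rm ext}[\QW^{-1}]$. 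By Corollary \ref{cor:Fourier_transform_JW_GIT} the map $\sfF_Y$ sends the generator $z J_W(\theta) = M_W(\theta) 1$ to $z M_Y(\ttt(\theta)) v$, which lies in the tangent space $T_{\ttt(\theta)}$ of $\cL_Y$; using the shift-operator invariance $\hcS^\beta T_\theta \subset \QW^{\bullet} T_\theta$ and the intertwining property \eqref{eq:properties_discrete_Fourier}, one checks that $\sfF_Y$ maps all of $\QW^{-1} T_\theta$ into $\QW^{-1} T_{\ttt(\theta)} = M_Y(\ttt(\theta))\cH_+[\QW^{-1}]$. Hence we may \emph{define} $\FT_Y := M_Y(\ttt(\theta))^{-1} \circ \sfF_Y \circ M_W(\theta)$, which makes the diagram \eqref{eq:FT_sfF} commute by construction and lands in $H^*(Y)[z][\![C_{Y,\N}^\vee,\btheta]\!][\QW^{-1}]$.

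The properties (1)--(5) then follow by combining the known equivariance of $\sfF_Y$ with the intertwining relations \eqref{eq:fundsol_qconn} satisfied by the fundamental solutions. For (2): the relation $\hcS^\beta \circ M_W(\theta) = M_W(\theta) \circ \hbS^\beta(\theta)$ (Proposition \ref{prop:shift_fundsol}) together with $\sfF_Y(\hcS^\beta \bbf) = \hS^\beta \sfF_Y(\bbf)$ (the first line of \eqref{eq:properties_discrete_Fourier}, generalized to all $\beta$) and the corresponding identity for $M_Y$ gives $\FT_Y \circ \hbS^\beta(\theta) = \hS^\beta \circ \FT_Y$. For (3): start from $\sfF_Y(\lambda \bbf) = (z S\partial_S + \kappa_Y(\lambda))\sfF_Y(\bbf)$, and more generally \eqref{eq:properties_discrete_Fourier_Qdiff}, then conjugate by $M_W(\theta)$ and $M_Y(\ttt(\theta))$ using the third line of \eqref{eq:fundsol_qconn} (for $W$, noting $\nabla_{\xi \QW\partial_\QW}$ is multiplication by $z^{-1}\xi$ when $\xi \in H^2_T(\pt)$, in particular for $\xi = \lambda$), and the pulled-back version of that line for $Y$ (here one must be careful that pulling back along $\ttt(\theta)$ introduces the extra term $z^{-1}(\xi\hS\partial_\hS \ttt(\theta))\star$ as in \eqref{eq:pull-back_qconn}, which is precisely accounted for by the chain rule applied to $M_Y(\ttt(\theta))$). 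For (4): $\sfF_Y$ is visibly $\partial_{\theta^{i,k}}$-linear (it does not involve $\btheta$ as an operator), and the first line of \eqref{eq:fundsol_qconn} for both $W$ and the pullback for $Y$ turns $\partial_{\theta^{i,k}}$ into $\nabla_{\theta^{i,k}}$ on both sides. For (5): $\sfF_Y$ preserves degree up to the shift by $\tfrac12$ recorded in \eqref{eq:properties_discrete_Fourier_zdiff}, namely $\sfF_Y \circ (z\partial_z - z^{-1}c_1^T(W) + \mu_W + \tfrac12) = (z\partial_z - z^{-1}c_1(Y) + \mu_Y)\circ \sfF_Y$; conjugating by $M_W(\theta)$ (second line of \eqref{eq:fundsol_qconn}) and $M_Y(\ttt(\theta))$ (its pullback, again with the chain-rule correction since $\nabla_{z\partial_z}$ acting through $\ttt(\theta)$ uses the Euler field $E_Y$) converts this into the statement that $\FT_Y$ intertwines $\nabla_{z\partial_z} + \tfrac12$ with $\nabla_{z\partial_z}$.

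The remaining point is (1), that $\FT_Y$ actually preserves the \emph{integral} lattices, i.e.\ $\FT_Y(\QDM_T(W)) \subset \ttt^*\QDM(Y)^{\rm ext}$ with no localization in $\QW$. This is where I expect the genuine work to be, since a priori $\sfF_Y$ only lands in the localized cone and $M_Y(\ttt(\theta))^{-1}$ could introduce negative powers of $\QW$. The argument should run as follows: $\QDM_T(W)$ is spanned over $\C[z][\![\QW,\btheta]\!]$ by $\{M_W(\theta)^{-1}(z\partial_{\theta^{i,k}} J_W(\theta))\}$ together with $1$; applying $\sfF_Y$ and using the already-established properties, $\FT_Y$ sends these generators into combinations of $M_Y(\ttt(\theta))^{-1}$ applied to $z\partial_{\theta^{i,k}}$-derivatives of $\sfF_Y(J_W(\theta)) = M_Y(\ttt(\theta)) \tv(\theta)$, and since $\tv(\theta) \in H^*(Y)[z][\![C_{Y,\N}^\vee,\btheta]\!]$ by Corollary \ref{cor:Fourier_transform_JW_GIT}, one needs that the $\btheta$-derivatives of $\ttt(\theta)$ and of $\tv(\theta)$ stay integral and that $\partial_{\theta^{i,k}} \ttt(\theta)$ is invertible enough to span; this is exactly the content of $\ttt(\theta)$ being a valid mirror map, which can be extracted from the structure of the Givental cone as an overruled cone together with the non-degeneracy at $\QW = \btheta = 0$ recorded in \eqref{eq:initialcond_tau_v}. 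I would formalize this by checking that $\FT_Y$ agrees, modulo the ideal $\overline{(\QW)}$, with the obvious identification $H^*_T(W)[z] \to H^*(Y)[z]$ twisted by an invertible operator, and then bootstrapping order by order in $\QW$ using that $M_Y(\ttt(\theta))$ is an isomorphism over $\C[z][\![C_{Y,\N}^\vee,\btheta]\!]$ and that all the relevant operators ($\hbS^\beta$, $\nabla$, derivatives of the mirror map) preserve the unlocalized lattice.
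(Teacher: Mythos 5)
Your construction and your treatment of parts (2)--(5) are essentially identical to the paper's: the paper also defines $\FT_Y$ so that $M_Y(\ttt(\theta))\circ\FT_Y = \sfF_Y\circ M_W(\theta)$ (explicitly, by sending the $\C$-basis $\phi_i\lambda^k$ to $z\nabla_{\theta^{i,k}}v(\theta)$, which is exactly what your formula $M_Y(\ttt(\theta))^{-1}\circ\sfF_Y\circ M_W(\theta)$ gives once you differentiate \eqref{eq:Fourier_JW_for_X_tX} and apply \eqref{eq:fundsol_qconn}), and (2)--(5) are then the intertwining identities for $\sfF_Y$ transported through the fundamental solutions.

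Where you diverge is in estimating the difficulty of part (1). You say this is ``where the genuine work'' is, and sketch an order-by-order bootstrap relying on $M_Y(\ttt(\theta))$ being ``an isomorphism over $\C[z][\![C_{Y,\N}^\vee,\btheta]\!]$''. That assertion is not correct as stated: the fundamental solution $M_Y$ contains genuine negative powers of $z$ (see \eqref{eq:fundsol}, \eqref{eq:fundsol_at_infinity}), so it is \emph{not} an endomorphism over $\C[z]$, and a bootstrap framed this way would have to be reorganized. The correct and much shorter route is one you briefly gesture at: by Corollary~\ref{cor:Fourier_transform_JW_GIT} the vectors $\ttt(\theta)$ and $v(\theta)$ already live in the \emph{unlocalized} ring $H^*(Y)[z][\![C_{Y,\N}^\vee,\btheta]\!]$, and differentiating the relation $\sfF_Y(zJ_W(\theta))=M_Y(\ttt(\theta))v(\theta)$ in $\theta^{i,k}$ gives directly $\FT_Y(\phi_i\lambda^k)=z\nabla_{\theta^{i,k}}v(\theta)=z\partial_{\theta^{i,k}}v + (\partial_{\theta^{i,k}}\ttt)\star_{\ttt}v$, each summand of which manifestly stays in the unlocalized ring. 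In other words, the hard content is entirely in Corollary~\ref{cor:Fourier_transform_JW_GIT}; once you have it, part (1) is a one-line consequence, and no invertibility or spanning of $\partial_{\theta^{i,k}}\ttt$ is needed (that would only be relevant for surjectivity of $\FT_Y$, which is not asserted here and is dealt with much later in Theorem~\ref{thm:Fourier_isom}).
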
 
\begin{proof} 
Differentiating \eqref{eq:Fourier_JW_for_X_tX} by $\theta^{i,k}$, we obtain $\sfF_X(M_W(\theta) \phi_i \lambda^k) = M_X(\tau(\theta)) z \nabla_{\theta^{i,k}} v(\theta)$, where $\{\phi_i \lambda^k\}$ is a $\C$-basis of $H^*_T(W)$ dual to $\theta^{i,k}$. 
We define a $\C[z][\![\QW,\btheta]\!]$-module map $\FT_X\colon \QDM_T(W) \to \tau^*\QDM(X)^{\rm ext}$ by sending a (topological) basis $\phi_i \lambda^k$ to $z\nabla_{\theta^{i,k}} v(\theta)$. Then we extend it to the localization by $\QW$. It clearly satisfies the commutative diagram \eqref{eq:FT_sfF} and part (1). 
The construction of $\FT_\tX$ is similar. 

Parts (2)-(5) follow from the corresponding properties for $\sfF_Y$ via the commutative diagram \eqref{eq:FT_sfF}.  
Part (2) follows from $\sfF_Y \circ \cS^l = S^l \circ \sfF_Y$ (see \eqref{eq:properties_discrete_Fourier}) and Proposition \ref{prop:shift_fundsol}. 
Part (3) follows from $\sfF_Y \circ (z\xi \QW\partial_{\QW}+\xi) = (z\xi \hS \partial_{\hS} + \kappa_Y(\xi)) \circ \sfF_Y$ (see \eqref{eq:properties_discrete_Fourier_Qdiff}) and \eqref{eq:fundsol_qconn}. 
Part (4) follows from $\sfF_Y \circ \partial_{\theta^{i,k}} = \partial_{\theta^{i,k}} \circ \sfF_Y$ and \eqref{eq:fundsol_qconn}. 
Part (5) follows from the homogeneity of $\sfF_Y$, \eqref{eq:properties_discrete_Fourier_zdiff} and \eqref{eq:fundsol_qconn}. 
\end{proof}

\subsection{Fourier isomorphism} 
\label{subsec:Fourier_isom} 
Since $\FT_\tX$ is a homomorphism of $\C[C_{\tX,\N}^\vee]$-modules, we have (see Proposition \ref{prop:FT_QDM}(1), (2) and \eqref{eq:QDMW_tX})  
\[
\FT_\tX(\QDM_T(W)_{\tX})  \subset \ttau^*\QDM(\tX)^{\rm ext}.  
\]
Thus $\FT_\tX|_{\QDM_T(W)_\tX}$ extends to a map 
\begin{equation} 
\label{eq:FT_tX_completed} 
\FT_\tX\sphat \colon \QDM_T(W)_\tX\sphat \to \ttau^*\QDM(\tX)^{\rm ext} 
\end{equation} 
on the completion \eqref{eq:QDMW_completed}.

\begin{theorem} 
\label{thm:Fourier_isom} 
The Fourier transformation $\FT_\tX\sphat$ in \eqref{eq:FT_tX_completed} is an isomorphism of $\C[z][\![C_{\tX,\N}^\vee,\btheta]\!]$-modules and satisfies parts $(2)$-$(5)$ of Proposition $\ref{prop:FT_QDM}$. If $s_1,\dots,s_N \in H^*_T(W)$ are homogeneous elements such that $\kappa_{\tX}(s_1),\dots,\kappa_{\tX}(s_N)$ form a basis of $H^*(\tX)$, then $s_1,\dots,s_N$ form a basis of $\QDM_T(W)_\tX\sphat$ over $\C[z][\![C_{\tX,\N}^\vee,\btheta]\!]$. 
\end{theorem}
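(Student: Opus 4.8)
\textbf{Proof plan for Theorem \ref{thm:Fourier_isom}.} The plan is to deduce the isomorphism statement from Proposition \ref{prop:FT_QDM} together with a ``Nakayama-type'' argument, reducing everything to the classical additive decomposition \eqref{eq:cohomology_bu} of $H^*(\tX)$ and the surjectivity of the Kirwan map $\kappa_\tX$. First I would record that $\FT_\tX\sphat$ is by construction a homomorphism of $\C[z][\![C_{\tX,\N}^\vee,\btheta]\!]$-modules (this is how $\FT_\tX$ was built in the proof of Proposition \ref{prop:FT_QDM}, and the completion \eqref{eq:QDMW_completed} was taken precisely so that it extends); that parts (2)--(5) of Proposition \ref{prop:FT_QDM} survive passage to the completion is immediate since all the relevant operators ($\bS(\theta)$, $\lambda$, $\nabla_{\theta^{i,k}}$, $\nabla_{z\partial_z}$, the Novikov derivations) are continuous for the $\frakm$-adic topology and $\FT_\tX$ intertwines them before completion. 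So the content is: (a) $s_1,\dots,s_N$ form a $\C[z][\![C_{\tX,\N}^\vee,\btheta]\!]$-basis of $\QDM_T(W)_\tX\sphat$, and (b) $\FT_\tX\sphat$ is bijective.

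For (a), the strategy is to work modulo the maximal ideal. Let $\frakn\subset\C[z][\![C_{\tX,\N}^\vee,\btheta]\!]$ be the closed ideal generated by $z$, $\btheta$, and the generators $\QW^\delta$ ($\delta\in\NEN(W)\setminus\{0\}$), $S$, $yS^{-1}$ of $C_{\tX,\N}^\vee$ (i.e.\ the ``$\frakm$'' of \S\ref{subsec:completion} together with $z$ and $\btheta$). I would show $\QDM_T(W)_\tX\sphat/\frakn \cong H^*(\tX)$, with the class of $f\in H^*_T(W)$ going to $\kappa_\tX(f)$. One direction is Lemma \ref{lem:Kirwan_kernel}: the kernel of $\kappa_\tX\colon H^*_T(W)\to H^*(\tX)$ is $i_{\tX*}H^*_T(\tX)+i_{X*}H^*_T(X)+i_{Z\times\PP^1*}(\pr_1^*H^*_T(Z))$, and one checks each of these three summands lies in $\frakn\cdot\QDM_T(W)_\tX\sphat$ --- for instance $i_{\tX*}H^*_T(\tX)$ is divisible by $\lambda-[D]$ hence, using $\lambda\cdot\bS(\theta)=\bS(\theta)(\lambda+z)$ and $[D]=\hjmath_*(\cdots)$ (which by Lemma \ref{lem:GKZ_type}(3) is expressible via $\hat\pi^*$-classes and $i_{\tX*}$), can be absorbed into the ideal generated by $z$ together with the ``wall'' generators $S$, $yS^{-1}$; similarly $i_{X*}$ and $i_{Z\times\PP^1*}$ terms vanish because $x S^{-1}\cdot i_{X*}(\cdots)$ and $i_{Z\times\PP^1*}(\cdots)$ are, up to the ideal, proportional to $S$ and to $y^{-1}S$ respectively, the latter after multiplying by $yS^{-1}\in C_{\tX,\N}^\vee$. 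Conversely $H^*_T(W)\to H^*_T(W)/\frakn\cdot(\cdots)$ is surjective and the associated graded of $\QDM_T(W)_\tX\sphat$ over the associated graded of the base ring is generated in degree $0$ by $H^*_T(W)$ --- here one uses that $S$ and $yS^{-1}$ have positive degree and $\frakm$-adic completion, so the quotient by $\frakn$ is computed by $H^*_T(W)$ alone. Thus the $s_i$, which map to a basis of $H^*(\tX)\cong\QDM_T(W)_\tX\sphat/\frakn$, generate $\QDM_T(W)_\tX\sphat$ by the graded Nakayama lemma (valid because $\C[z][\![C_{\tX,\N}^\vee,\btheta]\!]$ is a complete graded local-type ring and the module is separated and complete); freeness then follows by a standard argument comparing ranks of the free module $\bigoplus\C[z][\![C_{\tX,\N}^\vee,\btheta]\!]\,s_i$ with $\QDM_T(W)_\tX\sphat$ after tensoring down, or by noting that both sides have the same Hilbert series degree by degree.

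For (b), I would use the commutative diagram \eqref{eq:FT_sfF} and Corollary \ref{cor:Fourier_transform_JW_GIT}. The point is that on the target $\ttau^*\QDM(\tX)^{\rm ext}$ we also have a natural spanning set: $\QDM(\tX)^{\rm ext}=H^*(\tX)[z][\![C_{\tX,\N}^\vee,\ttau]\!]$ is free of rank $N$ over $\C[z][\![C_{\tX,\N}^\vee,\ttau]\!]$, and pullback by the mirror map $\ttau=\ttau(\theta)$ (which by \eqref{eq:initialcond_tau_v} satisfies $\ttau(0)\equiv S \bmod \overline{(\QW,yS^{-1})}$, hence is a ``formal change of coordinates'' in the appropriate graded-complete sense, invertible after completion) identifies $\ttau^*\QDM(\tX)^{\rm ext}$ with a free rank-$N$ module over $\C[z][\![C_{\tX,\N}^\vee,\btheta]\!]$. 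Now $\FT_\tX\sphat$ is a map of free modules of the same rank $N$ between which, modulo $\frakn$, it induces the identity $H^*(\tX)\to H^*(\tX)$: indeed, tracing the construction, $\FT_\tX(\phi_i\lambda^k)=z\nabla_{\theta^{i,k}}\tv(\theta)$ and $\tv(0)\equiv 1$, so modulo $\frakn$ the matrix of $\FT_\tX\sphat$ in the bases $\{s_i\}$ and the standard basis of $\QDM(\tX)^{\rm ext}$ is the matrix of $\kappa_\tX$ restricted to $\mathrm{span}\{s_i\}$, which is invertible by hypothesis. By graded Nakayama again a morphism of finite free graded-complete modules inducing an isomorphism modulo the maximal ideal is an isomorphism.

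\textbf{Main obstacle.} The routine-looking but genuinely delicate step is the identification $\QDM_T(W)_\tX\sphat/\frakn\cong H^*(\tX)$ via $\kappa_\tX$ --- specifically checking that $\Ker\kappa_\tX$ (as described in Lemma \ref{lem:Kirwan_kernel}) maps \emph{into} the relevant ideal after passing to the localization $\QDM_T(W)[\QW^{-1}]$, the submodule $\QDM_T(W)_\tX$, and its completion. One has to use the commutation relations of the shift operators with $\lambda$ and the explicit divisibility facts of Lemma \ref{lem:GKZ_type} carefully, and keep track of which monoid generators ($S$ versus $yS^{-1}$ versus $xS^{-1}$) are needed to absorb each summand; the asymmetry between $X$ and $\tX$ (only $xS^{-1}$, not $yS^{-1}$, preserves $\QDM_T(W)$, cf.\ \eqref{eq:QDMW_tX_2nd}) is exactly what makes $\frakm$-adic completion the right operation and is the crux of why the theorem holds on the $\tX$-chamber. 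Everything else --- freeness, the Nakayama arguments, compatibility with the connection --- is formal once this computation is in hand.
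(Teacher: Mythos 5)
Your overall strategy matches the paper's: reduce modulo the maximal ideal, identify the fiber with $H^*(\tX)$ via $\kappa_\tX$, apply a Nakayama-type argument for generation, and then obtain linear independence and bijectivity by observing that $\FT_\tX\sphat$ carries the $s_i$ to a basis of $\ttau^*\QDM(\tX)^{\rm ext}$. You also correctly identify the crux: checking that $\Ker\kappa_\tX = i_{\tX*}H^*_T(\tX)+i_{X*}H^*_T(X)+i_{Z\times\PP^1*}(\pr_1^*H^*_T(Z))$ sits inside the relevant ideal of $\QDM_T(W)_\tX\sphat$.

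However, you stop at asserting that this "absorption" happens, and the sketch you give for it would not go through. The difficulty is not just bookkeeping with divisibility and the commutator $[\lambda,\bS(\theta)]=z\bS(\theta)$. The paper packages the real content into Lemma~\ref{lem:Kirwan_kernel_shift}, which computes the leading terms $\LT(\bS(\theta)c)=i_{\tX*}(e^{-z\partial_\lambda}i_\tX^*c)$, $\LT(x\bS(\theta)^{-1}c)=i_{X*}(e^{z\partial_\lambda}i_X^*c)$, and --- the hard case --- shows that for $c=\hjmath_*\hpi^*\gamma$ one has $y\bS(\theta)^{-1}c\in\QDM_T(W)$ with $\LT(y\bS(\theta)^{-1}c)=i_{Z\times\PP^1*}(\pr_1^*e^{z\partial_\lambda}\gamma)+i_{X*}\alpha$. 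These computations go through the fundamental solution and the shift operator on the Givental space (via Proposition~\ref{prop:shift_fundsol} and the explicit formulas~\eqref{eq:shiftop_W}). In particular, proving that $y\bS(\theta)^{-1}c$ actually lies in $\QDM_T(W)$ (as opposed to only in $\QDM_T(W)[\QW^{-1}]$) requires showing that $\bbf_X=M_W(\theta)c|_X$ is divisible by $xy^{-1}$ when $i_X^*c=0$, and the paper establishes this by a virtual-localization argument about which $T$-invariant curves can contribute. Your proposal contains no route to this positivity/regularity fact; the remark about absorbing $i_{Z\times\PP^1*}$-terms "after multiplying by $yS^{-1}\in C_{\tX,\N}^\vee$" does not address it, because $yS^{-1}$ is precisely a generator of the ideal $\frakm$ you are quotienting by, so multiplying by it cannot help show that something is \emph{already} in $\QDM_T(W)_\tX$.

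A secondary point: in your part (b) you speak of $\FT_\tX\sphat$ as a "map of free modules of the same rank $N$", but freeness of $\QDM_T(W)_\tX\sphat$ is part of what is to be proved. The paper avoids this circularity by first establishing generation (from the Lemma~\ref{lem:Kirwan_kernel_shift} recursion) and then showing that the generators $s_i$ map to a basis of $\ttau^*\QDM(\tX)^{\rm ext}$, from which both linear independence of the $s_i$ and bijectivity of $\FT_\tX\sphat$ follow at once; your proposed alternative justifications (Hilbert-series comparison, rank comparison after tensoring down) are not worked out and are more delicate than they look because $\C[z][\![C_{\tX,\N}^\vee,\btheta]\!]$ is not a local ring in the usual Noetherian sense.
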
 
\begin{proof} It is clear that $\FT_\tX\sphat$ is a homomorphism of $\C[z][\![C_{\tX,\N}^\vee,\btheta]\!]$-modules and satisfies parts $(2)$-$(5)$ of Proposition $\ref{prop:FT_QDM}$. It suffices to show that $\FT_\tX\sphat$ is an isomorphism. 

We set $\cM_0:=\QDM_T(W)$ and $\cM := \QDM_T(W)_\tX$. 
Let $\frakm\subset R:=\C[z][\![\QW,\btheta]\!][S,y S^{-1}]$ be the ideal generated by $S$ and $yS^{-1}$ as in \S\ref{subsec:completion}. 
We first study the fiber $\cM/\frakm \cM$ at $\frakm$; it is a module over $R/\frakm = \C[z][\![\QW,\btheta]\!]/(y)$. 
Let $s_1,\dots,s_N\in H^*_T(W)$ be homogeneous elements such that $\kappa_\tX(s_1),\dots,\kappa_\tX(s_N)$ form a basis of $H^*(\tX)$. 
We claim that $\cM/\frakm \cM$ is generated by $s_1,\dots,s_N$ as an $R/\frakm$-module. 
We have  
\begin{equation} 
\label{eq:zero-fibre} 
\cM/\frakm \cM  = \frac{\cM_0 + yS^{-1} \C[y S^{-1}] \cM_0}{S \cM_0 + y S^{-1} \C[yS^{-1}]\cM_0} 
\cong \frac{\cM_0}{S\cM_0 + (\cM_0\cap yS^{-1} \C[yS^{-1}]\cM_0)} 
\end{equation} 
where we used the fact that $S$ preserves $\cM_0$. 
For $f\in \cM_0$, we write $\LT(f)$ for $f|_{\QW=\btheta=0}$. 
Using Lemma \ref{lem:Kirwan_kernel_shift}(4) below, for any $f\in \cM_0$, we can find $a_1,\dots,a_N\in \C[z]$ and $c_1,c_2,c_3\in H^*_T(W)[z]$ such that $y \bS(\theta)^{-1}c_3 \in \cM_0$ and that 
\[
\LT(f) -\sum_{i=1}^N a_i s_i = \LT\left(\bS(\theta) c_1 + x \bS(\theta)^{-1} c_2 + y \bS(\theta)^{-1} c_3\right) 
\]
where recall that $\bS(\theta)c_1, x\bS(\theta)^{-1} c_2 \in \cM_0$ by Proposition \ref{prop:equiv_QDM_module_over}. 
For a homogeneous $f$, we can choose $a_i,c_i$ to be homogeneous. 
Applying Lemma \ref{lem:Kirwan_kernel_shift}(4) recursively from the lower order of the $(\QW,\btheta)$-power series $f$, we can construct $A_i \in \C[z][\![\QW,\btheta]\!]$, $C_i \in H^*_T(W)[z][\![\QW,\btheta]\!]$ such that 
\[
f - \sum_{i=1}^N A_i s_i = \bS(\theta) C_1 + x \bS(\theta)^{-1} C_2 + y \bS(\theta)^{-1} C_3. 
\]
The right-hand side belong to $S \cM_0 + (\cM_0\cap yS^{-1} \C[yS^{-1}]\cM_0)$ since $xS^{-1}$ preserves $\cM_0$ and $xS^{-1} \cM_0 = y S^{-1} (x y^{-1}) \cM_0 \subset y S^{-1} \cM_0$.  
This together with \eqref{eq:zero-fibre} implies that $\cM/\frakm \cM$ is generated by $s_1,\dots,s_N$. 

Since finitely many elements $s_1,\dots,s_N$ generate $\cM/\frakm \cM$, the completion $\QDM_T(W)_\tX\sphat = \varprojlim_k \cM/\frakm^k \cM$ is also generated by $s_1,\dots,s_N$ over $\C[z][\![C_{\tX,\N}^\vee,\btheta]\!]=\varprojlim_k R/\frakm^k$. We claim that $\FT_\tX\sphat(s_i)=\FT_\tX(s_i)$, $i=1,\dots,N$ form a $\C[z][\![C_{\tX,\N}^\vee,\btheta]\!]$-basis of $\ttau^*\QDM(\tX)^{\rm ext}$. This implies that $s_1,\dots,s_N\in \QDM_T(W)_\tX\sphat$ are linearly independent over $\C[z][\![C_{\tX,\N}^\vee,\btheta]\!]$ and that $\FT_\tX\sphat$ is an isomorphism (since it sends a basis to a basis). 
It suffices to show that the leading terms of $\FT_\tX(s_i)$ at $\QW=S=yS^{-1} =\btheta= 0$ form a $\C[z]$-basis of $H^*(\tX)[z]$. Using the commutative diagram \eqref{eq:FT_sfF} and the fact that $\ttau(\theta)|_{\btheta=\QW=S=yS^{-1}=0}=0$ (see \eqref{eq:initialcond_tau_v}), we find that 
\[
\FT_\tX(s_i)|_{\QW=S=yS^{-1}=\btheta=0} = \sfF_\tX(s_i)|_{\QW=S=yS^{-1}=\btheta=0} = \kappa_\tX(s_i) 
\] 
and the claim follows. 
\end{proof}

\begin{lemma} 
\label{lem:Kirwan_kernel_shift} 
For $f\in \QDM_T(W) = H^*_T(W)[z][\![\QW,\btheta]\!]$, let $\LT(f)=f|_{\QW=\btheta=0} \in H^*_T(W)[z]$ denote the leading term of $f$. 
We have the following: 
\begin{itemize} 
\item[(1)] $\LT(\bS(\theta) c) = i_{\tX*} (e^{-z\partial_\lambda} i_\tX^*c)$ for $c\in H^*_T(W)[z]$. 
\item[(2)] $\LT(x \bS(\theta)^{-1} c) = i_{X_*}(e^{z \partial_\lambda} i_X^*c)$ for $c\in H^*_T(W)[z]$. 
\item[(3)] Let $c= \hjmath_*\hpi^*\gamma\in H^*_T(W)[z]$ with $\gamma \in H^*_T(Z)[z]$, where $\hjmath \colon \hD \to W$ is the inclusion and $\hpi \colon \hD \to Z$ is the projection. Then $y \bS(\theta)^{-1} c$ lies in $\QDM_T(W)$ and $\LT(y \bS(\theta)^{-1} c) = i_{Z\times \PP^1*} (\pr_1^*e^{z\partial_\lambda}\gamma) + i_{X*} \alpha$ for some $\alpha \in H^*_T(X)[z]$. 
\item[(4)] For $f\in \Ker(\kappa_\tX) \subset H^*_T(W)[z]$, there exist $c_1,c_2, c_3 \in H_T^*(W)[z]$ such that $y\bS(\theta)^{-1}c_3\in \QDM_T(W)$ and $f= \LT(\bS(\theta)c_1 + x\bS(\theta)^{-1}c_2 + y\bS(\theta)^{-1}c_3)$. 
\end{itemize} 
\end{lemma}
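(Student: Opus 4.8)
The plan is to compute each leading term $\LT$ by specializing the shift operators $\bS(\theta)^{\pm 1}$ at $\QW = \btheta = 0$. At this locus the shift operator degenerates to its ``classical'' form: from Definition \ref{def:shift} and Remark \ref{rem:shift}(1), the lowest-degree term in $\QW$ of $\hbS^\beta(\theta)$ comes from the term with $d = \beta + \sigma_{\rm max}(-\ovbeta)$, and when $\btheta = 0$ the map $\theta \mapsto \htheta$ becomes the trivial lift, so the Gromov--Witten invariants in \eqref{eq:def_tbS} reduce to classical intersection numbers on the Seidel space $E_k$. Concretely, for $\beta = (0,0,0,1)$ we have $k = -\ovbeta = -1$, $F_{\rm max}(-1) = \tX$, so $\LT(\bS(\theta)c)$ is the composition of $i_\tX^*$, the shift $e^{-z\partial_\lambda}$ (coming from the identification $\Phi_k$ of Definition \ref{def:shift}), and $i_{\tX*}$; this gives (1). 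Dually, for $\beta = (0,1,0,-1) = x\cdot S^{-1}$ we have $k = 1$, $F_{\rm max}(1) = X$, and the same reasoning with the opposite shift gives (2). I would extract these two formulas either directly from the localization analysis already invoked in the proof of Proposition \ref{prop:support_conjecture}, or by citing the standard ``classical limit'' of shift operators from \cite{Iritani:shift} (see also the computation \eqref{eq:shiftop_W} of $\cS^k$, whose leading $\QW$-term matches).

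For (3), the point is that $y\bS(\theta)^{-1}$ corresponds to the class $(0,0,1,0) - (0,0,0,1) = \sigma_Z(1) \in N_1^T(W)$ (using \eqref{eq:sigma_F}: $\sigma_Z(1) = (0,0,-1,1)$, hence $y S^{-1} = \hS^{(0,0,1,-1)} = \hS^{\sigma_Z(1)} \cdot (\text{fibre class})$; more precisely $yS^{-1} = \hS^{-\sigma_Z(-1)}$). Since $Z$ is \emph{not} the maximal fixed component for $k = 1$, the operator $y\bS(\theta)^{-1}$ need not preserve $\QDM_T(W)$ in general — but it does on classes pulled back from the exceptional divisor. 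I would argue this by Definition \ref{def:shift}: the relevant Seidel space is $E_1(W)$, and for an input of the form $c = \hjmath_*\hpi^*\gamma$ supported on $\hD$, the section class associated to a fixed point of $Z$ together with $\gamma$ produces an effective curve class (the negativity of the normal weights that would obstruct effectivity is cancelled by the factor $\hp$-degree of $c$, exactly as in the parenthetical remark in the proof of Theorem \ref{thm:Fourier_isom}). The leading term is then computed by the classical limit as in (1)--(2): $i_Z^*$ of $\hjmath_*\hpi^*\gamma$ followed by the shift $e^{z\partial_\lambda}$, pushed forward — and because $Z \subset Z\times\PP^1$ and the section class connecting $Z$ to its maximal component $X$ passes through $Z\times\PP^1$, the push-forward lands in $i_{Z\times\PP^1*}(\pr_1^*e^{z\partial_\lambda}\gamma)$ up to a correction term supported on $X$ (the difference of section classes $\sigma_Z - \sigma_X$ being an effective curve, cf.\ Lemma \ref{lem:NEN_W}). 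This structural statement is essentially Lemma \ref{lem:GKZ_type}(3)--(4) re-read through the shift operators.

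For (4), combine (1)--(3) with the description of $\Ker(\kappa_\tX)$ from Lemma \ref{lem:Kirwan_kernel}: any $f \in \Ker(\kappa_\tX) \subset H^*_T(W)[z]$ can be written as $i_{\tX*}a + i_{X*}b + i_{Z\times\PP^1*}(\pr_1^*c)$ with $a \in H^*_T(\tX)[z]$, $b\in H^*_T(X)[z]$, $c \in H^*_T(Z)[z]$. Using the surjectivity of $i_\tX^*$, $i_X^*$, $\hjmath^*$ (see \S\ref{subsec:cohomology}), lift $e^{z\partial_\lambda}a$, the $X$-part, and $e^{-z\partial_\lambda}c$ to classes on $W$, apply the (invertible on the leading term) shifts from parts (1)--(3), and solve for $c_1, c_2, c_3$; the $i_{X*}\alpha$ correction from (3) is absorbed into $c_2$. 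I expect the main obstacle to be part (3): verifying that $y\bS(\theta)^{-1}$ genuinely preserves $\QDM_T(W)$ on exceptional-divisor classes requires a careful effectivity count on the Seidel space $E_1(W)$ (which fixed components the relevant stable maps can touch, and which curve classes are forced by the $\hp$-class insertion), and pinning down the precise form of the correction term $i_{X*}\alpha$. The cleanest route is probably to avoid the geometry of $E_1(W)$ altogether and instead deduce (3) from the already-established Fourier-transform compatibility \eqref{eq:Gamma_solution} / Proposition \ref{prop:conti_vs_discrete} specialized at $\QW = \btheta = 0$, reducing everything to the explicit operators \eqref{eq:shiftop_W}.
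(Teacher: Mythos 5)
Your plan for parts (1), (2) and (4) is essentially the paper's argument: for (1)--(2) the paper passes from $\hbS$ to $\hcS$ via $M_W(\theta)|_{\QW=\btheta=0}=\id$ and Proposition \ref{prop:shift_fundsol}, then reads the answer off the explicit formulae \eqref{eq:shiftop_W}, which is the ``cleanest route'' you identify at the end; for (4) the paper likewise combines Lemma \ref{lem:Kirwan_kernel} with parts (1)--(3) and the surjectivity of the restriction maps.

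Part (3), which you correctly flag as the delicate step, contains a genuine gap. The heuristics you propose --- an effectivity count on the Seidel space $E_1(W)$ in which ``the negativity of the normal weights is cancelled by the $\hp$-degree of $c$'', or a reference to a parenthetical remark in the proof of Theorem \ref{thm:Fourier_isom} --- do not identify the actual mechanism, and I do not think they can be made to work as stated. The paper's reason that $y\bS(\theta)^{-1}c \in \QDM_T(W)$ is entirely different: set $\bbf = M_W(\theta)c$; the restriction of $y\cS^{-1}\bbf$ to the fixed component $X$ is (from \eqref{eq:shiftop_W}) $-\tfrac{\lambda}{xy^{-1}}\,e^{z\partial_\lambda}\bbf_X$, and this is regular at $\QW=0$ only if $\bbf_X$ is divisible by $xy^{-1}$. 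That divisibility follows from the hypothesis $i_X^*c = (\hjmath_*\hpi^*\gamma)|_X = 0$ together with virtual localization: any $T$-fixed stable map contributing to $\bbf_X$ with a nontrivial localization contribution must carry a $T$-invariant component leaving $X$, and by Lemma \ref{lem:NEN_W} and Table \ref{tab:homology_classes} the class of any such orbit closure lies in $(0,1,-1,0)+\NEN(W)$, so the corresponding $\QW$-monomial is divisible by $xy^{-1}$. Nothing about the $\hp$-degree of $c$ or the geometry of $E_1(W)$ enters; the key is that $c$ restricts to zero on the fixed component $X$. Once regularity is established, the restrictions at $\QW=\btheta=0$ give $\LT(y\bS(\theta)^{-1}c)|_\tX = 0$ and $\LT(y\bS(\theta)^{-1}c)|_Z = e_{-\lambda}(\cN_{Z/X})e^{z\partial_\lambda}\gamma$, and the paper invokes Lemma \ref{lem:GKZ_type}(4) (note: part (4), not (3) as you suggest) to obtain the stated decomposition with the correction term $i_{X*}\alpha$; your plan leaves the origin of that correction term unexplained. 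There are also sign slips in your identification of $yS^{-1}$ with a section class: by \eqref{eq:sigma_F}, $\sigma_Z(-1) = (0,0,1,-1)$, so $yS^{-1} = \hS^{\sigma_Z(-1)}$, not $\hS^{-\sigma_Z(-1)}$, and $(0,0,1,-1)\neq\sigma_Z(1)$.
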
 
\begin{proof} 
Using $M_W(\theta)|_{\QW=\btheta=0} = \id$ and Proposition \ref{prop:shift_fundsol}, we have for $c\in H^*_T(W)[z]$ that 
\[
\LT(\bS(\theta) c) = M_W(\theta) \bS(\theta) c|_{\QW= \btheta=0} = \cS (M_W(\theta) c) |_{\QW=\btheta=0} = \cS(c)|_{\QW=\btheta=0}.  
\]
Thus \eqref{eq:shiftop_W} implies $\LT(\bS(\theta)c) |_X = \LT(\bS(\theta)c)|_Z = 0$,  $\LT(\bS(\theta)c)|_\tX = (\lambda -[D]) e^{-z\partial_\lambda}i_{\tX}^*c$. Therefore $\LT(\bS(\theta)c) = i_{\tX*} (e^{-z\partial_\lambda} i_\tX^*c)$ as required. Part (1) follows. Part (2) follows by a similar computation using \eqref{eq:shiftop_W}. 

Take $\gamma \in H^*_T(Z)[z]$ and set $c=\hjmath_*\hpi^*\gamma$. 
Set $\bbf:=M_W(\theta)c$. 
Then the restrictions of  
\[
M_W(\theta) (y\bS(\theta)^{-1} c) = y\cS^{-1} \bbf 
\]
to the fixed loci $X$, $Z$, $\tX$ are given respectively by 
\begin{equation} 
\label{eq:restrictions_yS^{-1}f} 
-\frac{\lambda}{xy^{-1}} e^{z\partial_\lambda}\bbf_X, \quad \frac{e_{-\lambda}(\cN_{Z/X})}{\lambda+z} e^{z\partial_\lambda} \bbf_Z, \quad 
\frac{y}{\lambda-[D]+z} e^{z\partial_\lambda} \bbf_\tX 
\end{equation} 
by \eqref{eq:shiftop_W}. 
The equivariant class $c$ satisfies $c|_X=(\hjmath_*\hpi^*\gamma)|_X=0$. Therefore, applying the virtual localization formula to $\bbf_X = M_W(\theta)c|_X$, we find that non-zero contributions to $\bbf_X$ arise only from $T$-invariant curves that contain an irreducible component connecting $X$ with $Z$ or $X$ with $\tX$. 
This implies that $\bbf_X$ is divisible by $xy^{-1}$. 
Thus the restrictions \eqref{eq:restrictions_yS^{-1}f} of $M_W(\theta)(y \bS(\theta)^{-1} c)$ are all regular at $\QW=0$, and therefore $y \bS(\theta)^{-1} c \in \QDM_T(W)$. Moreover \eqref{eq:restrictions_yS^{-1}f} shows that $\LT(y \bS(\theta)^{-1} c)|_Z= e_{-\lambda}(\cN_{Z/X})e^{z\partial_\lambda} \gamma$ and $\LT(y\bS(\theta)^{-1}c)|_\tX =0$. By Lemma \ref{lem:GKZ_type}(4), we have $\LT(y\bS(\theta)^{-1} c) = i_{Z\times \PP^1*}(\pr_1^*e^{z\partial_\lambda}\gamma)+ i_{X*} \alpha$ for some $\alpha \in H^*_T(X)[z]$.  

Finally we prove part (4). By Lemma \ref{lem:Kirwan_kernel}, $f$ can be written as the sum $i_{\tX*}g_1 + i_{X*}g_2 + i_{Z\times \PP^1*} \pr_1^*g_3$ for $g_1\in H^*_T(\tX)[z]$, $g_2 \in H^*_T(X)[z]$ and $g_3 \in H^*_T(Z)[z]$. By part (3), setting $c_3 = \hjmath_*\hpi^*e^{-z\partial_\lambda}g_3$, we have  $y\bS(\theta)^{-1} c_3\in \QDM_T(W)$ and $f-\LT(y\bS(\theta)^{-1} c_3)$ is of the form $i_{\tX*}g_1 + i_{X*}g_2'$ for some $g_2'\in H^*_T(X)[z]$. By parts (1), (2), this can be written as $\LT(\bS(\theta) c_1 +  x \bS(\theta)^{-1} c_2)$ for some $c_1,c_2 \in H^*_T(W)$ since $i_X^*$ and $i_\tX^*$ are surjective. 
\end{proof} 

\subsection{Fourier projections} 
\label{subsec:Fourier_projections} 
We introduce the new variable $\frq:= S_Z^{-1} = yS^{-1}$ with $\deg \frq = 2(r-1)$. 
Let $\frs$ be the following number: 
\begin{equation} 
\label{eq:frs} 
\frs= \begin{cases} 
r-1 & \text{if $r$ is even;} \\
2(r-1) & \text{if $r$ is odd.} 
\end{cases} 
\end{equation} 
In this section, we construct projections from $\QDM_T(W)_\tX\sphat$ to the quantum $D$-modules of $X$ and $Z$ defined over formal Laurent series in $\frq^{-1/\frs}$, via the Fourier transformation. 
\subsubsection{Projection for $X$} 
\label{subsubsec:projection_X}
The Fourier transformation $\FT_X$ defines a map 
\[
\FT_X \colon \QDM_T(W)_\tX \to \tau^*\QDM(X)^{\rm ext}[\frq] 
\]
by Proposition \ref{prop:FT_QDM}(1), (2) and \eqref{eq:QDMW_tX_2nd}. In order to extend this map to the completion $\QDM_T(W)_\tX\sphat$, we consider the following extension of the ground ring for $\QDM(X)^{\rm ext}[\frq]$: 
\[
\C[z][\![C_{X,\N}^\vee]\!][\frq] \subset \C[z](\!(\frq^{-1/\frs})\!)[\![\QW]\!] 
\]
where the formal Laurent series ring $\C[z](\!(\frq^{-1/\frs})\!) = \C[\frq^{\pm1/\frs}][\![z]\!]$ should be understood in the graded sense (see \S\ref{subsec:formal_power_series}). 
This inclusion holds because $\C[C_{X,\N}^\vee] = \C[\QW][xy^{-1}\frq, \frq^{-1}]$ (see \eqref{eq:dualmonoids}) and $xy^{-1}\in \C[\QW]$ (see also Figure \ref{fig:Mori_cones}).  
We define formal Laurent versions of $\QDM(X)^{\rm ext}$, $\tau^*\QDM(X)^{\rm ext}$ by  
\begin{align*} 
\QDM(X)^{\rm La} & := H^*(X)[z](\!(\frq^{-1/\frs})\!)[\![\QW,\tau]\!], \\ 
\tau^*\QDM(X)^{\rm La} & := H^*(X)[z](\!(\frq^{-1/\frs})\!)[\![\QW,\btheta]\!].   
\end{align*} 
Note that $\tau^*\QDM(X)^{\rm ext}[\frq] \subset \tau^*\QDM(X)^{\rm La}$. Note also that $\C[z](\!(\frq^{-1/\frs})\!)[\![\QW,\btheta]\!]$ contains the ground ring $\C[z][\![C_{\tX,\N}^\vee,\btheta]\!]$ for $\QDM_T(W)_\tX\sphat$ since $\frq$ has positive degree. 
The flat connections $\nabla$ on these modules are defined similarly to \S\ref{subsec:extended_QDM}. 
\begin{proposition} 
\label{prop:FTX_completion} 
The map $\FT_X|_{\QDM_T(W)_\tX}$ in Proposition $\ref{prop:FT_QDM}$ extends to a homomorphism of $\C[z][\![C_{\tX,\N}^\vee,\btheta]\!]$-modules 
\begin{equation} 
\label{eq:FT_X_completed} 
\FT_X\sphat \colon \QDM_T(W)_\tX\sphat \to \tau^*\QDM(X)^{\rm La}    
\end{equation} 
on the completion. The map $\FT_X\sphat$ satisfies parts $(2)$-$(5)$ of Proposition $\ref{prop:FT_QDM}$.  
\end{proposition}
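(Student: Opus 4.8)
The statement asserts two things: that the map $\FT_X$, originally defined on $\QDM_T(W)_\tX$, extends continuously to the $\frakm$-adic completion $\QDM_T(W)_\tX\sphat$ with values in $\tau^*\QDM(X)^{\rm La}$; and that the extension still satisfies parts (2)--(5) of Proposition \ref{prop:FT_QDM} (compatibility with shift operators, with the Novikov/$\hS$-derivations, with $\nabla_{\theta^{i,k}}$, and with the grading/$z\partial_z$-direction). The approach is to check that $\FT_X$ is continuous for the $\frakm$-adic topology on the source and the $\frq^{-1/\frs}$-adic (i.e.\ graded) topology on the target, so that it factors through the completion, and then to note that properties (2)--(5) are closed conditions preserved under taking this limit.

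First I would recall the key algebraic fact from \S\ref{subsubsec:projection_X}: since $\C[C_{X,\N}^\vee]=\C[\QW][xy^{-1}\frq,\frq^{-1}]$ and $xy^{-1}\in\C[\QW]$, we get the inclusion $\C[z][\![C_{X,\N}^\vee]\!][\frq]\subset \C[z](\!(\frq^{-1/\frs})\!)[\![\QW]\!]$, hence $\tau^*\QDM(X)^{\rm ext}[\frq]\subset\tau^*\QDM(X)^{\rm La}$. By Proposition \ref{prop:FT_QDM}(1),(2) and the description $\QDM_T(W)_\tX=\C[yS^{-1}]\cdot\QDM_T(W)=\C[\frq]\cdot\QDM_T(W)$ in \eqref{eq:QDMW_tX_2nd}, the image $\FT_X(\QDM_T(W)_\tX)$ lies in $\tau^*\QDM(X)^{\rm ext}[\frq]$, so $\FT_X$ is already a $\C[z][\![C_{\tX,\N}^\vee,\btheta]\!]$-module map into $\tau^*\QDM(X)^{\rm La}$. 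Next I would verify continuity: the ideal $\frakm\subset\C[z][\![\QW,\btheta]\!][S,yS^{-1}]$ is generated by $S$ and $\frq=yS^{-1}$, and on the target side $\FT_X$ intertwines $S=\hS^{(0,0,0,1)}$ with $S$ and $\frq$ with $\frq$ (by part (2), using $\kappa_X(S)$, $\kappa_X(y)$ — note $\kappa_X(S)=0$ in fact, or more simply track the $\hS$-exponents). The point is that $S$ acts as multiplication by $S$ which, after $\FT_X$, pushes one into higher order in the target's grading (equivalently: $\FT_X(S\cdot m)=S\cdot\FT_X(m)$ and $S$ has positive degree, while $S$ as an element of $\C[C_{X,\N}^\vee]=\C[\QW,xy^{-1}\frq,\frq^{-1}]$ is $\frq^{-1}\cdot(xy^{-1}\frq)\cdot$(something in $\QW$)? — here I would instead argue directly that $\FT_X(\frakm^k\QDM_T(W)_\tX)$ lands in the $k$-th piece of the defining filtration of $\tau^*\QDM(X)^{\rm La}$, using that $S$ and $\frq$ both have positive degree and the completion $\C[z](\!(\frq^{-1/\frs})\!)[\![\QW,\btheta]\!]$ is graded-complete). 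Granting this, $\FT_X$ extends uniquely to $\FT_X\sphat\colon\QDM_T(W)_\tX\sphat\to\tau^*\QDM(X)^{\rm La}$ by passing to the inverse limit, and it remains a $\C[z][\![C_{\tX,\N}^\vee,\btheta]\!]$-module homomorphism because that base ring sits inside $\C[z](\!(\frq^{-1/\frs})\!)[\![\QW,\btheta]\!]$.

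For the last assertion, properties (2)--(5) of Proposition \ref{prop:FT_QDM} hold on the dense submodule $\QDM_T(W)_\tX\subset\QDM_T(W)_\tX\sphat$; since all the operators involved ($\hbS^\beta(\theta)$, $z\nabla_{\xi\QW\partial_\QW}$, $z\nabla_{S\partial_S}$, $\nabla_{\theta^{i,k}}$, $\nabla_{z\partial_z}$, multiplication by $\lambda$) are continuous for the relevant topologies — indeed they all preserve or shift the grading by a bounded amount and hence are continuous on graded-complete modules — and since $\FT_X\sphat$ is continuous, each identity extends from the dense submodule to the completion by continuity. I would spell this out for part (3) (the commutation with $\lambda$, i.e.\ $\FT_X\sphat$ intertwines $\lambda$ with $z\nabla_{S\partial_S}$), which is the one needed to make sense of $\lambda$ acting on $\QDM_T(W)_\tX\sphat$, and remark that the others are identical in form.

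The main obstacle I anticipate is \emph{verifying continuity with the correct topologies}: one must check that the $\frakm$-adic filtration on $\QDM_T(W)_\tX$ maps into a cofinal system of submodules for the graded completion defining $\tau^*\QDM(X)^{\rm La}$. This hinges on the fact that $S$ and $\frq=yS^{-1}$ have strictly positive degrees ($\deg S=2$, $\deg\frq=2(r-1)$ with $r\ge2$), together with the already-established containment $\FT_X(\QDM_T(W)_\tX)\subset\tau^*\QDM(X)^{\rm ext}[\frq]$ and the monoid identity $\C[C_{X,\N}^\vee]=\C[\QW][xy^{-1}\frq,\frq^{-1}]$ with $xy^{-1}\in\C[\QW]$ — this last point is what guarantees that passing to formal Laurent series in $\frq^{-1/\frs}$ genuinely enlarges the ground ring enough to absorb the completion, rather than clashing with it. Once the topological bookkeeping is in place, everything else is a formal limit argument.
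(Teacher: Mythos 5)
Your structural plan is sound — show $\FT_X$ is continuous from the $\frakm$-adic topology to the graded-complete target, then pass to the inverse limit and extend parts (2)–(5) by continuity — and your preliminary observations (the inclusion $\tau^*\QDM(X)^{\rm ext}[\frq]\subset\tau^*\QDM(X)^{\rm La}$, the description $\QDM_T(W)_\tX=\C[\frq]\cdot\QDM_T(W)$) are exactly right. But the continuity step, which you yourself flag as the ``main obstacle,'' is where a genuine gap remains, and the specific argument you sketch for it would not go through.

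The issue is your claim that $\FT_X(\frakm^k\QDM_T(W)_\tX)$ lands in ``the $k$-th piece of the defining filtration of $\tau^*\QDM(X)^{\rm La}$,'' backed by the positivity of $\deg S$ and $\deg\frq$. The filtration on the target is not a single adic filtration but the graded interaction of the $\frq^{-1/\frs}$-adic and $(\QW,\btheta)$-adic structures, and $S^k$ does \emph{not} go deep in the $\frq^{-1/\frs}$-adic direction: after $\FT_X$, the multiplier $S^k$ is $y^k\frq^{-k}$, so the $\frq$-exponent \emph{drops} by $k$, exactly the wrong way. What saves the situation is that $y^k$ goes deep in the Novikov $\QW$-adic filtration (since $y\in\QW$ has degree $2r>0$). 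The ``positive degree of $S$'' masks this because the positivity of $\deg S=2$ arises from a cancellation between the positive contribution of $y$ and the negative contribution of $\frq^{-1}$; the two sit in different filtration directions, and you need to track them separately. Likewise, for the $(yS^{-1})^l h_l=\frq^l h_l$ summands, positivity of $\deg\frq$ alone does not place them deep in the right filtration — rather, one needs that $h_l\in\QDM_T(W)$ is homogeneous of degree $n-2(r-1)l$, which is so negative for large $l$ that $h_l$ is forced deep into the $(\QW,\btheta)$-adic filtration, using the non-negative grading of $H^*_T(W)$, $\deg z>0$, and the finiteness of low-$\QW$-low-$\btheta$ monomials. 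Neither of these two mechanisms appears in your sketch. Concretely, the missing content is the decomposition of a homogeneous $f\in\frakm^{2k}\QDM_T(W)_\tX$ as $f=S^kg+\sum_{l\ge k}\frq^lh_l$, followed by separate degree/Novikov estimates for the two pieces; once this is in place, your inverse-limit argument and the extension of (2)–(5) by continuity are indeed routine.
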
 
\begin{proof} 
It suffices to show that $\FT_X|_{\QDM_T(W)_\tX}$ continuously extends to the completion. The point of the proof is that $\frq = y S^{-1}$ has positive degree. For $N=1,2,3,\dots$, let $\Omega_N$ be the set of monomials $\QW^\delta \btheta^I$ in the variables $(\QW,\btheta)$ such that $\omega_W \cdot \delta \ge N$, or $\btheta^I$ is divisible by $\theta^{i,k}$ for some $k\ge N$, or $\btheta^I$ is of the form $\theta^{i_1,k_1} \cdots \theta^{i_l, k_l}$ with $l\ge N$, where $\omega_W$ is a fixed ample class. 
For a graded module $K$, we write $F_N(K[\![\QW,\btheta]\!])\subset K[\![\QW,\btheta]\!]$ for the subspace consisting of power series $\sum a_{\delta,I} \QW^\delta\btheta^I$ with $a_{\delta,I}\in K$ being non-zero only when $\QW^\delta \btheta^I \in \Omega_N$.  Since $\FT_X(\QDM_T(W)) \subset \tau^*\QDM(X)^{\rm La}$, we have 
\begin{equation} 
\label{eq:FTX_filtration} 
\FT_X\left(I_N \cdot \C[\frq^\pm]\cdot \QDM_T(W) + F_N(\QDM_T(W))\right) 
\subset F_N(\tau^*\QDM(X)^{\rm La})
\end{equation} 
where we set $I_N := F_N(\C[z][\![\QW,\btheta]\!]) \subset \C[z][\![\QW,\btheta]\!]$. 
Because of this, it suffices to show that for any $N\ge 1$ and a fixed degree $n\in \Z$, there exists $k\ge 1$ such that every homogeneous element $f$ of degree $n$ in $\frakm^{2k} \QDM_T(W)_\tX$ satisfies 
\begin{equation} 
\label{eq:f_lies_in_the_deep_filter} 
f \in I_N \cdot \C[\frq^\pm]\cdot \QDM_T(W) + F_N(\QDM_T(W))
\end{equation} 
where $\frakm \subset \C[z][\![\QW,\btheta]\!][yS^{-1},S]$ is the ideal generated by $S$ and $y S^{-1}=\frq$ as in \S\ref{subsec:completion}.  
An element $f\in \frakm^{2k} \QDM_T(W)_\tX$ of degree $n$ can be written in the form 
\[
f = S^k g + \sum_{l\ge k} (y S^{-1})^l h_l  
\]
for some $g \in \QDM_T(W)_\tX=\C[\frq]\cdot \QDM_T(W)$ and $h_l\in \QDM_T(W)$. Here the sum over $l$ is finite and we may assume that $g$ and $h_l$ are homogeneous. In particular we have $\deg h_l = n - 2(r-1)l \le n- 2(r-1) k$. We can choose $k$ sufficiently large so that $y^k \in I_N$ and every elements of degree $\le n- 2 (r-1) k$ in $\QDM_T(W)$ lies in $F_N(\QDM_T(W))$. This is possible because $\deg z>0$ and $H^*_T(W)$ is non-negatively graded. Then $S^k g = y^k \frq^{-k} g \in I_N \cdot \C[\frq^\pm]\cdot \QDM_T(W)$ and $h_l \in F_N(\QDM_T(W))$; thus \eqref{eq:f_lies_in_the_deep_filter} holds. 
\end{proof} 

\begin{remark} 
We do not need to add the $\frs$th root $\frq^{-1/\frs}$ of $\frq^{-1}$ to define $\FT_X\sphat$, but we will need it when we consider the Fourier transformation associated with $Z$ below. 
\end{remark} 

\subsubsection{Projection for $Z$} 
\label{subsubsec:projection_Z}
We also rewrite the continuous Fourier transformations $\scrF_{Z,j}$, $j=0,\dots,r-2$ from \S\ref{subsec:continuous_Fourier} in terms of quantum $D$-modules. 
Let $\sigma$ denote the parameter of $H^*(Z)$. Let $\{\sigma^i\}$ be the coordinates of $\sigma$ corresponding to a homogeneous basis $\{\phi_{Z,i}\}$ of $H^*(Z)$. 
The quantum $D$-module of $Z$ is given by (see \S\ref{subsec:qcoh_qconn}) 
\[
\QDM(Z) = H^*(Z)[z][\![Q_Z, \sigma]\!] 
\]
equipped with the quantum connection $\nabla$ and the pairing $P_Z$, where $Q_Z$ denotes the Novikov variable of $Z$. 
In view of Corollary \ref{cor:Fourier_transform_JW}, we consider the (not necessarily injective but degree-preserving) extension of rings: 
\begin{equation} 
\label{eq:extension_Novikov_Z} 
\C[z][\![Q_Z,\sigma]\!] \to \C[z](\!(\frq^{-1/\frs})\!)[\![\QW,\sigma]\!], \ Q_Z^d \mapsto Q^{\imath_*d} \frq^{-\rho_Z \cdot d/(r-1)}
=\QW^{i_{Z*}d} S_Z^{-\rho_Z \cdot d/c_Z}, 
\end{equation} 
where $\imath\colon Z\to X$, $i_Z \colon Z \to W$ are the inclusions and $\rho_Z = c_1(\cN_{Z/W}) = c_1(\cN_{Z/X})$ (see Notation \ref{nota:normal_bundle}). 
We introduce a formal Laurent version of the quantum $D$-module of $Z$ as follows:  
\[
\QDM(Z)^{\rm La} := \QDM(Z)\otimes_{\C[z][\![Q_Z,\sigma]\!]} \C[z](\!(\frq^{-1/\frs})\!)[\![\QW,\sigma]\!] = H^*(Z)[z](\!(\frq^{-1/\frs})\!)[\![\QW,\sigma]\!]. 
\]
The quantum connection on $\QDM(Z)^{\rm La}$ is defined similarly to \S\ref{subsec:extended_QDM}; we define
\begin{align}
\label{eq:Z_qconn_La}  
\begin{split} 
\nabla_{\sigma^i} &=  \partial_{\sigma^i} + z^{-1} (\phi_{Z,i}\star_\sigma) \\
\nabla_{z\partial_z} & = z \partial_z - z^{-1} (E_Z\star_\sigma) + \mu_Z \\ 
\nabla_{\xi \hS \partial_\hS} & = \xi \hS\partial_\hS + z^{-1}  (\kappa_Z(\xi) \star_\sigma)
\end{split} 
\end{align} 
for $\xi\in H^2_T(W)$, where $\kappa_Z\colon H^2_T(W) \to H^2(Z)$ is the map sending $\xi$ to $i_Z^*\xi|_{\lambda = \rho_Z/(r-1)}$ and the quantum product $\star_\sigma$ is pushed forward along \eqref{eq:extension_Novikov_Z}. The map $\kappa_Z$ is dual to the homomorphism \eqref{eq:extension_Novikov_Z}; it is defined only on $H^2_T(W)$ and is \emph{not} the Kirwan map. 
\begin{remark} 
\label{rem:QDMZLa}
As in Remark \ref{rem:QDM}, the structure of $\QDM(Z)^{\rm La}$ can be reduced to a smaller ring, namely, the image $R$ of $\C[z][\![Q_Z e^{\sigma^{(2)}},\sigma']\!][\sigma^0]$ under \eqref{eq:extension_Novikov_Z}. Here we used notation analogous to Remark \ref{rem:QDM}. In other words, the connection \eqref{eq:Z_qconn_La} multiplied by $z$ preserves the submodule $H^*(Z)\otimes R$.  
\end{remark}

Let $j\in \{0,\dots,r-2\}$. By Corollary \ref{cor:Fourier_transform_JW}, there exist maps  $\sigma=\sigma_j(\theta) \in H^*(Z)(\!(\frq^{-\frac{1}{r-1}})\!)[\![\QW,\btheta]\!]$ and $u_j(\theta) \in q_{Z,j} H^*(Z)[z](\!(\frq^{-\frac{1}{r-1}})\!)[\![\QW,\btheta]\!]$ satisfying 
\begin{align}
\label{eq:initialcond_mirrormap}
\begin{split} 
\sigma_j(\theta)|_{\QW=0} &\in h_{Z,j} + \frakm' H^*(Z)[\![\frq^{-\frac{1}{r-1}},\btheta_Z \frq^{\frac{\bullet}{r-1}}]\!] \\ 
u_j(\theta)|_{\QW=0} & \in q_{Z,j} \left(1+\frakm'' H^*(Z)[z][\![\frq^{-\frac{1}{r-1}},\btheta_Z  \frq^{\frac{\bullet}{r-1}}]\!]\right)
\end{split} 
\end{align} 
(here we use notation analogous to Corollary \ref{cor:Fourier_transform_JW}) such that 
\begin{equation} 
\label{eq:Fourier_JW_for_Z}
\frq^{\rho_Z/((r-1)z)} \scrF_{Z,j}(J_W(\theta)) = M_Z(\sigma_j(\theta)) u_j(\theta)
\end{equation} 
where the fundamental solution $M_Z(\sigma)$ is pushed forward along the ring extension \eqref{eq:extension_Novikov_Z}. 
The quantities $q_{Z,j}$, $h_{Z,j}$ are given by (see \eqref{eq:qFj-hFj}) 
\begin{equation} 
\label{eq:qZj_hZj} 
q_{Z,j} = \frac{1}{\sqrt{r-1}} e^{\frac{\pi \iu}{r-1} (jr + \frac{1}{2})} \frq^{-\frac{r}{2(r-1)}}, \quad 
h_{Z,j} = \frac{2 \pi \iu}{r-1} (j+\frac{1}{2}) \rho_Z.  
\end{equation} 
The pullback $\sigma_j^*\QDM(Z)^{\rm La}$ is defined to be the module  $H^*(Z)[z](\!(\frq^{-1/\frs})\!)[\![\QW,\btheta]\!]$ equipped with the following pulled-back connection (cf.~\eqref{eq:pull-back_qconn}):  
\begin{align*} 
\nabla_{\theta^{i,k}} & =\partial_{\theta^{i,k}} + z^{-1} (\partial_{\theta^{i,k}} \sigma_j(\theta)) \star_{\sigma_j(\theta)}, \\ 
\nabla_{z\partial_z} & = z\partial_z - z^{-1} (E_Z\star_{\sigma_j(\theta)}) + \mu, \\ 
\nabla_{\xi \hS \partial_\hS} &=  \xi \hS\partial_\hS + z^{-1} (\kappa_Z(\xi)\star_{\sigma_j(\theta)}) + z^{-1} (\xi \hS\partial_\hS \sigma_j(\theta)) \star_{\sigma_j(\theta)}. 
\end{align*} 
Due to the constant term $h_{Z,j}$ in the change of variables $\sigma= \sigma_j(\theta)$, the pullback of functions $\sigma_j^* \colon \C[z](\!(\frq^{-1/\frs})\!)[\![\QW,\sigma]\!] \to \C[z](\!(\frq^{-1/\frs})\!)[\![\QW,\btheta]\!]$ is ill-defined. However, the pullback of connections is well-defined due to the Divisor Equation. In other words, the structure of $\QDM(Z)^{\rm La}$ can be reduced to the smaller ring $R$ as in Remark \ref{rem:QDMZLa} and the pullback $\sigma_j^* \colon R \to \C[z](\!(\frq^{-1/\frs})\!)[\![\QW,\btheta]\!]$ is well-defined. 


\begin{proposition} 
\label{prop:FT_Zj} 
For $j\in \{0,1,\dots,r-2\}$, we have a map 
\[
\FT_{Z,j} \colon \QDM_T(W)[\QW^{-1}] \to \sigma_j^*\QDM(Z)^{\rm La}[\QW^{-1}]  
\]
of $\C[z][\![\QW,\btheta]\!][\QW^{-1}]$-modules satisfying $M_Z(\sigma_j(\theta)) \circ \FT_{Z,j} = \frq^{\rho_Z/((r-1)z)} \scrF_{Z,j}\circ M_W(\theta)$,  i.e.~the following diagram commutes:  
\begin{equation} 
\label{eq:FT_scrF} 
\begin{aligned} 
\xymatrix{
\QDM_T(W)[\QW^{-1}] \ar[rr]^{\FT_{Z,j}} \ar[d]^{M_W(\theta)} 
&  & \sigma_j^*\QDM(Z)^{\rm La}[\QW^{-1}] \ar[d]^{M_Z(\sigma_j(\theta))} \\ 
\cH_W^{\rm rat}[\QW^{-1}] \ar[rr]^{\frq^{\rho_Z/((r-1)z)} \scrF_{Z,j}} &  & \cH_Z^{\rm La}[\QW^{-1}]  
}
\end{aligned}
\end{equation} 
where we set $\cH_Z^{\rm La} := H^*(Z)[z,z^{-1}](\!(\frq^{-1/\frs})\!)[\![\QW]\!]$. The map $\FT_{Z,j}$ satisfies the following: 
\begin{itemize} 
\item[(1)] $\FT_{Z,j}(\QDM_T(W)_\tX) \subset \sigma_j^*\QDM(Z)^{\rm La}$.  
\item[(2)] $\FT_{Z,j}$ intertwines $\hbS^\beta(\theta)$ with $\hS^\beta$ for $\beta \in N_1^T(W)$;  
\item[(3)] $\FT_{Z,j}$ intertwines $\lambda$ with $z\nabla_{S\partial_S} +\lambda_j = z\nabla_{\lambda \hS\partial_{\hS}}+\lambda_j = -z \nabla_{\frq\partial_\frq}+\lambda_j$; and more generally $z\nabla_{\xi  \QW\partial_\QW}$ with $z\nabla_{\xi\hS\partial_\hS} +i_{\pt}^*\xi|_{\lambda=\lambda_j}$ for $\xi \in H^2_T(W)$, where $\lambda_j =e^{-\frac{2\pi \iu}{r-1} (j+ \frac{r}{2})}\frq^{\frac{1}{r-1}}$ $($see  $\S\ref{subsubsec:formal_asymptotics}$$)$ and $i_{\pt}\colon \pt \to Z\hookrightarrow W$ is the inclusion of a point in $Z$; 
\item[(4)] $\FT_{Z,j}$ commutes with $\nabla_{\theta^{i,k}}$; 
\item[(5)] $\FT_{Z,j}$ is homogeneous of degree $-r$ and intertwines $\nabla_{z\partial_z}+\frac{1}{2}$ with $\nabla_{z\partial_z}+ z^{-1}(r-1)\lambda_j$.  
\end{itemize} 
Moreover, $\FT_{Z,j} \colon \QDM_T(W)_\tX \to \sigma_j^*\QDM(Z)^{\rm La}$ extends to a homomorphism of $\C[z][\![C_{\tX,\N}^\vee,\btheta]\!]$-modules  
\[
\FT_{Z,j}\sphat \colon \QDM_T(W)_\tX\sphat \to \sigma_j^*\QDM(Z)^{\rm La} 
\]
on the completion and $\FT_{Z,j}\sphat$ satisfies the same properties as $(2)$-$(5)$ above. 
\end{proposition}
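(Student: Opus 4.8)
The plan is to mirror the proof of Proposition \ref{prop:FT_QDM} and Theorem \ref{thm:Fourier_isom}, replacing the discrete transform $\sfF_Y$ by the continuous transform $\scrF_{Z,j}$ and Corollary \ref{cor:Fourier_transform_JW_GIT} by Corollary \ref{cor:Fourier_transform_JW} (applied to $F=Z$, with $c_Z=-(r-1)$, $\rho_Z=c_1(\cN_{Z/X})$, $\rho_Z\cdot d/c_Z = -\rho_Z\cdot d/(r-1)$, and $S_Z=yS^{-1}=\frq$). First I would construct $\FT_{Z,j}$ on $\QDM_T(W)[\QW^{-1}]$: differentiating the defining identity \eqref{eq:Fourier_JW_for_Z} in $\theta^{i,k}$ and using $\partial_{\theta^{i,k}} M_W(\theta)\mathbf 1 = M_W(\theta)\phi_i\lambda^k$ gives $\frq^{\rho_Z/((r-1)z)}\scrF_{Z,j}(M_W(\theta)\phi_i\lambda^k) = M_Z(\sigma_j(\theta))\, z\nabla_{\theta^{i,k}} u_j(\theta)$, so one defines $\FT_{Z,j}$ on the topological basis $\{\phi_i\lambda^k\}$ by $\phi_i\lambda^k\mapsto z\nabla_{\theta^{i,k}} u_j(\theta)$ and extends $\C[z][\![\QW,\btheta]\!][\QW^{-1}]$-linearly; this is well-defined because $u_j(\theta)$ has coefficients in $\C[z](\!(\frq^{-1/\frs})\!)[\![\QW,\btheta]\!]$ (Corollary \ref{cor:Fourier_transform_JW}), making the target $\sigma_j^*\QDM(Z)^{\rm La}[\QW^{-1}]$. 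The commutative square \eqref{eq:FT_scrF} then holds by construction. Properties (2)--(5) follow from the commutative square together with the intertwining properties of $\scrF_{Z,j}$ proved in Proposition \ref{prop:properties_conti_Fourier} and the fundamental-solution identities \eqref{eq:fundsol_qconn}, \eqref{eq:shift_qconn}: part (2) from $\scrF_{Z,j}(\hcS^\beta\bbf)=\hS^\beta\scrF_{Z,j}(\bbf)$ and Proposition \ref{prop:shift_fundsol}; part (3) from $\scrF_{Z,j}(\lambda\bbf)=(zS\partial_S+\lambda_j)\scrF_{Z,j}(\bbf)$ (and its $H^2_T(W)$-generalization, which I would record along the lines of \eqref{eq:properties_discrete_Fourier_Qdiff}, noting that for $\xi\in H^2_T(W)$ the relevant ``Kirwan-type'' class is $i_\pt^*\xi|_{\lambda=\lambda_j}$ since on $Z$ the variable $\frq^{1/(r-1)}$ plays the role of $\lambda_j$); part (4) from $\scrF_{Z,j}\circ\partial_{\theta^{i,k}} = \partial_{\theta^{i,k}}\circ\scrF_{Z,j}$; and part (5) from the homogeneity of $\scrF_{Z,j}$ (degree $-r$ here, because the prefactor $\frq^{\rho_Z/((r-1)z)}$ shifts the degree of $\scrF_{Z,j}$) together with Proposition \ref{prop:properties_conti_Fourier}(3) and \eqref{eq:fundsol_qconn}.

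Next I would prove part (1), that $\FT_{Z,j}$ maps $\QDM_T(W)_\tX = \C[yS^{-1}]\cdot\QDM_T(W) = \C[\frq]\cdot\QDM_T(W)$ into $\sigma_j^*\QDM(Z)^{\rm La}$ (without localizing $\QW$). Since $\FT_{Z,j}$ is a homomorphism of $\C[\frq]$-modules by part (2) (taking $\beta=(0,0,1,0)$, so $\hbS^\beta=\bS(\theta)^0\cdot y$... more precisely $\frq=yS^{-1}=\hS^{(0,0,1,-1)}$ and $\hbS^{(0,0,1,-1)}(\theta)=y\bS(\theta)^{-1}$ intertwines with $\hS^{(0,0,1,-1)}=\frq$), and since $\FT_{Z,j}(\QDM_T(W))\subset\sigma_j^*\QDM(Z)^{\rm La}$ already (the target ring $\C[z](\!(\frq^{-1/\frs})\!)[\![\QW,\btheta]\!]$ is a $\C[\frq]$-algebra), it follows that $\FT_{Z,j}(\C[\frq]\cdot\QDM_T(W))\subset\sigma_j^*\QDM(Z)^{\rm La}$. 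Then, to extend to the completion $\QDM_T(W)_\tX\sphat = \varprojlim_k \QDM_T(W)_\tX/\frakm^k\QDM_T(W)_\tX$, I would run exactly the continuity/filtration argument of Proposition \ref{prop:FTX_completion}: the key point is again that $\deg\frq = 2(r-1)>0$, so any element of $\frakm^{2k}\QDM_T(W)_\tX$ of a fixed degree $n$ can be written as $S^k g + \sum_{l\ge k}\frq^l h_l$ with $g\in\C[\frq]\cdot\QDM_T(W)$, $h_l\in\QDM_T(W)$ homogeneous of degree $n-2(r-1)l$; for $k$ large the $h_l$ lie in a deep filtration step $F_N(\QDM_T(W))$ (using $\deg z>0$ and non-negativity of the grading on $H^*_T(W)$), and $S^k g = y^k\frq^{-k}g$ with $y^k\in I_N$. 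Hence $\FT_{Z,j}$ is continuous for the $\frakm$-adic topology and extends to $\FT_{Z,j}\sphat\colon \QDM_T(W)_\tX\sphat\to\sigma_j^*\QDM(Z)^{\rm La}$, preserving properties (2)--(5) since these are continuous identities.

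The step I expect to be the main obstacle is establishing part (3) in its $H^2_T(W)$-generalized form and, relatedly, pinning down the correct ``Kirwan-type'' replacement $\kappa_Z$ for $\xi\in H^2_T(W)$ in the continuous setting: unlike the discrete case, where $\sfF_Y$ intertwines $z\xi\QW\partial_\QW+\xi$ with $z\xi\hS\partial_\hS+\kappa_Y(\xi)$ cleanly via \eqref{eq:properties_discrete_Fourier_Qdiff}, here the stationary-phase critical point $\lambda_j=e^{-\frac{2\pi\iu}{r-1}(j+\frac r2)}\frq^{1/(r-1)}$ depends on $\frq$, so the ``constant'' $i_\pt^*\xi|_{\lambda=\lambda_j}$ is in fact a function of $\frq^{1/(r-1)}$, and one must check that it indeed equals the image $\kappa_Z(\xi)$ of $\xi$ under the ring map \eqref{eq:extension_Novikov_Z} composed with evaluation at $\lambda=\rho_Z/(r-1)$—i.e.\ that the asymptotic expansion correctly produces the stated $\nabla_{\xi\hS\partial_\hS}$ on $\QDM(Z)^{\rm La}$. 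This amounts to carefully tracking how Proposition \ref{prop:properties_conti_Fourier}(2)--(3) interact with the prefactor $\frq^{\rho_Z/((r-1)z)}$ and with the Divisor-Equation reduction of $\QDM(Z)^{\rm La}$ to the smaller ring $R$ (Remark \ref{rem:QDMZLa}), which is also what makes the pullback $\sigma_j^*$ well-defined despite the nonzero constant term $h_{Z,j}$ in $\sigma_j(\theta)$. Everything else—constructing the map, the commutative square, the completion argument—is a direct transcription of the proofs of Propositions \ref{prop:FT_QDM} and \ref{prop:FTX_completion} and Theorem \ref{thm:Fourier_isom}.
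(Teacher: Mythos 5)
Your proposal is correct and follows essentially the same route as the paper: define $\FT_{Z,j}$ by differentiating \eqref{eq:Fourier_JW_for_Z} in $\theta^{i,k}$ and extending linearly, derive properties (2)--(5) from the commutative square \eqref{eq:FT_scrF} together with Proposition \ref{prop:properties_conti_Fourier} and \eqref{eq:fundsol_qconn}, obtain (1) from the $\C[\frq]$-module structure and $\FT_{Z,j}(\QDM_T(W))\subset\sigma_j^*\QDM(Z)^{\rm La}$, and repeat the filtration argument of Proposition \ref{prop:FTX_completion} for the completion. The one point you flag as a potential obstacle---the $H^2_T(W)$-generalization in part (3)---is handled in the paper by linearity: reduce to $\xi$ with $i_\pt^*\xi=0$, for which $(\xi\hS\partial_\hS)\frq=0$ and $\kappa_Z(\xi)=\xi|_Z$ is $\lambda$-independent, making the intertwining property of $\scrF_{Z,j}$ immediate from its definition.
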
 
\begin{proof} 
By differentiating \eqref{eq:Fourier_JW_for_Z} by $\theta^{i,k}$, we obtain $\frq^{\rho_Z/((r-1)z)} \scrF_{Z,j} (M_W(\theta) \phi_i \lambda^k) = M_Z(\sigma_j(\theta)) \nabla_{\theta^{i,k}} u_j(\theta)$. 
We define the $\C[z][\![\QW,\btheta]\!]$-module map $\FT_{Z,j}\colon \QDM_T(W)\to \sigma_j^*\QDM(Z)^{\rm La}$ by sending a topological basis $\phi_i \lambda^k$ to $\nabla_{\theta^{i,k}}u_j(\theta)$. Then we extend it to the localization by $\QW$. It clearly satisfies the commutative diagram \eqref{eq:FT_scrF}. 

Parts (2)-(5) follow from the corresponding properties for $\scrF_{Z,j}$ via the commutative diagram \eqref{eq:FT_scrF}. Part (2) follows from Proposition \ref{prop:properties_conti_Fourier}(1) and Proposition \ref{prop:shift_fundsol}. The first half of part (3) follows from the computation: 
\begin{align*} 
& M_Z(\sigma_j(\theta)) \circ \FT_{Z,j} \circ \lambda = \frq^{\rho_Z/((r-1)z)}\scrF_{Z,j} \circ M_W(\theta) \circ \lambda 
&& \text{by \eqref{eq:FT_scrF}} \\ 
& = (z S\partial_S + \tfrac{\rho_Z}{r-1} + \lambda_j) \circ 
\frq^{\rho_Z/((r-1)z)}\scrF_{Z,j} \circ M_W(\theta)
&& \text{by Proposition \ref{prop:properties_conti_Fourier}(2)} \\
& = (z \lambda \hS\partial_\hS + \kappa_Z(\lambda) + \lambda_j) \circ M_Z(\sigma_j(\theta)) \circ \FT_{Z,j} && \text{by \eqref{eq:FT_scrF}} \\ 
& = M_Z(\sigma_j(\theta)) \circ (z \nabla_{\lambda \hS \partial_\hS} +\lambda_j) \circ \FT_{Z,j} 
&& \text{by \eqref{eq:fundsol_qconn}.} 
\end{align*} 
To show the latter half of part (3), it suffices to show that $\FT_{Z,j}$ intertwines $z\nabla_{\xi \QW\partial_\QW}$ with $z \nabla_{\xi \hS \partial_\hS}$ when $i_{\pt}^*\xi = 0$.  
Note that $i_{\pt}^*\xi=0$ implies $(\xi \hS \partial_\hS) \frq =0$. 
Therefore it suffices to show that $\scrF_{Z,j} \circ ( z \xi \QW\partial_{\QW}+\xi) =(z \xi \hS \partial_\hS +\kappa_Z(\xi) ) \circ \scrF_{Z,j}$. The condition $i_{\pt}^*\xi =0$ also implies that $\kappa_Z(\xi) = \xi|_Z\in H^2(Z)$ is independent of $\lambda$. The definition of $\scrF_{Z,j}$ in \S\ref{subsubsec:formal_asymptotics} together with $(\xi \hS \partial_\hS) \frq =0$ implies that $\scrF_{Z,j} \circ z \xi \QW\partial_\QW = z \xi \hS \partial_\hS \circ \scrF_{Z,j}$ and $\scrF_{Z,j} \circ \xi = \kappa_Z(\xi) \circ \scrF_{Z,j}$; part (3) follows. 
Part (4) follows from $\partial_{\theta^{i,k}} \circ \scrF_{Z,j} = \scrF_{Z,j} \circ \partial_{\theta^{i,k}}$ and \eqref{eq:fundsol_qconn}. 
By the definition of $\scrF_{Z,j}$, it is easy to see that $\frq^{\rho_Z/((r-1)z)} \scrF_{Z,j}$ is homogeneous of degree $-r$. 
Hence $\FT_{Z,j}$ is also homogeneous of degree $-r$. Part (5) follows from Proposition \ref{prop:properties_conti_Fourier}(3) and \eqref{eq:fundsol_qconn}. 

Part (1) follows from $\FT_{Z,j}(\QDM_T(W)) \subset \sigma_j^* \QDM(Z)^{\rm La}$, \eqref{eq:QDMW_tX_2nd} and part (2). 

Finally, we show that $\FT_{Z,j}|_{\QDM_T(W)_\tX}$ extends to the completion. But the discussion is completely parallel to Proposition \ref{prop:FTX_completion}: it is sufficient to observe that \eqref{eq:FTX_filtration} holds when we replace $\FT_X$ with $\FT_{Z,j}$ and $\tau^*\QDM(X)^{\rm La}$ with $\sigma_j^*\QDM(Z)^{\rm La}$.  
\end{proof} 

We introduce a shift of the map $\sigma=\sigma_j(\theta)$ in the identity direction so that $\FT_{Z,j}$ is more compatible with the connection. Set 
\begin{equation} 
\label{eq:varsigma} 
\varsigma_j(\theta) := \sigma_j(\theta)  - (r-1) \lambda_j.  
\end{equation} 
Because of the leading term $-(r-1)\lambda_j$, the pullback $M_Z(\varsigma_j(\theta)) = e^{-(r-1) \lambda_j/z} M_Z(\sigma_j(\theta))$ of the fundamental solution by $\varsigma_j$ may not be well-defined. However, the pullback $\varsigma_j^*\nabla$ of the quantum connection of $Z$ remains well-defined. 
Shifting the parameter $\sigma$ in the identity direction does not change the quantum product but affects the Euler vector field and the covariant derivative with respect to $\xi \hS \partial_\hS$ through $-(\xi \hS \partial_\hS) (r-1)\lambda_j =- \langle \xi \hS\partial_\hS, \frac{d\frq}{\frq}\rangle \lambda_j =   i_{\pt}^*\xi|_{\lambda \to \lambda_j}$. 
Therefore, we have 
\[
\varsigma_j^* \nabla = \sigma_j^* \nabla  + (r-1) \lambda_j \frac{dz}{z^2} - \frac{\lambda_j}{z}\frac{d\frq}{\frq}   
\]
where $\nabla$ represents the connection on $\QDM(Z)^{\rm La}$ and $\sigma_j^*\nabla$, $\varsigma_j^*\nabla$ are its pullbacks. We define $\varsigma_j^*\QDM(Z)^{\rm La}$ to be the module $H^*(Z)[z](\!(\frq^{-1/\frs})\!)[\![\QW,\btheta]\!]$ equipped with the pulled-back quantum connection $\varsigma_j^*\nabla$. The well-definedness can also be explained by the fact that the pullback $\varsigma_j^*\colon R\to \C[z](\!(\frq^{-1/\frs})\!)[\![\QW,\btheta]\!]$ is well-defined for the ring $R$ in Remark \ref{rem:QDMZLa}. 
Henceforth, we shall denote the connection on $\varsigma_j^*\QDM(Z)^{\rm La}$ simply by $\nabla$, instead of $\varsigma_j^*\nabla$. 
The preceding discussion implies the following corollary. 
\begin{corollary} 
\label{cor:FT_Zj} 
The map $\FT_{Z,j}\sphat \colon \QDM_T(W)_\tX\sphat \to \varsigma_j^*\QDM(Z)^{\rm La}$ of $\C[z][\![C_{\tX,\N}^\vee,\btheta]\!]$-modules, defined as the composition of $\FT_{Z,j}\sphat$ in Proposition $\ref{prop:FT_Zj}$ and the identity map $\sigma_j^*\QDM(Z)^{\rm La} \to \varsigma_j^*\QDM(Z)^{\rm La}$ of the underlying modules, intertwines the connection $\nabla+\frac{1}{2} \frac{dz}{z}$ with the connection $\nabla$. 
\end{corollary}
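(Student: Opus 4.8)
The plan is to verify Corollary \ref{cor:FT_Zj} by combining Proposition \ref{prop:FT_Zj}(5) with the relation between the pullbacks $\sigma_j^*\nabla$ and $\varsigma_j^*\nabla$ that was established in the discussion immediately preceding the statement. Concretely, recall that we already computed
\[
\varsigma_j^* \nabla = \sigma_j^* \nabla + (r-1) \lambda_j \frac{dz}{z^2} - \frac{\lambda_j}{z}\frac{d\frq}{\frq},
\]
so that the $z\partial_z$-component of $\varsigma_j^*\nabla$ is $\nabla_{z\partial_z} + z^{-1}(r-1)\lambda_j$ (the term $(r-1)\lambda_j \frac{dz}{z^2}$ pairs with $z\partial_z$ to give $z^{-1}(r-1)\lambda_j$), and the $\frq\partial_\frq$-component picks up $-z^{-1}\lambda_j$, while the $\theta^{i,k}$-components are unchanged.

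First I would record that $\FT_{Z,j}\sphat$, as a map of underlying $\C[z][\![C_{\tX,\N}^\vee,\btheta]\!]$-modules, is literally the same as the $\FT_{Z,j}\sphat$ of Proposition \ref{prop:FT_Zj} followed by the identity $\sigma_j^*\QDM(Z)^{\rm La} \to \varsigma_j^*\QDM(Z)^{\rm La}$; only the connection on the target has changed. Then I would go through the connection operators one at a time. For $\nabla_{\theta^{i,k}}$: by Proposition \ref{prop:FT_Zj}(4), $\FT_{Z,j}\sphat$ commutes with $\nabla_{\theta^{i,k}}$ on $\sigma_j^*\QDM(Z)^{\rm La}$, and since $\varsigma_j^*\nabla$ and $\sigma_j^*\nabla$ agree in the $\theta^{i,k}$-directions and $\frac{1}{2}\frac{dz}{z}$ has no $\theta^{i,k}$-component, the intertwining persists. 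For $\nabla_{z\partial_z}$: Proposition \ref{prop:FT_Zj}(5) says $\FT_{Z,j}\sphat$ intertwines $\nabla_{z\partial_z}+\frac{1}{2}$ on $\QDM_T(W)_\tX\sphat$ with $\nabla_{z\partial_z}+z^{-1}(r-1)\lambda_j$ on $\sigma_j^*\QDM(Z)^{\rm La}$; but $\nabla_{z\partial_z}+z^{-1}(r-1)\lambda_j$ is precisely the $z\partial_z$-component of $\varsigma_j^*\nabla$, so $\FT_{Z,j}\sphat$ intertwines the operator $\nabla_{z\partial_z}+\frac12$ (which is the $z\partial_z$-component of $\nabla + \frac12\frac{dz}{z}$ on the source) with the $z\partial_z$-component of $\nabla$ on $\varsigma_j^*\QDM(Z)^{\rm La}$, as desired. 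For the $\QW$-directions I would use Proposition \ref{prop:FT_Zj}(3): $\FT_{Z,j}\sphat$ intertwines $z\nabla_{\xi\QW\partial_\QW}$ with $z\nabla_{\xi\hS\partial_\hS}+i_{\pt}^*\xi|_{\lambda=\lambda_j}$ on $\sigma_j^*\QDM(Z)^{\rm La}$; writing $i_{\pt}^*\xi|_{\lambda=\lambda_j} = \kappa_Z(\xi) + (\text{correction}) = \kappa_Z(\xi) - \langle\xi\hS\partial_\hS, \tfrac{d\frq}{\frq}\rangle\lambda_j$, this correction term is exactly what converts $\sigma_j^*\nabla$ into $\varsigma_j^*\nabla$ in the $\QW$-directions, and $\frac12\frac{dz}{z}$ again contributes nothing here. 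Since the $dz$-component was handled above, assembling the three cases gives the full statement.

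The one point requiring a little care — and the step I would single out as the main (minor) obstacle — is bookkeeping the constant shift correctly: one must check that the scalar $-(r-1)\lambda_j$ added to $\sigma$ produces exactly the connection terms $(r-1)\lambda_j\frac{dz}{z^2}$ and $-\frac{\lambda_j}{z}\frac{d\frq}{\frq}$ and nothing else, which relies on the Divisor Equation (so that shifting $\sigma$ in the identity direction is well-behaved even though the naive pullback $\varsigma_j^*$ of functions is ill-defined) and on the observation $-(\xi\hS\partial_\hS)(r-1)\lambda_j = i_{\pt}^*\xi|_{\lambda\to\lambda_j} - \kappa_Z(\xi)$, matching the discrepancy between part (3) of Proposition \ref{prop:FT_Zj} and the $\varsigma_j^*\nabla$-connection. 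Once this identity is in hand, the corollary is a direct transcription of Proposition \ref{prop:FT_Zj}(2)--(5) through the definition of $\varsigma_j^*\nabla$, and no further computation is needed. I would present it in two or three short sentences, citing Proposition \ref{prop:FT_Zj} and the displayed formula for $\varsigma_j^*\nabla$.
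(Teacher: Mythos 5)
Your approach is the same as the paper's: the paper simply asserts "The preceding discussion implies the following corollary," and your proof unpacks that assertion componentwise, feeding Proposition~\ref{prop:FT_Zj}(2)--(5) through the displayed relation between $\sigma_j^*\nabla$ and $\varsigma_j^*\nabla$. The $z\partial_z$- and $\theta^{i,k}$-steps are handled correctly.

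There is, however, a concrete algebraic slip in the $\QW$-step. You write
\[
i_{\pt}^*\xi|_{\lambda=\lambda_j} = \kappa_Z(\xi) - \bigl\langle\xi\hS\partial_\hS, \tfrac{d\frq}{\frq}\bigr\rangle\lambda_j,
\]
equivalently $-(\xi\hS\partial_\hS)(r-1)\lambda_j = i_{\pt}^*\xi|_{\lambda\to\lambda_j} - \kappa_Z(\xi)$. But the identity established in the text just before the corollary is
\[
-(\xi\hS\partial_\hS)(r-1)\lambda_j = -\bigl\langle\xi\hS\partial_\hS, \tfrac{d\frq}{\frq}\bigr\rangle\lambda_j = i_{\pt}^*\xi|_{\lambda\to\lambda_j},
\]
with no $\kappa_Z(\xi)$ term. (Indeed, $i_{\pt}^*\xi\in H^2_T(\pt)=\C\lambda$ is a scalar times $\lambda$, while $\kappa_Z(\xi)\in H^2(Z)$ lives in a different space, so your equation cannot even be formed.) Substituting the correct identity into your proposed formula would force $\kappa_Z(\xi)=0$ for all $\xi$, which is false. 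With the corrected identity the argument does go through exactly as you intend: $z\nabla^{(\sigma)}_{\xi\hS\partial_\hS} + i_{\pt}^*\xi|_{\lambda=\lambda_j} = z\nabla^{(\varsigma)}_{\xi\hS\partial_\hS}$, matching the target of Proposition~\ref{prop:FT_Zj}(3), and $\tfrac12\tfrac{dz}{z}$ contributes nothing in this direction. So the proof is essentially right, but this one identity should be fixed.
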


\subsection{Decomposition} 
\label{subsec:decomposition} 
The Fourier transformations $\FT_X\sphat$, $\FT_{Z,j}\sphat$ from Proposition $\ref{prop:FTX_completion}$ and Corollary $\ref{cor:FT_Zj}$ define a map $\Phi$ of $\C[z][\![C_{\tX,\N}^\vee,\btheta]\!]$-modules 
\begin{equation} 
\label{eq:decomposition} 
\Phi:=\FT_X\sphat \oplus \bigoplus_{j=0}^{r-2} \FT_{Z,j}\sphat \colon \QDM_T(W)_\tX\sphat  \to \tau^*\QDM(X)^{\rm La} \oplus \bigoplus_{j=0}^{r-2} \varsigma_j^*\QDM(Z)^{\rm La}. 
\end{equation} 
In this section, we show that this induces an isomorphism over $\C[z](\!(\frq^{-1/\frs})\!)[\![\QW,\btheta]\!]$.

\begin{theorem} 
\label{thm:decomposition} 
The map $\Phi$ in \eqref{eq:decomposition} induces an isomorphism after base change of  $\QDM_T(W)_\tX\sphat$ to $\C[z](\!(\frq^{-1/\frs})\!)[\![\QW,\btheta]\!]$. Composing $\Phi$ with  the inverse of the Fourier isomorphism $\FT_\tX\sphat$ in Theorem $\ref{thm:Fourier_isom}$, we obtain a map 
\[
\bPsi := \Phi \circ (\FT_\tX\sphat)^{-1} \colon \ttau^* \QDM(\tX)^{\rm ext} \to \tau^*\QDM(X)^{\rm La} \oplus \bigoplus_{j=0}^{r-2} \varsigma_j^*\QDM(Z)^{\rm La},    
\] 
inducing an isomorphism after base change to $\C[z](\!(\frq^{-1/\frs})\!)[\![\QW,\btheta]\!]$. This map $\bPsi$ commutes with the quantum connection and intertwines the pairing $P_\tX$ with $P_X \oplus \bigoplus_{j=0}^{r-2} P_Z$. 
\end{theorem}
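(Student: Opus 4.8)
The plan is to deduce Theorem~\ref{thm:decomposition} from the three Fourier transformations already constructed (Theorem~\ref{thm:Fourier_isom}, Proposition~\ref{prop:FTX_completion} via $\FT_X\sphat$, and Corollary~\ref{cor:FT_Zj} via $\FT_{Z,j}\sphat$), together with a rank count and a compatibility check of pairings. First I would record that each of $\FT_X\sphat$, $\FT_{Z,0}\sphat,\dots,\FT_{Z,r-2}\sphat$ is a morphism of $\C[z][\![C_{\tX,\N}^\vee,\btheta]\!]$-modules commuting with the connection (after the normalizing twists $\nabla_{z\partial_z}+\tfrac12$, carried out in Proposition~\ref{prop:FT_QDM}(5), Proposition~\ref{prop:FTX_completion}, and Corollary~\ref{cor:FT_Zj}). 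Hence $\Phi$ is a connection-preserving morphism after base change to $\C[z](\!(\frq^{-1/\frs})\!)[\![\QW,\btheta]\!]$. Since $\FT_\tX\sphat$ is an isomorphism over this ring (Theorem~\ref{thm:Fourier_isom}), it suffices to show $\Phi$ becomes an isomorphism after base change.

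The heart of the argument is that $\Phi$ is an isomorphism, and here I would again argue fiberwise at $\QW=S=yS^{-1}=\btheta=0$, exactly as in the proof of Theorem~\ref{thm:Fourier_isom}. The source $\QDM_T(W)_\tX\sphat$ is free of rank $\dim H^*(\tX)=\dim H^*(X)+(r-1)\dim H^*(Z)$ over $\C[z][\![C_{\tX,\N}^\vee,\btheta]\!]$ by Theorem~\ref{thm:Fourier_isom}; the target is free of the same rank over $\C[z](\!(\frq^{-1/\frs})\!)[\![\QW,\btheta]\!]$ by the cohomology decomposition \eqref{eq:cohomology_bu}. So I need the leading (constant) term of $\Phi$ to be an isomorphism of the corresponding finite-dimensional $\C[z]$-modules. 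For the $\FT_X\sphat$-component, the commutative diagram \eqref{eq:FT_sfF} together with $\tau(\theta)|_{\btheta=\QW=S=yS^{-1}=0}=0$ (see \eqref{eq:initialcond_tau_v}) shows that the leading term is $\kappa_X$. For each $\FT_{Z,j}\sphat$-component, the commutative diagram \eqref{eq:FT_scrF} together with the leading-order formula \eqref{eq:leadingterm_scrF} and the initial conditions \eqref{eq:initialcond_mirrormap} show that the leading term is a nonzero scalar multiple (involving $q_{Z,j}$, $e^{h_{Z,j}/z}$) of the restriction $H^*_T(W)\to H^*(Z)$, $f\mapsto f_Z|_{\lambda=\lambda_j}$, i.e.\ the map $f\mapsto f_Z|_{\lambda=\rho_Z/(r-1)}$ twisted by the $j$-th root of unity. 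I would then check that the combined map
\[
H^*(\tX)\;\xrightarrow{\ \cong\ }\; H^*(X)\oplus\bigoplus_{j=0}^{r-2}H^*(Z),\qquad
f_\tX\;\longmapsto\;\bigl(\kappa_X(f),\ \{c_j\cdot f_Z|_{\lambda=\lambda_j}\}_{j}\bigr)
\]
is a linear isomorphism: here I use that $\lambda_0,\dots,\lambda_{r-2}$ are the $(r-1)$ distinct $(r-1)$-st roots of $\frq$ up to a common factor, so a Vandermonde argument in $\lambda$ (applied to the explicit form of $f_Z$ in \eqref{eq:restrictions_of_f}, which is a polynomial of degree $\le r-1$ in $\lambda$ with coefficients $\gamma_0,\dots,\gamma_{r-1}\in H^*(Z)$, combined with $\kappa_X(f)=f_X|_{\lambda=0}$) recovers all of $f_X$ and $\gamma_0,\dots,\gamma_{r-1}$, hence all of $f_\tX$ via the decomposition \eqref{eq:cohomology_bu}. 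This is the step I expect to be the main obstacle: one must be careful that the evaluations $f_Z|_{\lambda=\lambda_j}$ together with $f_X|_{\lambda=0}$ really separate points of $H^*(\tX)$ rather than of $H^*_T(W)$ (the Kirwan map has a kernel), and that the $\frq$-half-integer powers appearing in $\lambda_j$ and $q_{Z,j}$ are accommodated by the ring $\C[z](\!(\frq^{-1/\frs})\!)$ — this is exactly why $\frs$ was chosen as in \eqref{eq:frs}. Once the leading term is invertible, invertibility of $\Phi$ over the base ring follows by the usual successive-approximation (completeness) argument as in Theorem~\ref{thm:Fourier_isom}.

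Finally, for the pairing statement, I would define $\bPsi=\Phi\circ(\FT_\tX\sphat)^{-1}$ and verify $P_\tX$ goes to $P_X\oplus\bigoplus_j P_Z$. Since all Fourier transformations are built from the discrete/continuous Fourier transforms of the $J$-function via the fundamental solutions $M_W,M_X,M_Z$ (which preserve the respective pairings by \eqref{eq:fundsol_pairing}), it is enough to check the pairing compatibility at the level of the fundamental-solution-normalized maps, and there it reduces to the localization formula in equivariant cohomology: the equivariant Poincar\'e pairing $P_W$ on $H^*_T(W)$ localizes as a sum over $X$, $Z$, $\tX$ of contributions weighted by inverse Euler classes, and the Mellin--Barnes / residue identities used in Proposition~\ref{prop:conti_vs_discrete} convert these contributions into the respective pairings $P_X$, $P_Z$, $P_\tX$. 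Concretely, I would expand $P_W(\FT^{-1}a,\FT^{-1}b)$ using the residue formula \eqref{eq:series_ISz} and the Gamma-function identity $\Gamma(s)\Gamma(1-s)=\pi/\sin(\pi s)$ to match the $\Gamma$-factors in $G_X$, $G_Z$, $G_\tX$ against the Euler-class denominators, obtaining exactly the orthogonal sum $P_X\oplus\bigoplus_{j=0}^{r-2}P_Z$ (with the normalizing constants $q_{Z,j}$ conspiring to give the correct sign and scale). The commutation of $\bPsi$ with the quantum connection is immediate from the connection-compatibility of $\FT_X\sphat$, $\FT_{Z,j}\sphat$, $\FT_\tX\sphat$ established above. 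This gives Theorem~\ref{thm:decomposition}, and Theorem~\ref{thm:decomposition_introd} follows by restricting along the slice $\btheta\mapsto\theta$ (carried out in the subsequent subsections).
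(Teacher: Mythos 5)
Your plan for the isomorphism part of the theorem is essentially the paper's: both approaches compute the leading term of $\Phi$ on the basis $\{c_i,c_{l,m}\}$ (equivalently, on $\{\varphi^*\phi_{X,i},\jmath_*(p^l\pi^*\phi_{Z,m})\}$ after composing with $(\FT_\tX\sphat)^{-1}$) at $\QW=\btheta=0$ and reduce invertibility to the invertibility of the Vandermonde-type matrix $(\zeta^{-j(l+1)})_{0\le j,l\le r-2}$, with the half-integer powers of $\frq$ absorbed by $\frq^{-1/\frs}$. The technical ingredients you cite (the commutative diagrams \eqref{eq:FT_sfF}, \eqref{eq:FT_scrF}, the initial conditions \eqref{eq:initialcond_tau_v}, \eqref{eq:initialcond_mirrormap}, and the leading order formula \eqref{eq:leadingterm_scrF}) are exactly the ones used, and the completeness argument you invoke is the paper's. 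Your worry that the evaluations separate only the image of the Kirwan map rather than all of $H^*_T(W)$ is resolved exactly as you suspect: one never leaves the finite-dimensional slice $\sfH$ which maps isomorphically onto $H^*(\tX)$ under $\kappa_\tX$, so there is no ambiguity.

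The pairing part is where your proposal diverges and where the gap lies. You propose a \emph{direct} Plancherel-type verification: expanding $P_W$ (or rather $P_\tX$ pulled back) via the residue formula \eqref{eq:series_ISz} and matching $\Gamma$-factors against Euler class denominators. This is genuinely different from the paper's strategy, and it is not carried out. There are two concrete obstacles. First, no pairing compatibility is ever proved for $\sfF_Y$ or $\scrF_{F,j}$; the only pairing-type identity available is \eqref{eq:shift_pairing}, which pairs $\hbS^{-\beta}$ against $\hbS^\beta$, and summing over all $k\in\Z$ in $\sfF_Y$ does not produce a self-evident Parseval identity. Second, the continuous Fourier transform $\scrF_{Z,j}$ is a formal asymptotic expansion in $z$, and the pairing involves the reflection $z\mapsto -z$; reconciling an asymptotic expansion valid as $z\nearrow 0$ with a sesquilinear pairing requires care you have not supplied. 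The paper instead uses a flatness argument: both $P_1:=(\bPsi^{-1})^*P_\tX$ and $P_2:=P_X\oplus P_Z^{\oplus(r-1)}$ are flat pairings for the (pulled back) quantum connections; one computes $\bPsi^{-1}$ explicitly at the corner $\QW=\btheta=0$, $\frq=\infty$ (using the explicit inversion of the Vandermonde matrix from the isomorphism step and the orthogonality of the two summands in \eqref{eq:cohomology_bu}), checks $P_1=P_2$ there, and then shows $P_2^{-1}P_1$ is a flat endomorphism equal to the identity at the corner, hence equal to the identity everywhere because the connection is Fuchsian in $\frq^{-1/(r-1)}$ with semisimple-plus-nilpotent residue and logarithmic with nilpotent residue in $(\QW,\btheta)$. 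This reduction to a leading-order check followed by propagation by flatness is the key idea you are missing; without it, the pairing statement remains unproved in your proposal.
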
 

The rest of the section is devoted to the proof of Theorem \ref{thm:decomposition}. The logical structure of the proof is similar to \cite[\S 5.6]{Iritani-Koto:projective_bundle}. 
Let $\{\phi_{X,i}\}$, $\{\phi_{Z,m}\}$ be bases of $H^*(X)$, $H^*(Z)$ respectively as before. We introduce the following elements of $H^*_T(W)$ (see \S\ref{subsec:cohomology} for the notation): 
\begin{equation}
\label{eq:basis_QDMTW}  
c_i := \hvarphi^* \pr_1^* \phi_{X,i}, \qquad 
c_{l,m} := \hjmath_* (\hp^l \hpi^*\phi_{Z,m}) \qquad \text{with $0\le l \le r-2$.}  
\end{equation} 
The images of these elements under the Kirwan map $\kappa_\tX$ are 
\[
\kappa_\tX(c_i) = \varphi^* \phi_{X,i}, \qquad 
\kappa_\tX(c_{l,m}) = \jmath_*(p^l  \pi^*\phi_{Z,m}) 
\]
and they form a basis of $H^*(\tX)$ by \eqref{eq:cohomology_bu}. Thus $\{c_i, c_{l,m}\}$ gives a basis of $\QDM_T(W)_\tX\sphat$ over $\C[z][\![C_{\tX,\N}^\vee,\btheta]\!]$ by Theorem \ref{thm:Fourier_isom}. 
It is sometimes convenient to identify $\{c_i,c_{l,m}\}$ with a basis of $H^*(X) \oplus H^*(Z)^{\oplus (r-1)}$: $c_i$ corresponds to $\phi_{X,i}$ in the factor $H^*(X)$ and $c_{l,m}$ corresponds to $\phi_{Z,m}$ in the $l$th factor of $H^*(Z)^{\oplus (r-1)}$. 

\subsubsection{That $\Phi$ induces an isomorphism over $\C[z](\!(\frq^{-1/\frs})\!)[\![\QW,\btheta]\!]$}
\label{subsubsec:Phi_isom} 
It suffices to show that $\Phi(c_i), \Phi(c_{l,m})$ form a basis over $\C[z](\!(\frq^{-1/\frs})\!)[\![\QW,\btheta]\!]$. For this, it suffices to study their restrictions to $\QW=\btheta=0$, or the images under the homomorphism $\C[z](\!(\frq^{-1/\frs})\!)[\![\QW,\btheta]\!] \to \C[z](\!(\frq^{-1/\frs})\!)$ sending $\QW^\delta$, $\theta^{i,k}$ with $\delta \in \NEN(W)\setminus \{0\}$ to zero. 
\begin{remark} 
\label{rem:QW=0_subtlety}
In this section (\S\ref{subsec:decomposition}), we often consider the limit $\QW=0$ as power series in $\frq^{\pm 1/\frs}=S_Z^{\mp 1/\frs}$ and $\QW$. Note that the meaning of the limit $\QW=0$ is different from that in the proof of Corollary \ref{cor:Fourier_transform_JW_GIT} (see footnote \ref{footnote:meaning_of_QW=0}).  
We have extended the Novikov rings of $X$, $\tX$ and $Z$ to $\C(\!(\frq^{-1/\frs})\!)[\![\QW]\!]$ (see \S\ref{subsec:extended_QDM}, \S\ref{subsubsec:projection_X} and \eqref{eq:extension_Novikov_Z}). 
It is important to note that the Novikov rings for $X$ and $Z$ map to $\C$ (the constant term) under the limit $\QW=0$, but this is not the case for $\tX$ as the class of a line in a fiber of $D\to Z$ corresponds to $\frq =yS^{-1}$.   
\end{remark} 

We start by calculating the discrete Fourier transforms  $\sfF_X(M_W(\theta) c)$, $\sfF_\tX(M_W(\theta) c)$ along $\QW = 0$. 
\begin{lemma} 
\label{lem:sfF_MW_c} 
For $c\in H^*_T(W)$, we have 
\begin{align*} 
\sfF_X(M_W(\theta) c)|_{\QW=0} 
& = \kappa_X(c_\theta) + \sum_{k>0} \frq^{-k} 
\imath_*\left( \frac{\prod_{\nu=1}^{k-1} e_{-\nu z}(\cN_{Z/X})}{k!z^k}  \left[ i_Z^*c_\theta \right]_{\lambda=kz} \right) \\ 
\sfF_\tX(M_W(\theta)c)|_{\QW=0} 
& = \kappa_\tX(c_\theta) + \sum_{k>0} \frq^k 
\jmath_* \left(\frac{\prod_{\nu=1}^{k-1} ([D] -\nu z)}{\prod_{\nu=1}^k e_{-[D] + \nu z}(\cN_{Z/X})}
\left[i_Z^*c_\theta\right]_{\lambda=[D]-kz} \right) 
\end{align*} 
where $\imath \colon Z \to X$, $\jmath\colon D \to \tX$, $i_Z\colon Z\to W$ are the inclusions, $e_\lambda(\cdots)$ is the equivariant Euler class \eqref{eq:equiv_Euler} and $c_\theta := e^{\theta/z} c$. 
\end{lemma}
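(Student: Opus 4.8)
The plan is to compute $\sfF_X(M_W(\theta)c)$ and $\sfF_\tX(M_W(\theta)c)$ at $\QW=0$ directly from the definition of the discrete Fourier transformation, exploiting that everything simplifies dramatically in this limit. First I would use $M_W(\theta)|_{\QW=0} = e^{\theta/z}$ (so $M_W(\theta)c|_{\QW=0} = e^{\theta/z}c = c_\theta$), together with Proposition~\ref{prop:shift_fundsol}, which gives $\cS^{-k}(M_W(\theta)c) = M_W(\theta)(\hbS^{-k}(\theta)c)$; restricting to $\QW=0$ this becomes $\cS^{-k}c_\theta$ in the Givental space, but one must be careful that $\hbS^{-k}$ for $k>0$ maps into $x^{\min(0,-k)}\QDM_T(W)$, i.e.\ involves negative powers of $x$, hence the $\QW=0$ restriction only makes sense after the Kirwan map $\kappa_Y$ is applied and the appropriate localization is taken. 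So the cleaner route is to apply $\kappa_Y$ to $\cS^{-k}c_\theta$ computed via \eqref{eq:shiftop_W}, and only then set $\QW=0$ inside $\C[z](\!(\frq^{-1/\frs})\!)[\![\QW,\btheta]\!]$.

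Concretely, for $Y=X$ I would evaluate $\kappa_X(\cS^{-k}\bbf)$ using the first line of \eqref{eq:shiftop_W}: $(\cS^{-k}\bbf)_X = x^{-k}\frac{\prod_{c\le 0}(-\lambda+cz)}{\prod_{c\le -k}(-\lambda+cz)} e^{kz\partial_\lambda}\bbf_X$. Since $\kappa_X$ restricts to $X$ and sets $\lambda=0$, the factor $e^{kz\partial_\lambda}$ shifts $\lambda\mapsto \lambda+kz$ before setting $\lambda=0$, i.e.\ evaluates at $\lambda=kz$; the ratio of products collapses to $\prod_{c=-k+1}^{0}(-\lambda+cz)$, which at $\lambda=kz$ equals $\prod_{\nu=1}^{k}(-(k+\nu-k)z)\cdots$ — I will need to track signs and reindexing carefully, but this is the routine part. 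The key geometric input is that $\bbf_X = (M_W(\theta)c)|_X|_{\QW=0}=\big(e^{\theta/z}c\big)|_X$, and that evaluating the restriction $c|_X$ followed by further restriction to $Z\subset X$ and the $x^{-k}\leftrightarrow\frq^{-k}$ bookkeeping (recall $xS^{-1}\in\C[\QW]$, so $x^{-k}S^k$ contributes $\frq^{-k}$ up to a unit in $\C[\QW]$ that dies at $\QW=0$ only if it is a nontrivial power, otherwise survives) turns the contribution into a push-forward $\imath_*$ from $Z$. The appearance of $\prod_{\nu=1}^{k-1}e_{-\nu z}(\cN_{Z/X})$ comes from the identity $\imath^*\imath_* = e_\lambda(\cN_{Z/X})\cup$ together with the Euler-class factor already present; similarly the $1/(k!z^k)$ is the leftover of the collapsed product $\prod_{\nu=1}^{k}(\nu z)^{-1}$ corrected by one factor absorbed into the push-forward. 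For $Y=\tX$ the computation is parallel using the third line of \eqref{eq:shiftop_W}, with $\kappa_\tX$ setting $\lambda=[D]$, so evaluation is at $\lambda=[D]-kz$, and the class $c|_\tX$ restricted to $D$ gives the $\jmath_*$ push-forward, with $\jmath^*\jmath_* = -p\cup$ and $\imath^*\imath_*=e(\cN)$ producing the stated denominator $\prod_{\nu=1}^k e_{-[D]+\nu z}(\cN_{Z/X})$ and numerator $\prod_{\nu=1}^{k-1}([D]-\nu z)$.

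The organizing observation that makes the $\QW=0$ limit manageable is Lemma~\ref{lem:GKZ_type}: an equivariant class $c\in H^*_T(W)$ restricted to the various fixed loci satisfies divisibility relations, and its component in $\Ker\hvarphi_*$ is exactly captured by the $\hjmath_*(\hp^l\hpi^*(\cdots))$ terms; moreover $c_\theta|_Z = e^{\theta/z}|_Z\cdot c|_Z$ is what enters $[\,i_Z^*c_\theta\,]_{\lambda=\cdots}$ after the shift. I would first establish the formulas for $c$ of the two types in \eqref{eq:basis_QDMTW} (i.e.\ $c_i = \hvarphi^*\pr_1^*\phi_{X,i}$, for which $i_Z^*c_i = \imath^*\phi_{X,i}$ is independent of $\lambda$, and $c_{l,m}=\hjmath_*(\hp^l\hpi^*\phi_{Z,m})$, for which $i_Z^*c_{l,m} = (-\lambda)^l e_{-\lambda}(\cN_{Z/X})\cdot$ — wait, more precisely $i_Z^*\hjmath_* = (\hp|_Z)\cdot$ up to Euler class, and $\hp|_Z=-\lambda$), then conclude by linearity.

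The main obstacle I anticipate is \emph{sign and normalization bookkeeping}: keeping straight the opposite $z$-sign convention (stated just before \S\ref{subsec:Givental_cone}), the precise form of the collapsed ``infinite products'' in \eqref{eq:shiftop_W} (which are formal but whose ratios are genuine finite products), the interplay between $x$, $y$, $S$ and $\frq = yS^{-1}$ under $\kappa_Y$ and the dual Kirwan map, and the factor-of-$(k!)$ versus $(k-1)!$ type discrepancies produced when one Euler-class factor is reabsorbed into a push-forward via $\imath^*\imath_* = e(\cN_{Z/X})$. None of these are conceptually hard, but getting every constant exactly right — so that the stated closed forms hold on the nose — will require patient index-chasing. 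A secondary, more structural point to check is that the $\QW=0$ limit genuinely exists in $\C[z](\!(\frq^{-1/\frs})\!)$: one must verify the $\frq$-expansions are Laurent (bounded below) in $\frq^{-1}$ for $X$ and in $\frq$ for $\tX$, which follows from Proposition~\ref{prop:support_conjecture} applied to the tangent vector $M_W(\theta)c$ but should be invoked explicitly.
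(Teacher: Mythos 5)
There is a genuine gap: your plan to compute $\kappa_X(\cS^{-k}\bbf)$ directly from the $X$-restriction formula of \eqref{eq:shiftop_W}, with $\bbf_X$ replaced early by $c_\theta|_X$ (i.e.\ $\QW=0$ set prematurely), fails. The prefactor in $(\cS^{-k}\bbf)_X$ is $\prod_{c=-k+1}^{0}(-\lambda+cz)$, whose $c=0$ factor is $-\lambda$ and kills everything at $\lambda=0$; since $c_\theta|_X$ is regular at $\lambda = kz$, your computation gives $0$ for every $k>0$, not the stated $\imath_*(\cdots)$. The nonzero answer actually comes from residues of $\bbf_X$ at $\lambda=kz$, which arise only from positive $\QW$-orders of $M_W(\theta)c|_X$; so the $X$-restriction of the $\QW^0$ term is the \emph{wrong} place to look. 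You also misidentify where $\imath_*$ and the Euler classes come from: they are not produced by $\imath^*\imath_*$ applied to $c|_X$ restricted to $Z$. (And a side remark: $xS^{-1}\notin\C[\QW]$; it is a generator of $\C[C_{X,\N}^\vee]$, whereas $xy^{-1}\in\C[\QW]$.)

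The mechanism the paper uses, and which you gesture at but do not deploy, is this. Rewrite $S^k = y^k\,\frq^{-k}$ so that at $\QW=0$ only the coefficient $[y^{-k}]\cS^{-k}M_W(\theta)c$ of $y^{-k}$ survives (with overall factor $\frq^{-k}$). From \eqref{eq:shiftop_W}, the $\tX$-component of this coefficient vanishes (no $y$-prefactor on that component and nonnegative $y$-powers in $\bbf_\tX$), while the $Z$-component is the explicit leading term involving $c_\theta|_Z$; the $X$-component is not directly computable from the shift formula. By the cone-preservation property this coefficient lies in $H^*_T(W)(\!(z^{-1})\!)[\![\btheta]\!]$, so Lemma~\ref{lem:GKZ_type}(4) (with $f_\tX=0$) pins it down as $i_{Z\times\PP^1*}(\pr_1^* f_Z/e_{-\lambda}(\cN_{Z/X}))+i_{X*}g$. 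Applying $\kappa_X$ kills $i_{X*}g$ (since $i_X^*i_{X*}=e_T(\cN_{X/W})$ vanishes at $\lambda=0$) and turns the first piece into the stated $\imath_*(\cdots)|_{\lambda=kz}$, producing exactly the products $\prod_{\nu=1}^{k-1}e_{-\nu z}(\cN_{Z/X})$ and $k!z^k$ from the $Z$-component shift factors (with one Euler factor divided out by Lemma~\ref{lem:GKZ_type}(4)). The $\sfF_\tX$ case is the same with the roles of $X$ and $\tX$ swapped and Lemma~\ref{lem:GKZ_type}(3) in place of (4). Without this coefficient extraction and the rigidity of Lemma~\ref{lem:GKZ_type}, you cannot reach the formula.
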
 

\begin{proof} 
We have 
\begin{align}
\label{eq:sfF_X_MW_c} 
\begin{split}  
\sfF_X(M_W(\theta)c) |_{\QW=0} 
& = \sum_{k\in \Z} S^k \kappa_X(\cS^{-k} M_W(\theta)c) \Bigr|_{\QW=0} \\ 
& = \kappa_X(c_\theta) + \sum_{k>0} (y^{-1}S)^{k} \kappa_X([y^{-k}] \cS^{-k} M_W(\theta)c) 
\end{split} 
\end{align} 
where $[y^{-k}] \cS^{-k} M_W(\theta)c$ denotes the coefficient of $y^{-k} = \QW^{(0,0,-k,0)}$ of the $\QW$-power series $\cS^{-k}M_W(\theta) c$. Here note that, since the $\hS$-power series $\sfF_X(M_W(\theta)c)$ is supported on $C_{X,\N}^\vee$ (Proposition \ref{prop:support_conjecture}), terms with $k<0$ do not contribute to the limit $\QW=0$. 
For $k>0$, the restrictions of $\cS^{-k}M_W(\theta)c$ to the fixed loci $X,Z,\tX$ are given by the vector (see \eqref{eq:shiftop_W}): 
\[
\renewcommand\arraystretch{1.5}
\begin{pmatrix} 
x^{-k} \left( \prod_{\nu=0}^{k-1} (-\lambda -\nu z) \right) e^{kz \partial_\lambda} (c_\theta|_X + O(\QW)) \\ 
y^{-k} \frac{\prod_{\nu=0}^{k-1}e_{-\lambda-\nu z}(\cN_{Z/X})}{\prod_{\nu=1}^k (\lambda+ \nu z)} 
e^{k z\partial_\lambda} (c_\theta|_Z + O(\QW)) \\ 
\frac{1}{\prod_{\nu=1}^k (-D+\lambda+\nu z)} e^{kz \partial_\lambda} (c_\theta|_\tX + O(\QW))  
\end{pmatrix}.  
\] 
The coefficient of $y^{-k}$ of this vector is of the form 
\[
\begin{pmatrix} 
* \\ 
\frac{\prod_{\nu=0}^{k-1}e_{-\lambda-\nu z}(\cN_{Z/X})}{\prod_{\nu=1}^k (\lambda+ \nu z)} 
e^{k z\partial_\lambda} c_\theta|_Z \\
0 
\end{pmatrix}. 
\]
Since the shift operator preserves tangent spaces to the Givental cone  (see \S\ref{subsec:Givental_cone}), $[y^{-k}]\cS^{-k} M_W(\theta)c$ lies in $H^*_T(W)(\!(z^{-1})\!)[\![\btheta]\!]$ when expanded at $z=\infty$. 
By Lemma \ref{lem:GKZ_type}(4), we have 
\[
[y^{-k}] \cS^{-k} M_W(\theta) c = i_{Z\times \PP^1 *} \left( 
\frac{\prod_{\nu=1}^{k-1}e_{-\lambda-\nu z}(\cN_{Z/X})}{\prod_{\nu=1}^k (\lambda+ \nu z)} 
e^{k z\partial_\lambda} c_\theta|_Z \right) + i_{X*}g 
\] 
for some $g \in H^*_T(X)(\!(z^{-1})\!)[\![\btheta]\!]$. The first formula follows from this and \eqref{eq:sfF_X_MW_c}. 

The calculation of $\sfF_\tX(M_W(\theta)c)|_{\QW=0}$ is similar. Using the fact that the $\hS$-series $\sfF_\tX(M_W(\theta) c)$ is supported on $C_{\tX,\N}^\vee$, we have  
\begin{align} 
\label{eq:sfF_tX_MW_c} 
\begin{split} 
\sfF_\tX(M_W(\theta) c)|_{\QW=0} 
& = \sum_{k\in \Z} S^{-k} \kappa_\tX( \cS^k M_W(\theta) c) \Bigr|_{\QW=0} \\ 
& = \kappa_\tX(c_\theta) + \sum_{k> 0} (y S^{-1})^k \kappa_\tX( [y^k] \cS^{k} M_W(\theta) c) 
\end{split} 
\end{align}  
Using the fact that $xy^{-1}$ is the class of an effective curve, we can see that the restrictions of $[y^k] \cS^kM_W(\theta)c$ (with $k>0$) to the fixed loci $X, Z, \tX$ are given by a vector of the form: 
\[
\begin{pmatrix} 
0 \\ 
\frac{\prod_{\nu=0}^{k-1} (\lambda -\nu z)}{\prod_{\nu=1}^k e_{-\lambda + \nu z}(\cN_{Z/X})}
e^{-kz \partial_\lambda}c_\theta|_Z\\ 
*
\end{pmatrix}.  
\]
Since $[y^k] \cS^k M_W(\theta) c$ lies in $H^*_T(W)(\!(z^{-1})\!)[\![\btheta]\!]$, Lemma \ref{lem:GKZ_type}(3) implies that  
\[
[y^k] \cS^k M_W(\theta) c = \hjmath_* \left(\frac{\prod_{\nu=1}^{k-1} (\lambda -\nu z)}{\prod_{\nu=1}^k e_{-\lambda + \nu z}(\cN_{Z/X})}
e^{-kz \partial_\lambda}i_Z^*c_\theta \right) + i_{\tX*} g 
\]
for some $g\in H^*_T(\tX)(\!(z^{-1})\!)[\![\btheta]\!]$. The second formula follows from this and \eqref{eq:sfF_tX_MW_c}.  
\end{proof} 

By using the commutative diagram \eqref{eq:FT_sfF}, $\tau(\theta)|_{\QW=\btheta=0} = O(\frq^{-1})$ (see \eqref{eq:initialcond_tau_v}),  $M_X(\tau(\theta))|_{\QW=\btheta=0}=\id+O(\frq^{-1})$ and Lemma \ref{lem:sfF_MW_c}, we have 
\begin{equation} 
\label{eq:image_FTX_c}
\FT_X\sphat(c)|_{\QW=\btheta=0}= \kappa_X(c) + O(\frq^{-1}). 
\end{equation} 
By the definition of $\scrF_{Z,j}$ in \S\ref{subsubsec:formal_asymptotics}, we have (see \eqref{eq:leadingterm_scrF}) 
\[
\frq^{\rho_Z/((r-1)z)}\scrF_{Z,j} (M_W(\theta) c) |_{\QW=\btheta=0} = q_{Z,j} e^{h_{Z,j}/z} 
\lambda_j^n (b  + O(\frq^{-\frac{1}{r-1}}))
\]
where we set $i_Z^*c = \lambda^n b + O(\lambda^{n-1})$ with $b \in H^*(Z)$. Therefore the commutative diagram \eqref{eq:FT_scrF}, $\sigma_j(\theta)|_{\QW=\btheta=0} = h_{Z,j}+O(\frq^{-\frac{1}{r-1}})$ (see \eqref{eq:initialcond_mirrormap}) and $M_Z(\sigma_j(\theta))|_{\QW=\btheta=0} = e^{h_{Z,j}/z}(\id + O(\frq^{-\frac{1}{r-1}}))$ imply that 
\begin{equation} 
\label{eq:image_FTZ_c}
\FT_{Z,j}\sphat(c)|_{\QW=\btheta=0} = q_{Z,j} \lambda_j^n (b + O(\frq^{-\frac{1}{r-1}})).  
\end{equation} 
Using $\kappa_X (c_i )= \phi_{X,i}$, $i_Z^*c_i = \phi_{X,i}|_Z$, $\kappa_X (c_{l,m}) = 0$, $i_Z^*c_{l,m} = (-1)^l \lambda^{l+1}\phi_{Z,m}$, \eqref{eq:image_FTX_c} and \eqref{eq:image_FTZ_c}, we obtain 
\begin{align*} 
\Phi(c_i)|_{\QW=\btheta=0} & = \left(\phi_{X,i} + O(\frq^{-1}), 
\left\{ q_{Z,j} (\phi_{X,i}|_Z + O(\frq^{-\frac{1}{r-1}})) \right\}_{0\le j\le r-2}\right) \\ 
a_l \frq^{-\frac{l+1}{r-1}}\Phi(c_{l,m})|_{\QW=\btheta=0} 
& = \left( O(\frq^{-\frac{r+l}{r-1}}), \left\{ q_{Z,j}  (\zeta^{-j(l+1)}\phi_{Z,m}+ O(\frq^{-\frac{1}{r-1}})) \right\}_{0\le j\le r-2}\right) 
\end{align*} 
where $a_l := (-1)^l e^{\frac{\pi \iu r}{r-1}(l+1)}$ and $\zeta = e^{2\pi\iu/(r-1)}$. 
Since $(\zeta^{-j(l+1)})_{0\le j,l\le r-2}$ is an invertible matrix, this shows that $\Phi(c_i), a_l \frq^{-\frac{l+1}{r-1}} \Phi(c_{l,m})$ form a basis of $\tau^*\QDM(X)^{\rm La}\oplus \bigoplus_{j=0}^{r-2} \varsigma_j^*\QDM(Z)^{\rm La}$ 
and thus $\Phi(c_i), \Phi(c_{l,m})$ also. 

\begin{lemma} 
\label{lem:inversion_Phi} 
Using the basis $\{c_i,c_{l,m}\}$, we identify $\Phi|_{\QW=\btheta=0}$ as an endomorphism of $H^*(X)\oplus H^*(Z)^{\oplus r-1}$ over the ring $\C[z](\!(\frq^{-1/\frs})\!)$. 
For $0\le l \le r-2$, we write $e_l \phi_{Z,m}$ for the basis element $\phi_{Z,m}$ in the $l$th factor of $H^*(Z)^{\oplus (r-1)}$.  
Under this notation, the inverse of $\Phi|_{\QW=\btheta = 0}$ is given as follows: 
\begin{align*} 
\Phi^{-1}(\phi_{X,i}) |_{\QW=\btheta=0} 
&= \left(\phi_{X,i}+O(\frq^{-1}),  
\left\{ - a_j \frq^{-1} \phi_{X,i}|_Z \delta_{j,r-2} + O(\frq^{-\frac{j+2}{r-1}})\right\}_{0\le j\le r-2}
\right) \\ 
\Phi^{-1}(e_l \phi_{Z,m}) |_{\QW= \btheta = 0} 
& = \left( O(\frq^{- \frac{r+2l}{2(r-1)}}), \left\{\frac{a_j\zeta^{l(j+1)}}{(r-1) q_{Z,l}} \frq^{-\frac{j+1}{r-1}}(\phi_{Z,m} + O(\frq^{-\frac{1}{r-1}})) \right\}_{0\le j\le r-2} \right) 
\end{align*} 
\end{lemma}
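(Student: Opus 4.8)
\textbf{Proof proposal for Lemma \ref{lem:inversion_Phi}.}

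The plan is to invert the matrix of $\Phi|_{\QW=\btheta=0}$ order by order in powers of $\frq^{-1/\frs}$, using the leading-order computation already carried out in \S\ref{subsubsec:Phi_isom}. Recall that there we found, in the basis $\{c_i, c_{l,m}\}$ on the source and $\{\phi_{X,i}\}\cup\{e_l\phi_{Z,m}\}$ on the target,
\begin{align*}
\Phi(c_i)|_{\QW=\btheta=0} &= \bigl(\phi_{X,i}+O(\frq^{-1}),\ \{q_{Z,j}(\phi_{X,i}|_Z+O(\frq^{-\frac{1}{r-1}}))\}_{0\le j\le r-2}\bigr),\\
\Phi(c_{l,m})|_{\QW=\btheta=0} &= \bigl(O(\frq^{-\frac{r+l}{r-1}}\cdot\frq^{\frac{l+1}{r-1}}),\ \{a_l^{-1}q_{Z,j}\frq^{\frac{l+1}{r-1}}(\zeta^{-j(l+1)}\phi_{Z,m}+O(\frq^{-\frac{1}{r-1}}))\}_{0\le j\le r-2}\bigr),
\end{align*}
so that the ``block'' structure of $\Phi|_{\QW=\btheta=0}$, after rescaling the $c_{l,m}$-columns by $a_l\frq^{-\frac{l+1}{r-1}}$, is upper-triangular in the filtration by powers of $\frq$: the $X\to X$ block is $\id+O(\frq^{-1})$, the $X\to Z_j$ block is $q_{Z,j}(\id+O(\frq^{-\frac{1}{r-1}}))$ composed with restriction $i_Z^*\varphi^*=\imath^*$, the $Z_l\to X$ block is $O(\frq^{-\text{(positive)}})$, and the $Z_l\to Z_j$ block is $q_{Z,j}(\zeta^{-j(l+1)}\id+O(\frq^{-\frac{1}{r-1}}))$. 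First I would record the exact leading powers of $\frq$ appearing in each block (this is bookkeeping using \eqref{eq:image_FTX_c}, \eqref{eq:image_FTZ_c}, $\kappa_X(c_i)=\phi_{X,i}$, $i_Z^*c_i=\phi_{X,i}|_Z$, $\kappa_X(c_{l,m})=0$, $i_Z^*c_{l,m}=(-1)^l\lambda^{l+1}\phi_{Z,m}$, and the definitions \eqref{eq:qZj_hZj} of $q_{Z,j}$, $a_l$, $\zeta$).

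Next I would invert the leading-order matrix explicitly. The $Z$-to-$Z$ leading block is $(q_{Z,j}\zeta^{-j(l+1)})_{j,l}$; since $(\zeta^{-j(l+1)})_{0\le j,l\le r-2}$ is a (conjugate) Vandermonde/DFT matrix with inverse $\frac{1}{r-1}(\zeta^{l'(j'+1)})$, the inverse of the $Z$-block is $\frac{1}{(r-1)q_{Z,l}}\zeta^{l(j+1)}$ in the $(l,j)$ entry (up to the $\frq$-rescaling by $a_l\frq^{-\frac{l+1}{r-1}}$ I introduced). This gives the stated leading term $\frac{a_j\zeta^{l(j+1)}}{(r-1)q_{Z,l}}\frq^{-\frac{j+1}{r-1}}$ for the $Z_j$-component of $\Phi^{-1}(e_l\phi_{Z,m})$. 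The $X$-to-$X$ block inverts to $\id+O(\frq^{-1})$. The off-diagonal corrections are then obtained by the standard geometric-series formula for inverting a block-triangular perturbation of a diagonal matrix: $\Phi^{-1}(\phi_{X,i})$ picks up a $Z_j$-component by composing the inverse $X$-block with the $X\to Z_j$ block and then the inverse $Z$-block, which produces $-\frac{1}{r-1}\sum_{l}\frac{a_j\zeta^{l(j+1)}}{q_{Z,l}}q_{Z,l}\cdots$; the point is that after summing over $l$ using $\sum_{l=0}^{r-2}\zeta^{l(j+1)}=(r-1)\delta_{j,r-2}$ (here $j+1$ runs over $1,\dots,r-1$, and the sum of a nontrivial character vanishes while the trivial one—at $j=r-2$—gives $r-1$), only the $j=r-2$ term survives, with coefficient $-a_j\frq^{-1}\phi_{X,i}|_Z$. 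This is exactly the claimed formula for $\Phi^{-1}(\phi_{X,i})|_{\QW=\btheta=0}$, and the error estimates $O(\frq^{-\frac{j+2}{r-1}})$ in the $Z_j$-slot and $O(\frq^{-\frac{r+2l}{2(r-1)}})$ in the $X$-slot of $\Phi^{-1}(e_l\phi_{Z,m})$ follow by tracking how the next-order corrections propagate through the three composed blocks (the half-integer exponent $\frac{r+2l}{2(r-1)}$ reflects the $\frq^{-\frac{r}{2(r-1)}}$ in $q_{Z,l}$ together with the $\frq^{-\frac{l+1}{r-1}}$ rescaling and one further $\frq^{-\frac{1}{2(r-1)}}$ step).

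The main obstacle, as usual with these asymptotic inversions, is not any single algebraic identity but keeping the $\frq$-gradings consistent across all four blocks simultaneously: the columns indexed by $c_{l,m}$ carry an intrinsic $\frq^{\frac{l+1}{r-1}}$ (cancelled by the $a_l\frq^{-\frac{l+1}{r-1}}$ normalization used in \S\ref{subsubsec:Phi_isom} to exhibit a basis), while $q_{Z,j}$ itself carries $\frq^{-\frac{r}{2(r-1)}}$ and the $X\to Z$ block carries no extra power; one must check that after all cancellations the surviving terms have exactly the exponents asserted and that the subleading terms are genuinely of higher order in $\frq^{-1/\frs}$ (recall $\frs=r-1$ or $2(r-1)$). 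I would organize this by working over $\C[z](\!(\frq^{-1/\frs})\!)$ throughout, noting that everything in sight lies in $H^*(X)\oplus H^*(Z)^{\oplus(r-1)}$ tensored with this ring and that the leading matrix is invertible there (already shown in \S\ref{subsubsec:Phi_isom}), so $\Phi|_{\QW=\btheta=0}$ is invertible and its inverse is computed by the geometric series, which converges in the $\frq^{-1/\frs}$-adic sense because each block other than the diagonal lowers the $\frq$-degree by a positive rational amount bounded below. Once the leading terms are pinned down, the stated formulas follow; the error orders are read off from the first correction in each geometric-series term. This completes the plan.
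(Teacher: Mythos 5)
Your proposal is correct and follows essentially the same route as the paper's proof: pass to the rescaled bases $\{c_i,\,a_l\frq^{-\frac{l+1}{r-1}}c_{l,m}\}$ and $\{\phi_{X,i},\,q_{Z,j}e_j\phi_{Z,m}\}$, write $\Phi|_{\QW=\btheta=0}=A+B$ with $A$ the leading block matrix \eqref{eq:Phi_at_QW=btheta=0}, invert by the geometric series $(A+B)^{-1}=A^{-1}-A^{-1}BA^{-1}+\cdots$, and compute $A^{-1}$ explicitly using the discrete Fourier inverse $(\zeta^{-j(l+1)})^{-1}=\frac{1}{r-1}(\zeta^{l(j+1)})$ together with the character-sum identity $\sum_{l}\zeta^{l(j+1)}=(r-1)\delta_{j,r-2}$. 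The paper simply writes down $A^{-1}$ and checks it, whereas you derive it via the Schur-complement formula for a block lower-triangular matrix, but this is the same computation; your bookkeeping of the exponent $\frac{r+2l}{2(r-1)}$ in the last parenthetical is slightly garbled (it should come from $q_{Z,l}^{-1}\sim\frq^{\frac{r}{2(r-1)}}$ multiplying the $O(\frq^{-\frac{r+l}{r-1}})$ correction, not from an extra $\frq^{-\frac{1}{2(r-1)}}$ step), but since you flag that this requires careful tracking, the outline is sound.
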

\begin{proof} 
With respect to the bases $\{c_i, a_l \frq^{-\frac{l+1}{r-1}} c_{l,m}\}$ and $\{\phi_{X,i}, q_{Z,j}e_j \phi_{Z,m}\}$, $\Phi|_{\QW=\btheta=0}$ is represented by the following block matrix: 
\begin{equation} 
\label{eq:Phi_at_QW=btheta=0}
\left( 
\begin{array}{c|ccc} 
\id_{H^*(X)} & & 0& \\ \hline 
\imath^* & & & \\ 
\vdots & & \left(\zeta^{-j(l+1)}\right)_{j,l} & \\ 
\imath^* & & &   
\end{array}
\right) 
+ 
\left( 
\begin{array}{c|ccc} 
O(\frq^{-1}) & \cdots & O(\frq^{-\frac{r+l}{r-1}}) & \cdots \\ \hline 
& & & \\ 
O(\frq^{-\frac{1}{r-1}}) & & O(\frq^{-\frac{1}{r-1}}) & \\ 
 & & & 
\end{array} 
\right) 
\end{equation} 
where $\left(\zeta^{-j(l+1)}\right)_{j,l}$ means the matrix whose $(j,l)$-block is $\zeta^{-j(l+1)} \id_{H^*(Z)}$, $\imath^* \colon H^*(X)\to H^*(Z)$ is the restriction and $O(\frq^{-\frac{r+l}{r-1}})$ at  the upper-right corner lies in the $l$th block of columns. If we denote this sum by $A+B$, we have 
\[
(A+B)^{-1} = A^{-1} - A^{-1} B A^{-1} + A^{-1} B A^{-1} BA^{-1} - \cdots 
\]
The leading term $A^{-1}$ can be computed as follows: 
\[
A^{-1} = \left( 
\begin{array}{c|ccc} 
\id_{H^*(X)} & & 0& \\ \hline 
0 & & & \\ 
\vdots & &\smash{\raisebox{-.5\normalbaselineskip}{$\left(\frac{1}{r-1}\zeta^{l(j+1)}\right)_{j,l} $}}& \\ 
0 & & & \\
-\imath^* & & &   
\end{array}
\right).  
\]
It follows that the remaining terms $A^{-1} B A^{-1}$, $A^{-1} B A^{-1}B A^{-1}, \dots$ are all of the form 
\[
\left( 
\begin{array}{c|ccc} 
O(\frq^{-1}) & \cdots & O(\frq^{-\frac{r+l}{r-1}}) & \cdots \\ \hline 
& & & \\ 
O(\frq^{-\frac{1}{r-1}}) & & O(\frq^{-\frac{1}{r-1}}) & \\ 
 & & & 
\end{array} 
\right). 
\]
The inverse $(A+B)^{-1}$ is the sum of the above two. 
The lemma follows easily from this. 
\end{proof}

\subsubsection{That $\bPsi=\Phi \circ (\FT_\tX\sphat)^{-1}$ preserves the pairing} 
It is clear from Theorem \ref{thm:Fourier_isom}, Proposition \ref{prop:FTX_completion} and Corollary \ref{cor:FT_Zj} that $\bPsi$ commutes with the connection. It remains to show that $\bPsi$ preserves the pairing. 
First we compute the images of $c_i, c_{l,m}$ in $\ttau^*\QDM(\tX)^{\rm ext}$ under $\FT_\tX\sphat$ along $\QW=\btheta=0$. 

\begin{lemma} 
\label{lem:image_under_FTtX_limit} 
Suppose that $c\in H^*_T(W)$ is such that $i_Z^*c\in H^*(Z)[\lambda]$ is of degree $\le r-1$ as a polynomial of $\lambda$.  Then $\FT_\tX\sphat(c)|_{\QW=\btheta=0} = \kappa_\tX(c)$. 
In particular, $\FT_\tX\sphat(c_i)|_{\QW=\btheta=0} = \varphi^*\phi_{X,i}$ and $\FT_\tX\sphat(c_{l,m})|_{\QW=\btheta=0} = \jmath_*(p^l \pi^*\phi_{Z,m})$ for $0\le l\le r-2$. 
\end{lemma}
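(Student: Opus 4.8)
The plan is to deduce the identity from the commutative diagram \eqref{eq:FT_sfF} combined with the explicit formula of Lemma \ref{lem:sfF_MW_c}. Since $c\in H^*_T(W)\subset\QDM_T(W)$ and $\FT_\tX\sphat$ restricts to $\FT_\tX$ there, \eqref{eq:FT_sfF} gives $M_\tX(\ttau(\theta))\,\FT_\tX\sphat(c)=\sfF_\tX(M_W(\theta)c)$, while Proposition \ref{prop:FT_QDM}(1) tells us that $\FT_\tX\sphat(c)\in\ttau^*\QDM(\tX)^{\rm ext}$, hence is a \emph{polynomial} in $z$. I would then specialize at $\QW=\btheta=0$ in the sense of Remark \ref{rem:QW=0_subtlety} (as a power series in $\frq^{\pm1/\frs}$ and $\QW$, so that $\frq$ stays invertible), obtaining $\FT_\tX\sphat(c)|_{\QW=\btheta=0}=N\cdot h$, where $N:=M_\tX(\ttau(\theta))^{-1}|_{\QW=\btheta=0}$ and $h:=\sfF_\tX(M_W(\theta)c)|_{\QW=\btheta=0}$; here $N=\id+O(z^{-1})$ because $M_\tX(\tau)=\id+O(z^{-1})$ for any parameter and Novikov variable.

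Since $\btheta=0$ forces $c_\theta=e^{\theta/z}c=c$, Lemma \ref{lem:sfF_MW_c} yields
\[
h \;=\; \kappa_\tX(c)\;+\;\sum_{k>0}\frq^k\,\jmath_*\!\left(\frac{\prod_{\nu=1}^{k-1}([D]-\nu z)}{\prod_{\nu=1}^k e_{-[D]+\nu z}(\cN_{Z/X})}\,\bigl[i_Z^*c\bigr]_{\lambda=[D]-kz}\right).
\]
The key step is a $z$-degree count on each summand $g_k$ (the argument of $\jmath_*$), using the hypothesis that $i_Z^*c\in H^*(Z)[\lambda]$ has $\lambda$-degree $\le r-1$: the numerator $\prod_{\nu=1}^{k-1}([D]-\nu z)\cdot[i_Z^*c]_{\lambda=[D]-kz}$ has $z$-degree $\le(k-1)+(r-1)$, while the denominator $\prod_{\nu=1}^k e_{-[D]+\nu z}(\cN_{Z/X})$ has $z$-degree $rk$ with invertible constant leading coefficient $(k!)^r$. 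Expanding at $z=\infty$ and using that $\jmath_*$ is $z$-linear, $g_k$ and hence $\jmath_*g_k$ has $z$-degree $\le(k-1)+(r-1)-rk=-(r-1)(k-1)-1\le-1$ for all $k\ge1$. Thus $h$ has $z^0$-part equal to $\kappa_\tX(c)$ and \emph{no} positive powers of $z$.

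To conclude I would combine these: $N$ has only non-positive powers of $z$ with $[N]_{z^0}=\id$, and $h$ likewise with $[h]_{z^0}=\kappa_\tX(c)$, so $N h$ has no positive powers of $z$ and its $z^0$-coefficient is $[N]_{z^0}[h]_{z^0}=\kappa_\tX(c)$; but $N h=\FT_\tX\sphat(c)|_{\QW=\btheta=0}$ is polynomial in $z$, so it must coincide with its $z^0$-part $\kappa_\tX(c)$. For the ``in particular'' statements I would read off from \eqref{eq:restrictions_of_f} that $i_Z^*c_i=\imath^*\phi_{X,i}$ (constant in $\lambda$, degree $0\le r-1$) and $i_Z^*c_{l,m}=(-1)^l\lambda^{l+1}\phi_{Z,m}$ (degree $l+1\le r-1$ for $0\le l\le r-2$), together with $\kappa_\tX(c_i)=\varphi^*\phi_{X,i}$ and $\kappa_\tX(c_{l,m})=\jmath_*(p^l\pi^*\phi_{Z,m})$ as recorded in \S\ref{subsec:decomposition}. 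The only mildly delicate point is keeping track of the $\QW=0$ specialization correctly --- the fibre-line class of $D\to Z$ maps to $\frq$, which is inverted and therefore survives the limit, so $M_\tX(\ttau(\theta))|_{\QW=\btheta=0}$ is a genuinely nontrivial ``fibre'' fundamental solution --- but since the argument only uses that it starts with the identity, this is harmless, and the real content is the elementary $z$-degree estimate above.
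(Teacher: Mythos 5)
Your argument is correct and follows essentially the same route as the paper's: both apply Lemma \ref{lem:sfF_MW_c} at $\QW=\btheta=0$, observe via the $z$-degree count $k-1+(r-1)-rk = -(r-1)(k-1)-1 \le -1$ that the error terms lie in $O(z^{-1})$, and then invoke $M_\tX(\ttau(\theta))=\id+O(z^{-1})$ together with the commutative diagram \eqref{eq:FT_sfF} to conclude. Your spelling out of the ``polynomial in $z$'' pinching argument and the note on $\QW=0$ not killing $\frq$ are exactly the points the paper leaves implicit.
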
 
\begin{proof} 
Recall the formula for $\sfF_\tX(M_W(\theta) c)|_{\QW=\btheta=0}$ in Lemma \ref{lem:sfF_MW_c}.  
Since the degree of $i_Z^*c$ as a polynomial of $\lambda$ is less than or equal to $r-1$, we see that the quantity
\[
\frac{\prod_{\nu=1}^{k-1} ([D] -\nu z)}{\prod_{\nu=1}^k e_{-[D] + \nu z}(\cN_{Z/X})}
\left[e^{-kz \partial_\lambda}i_Z^*c\right]_{\lambda=[D]} 
\] 
appearing there is of order $O(z^{k-1-rk+r-1})$ as $z\to \infty$. Since $k-1-rk+r-1 = r-2 - (r-1) k \le -1$, we have $\sfF_\tX(M_W(\theta)c)|_{\QW=\btheta=0}= \kappa_\tX(c) + O(z^{-1})$. 
Because $M_\tX(\ttau(\theta)) = \id + O(z^{-1})$, it follows from the commutative diagram \eqref{eq:FT_sfF} that $\FT_\tX\sphat(c)|_{\QW=\btheta=0} = \kappa_\tX(c)$. 
\end{proof} 

By Lemma \ref{lem:image_under_FTtX_limit}, $\bPsi^{-1}(\phi_{X,i})|_{\QW = \btheta=0}, \bPsi^{-1}(e_l \phi_{Z,m})|_{\QW=\btheta=0}$ are presented by the same vectors as $\Phi^{-1}(\phi_{X,i})|_{\QW=\btheta=0}, \Phi^{-1}(e_l \phi_{Z,m})|_{\QW=\btheta=0}$ in Lemma \ref{lem:inversion_Phi}, under the isomorphism $H^*(\tX) \cong H^*(X) \oplus H^*(Z)^{\oplus (r-1)}$ in the first line of \eqref{eq:cohomology_bu}. 
Using this fact and the fact that the two summands $H^*(X)$, $H^*(Z)^{\oplus (r-1)}$ of $H^*(\tX)$ are orthogonal to each other (for the Poincar\'e pairing) and that 
\begin{align*} 
\int_\tX \jmath_*(p^j \pi^*\gamma) \cup \jmath_*(p^{j'} \pi^*\gamma') 
& = \int_D -p^{j+j'+1} \pi^*(\gamma \cup \gamma') \\
& = \begin{cases} 
0 & \text{if $j+j'< r-2$;} \\ 
-\int_Z \gamma\cup \gamma' & \text{if $j+j' =r-2$} 
\end{cases} 
\end{align*} 
for $\gamma, \gamma' \in H^*(Z)$, 
we compute 
\begin{align*} 
P_\tX(\bPsi^{-1}(\phi_{X,i}), \bPsi^{-1}(\phi_{X,i'})) \bigr|_{\QW=\btheta=0} 
& = P_X (\phi_{X,i}, \phi_{X,i'}) + O(\frq^{-1}), \\ 
P_\tX(\bPsi^{-1}(\phi_{X,i}), \bPsi^{-1}(e_l \phi_{Z,m})) \bigr|_{\QW=\btheta=0} 
& = O(\frq^{-\frac{r}{2(r-1)}}), \\ 
P_\tX(\bPsi^{-1}(e_l \phi_{Z,m}), \bPsi^{-1}(e_{l'} \phi_{Z,m'}))\bigr|_{\QW=\btheta=0} 
& = \delta_{l,l'} P_Z(\phi_{Z,m}, \phi_{Z,m'}) + O(\frq^{-\frac{1}{r-1}}). 
\end{align*} 
This shows that $\bPsi^{-1}|_{\QW=\btheta=0}$ preserves the pairing at the limit $\frq = \infty$. 
We consider the pullback $P_1 := (\bPsi^{-1})^* P_\tX$ of the pairing $P_\tX$ via $\bPsi^{-1}$ and compare it with $P_2 := P_X \oplus P_Z^{\oplus (r-2)}$. Both $P_1$ and $P_2$ are flat pairings on 
\[
\cE:=\tau^*\QDM(X)^{\rm La} \oplus \bigoplus_{j=0}^{r-2} \varsigma_j^* \QDM(Z)^{\rm La},
\] 
regular and non-degenerate at the limit $\frq=\infty, \QW=\btheta=0$ (with respect to a standard basis) and $P_1|_{\frq=\infty,\QW=\btheta=0} = P_2|_{\frq=\infty,\QW=\btheta=0}$ by the above computation. 
Then $P_2^{-1} P_1$ defines a flat endomorphism of $\cE$. We claim that a flat endomorphism of $\cE|_{\QW=\btheta=0}$ which is the identity at $\frq=\infty$ is the identity. 
The quantum connection on $\cE|_{\QW=\btheta=0}$ along the $\frq$-direction is given by (see \eqref{eq:pull-back_qconn}, \eqref{eq:Z_qconn_La}, \eqref{eq:varsigma}) 
\begin{align*} 
\tau^* \nabla_{\frq\partial_\frq}|_{\QW=\btheta=0} & = \frq\partial_\frq + z^{-1} \kappa_X(-\lambda) +  z^{-1} \left[\frq\partial_\frq \tau(0)\star_{\tau(0)}\right]_{\QW=0} \\ 
& = \frq\partial_\frq + z^{-1} O(\frq^{-1}) \\ 
\varsigma_j^* \nabla_{\frq \partial_\frq}|_{\QW=\btheta=0}& =  \frq\partial_\frq + z^{-1} \kappa_Z(-\lambda) + z^{-1} \left(-\lambda_j + \left[\frq\partial_\frq \sigma_j(0)\star_{\sigma_j(0)}\right]_{\QW=0}\right) \\
& = \frq\partial_\frq - z^{-1} \left(\lambda_j + \frac{\rho_Z}{r+1} + O(\frq^{-\frac{1}{r-1}}) \right).
\end{align*} 
Here we used $\frq \partial_\frq = - \lambda \hS\partial_\hS$ and the properties  \eqref{eq:initialcond_tau_v}, \eqref{eq:initialcond_mirrormap} for $\tau$ and $\varsigma_j=-(r-1)\lambda_j+\sigma_j$; note that the quantities $\rho_Z$, $O(\frq^{-1})\in H^*(X)[\frq^{-1}]$, $O(\frq^{-\frac{1}{r+1}})\in H^*(Z)[\frq^{-\frac{1}{r-1}}]$ act by the cup product (see Remark \ref{rem:QW=0_subtlety}). 
Write $\frr = \frq^{-\frac{1}{r-1}}$. A flat endomorphism $F(\frr)$ of $\cE$ satisfies a differential equation of the form: 
\[
(\frr \partial_\frr + \frr^{-1}  \ad(D) + \ad(N_0) + \ad(N_1) \frr+ \ad(N_2) \frr^2 + \cdots) F(\frr) = 0  
\]
where $D,N_i \in \End(H^*(X)\oplus H^*(Z)^{\oplus (r-1)})$ are mutually commuting operators such that $D$ is semisimple and $N_0$ is nilpotent. 
Suppose that $F(0)= \id$. Expanding $F(\frr) = F_0 + F_1 \frr + F_2 \frr^2 + \cdots$ with $F_0=\id$, we find 
\begin{equation} 
\label{eq:recursive_equation}
\tag{$*_n$} 
\ad(D) F_{n+1} + (n+\ad(N_0))F_n + \ad(N_1) F_{n-1} + \ad(N_2) F_{n-2} + \cdots + \ad(N_n) F_0 = 0 
\end{equation} 
for $n\ge 0$. Since $\ad(D)F_0 = \ad(N_i) F_0= 0$, we have $\ad(D) F_1 =0$ by (\hyperref[eq:recursive_equation]{$*_0$}). Then (\hyperref[eq:recursive_equation]{$*_1$}) implies $\ad(D)^2 F_2 =0$. Since $D$ is semisimple, $\ad(D)$ is also semisimple and $\ad(D)F_2= 0$. 
By (\hyperref[eq:recursive_equation]{$*_1$}) again, we have $(1+\ad(N_0)) F_1=0$. The nilpotence of $N_0$ implies that $F_1=0$.  
In this way we can show inductively that $F_1=F_2= \cdots =0$. The claim follows and $P_1|_{\QW= \btheta=0} = P_2|_{\QW=\btheta=0}$. 
The quantum connection of $\cE$ along the $(\QW,\btheta)$-direction has at worst logarithmic singularities with nilpotent residues at $\QW=\btheta=0$ (see \eqref{eq:pull-back_qconn}, \eqref{eq:Z_qconn_La}, \eqref{eq:initialcond_tau_v}, \eqref{eq:initialcond_mirrormap}). By expanding $P_2^{-1} P_1$ in power series in $\QW, \btheta$, we can similarly show that $P_2^{-1} P_1= \id$. This completes the proof of Theorem \ref{thm:decomposition}.   

\begin{remark} 
\label{rem:global}
We describe a geography of the global ``K\"ahler moduli space'' without giving precise definitions on formal geometry. 
The quantum $D$-module $\ttau^*\QDM(\tX)^{\rm ext}$ of $\tX$ can be viewed as a connection over the ``graded formal superscheme''  $\Spf(\C[z][\![C_{\tX,\N}^\vee,\btheta]\!])$ and that  $\tau^*\QDM(X)^{\rm ext}$ of $X$ can be viewed as a connection over $\Spf(\C[z][\![C_{X,\N}^\vee,\btheta]\!])$. 
These base spaces are glued to the formal ``toric'' scheme 
\[
\frM = \Spf(\C[z][\![C_{\tX,\N}^\vee,\btheta]\!]) \cup \Spf(\C[z][\![C_{X,\N}^\vee,\btheta]\!])
\] 
associated with the fan structure on the $T$-ample cone $\overline{C_T(W)}$ (see \S\ref{subsec:T-ample_cone} and Figure \ref{fig:global}). We compare these connections over an (\'etale) open subset $\Spf (\C[z](\!(\frq^{-1/\frs})\!)[\![\QW,\btheta]\!])$ of $\frM$ and find that the difference equals  $\bigoplus_{j=0}^{r-2} \varsigma_j^* \QDM(Z)^{\rm La}$. All these connections arise from the equivariant quantum $D$-module $\QDM_T(W)$ of $W$, which originally lives on the affinization $\frM_0 = \Spf (\C[z][\![\NEN^T(W),\btheta]\!])$ of $\frM$ by Proposition \ref{prop:equiv_QDM_module_over} . 
\end{remark}

\subsection{Restriction to a finite-dimensional slice} 
\label{subsec:restriction_slice} 
In this section, we restrict the parameter $\theta$ to lie in the finite-dimensional slice $\sfH \subset H^*_T(W)$ spanned by the classes $c_i, c_{l,m}$ in \eqref{eq:basis_QDMTW} with $0\le l \le r-2$. By restricting the decomposition $\bPsi$ in Theorem \ref{thm:decomposition} to this subspace $\sfH$, we obtain a desired decomposition for $\QDM(\tX)$. We also show that $x$ and $y$ factor out from the final result. 

We write $\vartheta=\{\vartheta^\alpha\}$ for the coordinates on $\sfH$ dual to the basis $\{c_\alpha\} = \{c_i,c_{l,m}\}$: we may regard $\{\vartheta^\alpha\}$ as a subset of the infinitely many coordinates $\btheta = \{\theta^{i,k}\}$ on $H^*_T(W)$. We also denote by $\vartheta$ the corresponding point $\sum_\alpha \vartheta^\alpha c_\alpha$ on $\sfH$. 
The ``mirror maps'' $\ttau(\theta)$, $\tau(\theta)$ in \S\ref{subsec:FT_QDM} (arising from Corollary \ref{cor:Fourier_transform_JW_GIT}) restricted to $\sfH$ satisfies 
\begin{equation}
\label{eq:ttau_tau_fin}
\begin{aligned}  
\ttau(\vartheta) &\in H^*(\tX)[\![C_{\tX,\N}^\vee,\vartheta]\!], &  
\ttau(0)|_{\QW= 0} &\in \frq H^*(\tX)[\frq], \\
\tau(\vartheta) &\in H^*(X)[\![C_{X,\N}^\vee, \vartheta]\!], &  
\tau(0)|_{\QW=0} &\in \frq^{-1} H^*(X)[\frq^{-1}],
\end{aligned} 
\end{equation} 
where we take the restriction to $\QW=0$ as power series in $\frq^\pm, \QW$. 
The ``mirror map'' $\varsigma_j(\theta) = \sigma_j (\theta)- (r-1)\lambda_j$ in \S\ref{subsubsec:projection_Z} and \eqref{eq:varsigma} (arising from Corollary \ref{cor:Fourier_transform_JW}) restricted to $\sfH$ satisfies 
\begin{align}
\label{eq:varsigma_fin} 
\begin{split}  
\varsigma_j(\vartheta) & \in H^*(Z)(\!(\frq^{-\frac{1}{r-1}})\!)[\![\QW,\vartheta]\!] \\ 
\varsigma_j(0)|_{\QW=0} & \in -(r-1) \lambda_j +  h_{Z,j} + \frq^{-\frac{1}{r-1}}H^*(Z)[\frq^{-\frac{1}{r-1}}] 
\end{split} 
\end{align} 
where recall that $\lambda_j = e^{-\frac{2\pi \iu}{r-1} (j+ \frac{r}{2})}\frq^{\frac{1}{r-1}}$.  
\begin{lemma} 
\label{lem:asymptotics_mirrormaps} 
We write $f|_{\QW=0}$ for the restriction of $f\in \C(\!(\frq^{-1/\frs})\!)[\![\QW]\!]$ to $\QW=0$ as power series in $\frq^{\pm 1/\frs}$ and $\QW$. We have 
\begin{align}
\label{eq:asymptotics_mirrormaps}
\begin{split}   
\ttau(\vartheta)|_{\QW=0} & \in \kappa_\tX (\vartheta) + (\vartheta)^2 H^*(\tX)[\frq][\![\vartheta]\!] \\
\tau(\vartheta)|_{\QW=0} & \in \kappa_X(\vartheta) + \frq^{-1}[Z] + (\frq^{-1}\vartheta, \frq^{-2}) H^*(X)[\frq^{-1}][\![\vartheta]\!] \\  
\partial_{\vartheta^\alpha}\varsigma_j(\vartheta) |_{\QW=\vartheta=0} & 
\in 
\begin{cases}  
\imath^*\phi_{X,i} +\frq^{-\frac{1}{r-1}} H^*(Z)[\frq^{-\frac{1}{r-1}}] & \text{if $c_\alpha = c_i$}; \\  
(-1)^l \lambda_j^{l+1} \left(\phi_{Z,m} + \frq^{-\frac{1}{r-1}} H^*(Z)[\frq^{-\frac{1}{r-1}}]\right) & \text{if $c_\alpha = c_{l,m}$}
\end{cases} 
\end{split} 
\end{align} 
where $(\vartheta) \subset \C[\frq][\![\vartheta]\!]$ denotes the ideal generated by $\vartheta^\alpha$, $ (\frq^{-1}\vartheta, \frq^{-2}) \subset \C[\frq^{-1}][\![\vartheta]\!]$ denotes the ideal generated by $\frq^{-1}\vartheta^\alpha, \frq^{-2}$ and $\imath\colon Z\to X$ is the inclusion. Note that these conditions refine \eqref{eq:ttau_tau_fin}, \eqref{eq:varsigma_fin}.  
\end{lemma}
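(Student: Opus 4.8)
The plan is to obtain \eqref{eq:asymptotics_mirrormaps} by combining the initial conditions \eqref{eq:initialcond_tau_v} and \eqref{eq:initialcond_mirrormap} (which give the $\vartheta$-constant terms up to the $\frq^{-1}[Z]$ correction) with the explicit evaluations of the discrete and continuous Fourier transforms along $\QW=0$ computed in \S\ref{subsubsec:Phi_isom}. More precisely, recall from \eqref{eq:Fourier_JW_for_X_tX} and \eqref{eq:Fourier_JW_for_Z} that $\ttau$, $\tau$, $\varsigma_j$ are the ``mirror maps'' extracted by the Birkhoff factorisation of $\sfF_\tX(J_W(\theta))$, $\sfF_X(J_W(\theta))$, $\frq^{\rho_Z/((r-1)z)}\scrF_{Z,j}(J_W(\theta))$ via the fundamental solution. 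Differentiating these identities in $\theta^{i,k}$ (as already done in the proofs of Propositions \ref{prop:FT_QDM} and \ref{prop:FT_Zj}) and then restricting to $\sfH$ expresses $\partial_{\vartheta^\alpha}\tau$, $\partial_{\vartheta^\alpha}\ttau$, $\partial_{\vartheta^\alpha}\varsigma_j$ at $\vartheta = 0$ in terms of $\FT_\bullet(c_\alpha)|_{\QW=\btheta=0}$, and these have been evaluated in \eqref{eq:image_FTX_c}, \eqref{eq:image_FTZ_c} and Lemma \ref{lem:image_under_FTtX_limit}.

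The steps, in order, are as follows. First, for $\varsigma_j$: since $M_Z(\sigma_j(\theta))|_{\QW=\btheta=0} = e^{h_{Z,j}/z}(\id+O(\frq^{-1/(r-1)}))$ and the shift in the identity direction does not affect $\partial_{\vartheta^\alpha}$, we get $\partial_{\vartheta^\alpha}\varsigma_j(\vartheta)|_{\QW=\vartheta=0}$ directly from \eqref{eq:image_FTZ_c} together with $i_Z^*c_i = \imath^*\phi_{X,i}$ (degree $0$ in $\lambda$, so $n=0$, $b=\imath^*\phi_{X,i}$) and $i_Z^*c_{l,m} = (-1)^l\lambda^{l+1}\phi_{Z,m}$ (so $n=l+1$, $b=(-1)^l\phi_{Z,m}$); this gives the third line of \eqref{eq:asymptotics_mirrormaps} after absorbing $q_{Z,j}$ into the normalisation implicit in $\scrF_{Z,j}$ versus $\FT_{Z,j}$ (cf.\ the definition of $\FT_{Z,j}$ via $u_j(\theta) = M_Z(\sigma_j)^{-1}\frq^{\rho_Z/((r-1)z)}\scrF_{Z,j}(J_W)$, where the $q_{Z,j}$ cancels). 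Second, for $\ttau$: by Lemma \ref{lem:image_under_FTtX_limit}, $\FT_\tX\sphat(c_\alpha)|_{\QW=\btheta=0} = \kappa_\tX(c_\alpha)$ exactly (no $\frq$-corrections), and since $M_\tX(\ttau(\theta)) = \id+O(z^{-1})$ and $\ttau(0)|_{\QW=0}\in\frq H^*(\tX)[\frq]$ vanishes at $\vartheta=0$ only after subtracting the $O(\frq)$ piece coming from the $\tX$-cusp, a short bookkeeping of the Birkhoff extraction gives $\partial_{\vartheta^\alpha}\ttau(\vartheta)|_{\QW=\vartheta=0}=\kappa_\tX(c_\alpha)$, hence the first line (the higher $\vartheta$-order terms are automatically in $(\vartheta)^2$ and have coefficients in $H^*(\tX)[\frq]$ by \eqref{eq:ttau_tau_fin}). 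Third, for $\tau$: \eqref{eq:image_FTX_c} gives $\FT_X\sphat(c_\alpha)|_{\QW=\btheta=0} = \kappa_X(c_\alpha)+O(\frq^{-1})$, so $\partial_{\vartheta^\alpha}\tau(\vartheta)|_{\QW=\vartheta=0}\in\kappa_X(c_\alpha)+\frq^{-1}H^*(X)[\frq^{-1}]$; the $\vartheta$-independent term $\tau(0)|_{\QW=0}$ is then pinned down by computing the $\vartheta=0$ value of $\sfF_X(J_W(0))|_{\QW=0}$ via the first formula of Lemma \ref{lem:sfF_MW_c} with $c=1$, $\theta=0$, whose $k=1$ term produces precisely $\frq^{-1}\imath_*(1) = \frq^{-1}[Z]$ (the higher $k$ terms contribute to $\frq^{-2}H^*(X)[\frq^{-1}]$), yielding the second line.

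The main obstacle I expect is the third step — controlling the mixed ideal $(\frq^{-1}\vartheta,\frq^{-2})$ for $\tau$. One has to show not only that the $\vartheta$-constant term equals $\kappa_X(\vartheta)|_{\vartheta=0}+\frq^{-1}[Z]$ up to $\frq^{-2}$, but that the remaining terms — linear in $\vartheta$ with $O(\frq^{-1})$ coefficients, and quadratic-or-higher in $\vartheta$ — really lie in that ideal uniformly. This requires tracking how the Birkhoff factorisation (equivalently, the ``Newton iteration'' used in the proof of Corollary \ref{cor:Fourier_transform_JW_GIT}) propagates the $\frq^{-1}$-order of $\sfF_X(J_W(\theta))|_{\QW=0}-1$ through all orders in $\vartheta$; the point is that $\sfF_X(M_W(\theta)c)|_{\QW=0} - \kappa_X(c_\theta)$ is $O(\frq^{-1})$ by Lemma \ref{lem:sfF_MW_c}, that $\kappa_X(c_\theta)|_{\vartheta=0} = 1$, and that $M_X(\tau)^{-1}$ only improves (never worsens) the $\frq^{-1}$-filtration since $\tau$ itself is $O(\frq^{-1})$ along $\vartheta$. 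So the $\frq$-adic estimates are all one-directional and the induction closes, but making the filtration statement precise (especially the interaction of the $\frq^{-1}$-grading with the $\vartheta$-grading, analogous to the graded-completion argument in the proof of Corollary \ref{cor:Fourier_transform_JW}) is the part that needs genuine care rather than routine computation. The first two steps, by contrast, are essentially immediate from Lemma \ref{lem:image_under_FTtX_limit} and \eqref{eq:image_FTZ_c}.
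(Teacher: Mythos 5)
Your overall plan is sound — everything does come from the explicit $\QW=0$ formulas in Lemma \ref{lem:sfF_MW_c}, \eqref{eq:image_FTX_c}, \eqref{eq:image_FTZ_c} and Lemma \ref{lem:image_under_FTtX_limit}, combined with the covariant-derivative relation $\FT_\bullet(c_\alpha) = z\nabla_{\vartheta^\alpha}\FT_\bullet(1)$ evaluated at $z=0$ — and your step for $\varsigma_j$ matches the paper's. But there are two concrete gaps.

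For $\ttau$, your clause ``$\ttau(0)|_{\QW=0}\in\frq H^*(\tX)[\frq]$ vanishes at $\vartheta=0$ only after subtracting the $O(\frq)$ piece coming from the $\tX$-cusp'' is not what happens, and the vanishing of the constant term is the genuine content here, not a bookkeeping afterthought. What the paper actually proves is that $\ttau(\vartheta)|_{\QW=\vartheta=0}=0$ \emph{exactly}, and this is not automatic from \eqref{eq:ttau_tau_fin}; it requires the refined estimate $\sfF_\tX(J_W(\vartheta))|_{\QW=\vartheta=0}=1+O(z^{-2})$ (set $c=1$ in the second formula of Lemma \ref{lem:sfF_MW_c}: with $n=\deg_\lambda i_Z^*c=0$, the $\frq^k$-term is $O(z^{-(r-1)k-1})$, so $O(z^{-r})\subset O(z^{-2})$). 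Writing $\tv=M_\tX(\ttau)^{-1}\cdot(1+O(z^{-2}))$ and using $M_\tX(\ttau)^{-1}1=1-\ttau/z+O(z^{-2})$, the positivity of $\tv$ in $z$ forces the $z^{-1}$-coefficient $-\ttau|_{\QW=\vartheta=0}$ to vanish. Without this sharper $O(z^{-2})$ input, you would only get $\ttau(0)|_{\QW=0}\in\frq H^*(\tX)[\frq]$ from \eqref{eq:initialcond_tau_v}, which is strictly weaker than what the first line of \eqref{eq:asymptotics_mirrormaps} asserts at $\vartheta=0$.

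For $\tau$, you correctly flag that knowing the constant term and the first $\vartheta$-derivative at $\vartheta=0$ is not enough to place $\tau(\vartheta)|_{\QW=0}$ in $\kappa_X(\vartheta)+\frq^{-1}[Z]+(\frq^{-1}\vartheta,\frq^{-2})$ — in particular the $\frq^0$-part of $\tau(\vartheta)|_{\QW=0}$ must be \emph{exactly} linear, namely $\kappa_X(\vartheta)$, and this is not controlled by the derivative at $\vartheta=0$. But the cure you propose (tracking the Newton iteration / $\frq^{-1}$-filtration through all $\vartheta$-orders) is overkill and not what the paper does. The paper's resolution is a direct computation: apply the first formula of Lemma \ref{lem:sfF_MW_c} with $c=1$ and $\theta=\vartheta\in\sfH$ for all $\vartheta$ at once; since $\kappa_X(c_\theta)=\kappa_X(e^{\vartheta/z})=e^{\kappa_X(\vartheta)/z}$, the $\frq^0$-part of $\sfF_X(J_W(\vartheta))|_{\QW=0}$ is literally $e^{\kappa_X(\vartheta)/z}$ and the correction is $z^{-1}\frq^{-1}([Z]+O(\vartheta))+O(\frq^{-2})$. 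The Birkhoff factorisation $e^{\tau/z}v$ is compatible with the $\frq$-grading, so $\tau^{(\frq^0)}=\kappa_X(\vartheta)$ exactly and $\tau^{(\frq^{-1})}=[Z]+O(\vartheta)$, giving the second line with no filtration gymnastics. Your instinct to decouple constant term and linear term in $\vartheta$ is precisely what causes the problem; keeping the full $\vartheta$-dependence of Lemma \ref{lem:sfF_MW_c} avoids it.
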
 
\begin{proof} 
We showed in the proof of Lemma \ref{lem:image_under_FTtX_limit} that $\sfF_\tX(J_W(\vartheta))|_{\QW=\vartheta=0} = 1 + O(z^{-2})$ (set $c=1$ in the second formula of Lemma \ref{lem:sfF_MW_c}). Therefore $\tv(\vartheta)|_{\QW=\vartheta=0} = M_\tX(\ttau(\vartheta))^{-1}(1+ O(z^{-2}))|_{\QW=\vartheta=0}$ by \eqref{eq:Fourier_JW_for_X_tX}. Since $M_\tX(\ttau(\vartheta))^{-1}1 = 1 - \ttau(\vartheta)/z + O(z^{-2})$, we know that $\ttau(\vartheta)|_{\QW=\vartheta=0} = 0$ by comparing the coefficients of $z^{-1}$. 
Also, we have 
\begin{equation} 
\label{eq:FT_c_cov_der} 
\FT_\tX\sphat(c_\alpha) = \FT_\tX\sphat(z \nabla_{\vartheta^\alpha} 1 ) = z\nabla_{\vartheta^\alpha} (\FT_\tX\sphat(1)) 
=\left( z \partial_{\vartheta^\alpha} + (\partial_{\vartheta^\alpha}\ttau(\vartheta))\star_{\ttau(\vartheta)} \right) \FT_\tX\sphat(1) 
\end{equation} 
where we restrict $\theta$ to lie in $\sfH$. 
Setting $\QW=\vartheta=z=0$ and using Lemma \ref{lem:image_under_FTtX_limit}, we have $\partial_{\vartheta^\alpha}\ttau(\vartheta)|_{\QW=\vartheta=0} = \kappa(c_\alpha)$. These computations show the first line of \eqref{eq:asymptotics_mirrormaps}.

Setting $c=1$ in Lemma \ref{lem:sfF_MW_c}, we find 
\[
\sfF_X(J_W(\vartheta))|_{\QW=0} = e^{\kappa_X(\vartheta)/z} + z^{-1}\frq^{-1} ( [Z] + O(\vartheta)) + O(\frq^{-2}). 
\]
Since this equals $M_X(\tau(\vartheta)) v(\vartheta)|_{\QW=0}=e^{\tau(\vartheta)/z} v(\vartheta)|_{\QW=0}$ by \eqref{eq:Fourier_JW_for_X_tX}, we must have  $\tau(\vartheta)|_{\QW=0}=\kappa_X(\vartheta)+ \frq^{-1}[Z] + \frq^{-1} O(\vartheta) + O(\frq^{-2})$. The second line of \eqref{eq:asymptotics_mirrormaps} follows. 

Similarly to the above computation \eqref{eq:FT_c_cov_der}, we have 
\[
\FT_{Z,j}\sphat(c_\alpha) = 
\left( z \partial_{\vartheta^\alpha} + (\partial_{\vartheta^\alpha}\sigma_j(\vartheta)) \star_{\sigma_j(\vartheta)} \right) \FT_{Z,j}\sphat(1).  
\]
Using \eqref{eq:image_FTZ_c} and setting $\QW=\vartheta=z=0$, we arrive at the third line of \eqref{eq:asymptotics_mirrormaps}. 
\end{proof} 

Lemma \ref{lem:asymptotics_mirrormaps} implies that the change of variables $\sfH \to H^*(\tX)$, $\vartheta \mapsto \ttau(\vartheta)$ is (formally) invertible over $\C(\!(\frq^{-1})\!)[\![\QW]\!]$. 
We can also see that the map $\sfH \to H^*(X) \oplus H^*(Z)^{\oplus (r-1)}$, $\vartheta \mapsto (\tau(\vartheta), \{\varsigma_j(\vartheta)\}_{0\le j\le r-2})$ is invertible over $\C(\!(\frq^{-\frac{1}{r-1}})\!)[\![\QW]\!]$ as its Jacobian matrix at $\QW=\vartheta=0$ is invertible (it is equivalent to a matrix of the form \eqref{eq:Phi_at_QW=btheta=0}). 
Writing $\ttau \mapsto \vartheta= \vartheta(\ttau)$ for the inverse of the map $\vartheta \mapsto \ttau(\vartheta)$, we consider the composition  
\begin{equation} 
\label{eq:composed_change_of_variables} 
\tau(\ttau) := \tau(\vartheta(\ttau)), \qquad 
\varsigma_j(\ttau) := \varsigma_j(\vartheta(\ttau)) 
\end{equation} 
We have $\tau(\ttau) \in H^*(X)(\!(\frq^{-1})\!)[\![\QW,\ttau]\!]$, $\varsigma_j(\ttau) \in H^*(Z)(\!(\frq^{-\frac{1}{r-1}})\!)[\![\QW,\ttau]\!]$ and 
\begin{align} 
\label{eq:leadingterms_composedmirrormaps} 
\begin{split} 
\tau(\ttau)|_{\QW=\ttau=0} & = \frq^{-1} [Z] + O(\frq^{-2}) \\ 
\varsigma_j(\ttau)|_{\QW= \ttau=0} & = -(r-1) \lambda_j + h_{Z,j} + O(\frq^{-\frac{1}{r-1}})
\end{split}  
\end{align} 
by \eqref{eq:varsigma_fin}, \eqref{eq:asymptotics_mirrormaps}.  
Using $\{\kappa_\tX(c_\alpha)\}$ as a basis of $H^*(\tX)$ and writing $\ttau^\alpha$ for the dual coordinates, Lemma \ref{lem:asymptotics_mirrormaps} implies that the Jacobian matrix is given by 
\begin{align}
\label{eq:Jacobianmatrix} 
\begin{split} 
\left.\parfrac{\tau}{\ttau^\alpha}(\ttau)\right|_{\QW=\ttau=0} 
& = \kappa_X(c_\alpha)+O(\frq^{-1})
= \begin{cases} 
\phi_{X,i} + O(\frq^{-1}) & \text{if $c_\alpha=c_i$;} \\ 
O(\frq^{-1}) & \text{if $c_\alpha = c_{l,m}$;} 
\end{cases} 
\\
\left. \parfrac{\varsigma_j}{\ttau^\alpha}(\ttau) \right|_{\QW=\ttau=0} 
& = \begin{cases} 
\imath^*\phi_{X,i} + O(\frq^{-\frac{1}{r-1}}) & \text{if $c_\alpha= c_i$;}\\ 
(-1)^l \lambda_j^{l+1} (\phi_{Z,m} + O(\frq^{-\frac{1}{r-1}})) & \text{if $c_\alpha = c_{l,m}$.}
\end{cases}
\end{split}  
\end{align}

Let $\QDM(\tX)^{\rm La} = H^*(X)[z](\!(\frq^{-1/\frs})\!)[\![\QW,\ttau]\!]$ denote the formal Laurent version of the quantum $D$-module of $\tX$, induced from $\QDM(\tX)^{\rm ext}$ in \S\ref{subsec:extended_QDM} by the further extension $\C[z][\![C_{\tX,\N}^\vee,\ttau]\!] \subset \C[z](\!(\frq^{-1/\frs})\!)[\![\QW,\ttau]\!]$ of rings.  
Restricting the decomposition $\bPsi$ in Theorem \ref{thm:decomposition} to the finite-dimensional slice $\sfH$, we obtain an isomorphism $\Psi$ of $\C[z](\!(\frq^{-1/\frs})\!)[\![\QW,\ttau]\!]$-modules with connections:   
\begin{equation} 
\label{eq:Psi_restricted_to_H}
\Psi \colon \QDM(\tX)^{\rm La} \to \tau^*\QDM(X)^{\rm La} \oplus \bigoplus_{j=0}^{r-2} \varsigma_j^* \QDM(Z)^{\rm La}  
\end{equation} 
where $\tau^*\QDM(X)^{\rm La}$, $\varsigma_j^*\QDM(Z)^{\rm La}$ are defined to be the modules $H^*(X)[z](\!(\frq^{-1/\frs})\!)[\![\QW,\ttau]\!]$, $H^*(Z)[z](\!(\frq^{-1/\frs})\!)[\![\QW,\ttau]\!]$ equipped with the pullbacks of the quantum connection by the maps $\ttau \mapsto \tau(\ttau)$, $\ttau \mapsto \varsigma_j(\ttau)$ in \eqref{eq:composed_change_of_variables} respectively. 
These pullbacks are well-defined due to the String and Divisor equations. 
The well-definedness can also be explained by the fact that the structures of $\QDM(X)^{\rm La}$ and $\QDM(Z)^{\rm La}$ are reduced to $\C[z][\![\QW,\tau]\!]$ and the ring $R$ in Remark \ref{rem:QDMZLa}, respectively, and that the pullbacks $\tau^* \colon \C[z][\![\QW,\tau]\!] \to \C[z](\!(\frq^{-1/\frs})\!)[\![\QW,\ttau]\!]$, $\varsigma_j^* \colon R\to \C[z](\!(\frq^{-1/\frs})\!)[\![\QW,\ttau]\!]$ are well-defined. 


Recall from \S\ref{subsec:N1_W} that the Novikov variable $\QW$ of $W$ is composed of the variables $Q,x,y$. We observe that $x$ and $y$ split off from the decomposition \eqref{eq:Psi_restricted_to_H}. 

\begin{proposition}
\label{prop:xy} 
The maps $\ttau\mapsto \tau(\ttau)$, $\ttau \mapsto \varsigma_j(\ttau)$ in \eqref{eq:composed_change_of_variables} are independent of $x$ and $y$ as elements of $H^*(X)(\!(\frq^{-1})\!)[\![\QW,\ttau]\!]$, $H^*(Z)(\!(\frq^{-\frac{1}{r-1}})\!)[\![\QW,\ttau]\!]$ respectively. 
The matrix entries of the isomorphism $\Psi$ in \eqref{eq:Psi_restricted_to_H} with respect to bases of $H^*(\tX)$, $H^*(X)$, $H^*(Z)$ are also independent of $x$ and $y$ as elements of  $\C[z](\!(\frq^{-1/\frs})\!)[\![\QW,\ttau]\!]$. 
\end{proposition}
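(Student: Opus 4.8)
\textbf{Proof plan for Proposition \ref{prop:xy}.}

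The plan is to trace the $(x,y)$-dependence through the entire construction of $\Psi$, exploiting the fact that $x$ and $y$ enter only through the Novikov variables $\QW$ of $W$ and the shift-operator bookkeeping, and to show they always appear in a rigid combination that is killed by the relevant Fourier transformation. First I would reduce the $x$-dependence and the $y$-dependence to separate, concrete assertions about each building block. The key structural observation is that the discrete Fourier transform $\sfF_Y$ (for $Y = X, \tX$) intertwines the shift operator $\cS$ with multiplication by $S$, and more generally $\hcS^\beta$ with $\hS^\beta$ (see \eqref{eq:properties_discrete_Fourier} and Proposition \ref{prop:FT_QDM}(2)); similarly $\scrF_{Z,j}$ intertwines $\hcS^\beta$ with $\hS^\beta$ (Proposition \ref{prop:FT_Zj}(2)). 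Because the slice $\sfH$ consists of classes $c_i = \hvarphi^*\pr_1^*\phi_{X,i}$ and $c_{l,m} = \hjmath_*(\hp^l\hpi^*\phi_{Z,m})$ that are pulled back / pushed forward from $X$-directions (carrying no $\PP^1$-fiber information), the variable $\vartheta$ carries no $x$- or $y$-weight, so the only source of $x$ and $y$ in $J_W(\vartheta)$ is the Novikov variable $\QW = (Q,x,y)$ itself, recorded by $N_1(W) \subset N_1^T(W)$.

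Next I would make the "rigid combination" precise. Recall $C_{\tX,\N}^\vee$ is generated over $\NEN(W)$ by $S$ and $\frq = yS^{-1}$ (see \eqref{eq:dualmonoids}), and $\QDM_T(W)_\tX\sphat$ is built by adjoining $S$ and $yS^{-1}$; then the base change to $\C[z](\!(\frq^{-1/\frs})\!)[\![\QW,\btheta]\!]$ in Theorem \ref{thm:decomposition} introduces $\frq^{\pm 1/\frs}$. Now in $C_{\tX,\N}^\vee$ we have $x = (yS^{-1})\cdot(xy^{-1})\cdot S$ and, writing $\frq = yS^{-1}$, the effective class $xy^{-1} \in \NEN(W)$ (a line in $Z\times\PP^1$, see Table \ref{tab:homology_classes}) is already a Novikov monomial of $W$ that is \emph{not} divisible by $x$ or $y$ in any of the completed rings once we have inverted nothing but $\QW$-monomials... more carefully: on the $\tX$-side the relevant ring $\C[z](\!(\frq^{-1/\frs})\!)[\![\QW,\ttau]\!]$ should be re-examined, and I would show it is generated by $Q$, $\frq^{\pm 1/\frs}$, $xy^{-1}$, and $\varphi_*$-pushforward classes $\QW^{i_{\tX*}\td}S^{[D]\cdot\td}$ of $\tX$, all of which are monomials in $Q$, $xy^{-1}$, $\frq$ but never involve $x$ or $y$ \emph{individually}. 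The substance of the argument is therefore: (i) the "mirror maps" $\tau(\theta)$, $\ttau(\theta)$, $\varsigma_j(\theta)$ coming from Corollaries \ref{cor:Fourier_transform_JW_GIT} and \ref{cor:Fourier_transform_JW}, being supported on $C_{X,\N}^\vee$, $C_{\tX,\N}^\vee$, and expressed via $\frq^{\pm 1/\frs}$ respectively, involve $x$ and $y$ only through $Q$, $xy^{-1}$, $\frq^{\pm 1/\frs}$; hence after composing with $\vartheta(\ttau)$ in \eqref{eq:composed_change_of_variables} they remain independent of $x$ and $y$; (ii) the same holds for the matrix entries of $\FT_X\sphat$, $\FT_{Z,j}\sphat$, $(\FT_\tX\sphat)^{-1}$ with respect to the bases $\{c_\alpha\}$, $\{\phi_{X,i}\}$, $\{\phi_{Z,m}\}$, since these are obtained from $v(\theta)$, $u_j(\theta)$, $\tv(\theta)$ (and their $\nabla_{\vartheta^\alpha}$-derivatives) by the same substitution.

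Concretely, I would argue as follows. In $\C[\![C_{\tX,\N}^\vee]\!]$ the dual Kirwan map identifies $\tQ^\td$ with $Q^{\varphi_*\td}(y^{-1}S)^{[D]\cdot\td}$; note $y^{-1}S = \frq^{-1}$, so the Novikov variable of $\tX$ is $Q^{\varphi_*\td}\frq^{-[D]\cdot\td}$, visibly independent of $x$ and $y$. The generators of $\NEN(W)$ are $(d,0,0,0) \leftrightarrow Q^d$, $(\varphi_*\td,0,-[D]\cdot\td,0) \leftrightarrow Q^{\varphi_*\td}\,y^{-[D]\cdot\td}$ wait --- this one \emph{does} involve $y$; but in $C_{\tX,\N}^\vee$ one has $y = \frq\cdot S$, and $S$ is the shift variable which on $\QDM_T(W)_\tX\sphat$ has been absorbed into the connection (via $[\lambda,\bS] = z\bS$, so $S$ acts trivially on cohomology-valued coefficients after passing to $\QDM_T(W)_\tX\sphat$, which is a module over $\C[z][\![C_{\tX,\N}^\vee,\btheta]\!]$ in which $S$ is an honest ring element). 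So the correct statement is subtler: $y$ and $x$ each appear, but the combination that survives in $\C[z](\!(\frq^{-1/\frs})\!)[\![\QW,\ttau]\!]$ after the base change of Theorem \ref{thm:decomposition} is exactly one where $x,y$ are eliminated in favour of $Q$, $xy^{-1}$, $\frq^{\pm1/\frs}$ — because that base-change ring, spelled out, is $\C[\frq^{\pm 1/\frs}][\![Q, xy^{-1}, \{\varphi_*\text{-classes}\}, \ttau]\!][\![z]\!]$, in which neither $x$ nor $y$ is a coordinate. I would verify this presentation of the ring directly from \eqref{eq:dualmonoids}: $\NEN^T(W)$ is generated by $\QW^\delta$, $S$, $xS^{-1}$, and the $\tX$-side localization adjoins $\frq = yS^{-1}$; solving, $\{S, xS^{-1}, yS^{-1}\}$ together with $Q$ and the $\tX$-classes generate, and $x = S\cdot(xS^{-1})$, $y = S\cdot\frq$, so modulo the ideal generated by $S$ (which is where the completion $\frakm$-adically lives) the generators become $Q, xy^{-1} = (xS^{-1})\cdot\frq^{-1}, \frq^{\pm}$ — confirming that in the completed/base-changed ring $x$ and $y$ do not appear as free variables. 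Once this ring-theoretic fact is nailed down, the proposition is immediate: every object in the construction ($\tau, \ttau, \varsigma_j, v, u_j, \tv$, the matrix of $\Psi$) is by Corollaries \ref{cor:Fourier_transform_JW} and \ref{cor:Fourier_transform_JW_GIT} and Theorem \ref{thm:decomposition} an element of this ring tensored with the appropriate cohomology, hence manifestly $x$- and $y$-independent.

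\textbf{The main obstacle} I anticipate is precisely pinning down the interaction between the shift variable $S$ and the Novikov variables $x, y$ in the completed module $\QDM_T(W)_\tX\sphat$: one must be careful that "independence of $x$ and $y$" is asserted \emph{after} the base change to $\C[z](\!(\frq^{-1/\frs})\!)[\![\QW,\ttau]\!]$ of Theorem \ref{thm:decomposition} (where the ring genuinely has $x,y$ absent as coordinates) and not in the intermediate object $\QDM_T(W)_\tX\sphat$ (where $S$, and hence $x = S\cdot xS^{-1}$ and $y = S\cdot\frq$, still make sense). Getting the bookkeeping of monoid generators in \eqref{eq:dualmonoids} exactly right, and checking that the mirror maps from Corollary \ref{cor:Fourier_transform_JW_GIT} — which a priori are $\C[\![C_{Y,\N}^\vee]\!]$-valued — land in the subring generated by $Q$, $xy^{-1}$, $\frq^{\pm 1/\frs}$ and the $\varphi_*$-pushforward classes, is where the real care is needed; everything else is a formal consequence of the intertwining properties already established.
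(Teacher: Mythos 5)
Your proposal does not establish the proposition, and the central claim on which it rests is false. You assert near the end that the base-change ring $\C[z](\!(\frq^{-1/\frs})\!)[\![\QW,\ttau]\!]$ can be presented as $\C[\frq^{\pm 1/\frs}][\![Q, xy^{-1}, \ldots, \ttau]\!][\![z]\!]$ ``in which neither $x$ nor $y$ is a coordinate.'' This is not so. By the definition in \S\ref{subsec:N1_W}, $\QW$ collectively denotes $(Q,x,y)$, and the monoid $\NEN(W)$ (Lemma \ref{lem:NEN_W}) contains both $(0,0,1,0)=y$ (a line in a fibre of $\hD \to Z$) and $(0,1,-1,0)=xy^{-1}$, hence also $(0,1,0,0)=x$. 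The ring $\C[z](\!(\frq^{-1/\frs})\!)[\![\QW,\ttau]\!]$ therefore does contain $x$, $y$, $\frq^{\pm 1/\frs}$ as genuinely independent formal variables, and the notion ``independent of $x$ and $y$'' picks out a proper subring, as Lemma \ref{lem:xy} makes precise. If your ring-presentation claim were true, Proposition \ref{prop:xy} would be a tautology about the ambient ring, but it is instead a nontrivial statement about the specific elements $\tau(\ttau)$, $\varsigma_j(\ttau)$, and the matrix entries of $\Psi$, which a priori lie in a ring where $x,y$ are honest coordinates. Nothing in the support statements of Corollaries \ref{cor:Fourier_transform_JW} and \ref{cor:Fourier_transform_JW_GIT} (being supported on $C_{X,\N}^\vee$ or $C_{\tX,\N}^\vee$) implies $x,y$-independence, since $x,y \in \NEN(W) \subset C_{Y,\N}^\vee$; and the intertwining of $\hcS^\beta$ with $\hS^\beta$ does not control the $x,y$-dependence introduced through the Novikov variables of $W$ inside $M_W(\theta)$ or $J_W(\theta)$. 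You flag exactly this difficulty as ``the main obstacle'' but then do not resolve it; the resolution you sketch (``modulo the ideal generated by $S$ the generators become $Q$, $xy^{-1}$, $\frq^{\pm}$'') is not valid reasoning about the base-changed ring, where one is not working modulo $S$.

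The paper's actual proof is of a completely different nature: a flatness/residue argument. One observes that in the coordinate system $(Q,x,y,\frq)$ the vector fields $x\partial_x$ and $y\partial_y$ correspond to the classes $[X]$ and $\lambda-[\hD]$ in $H^2_T(W)$, and both of these are annihilated by all three maps $\kappa_X$, $\kappa_\tX$, $\kappa_Z$ appearing in the connection formulas \eqref{eq:pull-back_qconn} and \S\ref{subsubsec:projection_Z}. Consequently the quantum connection in the $x\partial_x$ and $y\partial_y$ directions on $\QDM(\tX)^{\rm La}$ is just the bare derivative (no quantum-product term), whereas on the $X$- and $Z$-sides the only extra term is the $z^{-1}$-weighted mirror-map Jacobian term $z^{-1}(x\partial_x\tau(\ttau))\star_{\tau(\ttau)}$ (resp.\ with $\varsigma_j$). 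Since $\Psi$ intertwines the connections and is regular at $z=0$ (with invertible leading term), comparing $z^{-1}$-residues forces $x\partial_x\tau = y\partial_y\tau = 0$ and likewise for $\varsigma_j$; with the $\star$-terms gone, $\Psi$ then commutes with $x\partial_x$ and $y\partial_y$ outright, giving the conclusion. Your sketch misses this entire mechanism; without it there is no way to rule out a genuine $x$- or $y$-dependence.
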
 
\begin{proof} 
Let $x\partial_x$, $y\partial_y$ denote the partial derivatives with respect to the coordinate system $(Q,x,y, \frq = yS^{-1})$. They are given by $x\partial_x = (0,1,0,0) \hS \partial_{\hS} = [X] \hS \partial_{\hS}$ and $y\partial_y = (0,0,1,1) \hS \partial_{\hS} = (\lambda- [\hD]) \hS \partial_{\hS}$ in the notation of \S\ref{subsec:N1_W} and \S\ref{subsec:extended_QDM}. Let $\nabla^{\tX}$, $\nabla^{X}$, $\nabla^{Z,j}$ denote the quantum connections on $\QDM(\tX)^{\rm La}$,  $\tau^*\QDM(X)^{\rm La}$, $\varsigma_j^* \QDM(Z)^{\rm La}$ respectively.  Using the fact that the images of $[X], \lambda -[\hD]\in H^2_T(W)$ under the maps $\kappa_X$, $\kappa_\tX$, $\kappa_Z$ are all zero (see \S\ref{subsec:Kirwan}, \eqref{eq:pull-back_qconn} and \S\ref{subsubsec:projection_Z}), we have 
\begin{align*} 
\nabla^\tX_{x\partial_x} & = x\partial_x, &  \nabla^X_{x\partial_x} & = x\partial_x + z^{-1}(x \partial_x \tau(\ttau))\star_{\tau(\ttau)}, & 
\nabla^{Z,j}_{x\partial_x} & = x\partial_x + z^{-1} (x\partial_x \varsigma_j(\ttau))\star_{\varsigma_j(\ttau)} \\ 
\nabla^\tX_{y\partial_y} & = y\partial_y, &  \nabla^X_{y\partial_y} & = y\partial_y + z^{-1}(y \partial_y \tau(\ttau))\star_{\tau(\ttau)}, & 
\nabla^{Z,j}_{y\partial_y} & = y\partial_y + z^{-1} (y\partial_y \varsigma_j(\ttau))\star_{\varsigma_j(\ttau)} 
\end{align*} 
with respect to the standard trivialization. Therefore $z \nabla^{\tX}_{x\partial_x}$, $z \nabla^\tX_{y\partial_y}$ vanish along $z=0$. Since the isomorphism $\Psi$ intertwines the connections and $\tau(\ttau)$, $\varsigma_j(\tau)$ are independent of $z$, it follows that $\tau(\ttau)$, $\varsigma_j(\tau)$ are independent of $x$ and $y$. Therefore, the isomorphism $\Psi$ commutes with $x\partial_x$ and $y\partial_y$. This implies the conclusion. 
\end{proof} 

The following lemma is immediate from the formula $\QW^{\delta} = Q^{\pr_{1*} \hvarphi_*\delta} x^{[X] \cdot \delta} y^{-[\hD] \cdot \delta}$ for $\delta \in \NEN(W)$ (see \S\ref{subsec:N1_W}). 

\begin{lemma} 
\label{lem:xy} 
Let $K$ be a graded ring. The elements of $K[\![\QW]\!]=K[\![\NEN(W)]\!]$ that are independent of $x,y$ are precisely the elements of the subring $K[\![Q]\!] = K[\![\NEN(X)]\!] \subset K[\![\QW]\!]$, where $Q^d$ is identified with $\QW^{i_{X*}d}$. 
\end{lemma}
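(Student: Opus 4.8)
The plan is to reduce the statement to a combinatorial identity about the monoid $\NEN(W)$ and then invoke the generators provided by Lemma \ref{lem:NEN_W}. First I would recall that the monomials $\{\QW^\delta\}_{\delta\in\NEN(W)}$ form a $K$-basis of the monoid algebra $K[\NEN(W)]$, and that the map $\delta\mapsto(\pr_{1*}\hvarphi_*\delta,\,[X]\cdot\delta,\,-[\hD]\cdot\delta)$ of \S\ref{subsec:N1_W} is injective with $\QW^\delta=Q^dx^ky^l$ for $\delta=(d,k,l,0)$. Consequently an element $f=\sum_{\delta\in\NEN(W)}a_\delta\QW^\delta$ of $K[\![\QW]\!]$ is independent of $x$ and $y$ exactly when $a_\delta=0$ whenever $[X]\cdot\delta\neq0$ or $[\hD]\cdot\delta\neq0$; this passes harmlessly to the graded completion because $K[\![Q]\!]\hookrightarrow K[\![\QW]\!]$ is a graded inclusion. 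So it suffices to prove
\[
\{\delta\in\NEN(W):[X]\cdot\delta=[\hD]\cdot\delta=0\}=i_{X*}\NEN(X),
\]
and then to observe that $\QW^{i_{X*}d}=Q^d$, which identifies the right-hand set with the subring $K[\![Q]\!]$.

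For the inclusion ``$\supseteq$'' I would just note that $i_{X*}d=(d,0,0,0)$, so its intersection numbers with $[X]$ and $[\hD]$ both vanish (they are the second and third coordinates). For ``$\subseteq$'', I would take $\delta\in\NEN(W)$ with $[X]\cdot\delta=[\hD]\cdot\delta=0$ and write it as a nonnegative integral combination of the generators of Lemma \ref{lem:NEN_W},
\[
\delta=\sum_i a_i\,i_{X*}d_i+\sum_j b_j\,i_{\tX*}\td_j+c\,(0,1,-1,0),
\]
with $d_i\in\NEN(X)$, $\td_j\in\NEN(\tX)$, $a_i,b_j,c\in\N$, where $i_{\tX*}\td=(\varphi_*\td,0,-[D]\cdot\td,0)$. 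Pairing with $[X]$ annihilates every term except the last, giving $[X]\cdot\delta=c$, hence $c=0$. Pairing with $[\hD]$ then yields $[\hD]\cdot\delta=\sum_j b_j\,[D]\cdot\td_j$, so this sum is zero; therefore the middle piece $\sum_j b_j\,i_{\tX*}\td_j=\bigl(\sum_j b_j\varphi_*\td_j,\,0,\,-\sum_j b_j[D]\cdot\td_j,\,0\bigr)$ in fact equals $i_{X*}\bigl(\sum_j b_j\varphi_*\td_j\bigr)$, which lies in $i_{X*}\NEN(X)$ since $\sum_j b_j\varphi_*\td_j\in\NEN(X)$. Hence $\delta=i_{X*}\bigl(\sum_i a_i d_i+\sum_j b_j\varphi_*\td_j\bigr)\in i_{X*}\NEN(X)$, as required.

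I do not anticipate a real obstacle: the content is essentially bookkeeping, as the paper's phrasing ``immediate from the formula'' suggests. The only delicate points will be keeping the coordinate conventions of \S\ref{subsec:N1_W} straight (in particular that the third coordinate is $-[\hD]\cdot\delta$, and that the $(0,1,-1,0)$ generator is the one detected by $[X]$), and confirming that a formal series supported on $i_{X*}\NEN(X)$ genuinely converges in $K[\![Q]\!]$, which holds because $i_{X*}$ is degree-preserving ($c_1^T(W)\cdot i_{X*}d=c_1(X)\cdot d$ by Lemma \ref{lem:c_1(W)}).
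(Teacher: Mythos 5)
Your argument is correct and follows exactly the route the paper intends: after observing that $\QW^\delta = Q^{\pr_{1*}\hvarphi_*\delta} x^{[X]\cdot\delta} y^{-[\hD]\cdot\delta}$ identifies ``$x$-$y$-independent'' with ``supported on $\{\delta\in\NEN(W): [X]\cdot\delta=[\hD]\cdot\delta=0\}$'', you verify that this face of $\NEN(W)$ equals $i_{X*}\NEN(X)$ by pairing the generators of Lemma~\ref{lem:NEN_W} with $[X]$ and $[\hD]$. The paper simply declares the lemma ``immediate from the formula,'' so your proposal just makes that one-line justification explicit, including the (correct) check that the identification $Q^d\leftrightarrow\QW^{i_{X*}d}$ is degree-preserving and hence compatible with the graded completions.
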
 

By Proposition \ref{prop:xy} and Lemma \ref{lem:xy}, we have that $\tau(\ttau) \in H^*(X)(\!(\frq^{-1})\!)[\![Q,\ttau]\!]$ and $\varsigma_j(\ttau) \in H^*(Z) (\!(\frq^{-\frac{1}{r-1}})\!)[\![Q,\ttau]\!]$ and that the decomposition $\Psi$ in \eqref{eq:Psi_restricted_to_H} is reduced to the smaller ring $\C[z](\!(\frq^{-1/\frs})\!)[\![Q,\ttau]\!]$.  In light of this, we define ``smaller'' formal Laurent versions of the quantum $D$-modules as follows: 
\begin{align}
\label{eq:QDMla}
\begin{split}  
\QDM(\tX)^{\rm la} & := H^*(\tX)[z](\!(\frq^{-1/\frs})\!)[\![Q,\ttau]\!], \\
\QDM(X)^{\rm la} & := H^*(X)[z](\!(\frq^{-1/\frs})\!)[\![Q,\tau]\!] \\ 
\QDM(Z)^{\rm la} & := H^*(Z)[z](\!(\frq^{-1/\frs})\!)[\![Q,\sigma]\!] \\ 
\tau^*\QDM(X)^{\rm la} & := H^*(X)[z](\!(\frq^{-1/\frs})\!)[\![Q,\ttau]\!], \\ 
\varsigma_j^* \QDM(Z)^{\rm la} & :=  H^*(Z)[z](\!(\frq^{-1/\frs})\!)[\![Q,\ttau]\!]. 
\end{split} 
\end{align} 
Here, $\QDM(\tX)^{\rm la}$ is  induced from $\QDM(\tX)$ by the ring extension 
\begin{equation} 
\label{eq:extension_QDMtXla}
\C[z][\![\tQ,\ttau]\!] \hookrightarrow \C[z](\!(\frq^{-1/\frs})\!)[\![Q,\ttau]\!], \quad 
\tQ^\td \mapsto  Q^{\varphi_*\td} \frq^{-[D]\cdot \td}
\end{equation} 
that fits into the diagram\footnote{Recall that $\QDM(\tX)^{\rm ext}$ was induced from $\QDM(\tX)$ by the ring extension in the left arrow (\S\ref{subsec:extended_QDM}) and $\QDM(\tX)^{\rm La}$ was induced from $\QDM(\tX)^{\rm ext}$ by the ring extension in the bottom arrow.}   
\[
\xymatrix{
\C[z][\![\tQ,\ttau]\!] \ar@{^{(}->}[r] \ar@{^{(}->}[d]^{\kappa_\tX^*} & \C[z](\!(\frq^{-1/\frs})\!)[\![Q,\ttau]\!] \ar@{^{(}->}[d]  \\ 
\C[z][\![C_{\tX,\N}^\vee,\ttau]\!] \ar@{^{(}->}[r] &  \C[z](\!(\frq^{-1/\frs})\!)[\![\QW,\ttau]\!]. 
}
\]
Similarly, $\QDM(X)^{\rm la}$ is induced from $\QDM(X)$ by the ring extension 
\begin{equation}
\label{eq:extension_QDMXla}  
\C[z][\![Q,\tau]\!]\hookrightarrow \C[z](\!(\frq^{-1/\frs})\!)[\![Q,\tau]\!], \quad Q^d \mapsto Q^d 
\end{equation} 
that fits into the diagram 
\[
\xymatrix{
\C[z][\![Q,\tau]\!] \ar@{^{(}->}[r] \ar@{^{(}->}[d]^{\kappa_X^*} & \C[z](\!(\frq^{-1/\frs})\!)[\![Q,\tau]\!] \ar@{^{(}->}[d]  \\ 
\C[z][\![C_{X,\N}^\vee,\tau]\!] \ar@{^{(}->}[r] &  \C[z](\!(\frq^{-1/\frs})\!)[\![\QW,\tau]\!], 
}  
\]
and $\tau^*\QDM(X)^{\rm la}$ is its pullback by $\tau=\tau(\ttau)$. Likewise, $\QDM(Z)^{\rm la}$ is induced from $\QDM(Z)$ by the ring extension 
\begin{equation} 
\label{eq:extension_QDMZla} 
\C[z][\![Q_Z,\sigma]\!] \to \C[z](\!(\frq^{-1/\frs})\!)[\![Q,\sigma]\!], \quad 
Q_Z^d\mapsto Q^{\imath_*d} \frq^{-\rho_Z \cdot d/(r-1)}, 
\end{equation} 
through which the homomorphism \eqref{eq:extension_Novikov_Z} factors, and $\varsigma_j^*\QDM(Z)^{\rm la}$ is its pullback by $\sigma = \varsigma_j(\ttau)$. The quantum connections on these modules are naturally induced by these ring extensions and pullbacks, where $\rho_Z = c_1(\cN_{Z/X})$. 
Explicitly, the quantum connection on $\QDM(\tX)^{\rm la}$ is given by 
\begin{align}
\label{eq:qconn_QDMtXla}  
\begin{split} 
\nabla_{\ttau^\alpha} &= \partial_{\ttau^\alpha} + z^{-1} (\phi_{\tX,\alpha} \star_\ttau), \\ 
\nabla_{z\partial_z} & = z\partial_z - z^{-1} (E_\tX \star_{\ttau}) + \mu_\tX,\\ 
\nabla_{\xi Q\partial_Q} & = \xi Q\partial_Q + z^{-1} (\varphi^*\xi)\star_\ttau, \\ 
\nabla_{\frq \partial_\frq} & = \frq \partial_\frq  - z^{-1}[D]\star_\ttau 
\end{split} 
\intertext{where $\phi_{\tX,\alpha}$ is a basis of $H^*(\tX)$ and $\ttau^\alpha$ is the coordinate dual to $\phi_{\tX,\alpha}$, the quantum connection on $\tau^*\QDM(X)^{\rm la}$ is given by}
\label{eq:qconn_QDMXla}  
\begin{split} 
\nabla_{\ttau^\alpha} &= \partial_{\ttau^\alpha} + z^{-1} (\partial_{\ttau^\alpha}\tau(\ttau)) \star_{\tau(\ttau)} \\ 
\nabla_{z\partial_z} & = z\partial_z - z^{-1} (E_X \star_{\tau(\ttau)}) + \mu_X\\ 
\nabla_{\xi Q\partial_Q} & = \xi Q\partial_Q + z^{-1} (\xi\star_{\tau(\ttau)}) + z^{-1} (\xi Q\partial_Q \tau(\ttau))\star_{\tau(\ttau)},  \\
\nabla_{\frq \partial_\frq} & = \frq \partial_\frq + z^{-1} (\frq \partial_\frq \tau(\ttau))\star_{\tau(\ttau)} 
\end{split} 
\intertext{and the quantum connection on $\varsigma_j^*\QDM(Z)^{\rm la}$ is given by}
\label{eq:qconn_QDMZla} 
\begin{split} 
\nabla_{\ttau^\alpha} & = \partial_{\ttau^\alpha} + z^{-1} (\partial_{\ttau^\alpha}\varsigma_j(\ttau)) \star_{\tau(\ttau)} \\ 
\nabla_{z\partial_z} & = z\partial_z - z^{-1} (E_Z \star_{\varsigma_j(\ttau)}) + \mu_Z\\ 
\nabla_{\xi Q\partial_Q} & = \xi Q\partial_Q + z^{-1} (\imath^*\xi\star_{\varsigma_j(\ttau)}) + z^{-1} (\xi Q\partial_Q \varsigma_j(\ttau))\star_{\varsigma_j(\ttau)},  \\ 
\nabla_{\frq \partial_\frq} & = \frq \partial_\frq - z^{-1}(r-1)^{-1} (\rho_Z\star_{\varsigma_j(\ttau)}) + z^{-1} (\frq \partial_\frq \varsigma_j(\ttau))\star_{\varsigma_j(\ttau)} 
\end{split} 
\end{align} 
where $\xi \in H^2(X)$, $\imath \colon Z\to X$ is the inclusion and the quantum products $\star$ in \eqref{eq:qconn_QDMtXla}, \eqref{eq:qconn_QDMXla}, \eqref{eq:qconn_QDMZla} are the push-forwards of those for $\tX$, $X$, $Z$ along the ring extensions \eqref{eq:extension_QDMtXla},  \eqref{eq:extension_QDMXla}, \eqref{eq:extension_QDMZla} respectively. 
The pullbacks of the quantum connection by $\tau(\ttau)$, $\varsigma_j(\ttau)$ are well-defined, as explained previously (see the discussion after \eqref{eq:Psi_restricted_to_H}). 

Summarizing the above discussions, we arrive at the following conclusion. 

\begin{theorem} 
\label{thm:decomposition_fd} 
There exist maps $H^*(\tX) \to H^*(X)$, $\ttau\mapsto \tau(\ttau)\in H^*(X)(\!(\frq^{-1})\!)[\![Q,\ttau]\!]$ and $H^*(\tX) \to H^*(Z)$, $\ttau \mapsto \varsigma_j(\ttau) \in H^*(Z)(\!(\frq^{-\frac{1}{r-1}})\!)[\![Q,\ttau]\!]$, $0\le j\le r-2$, and an isomorphism $\Psi$ of $\C[z](\!(\frq^{-1/\frs})\!)[\![Q,\ttau]\!]$-modules 
\[
\Psi \colon \QDM(\tX)^{\rm la} \to \tau^*\QDM(X)^{\rm la} \oplus \bigoplus_{j=0}^{r-2} \varsigma_j^* \QDM(Z)^{\rm la}  
\]
satisfying the following properties: 
\begin{itemize} 
\item[(1)] $\Psi$ commutes with the quantum connection; 
\item[(2)] $\Psi$ intertwines the pairing $P_\tX$ with $P_X \oplus P_Z^{\oplus (r-1)}$; 
\item[(3)] the first component of $\Psi$ $($mapping to $\tau^*\QDM(X)^{\rm la}$$)$ is homogeneous of degree zero and the second $($mapping to $\bigoplus_{j=0}^{r-2} \varsigma_j^* \QDM(Z)^{\rm la}$$)$ is homogeneous of degree $-r$; 
\item[(4)] $\Psi$ has the following asymptotics $($see $\S\ref{subsec:cohomology}$ for the notation$)$: 
\begin{align*}
\Psi(\varphi^*\phi_{X,i})|_{Q=\ttau=0} & = 
\left(\phi_{X,i} + O(\frq^{-1}), 
\left\{ q_{Z,j} (\phi_{X,i}|_Z + O(\frq^{-\frac{1}{r-1}})) \right\}_{0\le j\le r-2}\right) \\ 
\Psi(\jmath_*(p^l \pi^*\phi_{Z,m}))|_{Q=\ttau=0} 
& = \left( O(\frq^{-1}), \left\{ q_{Z,j}  (-1)^l \lambda_j^{l+1} (\phi_{Z,m}+ O(\frq^{-\frac{1}{r-1}})) \right\}_{0\le j\le r-2}\right) 
\end{align*} 
\item[(5)] the maps $\tau(\ttau)$, $\varsigma_j(\ttau)$ are homogeneous of degree $2$; 

\item[(6)] $\tau(\ttau)|_{Q=\ttau=0} =  \frq^{-1}[Z] + O(\frq^{-2})$ and $\varsigma_j(\ttau)|_{Q=\ttau=0} =  -(r-1) \lambda_j + h_{Z,j} + O(\frq^{-\frac{1}{r-1}})$; 

\item[(7)]  the Jacobian matrix of $(\tau(\ttau), \varsigma_j(\ttau))$ at $Q=\ttau=0$ is given in \eqref{eq:Jacobianmatrix} and is invertible, where $\{\ttau^\alpha\}$ are the coordinates dual to the basis $\{\kappa_\tX(c_\alpha)\}=\{\varphi^*\phi_{X,i}, \jmath_*(p^l \pi^*\phi_{Z,m})\}$ of $H^*(\tX)$ with $c_\alpha=\{c_i,c_{l,m}\}$ given in \eqref{eq:basis_QDMTW}.  
\end{itemize} 
Here the formal Laurent forms $\QDM(\tX)^{\rm la}$, $\tau^*\QDM(X)^{\rm la}$, $\varsigma_j^*\QDM(Z)^{\rm la}$ are described in \eqref{eq:QDMla} and \eqref{eq:qconn_QDMtXla}--\eqref{eq:qconn_QDMZla}, 
$\frs$ is given in \eqref{eq:frs},  
$q_{Z,j}, h_{Z,j}$ are given in \eqref{eq:qZj_hZj}, $\lambda_j = e^{-\frac{2\pi \iu}{r-1} (j+ \frac{r}{2})}\frq^{\frac{1}{r-1}}$, and $\{\phi_{X,i}\}, \{\phi_{Z,m}\}$ are homogeneous bases of $H^*(X)$, $H^*(Z)$ respectively. 
\end{theorem}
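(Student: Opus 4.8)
\textbf{Proof plan for Theorem \ref{thm:decomposition_fd}.}
The plan is to simply assemble Theorem \ref{thm:decomposition_fd} from the global decomposition $\bPsi$ of Theorem \ref{thm:decomposition} by restricting the $W$-parameter $\theta$ to the finite-dimensional slice $\sfH = \langle c_i, c_{l,m}\rangle$ and then showing that the variables $x,y$ drop out. Concretely, I would first record the restricted ``mirror maps'' $\ttau(\vartheta)$, $\tau(\vartheta)$, $\varsigma_j(\vartheta)$ and their leading-order behaviour along $\QW=0$ and $\vartheta=0$ — this is exactly Lemma \ref{lem:asymptotics_mirrormaps}, which follows from Lemma \ref{lem:sfF_MW_c}, Lemma \ref{lem:image_under_FTtX_limit}, \eqref{eq:Fourier_JW_for_X_tX}, \eqref{eq:Fourier_JW_for_Z} and the initial-condition relations \eqref{eq:initialcond_tau_v}, \eqref{eq:initialcond_mirrormap}. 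From the first line of \eqref{eq:asymptotics_mirrormaps} the Jacobian $\partial\ttau/\partial\vartheta$ at $\QW=\vartheta=0$ is the identity on $H^*(\tX)$ up to $O(\frq^{-1})$, so $\vartheta\mapsto\ttau(\vartheta)$ is formally invertible over $\C(\!(\frq^{-1})\!)[\![\QW]\!]$; composing the inverse $\ttau\mapsto\vartheta(\ttau)$ with $\tau(\vartheta)$ and $\varsigma_j(\vartheta)$ gives the maps $\tau(\ttau)$, $\varsigma_j(\ttau)$ of \eqref{eq:composed_change_of_variables}, and their leading terms \eqref{eq:leadingterms_composedmirrormaps} and Jacobian \eqref{eq:Jacobianmatrix} are read off from \eqref{eq:varsigma_fin} and \eqref{eq:asymptotics_mirrormaps}. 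Restricting $\bPsi$ to $\sfH$ and precomposing with $\ttau\mapsto\vartheta(\ttau)$ then produces the isomorphism $\Psi$ of \eqref{eq:Psi_restricted_to_H} over $\C[z](\!(\frq^{-1/\frs})\!)[\![\QW,\ttau]\!]$, carrying connection and pairing by Theorem \ref{thm:decomposition}; one must check at this point that the pullbacks of the connections by $\tau(\ttau)$, $\varsigma_j(\ttau)$ are well-defined despite the non-trivial constant terms $\frq^{-1}[Z]$ and $-(r-1)\lambda_j+h_{Z,j}$, which follows from the String and Divisor equations (equivalently, from the reduction of the $D$-module structures to the smaller rings as in Remark \ref{rem:QDM}, Remark \ref{rem:QDMZLa}).

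The second half of the argument is the ``$x,y$ split off'' statement of Proposition \ref{prop:xy}. Here I would compute the quantum connection of $\QDM(\tX)^{\rm La}$ in the directions $x\partial_x=[X]\hS\partial_\hS$ and $y\partial_y=(\lambda-[\hD])\hS\partial_\hS$, observe that $\kappa_\tX([X])=\kappa_\tX(\lambda-[\hD])=0$ (from the table in \S\ref{subsec:Kirwan}), so that $z\nabla^{\tX}_{x\partial_x}$ and $z\nabla^{\tX}_{y\partial_y}$ vanish at $z=0$; since $\Psi$ is a flat isomorphism of $z$-connections and the functions $\tau(\ttau)$, $\varsigma_j(\ttau)$ are $z$-independent, it follows that these functions are independent of $x,y$, and hence so are the matrix entries of $\Psi$. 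Combined with Lemma \ref{lem:xy} (which identifies the $x,y$-independent part of $K[\![\QW]\!]$ with $K[\![Q]\!]$), this lets me replace $\C[z](\!(\frq^{-1/\frs})\!)[\![\QW,\ttau]\!]$ by the smaller ring $\C[z](\!(\frq^{-1/\frs})\!)[\![Q,\ttau]\!]$ everywhere, and I define $\QDM(\tX)^{\rm la}$, $\tau^*\QDM(X)^{\rm la}$, $\varsigma_j^*\QDM(Z)^{\rm la}$ by the ring extensions \eqref{eq:extension_QDMtXla}–\eqref{eq:extension_QDMZla} with the explicit connections \eqref{eq:qconn_QDMtXla}–\eqref{eq:qconn_QDMZla}; here I should check that $\frq=yS^{-1}$ is indeed identified with the class $(y^{-1}S)$-extension via $\kappa_\tX^*$, i.e.\ $\tQ^{\td}\mapsto Q^{\varphi_*\td}\frq^{-[D]\cdot\td}$, and that the degree $\deg\frq=2(r-1)$ makes all extensions graded-compatible. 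Once $\Psi$ is known to be defined over the smaller ring, properties (1)–(7) are just bookkeeping: (1)–(2) are inherited from Theorem \ref{thm:decomposition}; (3) and (5) come from the homogeneity statements of Propositions \ref{prop:FTX_completion}, \ref{prop:FT_Zj} (i.e.\ $\FT_X\sphat$ has degree $0$, $\FT_{Z,j}\sphat$ has degree $-r$) together with the degree-two homogeneity of the mirror maps from Corollaries \ref{cor:Fourier_transform_JW}, \ref{cor:Fourier_transform_JW_GIT}; (4) follows from \eqref{eq:image_FTX_c}, \eqref{eq:image_FTZ_c} evaluated on the basis $c_i$, $c_{l,m}$ together with Lemma \ref{lem:image_under_FTtX_limit}; (6), (7) are \eqref{eq:leadingterms_composedmirrormaps}, \eqref{eq:Jacobianmatrix}.

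The main obstacle, I expect, is not in any single calculation but in the careful handling of the several different ``limit $\QW=0$'' operations and the several ground rings. As Remark \ref{rem:QW=0_subtlety} warns, the meaning of $\QW=0$ as a limit of power series in $\frq^{\pm1/\frs}$ and $\QW$ differs between $X$, $\tX$, $Z$ (for $\tX$ the class of a fibre line of $D\to Z$ is precisely $\frq$, so it does \emph{not} go to a constant), and between this section and the proof of Corollary \ref{cor:Fourier_transform_JW_GIT}; keeping these consistent while extracting leading terms in Lemma \ref{lem:asymptotics_mirrormaps} and in step (4) is the delicate point. A secondary subtlety is the well-definedness of the pullback connections: the constant shifts $h_{Z,j}$ and $-(r-1)\lambda_j$ in $\varsigma_j$ make $\varsigma_j^*$ ill-defined on all of $\C[z](\!(\frq^{-1/\frs})\!)[\![\QW,\sigma]\!]$, and one must invoke the reduction to the sub-ring $R$ (Remark \ref{rem:QDMZLa}) on which $\varsigma_j^*$ \emph{is} defined; this was already handled in \S\ref{subsec:Fourier_projections} and \S\ref{subsec:decomposition} but needs to be threaded through once more after passing to the $\frq$-variable and the $Q$-only Novikov ring. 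Everything else — invertibility of the change of variables, flatness, and the pairing — is a direct consequence of Theorem \ref{thm:decomposition} and the functoriality of pullbacks.
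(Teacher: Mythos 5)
Your proposal reproduces the paper's own argument essentially step by step: restrict $\bPsi$ of Theorem \ref{thm:decomposition} to the finite-dimensional slice $\sfH$, use Lemma \ref{lem:asymptotics_mirrormaps} to invert $\vartheta\mapsto\ttau(\vartheta)$ and form the composed mirror maps \eqref{eq:composed_change_of_variables}, then invoke Proposition \ref{prop:xy} and Lemma \ref{lem:xy} to descend from $\C[z](\!(\frq^{-1/\frs})\!)[\![\QW,\ttau]\!]$ to $\C[z](\!(\frq^{-1/\frs})\!)[\![Q,\ttau]\!]$, reading off properties (1)--(7) from Lemma \ref{lem:asymptotics_mirrormaps}, Lemma \ref{lem:image_under_FTtX_limit}, \eqref{eq:image_FTX_c}--\eqref{eq:image_FTZ_c} and the homogeneity statements of Propositions \ref{prop:FTX_completion}, \ref{prop:FT_Zj}. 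This matches \S\ref{subsec:restriction_slice}, and you also correctly flag the two genuine subtleties (the shifting meaning of the limit $\QW=0$ per Remark \ref{rem:QW=0_subtlety}, and well-definedness of the pullback connections via the reduction to a smaller ring), so the proposal is correct and takes the same route as the paper.
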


\subsection{Initial conditions for the decomposition} 
\label{subsec:initial_conditions} 
In this section, we discuss the initial conditions for the decomposition $\Psi$ and the maps $\tau(\ttau), \varsigma_j(\ttau)$ in Theorem \ref{thm:decomposition_fd} along $Q=\ttau=0$. Borrowing the idea of Katzarkov-Kontsevich-Pantev-Yu \cite{Kontsevich:Miami2020, Kontsevich:Simons2021} and Hinault-Yu-Zhang-Zhang \cite{HYZZ:framing}, we show that $\Psi$, $\tau(\ttau)$, $\varsigma_j(\ttau)$ can be uniquely reconstructed from these initial conditions and genus-zero Gromov-Witten invariants of $X$ and $Z$. This, in turn, provides an algorithm to reconstruct genus-zero Gromov-Witten invariants of $\tX$ from those of $X$ and $Z$. 

\begin{remark} 
We note that the restriction of $\QDM(\tX)$ to $Q= \ttau=0$, where we consider the initial condition for the decomposition, can be computed by counting only exceptional rational curves that are contracted to points under $\varphi \colon \tX \to X$. The remaining variable $\frq$ represents the class of the exceptional line. The function $\sfF_\tX(M_W(\theta) 1)|_{\QW=0}$ in Lemma \ref{lem:sfF_MW_c} serves as an $I$-function for this ``exceptional'' quantum $D$-module $\QDM(\tX)|_{Q=0}$.  
\end{remark} 

\subsubsection{Initial conditions} 
Recall that the decomposition $\Psi$ in Theorem \ref{thm:decomposition_fd} arises from the restriction to the finite-dimensional slice $\sfH\subset H^*_T(W)$ of the map $\bPsi$ in Theorem \ref{thm:decomposition}, which was defined as the composition $\Phi\circ (\FT_\tX\sphat)^{-1}$. The initial value $\Psi|_{Q=\ttau =0}$ equals $\bPsi|_{\QW=\vartheta = 0}$, as the point $\vartheta =0$ in $\sfH$ corresponds to the point $\ttau=0$ in $H^*(\tX)$ under the invertible change of variables $\ttau = \ttau(\vartheta)$ in \eqref{eq:ttau_tau_fin} (see \eqref{eq:asymptotics_mirrormaps}).  Since $(\FT_\tX\sphat)^{-1}|_{\QW=\vartheta=0}$ is a constant endomorphism sending the basis $\{\varphi^*\phi_{X,i}, \jmath_*(p^l \pi^*\phi_{Z,m})\}$ to the basis $\{c_\alpha\} = \{c_i,c_{l,m}\}$ in \eqref{eq:basis_QDMTW} by Lemma \ref{lem:image_under_FTtX_limit}, it suffices to determine the action of $\Phi|_{\QW=\vartheta=0}$ on these basis. 
Recall that $\Phi = \FT_X\sphat \oplus \bigoplus_{j=0}^{r-2} \FT_{Z,j}\sphat$. Using the commutative diagrams \eqref{eq:FT_sfF},
\eqref{eq:FT_scrF}, Lemma \ref{lem:sfF_MW_c} and the fact that $M_W(\vartheta) | _{\QW=\vartheta=0} = \id$, $M_X(\tau)|_{\QW=0} = e^{\tau/z}$, $M_Z(\sigma)|_{\QW=0} = e^{\sigma/z}$  (see also Remark \ref{rem:QW=0_subtlety}), we have 
\begin{align}
\label{eq:explicit_initial_condition}  
\begin{split} 
e^{\tau(\vartheta)/z} \FT_X\sphat(c_\alpha) |_{\QW=\vartheta=0} & = \kappa_X(c_\alpha) + \sum_{k>0} \frq^{-k}
\imath_*\left( \frac{\prod_{\nu=1}^{k-1} e_{-\nu z}(\cN_{Z/X})}{k!z^k}  \left[i_Z^*c_\alpha \right]_{\lambda=kz} \right), \\ 
e^{\sigma_j(\vartheta)/z} \FT_{Z,j}\sphat(c_\alpha) |_{\QW=\vartheta=0} & = \frq^{\rho_Z/((r-1)z)} \scrF_{Z,j} (c_\alpha). 
\end{split} 
\end{align} 
These power series belong to $H^*(X)[z,z^{-1}][\![\frq^{-1}]\!]$, $\frq^{-\frac{r}{2(r-1)}} \frq^{\frac{n_\alpha}{r-1}}  H^*(Z)[z,z^{-1}][\![\frq^{-\frac{1}{r-1}}]\!]$ respectively, where $n_\alpha = 0$ if $c_\alpha = c_i$ and $n_\alpha = l+1$ if $c_\alpha = c_{l,m}$ (see \eqref{eq:leadingterm_scrF}). Note that the right-hand sides of \eqref{eq:explicit_initial_condition} can be explicitly calculated by \emph{purely topological data}, such as the Chern classes of $\cN_{Z/X}$, without requiring curve counts (see \S\ref{subsubsec:formal_asymptotics} for $\scrF_{Z,j}$). 

When $c_\alpha=1$, the right-hand sides of \eqref{eq:explicit_initial_condition} are of the forms $1+O(\frq^{-1})$, $q_{Z,j} e^{h_{Z,j}/z}(1+O(\frq^{-\frac{1}{r-1}}))$ respectively and we can take their logarithms with respect to the cup product structure on $H^*(X)$, $H^*(Z)$. Because $\FT_X\sphat(1)$, $\FT_{Z,j}\sphat(1)$ are non-negative power series in $z$, the initial conditions $\tau(\ttau)|_{Q=\ttau=0} = \tau(\vartheta)|_{\QW= \vartheta=0}$, $\varsigma_j(\ttau)|_{Q=\ttau=0} = -(r-1)\lambda_j + \sigma_j(\vartheta)|_{\QW= \vartheta = 0}$ for the mirror maps are given by 
\begin{align}
\label{eq:initial_condition_mirrormaps} 
\begin{split} 
\tau^\circ:=\tau(\ttau)|_{Q=\ttau=0} & = [z^{-1}] \log \left( 1 + \sum_{k>0} \frq^{-k}
\frac{\imath_*(\prod_{\nu=1}^{k-1} e_{-\nu z}(\cN_{Z/X}))}{k!z^k}  \right), \\
\varsigma_j^\circ :=\varsigma_j(\ttau)|_{Q=\ttau=0} & = -(r-1) \lambda_j + [z^{-1}] \log  \left( \frq^{\rho_Z/((r-1)z)} \scrF_{Z,j}(1) \right) 
\end{split} 
\end{align} 
where $[z^{-1}](\cdots)$ means the coefficient of $z^{-1}$.  
The initial conditions for $\Psi$: 
\begin{align*} 
\Psi(\varphi^*\phi_{X,i})|_{Q=\ttau=0} & = \left(\FT_X\sphat(c_i), \{\FT_{Z,j}\sphat(c_i)\}_{0\le j\le r-2} \right)|_{\QW= \vartheta=0} \\ 
\Psi(\jmath_*(p^l \pi^*\phi_{Z,m}))|_{Q=\ttau=0} & = \left(\FT_X\sphat(c_{l,m}), \{\FT_{Z,j}\sphat(c_{l,m})\}_{0\le j\le r-2}\right)|_{\QW= \vartheta = 0}
\end{align*}  
are then determined by \eqref{eq:explicit_initial_condition}.  We denote it by 
\begin{equation} 
\label{eq:Psi_circ} 
\Psi^\circ = \Psi|_{Q=\ttau=0} \in \Hom(H^*(\tX), H^*(X)\oplus H^*(Z)^{\oplus (r-1)})[z](\!(\frq^{-1/\frs})\!). 
\end{equation} 

\subsubsection{Reconstruction} 
We can reconstruct the decomposition $\Psi$ from the above initial conditions $\tau^\circ,\varsigma_j^\circ,\Psi^\circ$, as follows. Using the initial condition \eqref{eq:initial_condition_mirrormaps}, we write 
\[
\tau(\ttau) = \tau^\circ + t (\ttau), \quad \varsigma_j(\ttau) = \varsigma_j^\circ + s_j(\ttau) 
\]
with $t(\ttau) \in H^*(X)(\!(\frq^{-1})\!)[\![Q,\ttau]\!], s_j(\ttau) \in H^*(Z)(\!(\frq^{-\frac{1}{r-1}})\!)[\![Q,\ttau]\!]$. The map $\ttau \mapsto (t(\ttau), s_0(\ttau),\dots,s_{r-2}(\ttau))$ gives a formal invertible change of variables over $\C(\!(\frq^{-\frac{1}{r-1}})\!)[\![Q]\!]$. Let $(t,s)=(t,s_0,\dots,s_{r-2}) \mapsto \ttau(t,s)$ denote the inverse map. Consider the external direct sum of quantum $D$-modules of $X$ and $Z$: 
\begin{align*} 
\cM& := (H^*(X)\oplus H^*(Z)^{\oplus (r-1)})\otimes \C[z](\!(\frq^{-1/\frs})\!)[\![Q,t,s_0,\dots,s_{r-2}]\!] \\
& \cong \QDM(X)^{\rm la} \boxplus \bigboxplus_{j=0}^{r-2} \QDM(Z)^{\rm la}   
\end{align*} 
where we identify the parameter $\tau\in H^*(X)$ for $\QDM(X)^{\rm la}$ and the parameter $\varsigma_j\in H^*(Z)$ for the $j$th summand $\QDM(Z)^{\rm la}$ as follows: 
\begin{equation} 
\label{eq:shift_of_coordinates}
\tau = \tau^\circ + t, \quad \varsigma_j = \varsigma_j^\circ + s_j. 
\end{equation} 
This shift of coordinates is well-defined as the structures of $\QDM(X)^{\rm la}$, $\QDM(Z)^{\rm la}$ can be reduced to smaller rings, as we discussed after \eqref{eq:Psi_restricted_to_H}. 
By Theorem \ref{thm:decomposition_fd}, we know that $\cM$ is isomorphic to the pullback $\ttau^*\QDM(\tX)^{\rm la}$ of $\QDM(\tX)$ by the map $(t,s) \mapsto \ttau(t,s)$: 
\begin{equation} 
\label{eq:isom_Psi} 
\Psi \colon \ttau^*\QDM(\tX)^{\rm la} \overset{\cong}{\longrightarrow} \cM. 
\end{equation} 
The isomorphism $\Psi|_{t=s=Q=0}$ is given by the initial condition $\Psi^\circ$ in \eqref{eq:Psi_circ}.  
We claim that the isomorphism $\Psi$ and the map $\ttau(t,s)$ can be reconstructed from the initial conditions $\Psi^\circ$, $\tau^\circ$, $\varsigma_j^\circ$ and the data $\QDM(X)$, $\QDM(Z)$. In the coordinate system $(t,s,Q)$ and with respect to the standard trivialization, the connection on $\cM$ is of the form: 
\[
z\nabla = z d + \begin{pmatrix} 
dt\star_\tau &  & & \\ 
& ds_0 \star_{\varsigma_0} & & \\
& & \ddots & \\ 
& & & ds_{r-2} \star_{\varsigma_{r-2}} 
\end{pmatrix} 
+ \sum_{i} 
\begin{pmatrix} 
\xi_i \star_\tau &  & & \\ 
& \imath^*\xi_i \star_{\varsigma_0} & & \\ 
& & \ddots & \\ 
& && \imath^*\xi_i \star_{\varsigma_{r-2}} 
\end{pmatrix} 
\frac{d Q^{d_i}}{Q^{d_i}} 
\]
where we expand $t = \sum_i t^i \phi_{X,i}$, $s_j = \sum_j s^i_j \phi_{Z,i}$ and write $dt \star_\tau = \sum_i dt^i (\phi_{X,i}\star_\tau)$, $ds_j\star_{\varsigma_j} = \sum_i ds^i_j (\phi_{Z,i}\star_{\varsigma_j})$ with the coordinate shift \eqref{eq:shift_of_coordinates} understood; $\{d_i\} \subset N_1(X)$ and $\{\xi_i\} \subset N^1(X)$ are mutually dual basis modulo torsions. 
Here we disregard the covariant derivatives with respect to the other variables $\frq$ and $z$. 
The above connection has nilpotent residues along $Q=0$ and hence admits a unique fundamental solution $M \in \End(H^*(X)\oplus H^*(Z)^{\oplus (r-1)})\otimes \C[z^{-1}](\!(\frq^{-1/\frs})\!)[\![Q,t,s]\!]$ satisfying 
\[
M|_{t=s=Q=0} = \id, \quad M \circ z \nabla = (zd + \xi dQ/Q) \circ M  
\]
where 
\[
\xi dQ/Q := \sum_i \begin{pmatrix} 
\xi_i \cup &  & & \\ 
& \imath^*\xi_i \cup & & \\ 
& & \ddots & \\ 
& && \imath^*\xi_i \cup 
\end{pmatrix} 
\frac{d Q^{d_i}}{Q^{d_i}}.  
\]
More explicitly, such a fundamental solution is given by\footnote{The term $(r-1) \lambda_j$ is added to $\varsigma_j^\circ$ and $\varsigma_j = \varsigma_j^\circ + s_j$ to ensure that the factors $e^{(\varsigma_j^\circ+(r-1)\lambda_j)/z}$ and $M_Z(\varsigma^j + (r-1)\lambda_j)$ are both well-defined over the ring $\C[z^{-1}](\!(\frq^{-1/\frs})\!)[\![Q,s]\!]$. See \eqref{eq:leadingterms_composedmirrormaps}. }
\[
M = 
\begin{pmatrix} 
e^{-\tau^\circ/z} M_X(\tau) &  & &  & \\
& \ddots & & & \\ 
& &  e^{-(\varsigma_j^\circ+(r-1)\lambda_j)/z}M_Z(\varsigma_j+(r-1)\lambda_j)  & \\
& & & \ddots 
\end{pmatrix} 
\]
where the coordinate shift \eqref{eq:shift_of_coordinates} is used and the fundamental solutions $M_X$, $M_Z$ of $X$ and $Z$ are pushed-forward along the ring extensions \eqref{eq:extension_QDMXla}, \eqref{eq:extension_QDMZla} respectively. 

The map $\Psi$ in \eqref{eq:isom_Psi} induces a fundamental solution $M\circ \Psi$ for $\ttau^*\QDM(\tX)^{\rm la}$. Consider $M' = (\Psi^\circ)^{-1} \circ M \circ \Psi \in \End(H^*(\tX))\otimes \C[z,z^{-1}](\!(\frq^{-1/\frs})\!)[\![Q,t,s]\!]$. Since $\Psi^\circ$ does not depend on $(t,s,Q)$, this is again a fundamental solution for $\ttau^*\QDM(\tX)^{\rm la}$ satisfying 
\[
M'|_{t=s=Q=0} = \id, \quad M' \circ z \ttau^*\nabla = (zd + \xi dQ/Q) \circ M' 
\]
where $\ttau^*\nabla$ denotes the quantum connection on $\ttau^*\QDM(\tX)^{\rm la}$ in the $(t,s,Q)$-direction (again ignoring the directions of $\frq$ and $z$). Since the connection matrices for $\ttau^*\nabla$ contain only negative powers of $z$ and $\ttau^*\nabla|_{z=\infty} = d$, it follows that $M'$ satisfies $M' = \id + O(z^{-1})$. We find that $M'$ and $\Psi^{-1}$ can be uniquely determined from the equation 
\[
M' \circ \Psi^{-1} = (\Psi^\circ)^{-1} \circ M  
\]
together with the condition that $M' = \id + O(z^{-1})$ and $\Psi^{-1}$ contains no negative powers of $z$. Viewing $z$ as a parameter of loops, we can interpret this equation as a \emph{Birkhoff factorization} of the loop group element $(\Psi^\circ)^{-1} \circ M$; $\Psi^{-1}$ is the positive Birkhoff factor of $(\Psi^\circ)^{-1} \circ M$ (see e.g.~\cite[Lemma 5.23]{CIJ} for a Birkhoff factorization of formal loops). Therefore the decomposition $\Psi$ can be reconstructed from the initial conditions. In particular, we can reconstruct the connection on $\ttau^*\QDM(\tX)^{\rm la}$ as the gauge transform of that on $\cM$ by $\Psi$. Finally, the relations 
\[
z\ttau^*\nabla_{t^i} 1 = \partial_{t^i} \ttau, \quad z \ttau^*\nabla_{s_j^i} 1 = \partial_{s_j^i} \ttau, \quad 
z \ttau^*\nabla_{\xi Q\partial_Q} 1 =\xi +  (\xi Q\partial_Q \ttau) 
\]
with $\xi \in N^1(X)$ give differential equations for $\ttau(t,s)$ with respect to $(t,s,Q)$ and they determine $\ttau(t,s)$ uniquely from the initial condition $\ttau(t,s) |_{t=s=Q=0} = 0$. This completes the reconstruction. 

\begin{remark} 
Katzarkov-Kontsevich-Pantev-Yu \cite{Kontsevich:Miami2020, Kontsevich:Simons2021} and Hinault-Yu-Zhang-Zhang \cite{HYZZ:framing} have demonstrated the unique reconstructibility of the decomposition using an ``extension of framing'' result. This section provides the initial condition for the decomposition, as stated in Theorem \ref{thm:decomposition_fd}, and, building on the ideas from these works, offers a reconstruction algorithm using Birkhoff factorization. The use of Birkhoff factorization in this context dates back to Coates-Givental \cite{Coates-Givental} and Guest \cite{Guest:D-mod} (see also \cite{Iritani:efc, Iritani:genmir}). 
\end{remark}

\subsection{Decomposition of $F$-manifolds} 
Theorem \ref{thm:decomposition_fd} implies a decomposition of quantum cohomology $F$-manifolds in the sense of Hertling-Manin \cite{Hertling-Manin:weak, Manin:F-manifolds}. 

In general, quantum cohomology of $X$ defines the structure of an $F$-manifold on the cohomology group $H^*(X)$ viewed as a formal supermanifold over the Novikov ring \cite{Manin:Frobenius_book}. The product structure $\star$ on the tangent sheaf can be uniquely determined from the quantum connection by the following properties: 
\[
z \nabla_v \circ z \nabla_w = z \nabla_{v\star w} + O(z)
\]
where $v,w$ are tangent vector fields on $H^*(X)$ (vector fields on the $\tau$-space). 
The Euler vector field 
\[
\vec{E}_X = \partial_{c_1(X)} + \sum_{i} \left(1-\frac{\deg \phi_i}{2}\right) \tau^i \partial_{\tau^i},
\]
is determined from the quantum connection by the property that $\nabla_{z\partial_z} + \nabla_{\vec{E}_X}$ is regular at $z=0$, where we used the notation $\vec{E}_X$ in place of $E_X$ to distinguish it from the Euler vector field \eqref{eq:Euler_grading} viewed as a section of the quantum $D$-module. 
Therefore the decomposition $\Psi$ of quantum connection induces a decomposition of the underlying $F$-manifolds with Euler vector fields. This proves Corollary \ref{cor:F-manifolds}.  

\begin{figure}[t] 
\centering 
\begin{tikzpicture}[x=0.8cm, y=0.8cm]
\fill[blue, opacity=0.15] (0,0) circle (1); 
\fill[brown, opacity=0.1] (1.44,2.5) circle (0.8); 
\fill[brown, opacity=0.1] (1.44,-2.5) circle (0.8); 
\fill[brown, opacity=0.1] (-2.88,0) circle (0.8); 

\fill (0.5,0) circle (0.07); 
\fill (0.25,0.43) circle (0.07); 
\fill (-0.25, 0.43) circle (0.07); 
\fill (-0.5,0) circle (0.07); 
\fill (-0.25,-0.43) circle (0.07); 
\fill (0.25,-0.43) circle (0.07); 

\draw (2.2,0) node {$\QH(X)$}; 
\draw (3.3,2.5) node {$\QH(Z)$}; 

\fill (1.54,2.67) circle (0.07); 
\fill (1.34,2.33) circle (0.07); 

\fill (-2.68,0) circle (0.07); 
\fill (-3.08,0) circle (0.07); 

\fill (1.54,-2.67) circle (0.07); 
\fill (1.34,-2.33) circle (0.07); 
\end{tikzpicture} 
\caption{Euler eigenvalues near $Q=\ttau=0$ for $r=4$. The eigenvalues of $E_Z\star_{\varsigma_j(\ttau)}$, $j=0,\dots,r-2$ surround those of $E_X\star_{\tau(\ttau)}$ as ``satellites''. Here we assume the convergence of quantum cohomology and the maps $\tau(\ttau), \varsigma_j(\ttau)$. We also choose $\frq$ to be positive real.}
\label{fig:Euler_eigenvalues} 
\end{figure}

\begin{remark}
We can see from Theorem \ref{thm:decomposition_fd} how the eigenvalues of the quantum multiplication by the Euler vector field $E_\tX \star_\ttau$ decompose along the locus $Q=\ttau=0$. The quantum product for $X$ or $Z$ is the cup product along $Q=\ttau=0$ and therefore the eigenvalues of the Euler multiplication are given by the $H^0$ component of the Euler vector field. By the asymptotics of $\tau(\ttau)$, $\varsigma_j(\ttau)$ in Theorem \ref{thm:decomposition_fd}(6), we see that the eigenvalues of $E_X\star_{\tau(\ttau)}$ are all zero and those of $E_Z \star_{\varsigma_j(\ttau)}$ are all $-(r-1) \lambda_j$ along this locus. See Figure  \ref{fig:Euler_eigenvalues}. Note here that $(-\lambda_j)^{r-1} = - \frq$. 
The eigenvalues of the Euler multiplication correspond to critical values of the mirror Landau-Ginzburg potential and the same has been observed in \cite[Figure 10]{Iritani:discrepant} for mirrors of toric blowups. 
Gyenge-Szab\'o \cite{Gyenge-Szabo} studied asymptotics of the Euler eigenvalues associated with blowdowns to the minimal models of projective surfaces. 
\end{remark} 

\appendix 

\section{Proof of Theorem \ref{thm:divisor_reduction}}
\label{append:divisor_reduction}
In Corollary \ref{cor:Fourier_transform_JW_GIT}, we proved that Conjecture \ref{conj:reduction} holds for the GIT quotients $X, \tX$ of $W$, which are contained in $W$ as $\C^\times$-fixed divisors. Theorem \ref{thm:divisor_reduction} can be shown by essentially the same argument. Here we add a few arguments to complete the proof in the general case. 

Let $X$, $Y$ be as in Theorem \ref{thm:divisor_reduction}. Inverting the action if necessary, we may assume that the normal bundle of $Y$ has $\C^\times$-weight 1. We let $T=\C^\times$. 
We take a splitting of the exact sequence $0\to N_1(X) \to N_1^T(X) \to \Hom(\C^\times,T) = \Z \to 0$ given by the section class $\sigma_Y(1) \in N_1^{\rm sec}(E_1)\subset N_1^T(X)$ associated with the $T$-fixed divisor $Y$. This induces a dual splitting $N^1_T(X) \cong N^1(X) \oplus \Z$. It sends $\homega\in N^1_T(X)$ to $(\omega, a)$ where $\omega$ is the non-equivariant limit of $\homega$ and $a\in \Z$ is such that $\sigma_Y(1) \cdot \homega = \homega|_p = a \lambda$ for a point $p\in Y$. 
Let $\cS=\hcS^{\sigma_Y(1)}$ denote the shift operator on the Givental space (Definition \ref{def:shift_Givental}) corresponding to $(0,1) \in N_1(X)\oplus \Z \cong N_1^T(X)$. 
We have 
\begin{equation} 
\label{eq:shiftop_divisor} 
(\cS^k \bbf)_Y = \frac{\prod_{c=-\infty}^0 (\lambda + [Y] +cz)}{\prod_{c=-\infty}^{-k} (\lambda+[Y]+cz)} e^{-k z\partial_\lambda} \bbf_Y 
\end{equation} 
for $\bbf \in \cH^{\rm rat}_X$. We also have $\kappa_Y(\lambda) = -[Y]|_Y = - c_1(\cN_{Y/X})$. We write $\hS = (Q,S)$ for the formal variable of the group ring $\C[N_1^T(X)] = \C[N_1(X) \oplus \Z]$. 

We claim that the power series 
\begin{equation} 
\label{eq:I-function_append} 
I = z \sum_{k\in \Z} \kappa_Y(\cS^{-k} J_X(\tau)) S^k 
\end{equation} 
is supported on $C_Y^\vee \cap N_1^T(X)$. Note that the set of homology classes of non-constant $\C^\times$-orbit closures is finite. In particular, there are finitely many classes $\delta_1,\dots,\delta_l \in N_1(X)$ representing $\C^\times$-invariant irreducible curves meeting $Y$ but not contained in $Y$. We set $y_i = Q^{\delta_i}$ for $1\le i\le l$. 
Arguing as in the proof of Proposition \ref{prop:support_conjecture} and using \eqref{eq:shiftop_divisor}, we find that $\kappa_Y(\cS^{k}J_X(\tau))$ with $k\ge  0$ is non-vanishing only when it is divisible by $y_i^k$, i.e.~belongs to $y_i^k H^*(Y)[z,z^{-1}][\![Q]\!]$ for some $1\le i\le l$. Thus $I$ can be written as power series of $y_1 S^{-1},\dots,y_l S^{-1}$ and $S$ over the Novikov ring $\C[\![Q]\!]$. 
It now suffices to show that $(\delta_i,-1), (0,1)\in N_1(X)\oplus \Z \cong N_1^T(X)$ lies in the dual cone of $C_Y$. Let $\homega = (\omega,a)$ be an equivariant ample class in the cone $C_Y$. Take a $\C^\times$-invariant irreducible curve $C_i\cong \PP^1$ of class $\delta_i$ connecting a point $p_1 \in Y$ and another $\C^\times$-fixed point $p_2 \notin Y$. Then we have $\omega \cdot \delta_i = \int_{C_i} \homega = (\homega|_{p_1} - \homega|_{p_2})/\lambda = a - \homega|_{p_2}/\lambda$ by the localization theorem. The Hilbert-Mumford criterion says that $-\homega|_{p_1}/\lambda =-a<0$ and $-\homega|_{p_2}/\lambda >0$; hence $a>0$ and $\omega\cdot \delta_i -a >0$. This implies $(\delta_i,-1), (0,1) \in C_Y^\vee$ and the claim follows. 

The above argument shows that the power series $I$ is supported on the monoid 
\[
C_{Y,\N}^\vee := \NEN(X) + \left\langle (0,1), (\delta_i, -1) : i=1,\dots,l\right\rangle_\N \subset N_1^T(X)\]
contained in $C_Y^\vee$. 
A similar argument to that in \S\ref{subsec:cont_vs_discrete} establishes the following result, which implies Theorem \ref{thm:divisor_reduction}. 
\begin{proposition} 
The $I$-function \eqref{eq:I-function_append} has the form: 
\[
I = z M_Y(\sigma) v 
\]
for some $\sigma\in H^*(Y)[\![C^\vee_{Y,\N}]\!][\![\btau]\!]$ and $v \in H^*(Y)[z][\![C^\vee_{Y,\N}]\!][\![\btau]\!]$ satisfying $\sigma|_{\btau=0}\equiv S$, $v|_{\btau=0}\equiv 1$ modulo the closed ideal of $\C[z][\![C_{Y,\N}^\vee]\!]$ generated by $Q^d$, $d \in \NEN(X)\setminus \{0\}$ and $y_i S^{-1} = Q^{\delta_i} S^{-1}$, $i=1,\dots,l$. Here, the fundamental solution $M_Y$ for $Y$ is pushed-forward along the map $\C[\![\NEN(Y)]\!] \to \C[\![C_{Y,\N}^\vee]\!]$, $Q_Y^d \mapsto Q^{i_*d} S^{-c_1(\cN_{Y/X})\cdot d}$, induced by the dual Kirwan map, where $Q_Y$, $Q$ denote the Novikov variables of $Y$ and $X$, respectively. 
\end{proposition}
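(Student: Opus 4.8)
The plan is to mirror the proof of Corollary \ref{cor:Fourier_transform_JW_GIT}, which treated the two $\C^\times$-fixed divisors $X,\tX\subset W$, now replacing $W$ by the general smooth projective $T$-variety $X$ and $\tX$ by the $\C^\times$-fixed divisor $Y$. The $I$-function \eqref{eq:I-function_append} is exactly the discrete Fourier transform $\sfF_Y(z J_X(\tau))$ of the equivariant $J$-function, where $\sfF_Y$ is defined as in \S\ref{subsec:discrete_Fourier} using the shift operator $\cS=\hcS^{\sigma_Y(1)}$ (whose action on the $Y$-localization is \eqref{eq:shiftop_divisor}) and the Kirwan map $\kappa_Y$ with $\kappa_Y(\lambda)=-c_1(\cN_{Y/X})$. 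The support statement has already been established in the text preceding the proposition, so the remaining content is the identification of $\sfF_Y(zJ_X(\tau))$ with $zM_Y(\sigma)v$, i.e.\ the assertion that this Fourier transform is a point on (a tangent space of) the Givental cone of $Y$.

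First I would introduce the continuous Fourier transformation $\scrF_{Y,0}$ attached to the fixed component $Y$, exactly as in \S\ref{subsec:continuous_Fourier}: here $\cN_{Y/X}$ is a line bundle of weight $1$, so in the notation of Notation \ref{nota:normal_bundle} we have a single weight $\alpha=\lambda$, $r_\alpha=1$, $w_\alpha=1$, hence $c_Y=-1$ and there is exactly $|c_Y|=1$ critical point. (The sign $c_Y=-1$, as opposed to $c_\tX=+1$ in the body, reflects that the weight is $+1$ rather than $-1$; the argument is the formal analogue of the $Y=X$ case treated in the body, where $c_X=-1$.) Proposition \ref{prop:restriction_twisted_cone} gives that the Laurent expansion at $z=0$ of $(zJ_X(\tau))|_Y$ lies in the $(\cN_{Y/X},e_T^{-1})$-twisted Givental cone of $Y$; then Proposition \ref{prop:Fourier_bigJtw} (with $F=Y$, $j=0$) expresses the formal asymptotics of the Mellin--Barnes integral $\int e^{\lambda\log S/z}G_Y\, \bbf_Y\,d\lambda$ as $S^{\rho_Y/(c_Yz)}\,zM_Y(\sigma;\,\cdot\,)v$ after applying the quantum Riemann--Roch operator $\tDelta_\alpha^{-1}$ (Theorem \ref{thm:QRR}) and the String/Divisor equations, exactly as in Corollary \ref{cor:Fourier_transform_JW}.

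Next I would prove the analogue of Proposition \ref{prop:conti_vs_discrete}: the continuous transform $\scrF_{Y,0}(\bbf)$ and the discrete transform $\sfF_Y(\bbf)$ agree up to the factor $c_Y^{-1}S^{-\rho_Y/(c_Yz)}\exp(S^{1/c_Y}/z)$. This is the technical heart and the step I expect to be the main obstacle. The argument is the one carried out in detail for $Y=\tX$ in the body: using that $\bbf_Y$ can have poles only at $\lambda=-kz$ with $k>0$ and that $\cS^{-k}\bbf$ is regular at $\lambda=0$ for all $k$, one gets a partial fraction decomposition of the type \eqref{eq:pfd} for the coefficient of each monomial in $\bbf_Y$; one evaluates the Mellin--Barnes integral $I_f(S,z)$ both by the stationary-phase asymptotic expansion and by closing the contour and summing residues, obtaining on the residue side precisely the discrete transform; and finally one checks that $e^{-S/z}S^{-\rho_Y/z}I_f(S,z)$ is a Laurent polynomial in $z,S$ (it suffices to check this for $f=\lambda^n$ and $f=(\pm[Y]+\lambda+kz)^{-1}$, via the same elementary ODE argument), so the asymptotics is exact. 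The appendix on the asymptotics (Appendix \ref{append:asymptotics}) applies verbatim. The only genuinely new point compared to the body is bookkeeping: $Y$ may meet infinitely many $\C^\times$-invariant curves but only finitely many homology classes $\delta_1,\dots,\delta_l$ occur among the non-contracted ones, which is what forces the support into $C_{Y,\N}^\vee$ and controls the denominators appearing in the partial fraction decomposition.

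Finally, combining the support statement with the Givental-cone membership, I would run the ``Newton method'' argument from the end of the proof of Corollary \ref{cor:Fourier_transform_JW_GIT}: write $\exp(-S^{1/c_Y}/z)\sfF_Y(J_X(\tau))=1+g$ with $g$ vanishing at $Q=\btau=0$ and supported on $C_{Y,\N}^\vee$, write $M_Y(\sigma)^{-1}=\id-(\sigma\cup)/z+z^{-1}A(\sigma)$, and solve recursively $\sigma=[g]_{-1}-\sigma[g]_0+[A(\sigma)g]_0$; since $A$ and $g$ are supported on $C_{Y,\N}^\vee$, so are $\sigma$ and $v=M_Y(\sigma)^{-1}g$, and the initial conditions $\sigma|_{\btau=0}\equiv S$, $v|_{\btau=0}\equiv 1$ modulo the ideal generated by the $Q^d$ and the $y_iS^{-1}$ follow by restricting to $Q=\btau=0$ and comparing powers of $z$. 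Absorbing the $\exp(S^{1/c_Y}/z)$ via the Divisor equation $\exp(S^{1/c_Y}/z)M_Y(\sigma)=M_Y(\sigma+S^{1/c_Y})$ (noting $c_Y=-1$ so $S^{1/c_Y}=S^{-1}$, but after the monodromy normalization this is the variable $S$ as in the body) yields $I=zM_Y(\sigma)v$ with the stated properties.
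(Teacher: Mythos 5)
Your overall strategy is exactly the paper's: the paper's proof of this proposition is just one sentence, referring the reader to ``a similar argument to that in \S\ref{subsec:cont_vs_discrete}'', and you correctly spell out what that argument is (continuous Fourier transform $\scrF_{Y,0}$ built from Proposition \ref{prop:restriction_twisted_cone}, Theorem \ref{thm:QRR} and Proposition \ref{prop:Fourier_bigJtw}; the discrete-vs-continuous comparison of Proposition \ref{prop:conti_vs_discrete} via Mellin--Barnes residues; and the Newton-method step from Corollary \ref{cor:Fourier_transform_JW_GIT}).

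There is, however, a sign error that you should fix because it internally contradicts your own computation and confuses the final step. With the normalization taken in the appendix (normal bundle weight $+1$), Notation \ref{nota:normal_bundle} gives the single weight $\alpha=\lambda$ with $r_\alpha=1$, $w_\alpha=1$, hence $c_Y=\sum_\alpha r_\alpha w_\alpha=+1$, not $-1$. In the body it is $\tX$ (not $X$) that has $c_{\tX}=+1$ and normal bundle $\cO_{\tX}(-D)$ of $T$-weight $+1$; your parenthetical claims the opposite. The correct analogue to imitate is therefore the $Y=\tX$ case of Proposition \ref{prop:conti_vs_discrete} (and indeed the shift-operator formula \eqref{eq:shiftop_divisor} in the appendix is identical to the third line of \eqref{eq:shiftop_W} with $[Y]\leftrightarrow -[D]$, and $\kappa_Y(\lambda)=-c_1(\cN_{Y/X})$ matches $\kappa_{\tX}(\lambda)=[D]$). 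With $c_Y=+1$, $S_Y^{1/c_Y}=S$ directly, which is exactly the initial condition $\sigma|_{\btau=0}\equiv S$ asserted in the proposition; the hand-wave ``after the monodromy normalization this is the variable $S$'' in your final paragraph becomes unnecessary once the sign is corrected. The content of your middle paragraph --- poles of $\bbf_Y$ only at $\lambda=-kz$, $k>0$; partial fractions as in \eqref{eq:pfd}; contour closing and residue sum; showing $e^{-S/z}S^{-[Y]/z}I_f(S,z)$ is a Laurent polynomial via the elementary ODE check --- is correct and matches the body once you work with the $\tX$-style formulas.
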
 

\section{Proof of the asymptotics \eqref{eq:expansion_ISz}}
\label{append:asymptotics}

We work in the setting of the proof of Proposition \ref{prop:conti_vs_discrete}. Throughout the argument, we fix a positive real number $S>0$. Recall from \S\ref{subsubsec:formal_asymptotics} that the coefficient of $\QW^\delta\bx^m$ in $\scrF_{\tX,0}(\bbf)$ arises from the \emph{formal} asymptotic expansion of the integral 
\[
e^{-S/z} I_f(S,z) = 
\frac{e^{-S/z}}{\sqrt{2\pi z}}
\int_{S-\iu \infty}^{S+\iu \infty} \cI(\lambda,z) d\lambda 
\]
with
\[
\cI(\lambda,z) :=S^{\lambda/z}  G_\tX f(\lambda,z) = \frac{S^{\lambda/z}}{\sqrt{-2\pi z}} (-z)^{(\lambda -[D])/(-z)} \Gamma\left(\frac{\lambda-[D]}{-z}\right)  f(\lambda,z)
\]
via the Stirling approximation for the $\Gamma$-function. 
We need to show that the procedure in \S\ref{subsubsec:formal_asymptotics} computes the \emph{actual}  asymptotic expansion as $z$ approaches zero from the negative real axis. 
We may assume that $f(\lambda,z)\in H^*(\tX)\otimes \C(\lambda,z)_{\rm hom}$ is homogeneous of degree $\ell$ with respect to the cohomology grading and $\deg \lambda = \deg z=2$. 
The Stirling approximation gives the asymptotic expansion of the form 
\begin{equation} 
\label{eq:Stirling_append}
\cI(\lambda,z) 
 \sim e^{\eta(\lambda)/z} \sum_{i=-M}^\infty c_i(\lambda) z^i 
\end{equation} 
with $\eta(\lambda) = \lambda - \lambda \log \lambda + \lambda \log S$ and $c_i(\lambda)\in \lambda^{-1/2} H^*(\tX)[\lambda,\lambda^{-1},\log \lambda]$. 
The asymptotic series $\sum_{i=-M}^\infty c_i(\lambda) z^i$ is the product of the inverse of the quantum Riemann-Roch operator $\Delta= \Delta_{(\cN_{\tX/W},e_\lambda^{-1})}$ (see \eqref{eq:QRR_operator} and Remark \ref{rem:Stirling}) and the Laurent expansion of $f(\lambda,z)$ at $z=0$; 
note that $\Delta^{-1}$ is a Laurent series of $z$ bounded in the negative direction. 
The asymptotic expansion \eqref{eq:Stirling_append} is valid when $z$ approaches zero in the angular sector $\arg(-\lambda/z) \in (-\pi,\pi)$. 
Write 
\[
\cI(\lambda,z) =e^{\eta(\lambda)/z} \left(\sum_{i=-M}^N c_i(\lambda) z^i +R_N(\lambda,z)\right). 
\]
The standard stationary phase method (or Laplace's method) shows that the asymptotic expansion (as $z\nearrow 0$) of the integral 
\[
\frac{e^{-S/z}}{\sqrt{2\pi z}}\int_{S-\iu \infty}^{S+\iu \infty} 
e^{\eta(\lambda)/z} \sum_{i=-M}^N c_i(\lambda) z^i d\lambda
\]
coincides with the coefficient of $Q^\delta \bx^m$ in $\scrF_{\tX,0}(\bbf)$ up to order $z^N$. Hence it suffices to show that 
\begin{equation} 
\label{eq:error_term} 
\frac{e^{-S/z}}{\sqrt{2\pi z}}\int_{S-\iu \infty}^{S+\iu \infty} 
e^{\eta(\lambda)/z} R_N(\lambda,z) d\lambda = O(z^{N+1}) 
\end{equation} 
as $z\nearrow 0$. To show this, we use the following homogeneity property 
\[
e^{-\eta(\lambda)/z}\cI(\lambda,z) = \lambda^{\frac{[D]}{z}+\frac{\ell-1}{2}}  
\lambda^{\frac{\deg}{2}} (e^{-\eta(1) \lambda/z} \cI(1,z/\lambda))  
\]
where $\deg \in \End(H^*(\tX))$ is the ordinary degree operator on $H^*(\tX)$. 
Using the asymptotic expansion \eqref{eq:Stirling_append} for $e^{-\eta(1)/u} \cI(1,u)$ together with the above homogeneity, we find that the remainder $R_N(\lambda,z)$ satisfies the estimate 
\[
|R_N(\lambda,z)|\le C (1+|\Im \lambda|^n) |z|^{N+1}
\]
for some $C, n>0$, whenever $z\in [-1,0)$ and $\lambda$ is on the integration contour $\Re(\lambda) = S$. Note that $\arg(-\lambda/z)$ lies in the range $[-\frac{\pi}{2},\frac{\pi}{2}]$ where the Stirling approximation is valid. 
The rest of the argument is standard. We can find constants $\alpha,\beta>0$ such that for all $t\in \R$, 
\[
\Re(\eta(S+\iu t)) -S = t \arctan(t/S) \ge 
\begin{cases} 
\alpha t^2 & \text{if $|t|\le 1$;} \\ 
\beta |t| & \text{if $|t|\ge 1$.} 
\end{cases} 
\]
Then we have, for $-1\le z<0$, 
\begin{align*} 
\frac{\left|\text{LHS of \eqref{eq:error_term}} \right|}{|z|^{N+1}}  
& \le \frac{1}{\sqrt{2 \pi (-z)}}\left( \int_{-1}^1 e^{\alpha t^2/z} 2C dt + 2 
\int_1^\infty e^{\beta t/z} C (1+t^n) dt \right)  \\ 
& \le \frac{2C}{\sqrt{2\alpha}} + 2 C \frac{e^{\beta/z}}{\sqrt{2\pi(-z)}} \int_0^\infty e^{\beta s/z} (1+(1+s)^n) ds 
\end{align*} 
where the second term in the last line goes to zero as $z\nearrow 0$. The conclusion follows.

\bibliographystyle{amsplain}
\bibliography{qcoh_monoidal}

\end{document}